\theoremstyle{definition}
\newtheorem{thm}{Theorem}[section]
\crefname{thm}{Theorem}{Theorems}
\newtheorem{cor}[thm]{Corollary}
\newtheorem{prop}[thm]{Proposition}
\crefname{prop}{Proposition}{Propositions}
\newtheorem{lem}[thm]{Lemma}
\crefname{lem}{Lemma}{Lemmas}
\newtheorem{defn}[thm]{Definition}
\crefname{defn}{Definition}{Definitions}
\newtheorem{exmp}[thm]{Example}
\newtheorem{rmk}[thm]{Remark}
\newtheorem*{ack*}{Acknowledgements}
\newtheorem*{clm*}{Claim}
\newcommand*{\mb}[1]{\mathbb{#1}}
\newcommand{\pt}{{\rm pt}}
\newcommand{\ms}{\mathscr}
\newcommand{\mc}{\mathcal}
\newcommand{\wt}{\widetilde}
\newcommand*{\on}[1]{\operatorname{#1}}
\newcommand{\Gm}{\mb{G}_m}
\begin{document}

\author{Hunter Spink, Dennis Tseng}
%\address{California, United States}
%\email{hspink@math.harvard.edu}
%\thanks{The author would like to thank Harvard University for providing travel funding.}

\title{$PGL_2$-equivariant strata of point configurations in $\mb{P}^1$}

\begin{abstract}
We compute the integral Chow ring of the quotient stack $[(\mb{P}^1)^n/PGL_2]$, which contains $\mc{M}_{0,n}$ as a dense open, and determine a natural $\mb{Z}$-basis for the Chow ring in terms of certain ordered incidence strata. We further show that all $\mb{Z}$-linear relations between the classes of ordered incidence strata arise from an analogue of the WDVV relations in $A^\bullet(\overline{\mc{M}}_{0,n})$.

Next we compute the classes of unordered incidence strata in the integral Chow ring of the quotient stack $[{\rm Sym}^n\mb{P}^1/PGL_2]$ and classify all $\mb{Z}$-linear relations between the strata via these analogues of WDVV relations. %classify all $\mb{Z}$-linear relations between the strata.

% and show that every $\mb{Z}$-linear relation between these strata in the rational Chow ring induces a relation in the integral Chow ring, despite the presence of $2$-torsion. We also show that the classes of certain unordered incidence strata give a $\mb{Q}$-basis for the rational Chow ring of the quotient stack $[{\rm Sym}^n\mb{P}^1/PGL_2]$ up to degree $n-2$, and show how to write every unordered strata in this basis. 
%that the classes of certain unordered incidence strata give a $\mb{Q}$-basis for the rational Chow ring of the quotient stack $[{\rm Sym}^n\mb{P}^1/PGL_2]$ up to degree $n-2$.

%Furthermore, we show that a $\mb{Z}$-linear combination of these classes is zero if and only if it is zero in the rational Chow ring of $[{\rm Sym}^n\mb{P}^1/PGL_2]$ despite the presence of $2$-torsion, reducing the problem of determining relations between these classes to finding relations between certain rational polynomials.

Finally, we compute the rational Chow rings of the complement of a union of unordered incidence strata.
%Linear relations between classes of strata in the ordered and unordered cases above imply universal relations between counts in enumerative problems in a relative setting.
%We determine the linear relations between equivariant Chow classes of strata in $(\mb{P}^1)^n$ and ${\rm Sym}^n \mb{P}^1$ using a combination of geometric and combinatorial techniques. These relations imply certain universal relations between counts in enumerative geometry involving point coincidences in $\mb{P}^1$ in a relative setting. In particular, we determine 
%We also classify the relations which can occur between the $\binom{n}{m}$ pullbacks of a class $\alpha \in H^\bullet(X^m;\mb{Q})^{S_m}$ to $H^\bullet(X^n;\mb{Q})$ for any topological space $X$ as a consequence of certain ``hyperoctahedral relations'' between elementary symmetric polynomials.
\end{abstract}

\maketitle
\section{Introduction}
\label{intro}
We consider $PGL_2$-equivariant Chow classes of incidence strata corresponding to point configurations in $\mb{P}^1$. %These universal relations descend to relations between counts in enumerative problems in a relative setting, and are related to intersection theory on the moduli space of $n$-pointed curves $\mc{M}_{0,n}$. 
Our results concern both ordered point configurations, parametrized by $(\mb{P}^1)^n$, and unordered point configurations, parametrized by ${\rm Sym}^n\mb{P}^{1}\cong \mb{P}^n$, previously considered $GL_2$-equivariantly by Feh\'er, N\'emethi, and Rim\'anyi \cite{FNR06}. The equivariant Chow rings $A^\bullet_{PGL_2}((\mb{P}^1)^n)$ and $A^\bullet_{PGL_2}(\mb{P}^n)$ can be defined as the integral Chow rings of the quotient stacks $[(\mb{P}^1)^n/PGL_2]$ and $[\mb{P}^n/PGL_2]$ respectively, so the $PGL_2$-equivariant Chow classes of incidence strata are of interest because they specialize to relative classes in $A^\bullet(\mc{P}^n)$ and $A^\bullet(\text{\rm Sym}^n\mc{P})$ respectively for any $\mb{P}^1$-bundle $\mc{P} \to B$.

The equivariant Chow ring $A^{\bullet}_{PGL_2}(\pt)\cong \mb{Z}[c_2,c_3]/(2c_3)$ was computed by Pandharipande \cite{Pandharipande}, and the 2-torsion is reflected in the fact that $PGL_2$ is not special. Consequently, restriction to a maximal torus is only injective rationally \cite[Proposition 6]{EG98}, which is the main obstacle to computing integral Chow classes. This is in contrast with the analogous $GL_2$-equivariant Chow rings, which are substantially easier to work with as $GL_2$ is special. Also, classes in the $GL_2$-equivariant Chow rings of $(\mb{P}^1)^n$ and $\mb{P}^n$ only specialize to classes in $A^{\bullet}(\mc{P}^n)$ and $A^{\bullet}({\rm Sym}^n \mc{P})$ when $\mc{P}\to B$ is the projectivization of a rank $2$ vector bundle. 

The reader may refer to \Cref{universalrelations} and \Cref{eqinttheory} for an exposition on how equivariant Chow classes yield universal relations between relative Chow classes in bundles, and \Cref{exampleintro} for example applications.

\subsection{Ordered strata in $[(\mb{P}^1)^n/PGL_2]$}
\label{Orderedintro}

The moduli space $\mc{M}_{0,n}$ of $n$ distinct points on $\mb{P}^1$ is the quotient of $(\mb{P}^1)^n\setminus\bigcup_{i<j}\Delta_{i,j}$ by the free action of $PGL_2$, where $\Delta_{i,j}$ is the locus in $(\mb{P}^1)^n$ where the $i$th and $j$th coordinates are equal. This is classically compactified by the variety $\overline{\mc{M}}_{0,n}$ of stable genus zero $n$-pointed curves.

We study the quotient stack $[(\mb{P}^1)^n/PGL_2]$ containing $\mc{M}_{0,n}$ as a dense open and in particular its integral Chow ring $A^\bullet_{PGL_{2}}((\mb{P}^1)^n)$ as defined in \cite[Section 5]{EG98}. This stack is stratified by certain incidence strata $\Delta_P\subset (\mb{P}^1)^n$ for $P$ a partition of $[n]:=\{1,\ldots,n\}$, the loci where the $i$th and $j$th coordinates are equal if $i$ and $j$ are in the same part of $P$.

We compute a ring presentation in \Cref{bigthmintro} for $A^\bullet_{PGL_{2}}((\mb{P}^1)^n)$ similar to that of $A^{\bullet}(\overline{\mc{M}}_{0,n})$ computed by Keel \cite{Keel}. The incidence strata $\Delta_P$ play a fundamental role in the equivariant Chow ring: in \Cref{goodbasis} we compute a $\mb{Z}$-basis for $A^\bullet_{PGL_2}((\mb{P}^1)^n)$, which consists in degree $\leq n-2$ of certain incidence strata, and in \Cref{basisthmintro} we show all relations between incidence strata arise from an analogue of the WDVV relation in $A^{\bullet}(\overline{\mc{M}}_{0,4})$ (see \Cref{WDVVintro}).

%Using a combination of geometric and combinatorial techniques, w

%, and construct an additive basis via these $\Delta_P$-classes, which up to degree $n-2$ is given by an explicit subset of the $\Delta_P$ classes.

%In particular, we show the integral Chow ring   In particular, the integral Chow ring can be presented in terms of diagonal classes, which we now define. 

\begin{comment}
\begin{defn}
Let $\psi_i \in A^{\bullet}_{PGL_2}((\mb{P}^1)^n)$ be the pullback of $c_1(T_{\mb{P}^1}^{\vee})$ under the $i$th projection $(\mb{P}^1)^n\to \mb{P}^1$. 
\end{defn}
\end{comment}
\begin{comment}
\begin{defn}
For distinct $i,j \in \{1,\ldots,n\}$, we denote by $\Delta_{i,j}$ (where the two indices are interchangeable) both for the locus in $(\mb{P}^1)^n$ where the $i$th and $j$th coordinates coincide, and the class of this locus in $ A^\bullet_{PGL_2}((\mb{P}^1)^n)$.
\end{defn}
\end{comment}

\begin{thm}
\label{bigthmintro}
The following are true.
\begin{enumerate}
\item (\Cref{mainthm})
For $n \ge 3$, the ring $A^\bullet_{PGL_2}((\mb{P}^1)^n)=\frac{\mb{Z}[\Delta_{i,j}]_{1 \le i < j \le n}}{\text{relations}}$, where the relations are (notating $\Delta_{j,i}:=\Delta_{i,j}$ for $j>i$)
\begin{enumerate}
\item $\Delta_{i,j}+\Delta_{k,l}=\Delta_{i,k}+\Delta_{j,l}$ for distinct $i,j,k,l$ \hfill (square relations)
\item $\Delta_{i,j}\Delta_{i,k}=\Delta_{i,j}\Delta_{j,k}$ for distinct $i,j,k$. \hfill (diagonal relations)
%\item $\psi_i=\Delta_{i,j}+\Delta_{i,k}-\Delta_{j,k}$ for any distinct indices $i,j,k$.
%\item $\psi_i+\psi_j=2\Delta_{i,j}$
\end{enumerate}
\item
(\Cref{bettinumber}) For $n \ge 1$, the group $A^k_{PGL_2}((\mb{P}^1)^n)$ is a free $\mb{Z}$-module of rank
$$\sum_{\substack{i \le k\\i \equiv k\text{ mod 2}}} \binom{n}{i}.$$
\item
(\Cref{PGL2injective}) For $n\ge 1$, the natural map from $A^\bullet_{PGL_2}((\mb{P}^1)^n)$ to $$A^\bullet_{GL_2}((\mb{P}^1)^n)\cong \mb{Z}[u,v]^{S_2}[H_1,\ldots,H_n]/(F(H_1),\ldots,F(H_n)),$$ is injective, where $u+v$ and $uv$ are the first and second chern classes of the standard representation of $GL_2$, $F(z)=(z+u)(z+v)$, and $H_i$ is $c_1(\ms{O}(1))\in A^\bullet_{PGL_2}((\mb{P}^1))$ pulled back via projection to the $i$th factor.

This identifies $A^\bullet_{PGL_2}((\mb{P}^1)^n)$ with the subring of $A^\bullet_{GL_2}((\mb{P}^1)^n)$ generated by $H_i+H_j+u+v$ for distinct $i,j$ and $2H_i+u+v$ for all $i$, and this maps $$\Delta_{i,j} \mapsto H_i+H_j+u+v.$$
%$$\psi_i \mapsto 2H_i +u+v$$
\item
(\Cref{equivariantcohomology}) If the base field is $\mb{C}$, then for all $n\ge 1$ the map $A^\bullet_{PGL_2}((\mb{P}^1)^n)\to H^{\bullet}_{PGL_2}((\mb{P}^1)^n)$ to equivariant cohomology is an isomorphism. 
\end{enumerate}
\end{thm}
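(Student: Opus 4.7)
The plan is to prove the four parts in the order (3), (1), (2), (4), using the $GL_2$-equivariant ring as concrete scaffolding. Since $GL_2$ is special, the iterated projective bundle formula applied to $\mb{P}(V)^{\times n}\to BGL_2$ gives the presentation of $A^\bullet_{GL_2}((\mb{P}^1)^n)$ stated in part (3); the map from $A^\bullet_{PGL_2}((\mb{P}^1)^n)$ is pullback along $BGL_2\to BPGL_2$. I would first check that $H_i+H_j+u+v$ and $2H_i+u+v$ lie in the image of this map by exhibiting explicit $PGL_2$-equivariant line bundles: the tensor products $\mc{O}_{(\mb{P}^1)^n}(e_i+e_j)\otimes\pi^*\det V$ and $\mc{O}_{(\mb{P}^1)^n}(2e_i)\otimes\pi^*\det V$ both have trivial central $\Gm$-weight and hence descend, with the former cutting out $\Delta_{i,j}$ and giving the formula $\Delta_{i,j}\mapsto H_i+H_j+u+v$.

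For part (1), I would verify the stated relations hold under this map: the square relation becomes a tautology after the substitution, and the diagonal relation reduces via $F(H_j)-F(H_k)=(H_j-H_k)(H_j+H_k+u+v)$ to an element of the ideal $(F(H_j),F(H_k))$ after multiplication by $H_i+H_j+u+v$. This yields a surjection from the putative presentation ring onto the subring. For injectivity, I would extract a canonical monomial basis of the presentation: use the square relations to reduce a general $\Delta_{i,j}$ to a linear combination involving a small generating set (say $\Delta_{1,j}$ for $j\ne 1$), and the diagonal relations to bound the powers of each $\Delta_{1,j}$ appearing in a monomial. I would then compute the image of each normal form monomial explicitly as a polynomial in $H_i,u,v$ inside $A^\bullet_{GL_2}((\mb{P}^1)^n)$ and verify $\mb{Z}$-linear independence, which simultaneously proves injectivity in (3).

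That the image actually equals the subring requires showing the pullbacks of the generators $c_2,c_3$ of $A^\bullet_{PGL_2}(\pt)$ lie in the subring: for $c_2$ one uses $(2H_i+u+v)^2=(u+v)^2-4uv$ (from $F(H_i)=0$) combined with the expression $2H_i+u+v=\Delta_{i,j}+\Delta_{i,k}-\Delta_{j,k}$ for $n\ge 3$; the 2-torsion class $c_3$ maps to zero in $A^\bullet_{GL_2}(\pt)$. The cases $n=1,2$ require separate inspection, where the generator $2H_i+u+v$ is essential. Part (2) then follows from a combinatorial enumeration of the normal form basis in each degree, which after reorganization yields the binomial sum $\sum_{i\le k,\,i\equiv k\,(2)}\binom{n}{i}$, with the parity reflecting contributions from $A^\bullet_{PGL_2}(\pt)$.

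The main obstacle is the simultaneous rank-matching/injectivity argument, because restriction to a maximal torus is only rationally injective for $PGL_2$ and the 2-torsion in $A^\bullet_{PGL_2}(\pt)$ prevents direct localization arguments; the entire injectivity must be extracted from explicit computation in $A^\bullet_{GL_2}((\mb{P}^1)^n)$. For part (4), the $GL_2$-equivariant cycle class map $A^\bullet_{GL_2}((\mb{P}^1)^n)\to H^{2\bullet}_{GL_2}((\mb{P}^1)^n)$ is an isomorphism because $(\mb{P}^1)^n$ admits a $GL_2$-equivariant cellular filtration to which Leray-Hirsch applies identically on both sides. Running the same strategy as in (3) cohomologically shows $H^{2\bullet}_{PGL_2}((\mb{P}^1)^n)$ injects into $H^{2\bullet}_{GL_2}((\mb{P}^1)^n)$ with image the same subring, and matching generators across the square of Chow and cohomology rings yields the desired isomorphism for $PGL_2$.
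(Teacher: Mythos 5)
Your proposal has a genuine gap at its foundation: you never obtain independent control of the ring $A^\bullet_{PGL_2}((\mb{P}^1)^n)$ itself, and every downstream claim (injectivity in (3), the presentation in (1), the ranks and freeness in (2)) depends on that control. Your normal-form argument, if carried out, would show that the presentation ring $R=\mb{Z}[\Delta_{i,j}]/(\text{relations})$ maps isomorphically onto the subring $S\subset A^\bullet_{GL_2}((\mb{P}^1)^n)$ generated by the classes $H_i+H_j+u+v$, and since the $\Delta_{i,j}$ are honest $PGL_2$-invariant cycles this gives a factorization $R\to A^\bullet_{PGL_2}((\mb{P}^1)^n)\xrightarrow{\iota} S$ with injective composite. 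But this only bounds $A^\bullet_{PGL_2}((\mb{P}^1)^n)$ from below. To conclude that $\iota$ is injective and that $R\to A^\bullet_{PGL_2}((\mb{P}^1)^n)$ is surjective, you must rule out extra classes --- most pointedly the $2$-torsion: $A^\bullet_{PGL_2}(\pt)\cong\mb{Z}[c_2,c_3]/(2c_3)$, and you yourself note that $c_3$ maps to zero in $A^\bullet_{GL_2}(\pt)$. If the pullback of $c_3$ to $A^\bullet_{PGL_2}((\mb{P}^1)^n)$ were nonzero, injectivity of $\iota$ would simply be false; showing it vanishes (and that nothing else survives) is the actual content of part (3), and your strategy of ``showing the pullbacks of $c_2,c_3$ lie in the subring'' presupposes a generating set for $A^\bullet_{PGL_2}((\mb{P}^1)^n)$ that you never produce. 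The same gap undermines (2): your enumeration computes the rank of $R_k$, not of $A^k_{PGL_2}((\mb{P}^1)^n)$, and freeness of the latter is exactly what is at stake.

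The paper's key device, which your proposal is missing, is the isomorphism
\begin{equation*}
A^\bullet_{PGL_2}((\mb{P}^1)^n)\;\cong\;A^\bullet_{GL_2}\bigl((\mb{A}^2\setminus\{0\})\times(\mb{P}^1)^{n-1}\bigr),
\end{equation*}
obtained because the center $\Gm\subset GL_2$ acts freely on the first factor with quotient $(\mb{P}^1)^n$ and quotient group $PGL_2$ (\Cref{quotientlemma}). Since $GL_2$ is special, the right-hand side is computable by the projective bundle theorem plus excision of the origin, which hands you the full ring --- torsion-freeness, a generating set, and the explicit image of $\iota$ under the comparison with $A^\bullet_{GL_2\times\Gm}((\mb{A}^2\setminus\{0\})\times(\mb{P}^1)^{n-1})\cong A^\bullet_{GL_2}((\mb{P}^1)^n)$. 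With that in hand, the paper's counting argument (good partitions in degrees $\le n-2$, the classes $\Delta_{i_P,j_P}^{k-n+2}\Delta_P$ above, matched against the rational rank) closes the loop for (1) and (2). Your verifications that the square and diagonal relations hold in the image, and your Leray--Hirsch argument for (4), are fine as far as they go, but they cannot substitute for this step; you need either this quotient-stack identification or some equivalent mechanism that computes $A^\bullet_{PGL_2}((\mb{P}^1)^n)$ from the $GL_2$ side rather than merely mapping into it.
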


The square relations  $\Delta_{i,j}+\Delta_{k,l}=\Delta_{i,k}+\Delta_{j,l}$ for distinct $i,j,k,l$ are analogous to the WDVV relations on $A^{\bullet}(\overline{M}_{0,n})$ pulled back from $A^{\bullet}(\overline{M}_{0,4})\cong A^{\bullet}(\mb{P}^1)$ (see \Cref{WDVVintro}). 

The diagonal relations $\Delta_{i,j}\Delta_{i,k}=\Delta_{i,j}\Delta_{j,k}$ are geometrically obvious as $\Delta_{i,j} \cap \Delta_{i,k}$ and $\Delta_{i,j}\cap \Delta_{j,k}$ both give the locus where the $i$th, $j$th, and $k$th coordinates are all equal. In particular, repeated intersections in this fashion allow us to reconstruct all $\Delta_P$. 
%\begin{rmk}
%For $n\leq 2$ we will use also consider the classes $\psi_i \in A^{\bullet}_{PGL_2}((\mb{P}^1)^n)$, the pullback of $c_1(T_{\mb{P}^1}^{\vee})$ under the $i$th projection $(\mb{P}^1)^n\to \mb{P}^1$, to give a presentation which works uniformly for all $n$.
%\end{rmk}

We will in fact show that the classes of the $\Delta_P$ for $P$ a partition of $\{1,\ldots,n\}$ into $d \ge 2$ parts generate $A^{n-d}_{PGL_2}((\mb{P}^1)^n)$ $\mb{Z}$-linearly. Surprisingly, we can produce a $\mb{Z}$-basis for $A^\bullet_{PGL_2}((\mb{P}^1)^n)$ represented by certain $\Delta_P$ (at least in degrees $\le n-2$).
\begin{defn}
Call a partition $P$ of $\{1,\ldots,n\}$ \emph{good} if it can be written as $P=\{A_1,\ldots,A_d\}$ with $A_1 \sqcup A_2$ an initial segment of $\{1,\ldots,n\}$, and $A_3,\ldots,A_d$ intervals.
\end{defn}
\begin{thm}[{\Cref{mainthm}}]
\label{goodbasis}
For $n\ge 3$, the additive group $A^{k}_{PGL_2}((\mb{P}^1)^n)$ has a $\mb{Z}$-basis consisting of the following.
\begin{enumerate}
\item
If $k \le n-2$, the classes $\Delta_P$ for $P$ a good partition into $n-k$ parts.
\item
If $k > n-2$, the classes $\Delta_{i_P,j_P}^{k-n+2} \Delta_P$ for $P$ a partition of $\{1,\ldots,n\}$ into two parts and $\Delta_{i_{\{[n]\}},j_{\{[n]\}}}^{k-n+1}\Delta_{\{[n]\}}$, where for each $P$ the pair $i_P,j_P$ are chosen to lie in the same part of $P$.
\end{enumerate}
%The classes $\Delta_P$ for $P$ a good partition into $\ge 2$ parts form a $\mb{Z}$-basis for $A^{\le n-2}_{PGL_2}((\mb{P}^1)^n)$.
%For $P$ a partition of $\{1,\ldots,n\}$ into at most two parts, choose $a_P\in \{1,\ldots,n\}$. 
%The additive group $A^{\ge n-1}_{PGL_2}((\mb{P}^1)^n)$ has a $\mb{Z}$-basis given by $\psi_{1}^k \Delta_P$ over partitions $P$ of $\{1,\ldots,n\}$ into at most two parts, and $k \ge 0$.
\end{thm}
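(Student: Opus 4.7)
The plan is to prove that the proposed set spans $A^k_{PGL_2}((\mb{P}^1)^n)$ as a $\mb{Z}$-module, after which \Cref{bettinumber} will force linear independence: any spanning set of the correct cardinality in a free $\mb{Z}$-module of that rank is automatically a basis.

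First I would reduce any monomial in the $\Delta_{i,j}$'s to a normal form. Given $\prod_\alpha\Delta_{e_\alpha}$, let $P=\{A_1,\ldots,A_d\}$ be the partition of $[n]$ given by the connected components of the underlying multigraph on vertex set $[n]$. The diagonal relations imply that the product of the $\Delta_e$ along a spanning tree of a component $A_s$ equals the class of the stratum where the $A_s$-coordinates all coincide, and that any further ``excess'' edge inside $A_s$ can be replaced, after factoring $\Delta_P$, by a single factor $\Delta_{i_s,j_s}$ for an arbitrary $(i_s,j_s)\in A_s\times A_s$. Hence the monomial equals $\prod_s\Delta_{i_s,j_s}^{a_s}\cdot\Delta_P$, where $a_s\ge 0$ counts the excess edges in $A_s$.

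Next I would apply the square relations $\Delta_{i,j}+\Delta_{k,l}=\Delta_{i,k}+\Delta_{j,l}$. If $d\ge 3$ and some $a_s>0$, choosing $k\in A_t,\ l\in A_u$ in two other parts yields
\[\Delta_{i_s,j_s}\Delta_P=\Delta_{P^{s,t}}+\Delta_{P^{s,u}}-\Delta_{P^{t,u}},\]
where $P^{s,t}$ denotes the partition obtained from $P$ by merging $A_s$ and $A_t$. This lowers $\sum_s a_s$ at the cost of reducing $d$. Iterating drives the reduction to either (i) a pure $\Delta_P$ when $d=n-k\ge 2$, or (ii) the classes $\Delta_{i,j}^{k-n+2}\Delta_P$ with $P$ a $2$-partition and $\Delta_{i,j}^{k-n+1}\Delta_{\{[n]\}}$ when $k>n-2$. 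For case (i) a further reduction from an arbitrary $\Delta_P$ to good $\Delta_{P'}$ proceeds by induction on a badness statistic (for example, the number of pairs of parts whose union is neither an initial segment nor an interval), with a model move $\Delta_{1,4}=\Delta_{1,2}+\Delta_{3,4}-\Delta_{2,3}$ in the $n=4,\ k=1$ case illustrating how a square relation rewrites a bad stratum as a signed sum of good strata.

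Finally I would verify that the cardinality of the proposed basis equals $\sum_{i\le k,\ i\equiv k\,(2)}\binom{n}{i}$ given by \Cref{bettinumber}. For case (ii) the count is $(2^{n-1}-1)+1=2^{n-1}$, and for case (i) the good partitions into $d$ parts are parametrized by choosing the split initial segment $A_1\sqcup A_2=\{1,\ldots,m\}$ (in $2^{m-1}-1$ ways) together with a composition of $\{m+1,\ldots,n\}$ into $d-2$ intervals, a sum that matches the binomial formula by a direct combinatorial identity. The main obstacle I anticipate is the case (i) reduction from arbitrary $\Delta_P$ to good $\Delta_{P'}$: arranging for the square relation to strictly decrease the badness statistic while not reintroducing self-intersection $\Delta_{i,j}^a$ terms (which would force us to re-run the earlier reduction step) requires a carefully chosen well-ordering, likely a lex order on ``(total excess $\sum a_s$, badness)''.
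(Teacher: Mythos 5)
Your overall strategy is the same as the paper's: show the proposed classes span, count them, and invoke the rank computation (\Cref{bettinumber}) to conclude that a spanning set of the right cardinality in a free $\mb{Z}$-module is a basis. The normal-form reduction via diagonal relations and the move $\Delta_{i_s,j_s}\Delta_P=\Delta_{P^{s,t}}+\Delta_{P^{s,u}}-\Delta_{P^{t,u}}$ (valid when there are at least three parts) are exactly \Cref{Pgenerate}. But the step you yourself flag as ``the main obstacle'' --- rewriting an arbitrary $\Delta_Q$ as a $\mb{Z}$-combination of $\Delta_P$ for $P$ good --- is a genuine gap: you propose an induction on an unspecified badness statistic and do not exhibit a square relation that strictly decreases it, and it is not clear that your candidate statistic (pairs of parts whose union is neither an initial segment nor an interval) does decrease under any single square relation. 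The paper's \Cref{generatePlow} resolves this with a different and cleaner induction, on $n$ and $d$ simultaneously: if $n$ is isolated, or if $n-1$ and $n$ lie in the same part, one deletes $n$ and invokes the inductive hypothesis for $\mc{A}(d-1,n-1)$ or $\mc{A}(d,n-1)$ (good partitions pull back to good partitions under re-inserting $n$); otherwise a single square relation, applied to the refinement of $Q$ that isolates $n$ and to indices $n-1,n,x,y$, expresses $Q$ via three partitions each falling into one of the first two cases. Note also that your worry about square relations ``reintroducing self-intersection terms'' is moot: this entire reduction takes place in the formal group $\mb{Z}^{\on{Part}(d,n)}/\on{Sq}(d,n)$, where only honest partitions appear.

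A second, smaller gap concerns part (2), $k>n-2$. Your excess-removing square relation needs two parts \emph{other} than the one containing the excess edge, so it is unavailable once $d=2$. Concretely, if $P=\{A,B\}$ with $i_P,j_P\in A$ and an excess factor $\Delta_{i,j}$ has $i,j\in B$, neither the diagonal relation nor your stated move converts it to $\Delta_{i_P,j_P}$. The paper's \Cref{generatePhigh} handles this with the identity $\Delta_{i,j}\Delta_P\Delta_{i_P,j_P}^{k-n+1}=2\Delta_{\{[n]\}}\Delta_{i_{\{[n]\}},j_{\{[n]\}}}^{k-n+1}-\Delta_P\Delta_{i_P,j_P}^{k-n+2}$, obtained by writing $\Delta_{i,j}=\Delta_{i,i_P}-\Delta_{i_P,j_P}+\Delta_{j_P,i}$ and observing that the first and third factors merge $A$ and $B$. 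Your cardinality checks ($2^{n-1}$ for $k>n-2$, and the sum over the size $m$ of the initial segment matching $\sum_{i\le k,\,i\equiv k\,(2)}\binom{n}{i}$ via Pascal's identity, as in \Cref{howgood}) are correct as stated.
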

%In particular, this implies that the $\Delta_P$ for $P$ a partition of $n$ into $d \ge 2$ parts generate $A^{n-d}_{PGL_2}((\mb{P}^1)^n)$ additively (see \Cref{Pgenerate} for both a combinatorial proof and a proof via excision). 
In \Cref{algandexmp} we describe a simple algorithm to write arbitrary classes in this $\mb{Z}$-basis, along with a worked example.

In addition, we show that all relations between the $\Delta_P$ are generated by pushforwards of square relations. The method  of proof will in fact supply an algorithm to write every $\Delta_Q$ as a $\mb{Z}$-linear combination of $\Delta_P$ for $P$ a good partition using only these relations.
\begin{defn}
Denote by $\on{Part}(d,n)$ the set of partitions of $[n]$ into $d$ parts. Let $\on{Sq}(d,n)$ be the subgroup of the free abelian group $\mb{Z}^{\on{Part}(d,n)}$ generated by formal square relations $P_{i,j}-P_{j,k}+P_{k,l}-P_{l,i}$ for $P \in \on{Part}(d+1,n)$ and $i,j,k,l\in\{1,\ldots,n\}$ indices in different parts of $P$, where $P_{x,y}$ denotes the partition formed by merging the parts of $P$ containing $x$ and $y$.
\end{defn}
\begin{thm}[{\Cref{3equal}}]
\label{basisthmintro}
For $d \ge 2$, the map
$$\mb{Z}^{\on{Part}(d,n)}/\on{Sq}(d,n) \to A^{n-d}_{PGL_2}((\mb{P}^1)^n)$$
sending $P \mapsto \Delta_P$ is an isomorphism.
\end{thm}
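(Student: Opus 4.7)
The plan is to combine \Cref{goodbasis} with a reduction algorithm showing that every $\Delta_Q$ is expressible modulo $\on{Sq}(d,n)$ as a $\mb{Z}$-linear combination of good partitions. Together with the basis statement in \Cref{goodbasis}, this produces a factorization
$$\mb{Z}^{\on{Good Part}(d,n)} \twoheadrightarrow \mb{Z}^{\on{Part}(d,n)}/\on{Sq}(d,n) \twoheadrightarrow A^{n-d}_{PGL_{2}}((\mb{P}^1)^n)$$
whose composition is an isomorphism (onto a free $\mb{Z}$-module with basis the images of good partitions), forcing both surjections to be isomorphisms.

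First I would verify well-definedness: a generator $P_{i,j} - P_{j,k} + P_{k,l} - P_{l,i}$ of $\on{Sq}(d,n)$ maps to $(\Delta_{i,j} - \Delta_{j,k} + \Delta_{k,l} - \Delta_{l,i})\cdot \Delta_P$ via the identity $\Delta_{x,y}\Delta_P = \Delta_{P_{x,y}}$ for $x, y$ in distinct parts of $P$ (a consequence of the diagonal relations together with the spanning-forest description of $\Delta_P$), and the bracket vanishes by the square relation of \Cref{bigthmintro}(1)(a).

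The reduction algorithm goes by induction on $d$. The base case $d = 2$ is trivial: every partition into two parts is good, and $\on{Sq}(2,n) = 0$ since a partition in $\on{Part}(3,n)$ has only three parts and four indices cannot inhabit four distinct parts. For $d \geq 3$ and $Q \in \on{Part}(d,n)$, let $B = B_n(Q)$ be the part containing $n$. If $B = \{m+1,\ldots,n\}$ is a top interval, then $Q \setminus \{B\}$ lies in $\on{Part}(d-1, m)$ and any relation in $\on{Sq}(d-1, m)$ lifts to $\on{Sq}(d,n)$ by adjoining $B$ as an extra part, so the inductive hypothesis reduces $Q$ to good partitions. Otherwise let $j$ be the largest index with $j \notin B$ and $\{j+1,\ldots,n\} \subseteq B$, split $B = C_1 \sqcup C_2$ with $C_1 = \{j+1,\ldots,n\}$ and $C_2 = B \cap [1,j-1]$, let $D$ be the part of $Q$ containing $j$, and pick any third part $E$ (which exists since $d \geq 3$). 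The square relation at $(C_1, C_2, D, E, \ldots) \in \on{Part}(d+1,n)$ gives
$$\Delta_Q \equiv \Delta_{(C_1 \cup D,\, C_2,\, E,\, \ldots)} + \Delta_{(C_1,\, C_2 \cup E,\, D,\, \ldots)} - \Delta_{(C_1,\, C_2,\, D \cup E,\, \ldots)} \pmod{\on{Sq}(d,n)},$$
in which the last two summands have $C_1$ as top part and reduce by the previous case.

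The hard part will be termination: the top part $C_1 \cup D$ of the first summand need not be an interval and may have $|B_n|$ strictly larger than $|B|$, so a naive induction on $|B_n|$ fails. I plan to handle this by exploiting the two available pairings---the alternative produces $\Delta_{(C_1 \cup E, C_2, D, \ldots)} + \Delta_{(C_1, C_2 \cup D, E, \ldots)}$ in place of the first two summands---together with a judicious choice of $E$ among the $d-2$ candidate parts, arranging that a well-chosen lexicographic complexity (for instance $(|B_n(Q)|,\, n - \min(B_n(Q)))$, or a signature derived from the sorted part maxima of $Q$) strictly decreases at each reduction step. Pinpointing this complexity, or alternatively designing a multi-step reduction that first simplifies $B$ or $D$ before applying the split, is the combinatorial heart of the argument.
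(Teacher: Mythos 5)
Your overall architecture is the same as the paper's: factor $\mb{Z}^{\on{Good}(d,n)}\twoheadrightarrow \mb{Z}^{\on{Part}(d,n)}/\on{Sq}(d,n)\twoheadrightarrow A^{n-d}_{PGL_2}((\mb{P}^1)^n)$, check well-definedness via $\Delta_{x,y}\Delta_P=\Delta_{P_{x,y}}$ and the square relation, and conclude by comparing with a free module of known rank. (One caveat: citing \Cref{goodbasis} wholesale is mildly circular, since the paper proves \Cref{goodbasis} and \Cref{basisthmintro} simultaneously; what you actually need, and what the paper supplies, is the rank count $\#\on{Good}(d,n)=\on{rank}A^{n-d}_{PGL_2}((\mb{P}^1)^n)$ together with the reduction lemma, from which linear independence of the good classes follows for free.)

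The genuine gap is exactly where you flag it: the reduction of an arbitrary $Q$ to good partitions modulo $\on{Sq}(d,n)$ is only sketched, and the termination of your algorithm is left open. This is not a removable formality --- it is the entire content of the combinatorial lemma. Two remarks. First, your algorithm can in fact be repaired: the correct measure is $j(Q)=\max([n]\setminus B_n(Q))$, where $B_n(Q)$ is the part containing $n$. In your recalcitrant summand $\Delta_{(C_1\cup D, C_2, E,\ldots)}$ the new part containing $n$ is $C_1\cup D\supseteq\{j,j+1,\ldots,n\}$, so $j$ strictly decreases, while the other two summands already have a top-interval part containing $n$; this makes the recursion well-founded. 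Neither of the measures you propose works as stated ($|B_n|$ can grow, and $\min(B_n)$ can drop if $D$ contains small elements). Second, the paper's reduction (\Cref{generatePlow}) is organized quite differently and avoids any termination measure: it inducts on $(n,d)$ according to whether $n$ is isolated, or $n-1$ and $n$ share a part; in the remaining case it isolates $n$ into its own part of a partition $\wt{Q}$ and applies the single square relation attached to $n-1,n,x,y$ (with $x$ in $n$'s original part and $y$ in a third part), which writes $\Delta_Q$ as a combination of three strata each of which immediately falls into one of the two inductive cases. That one-step trick is the idea your proposal is missing.
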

In particular, since every square relation between the $\Delta_P$ classes comes from an explicit $PGL_2$-invariant degeneration in $(\mb{P}^1)^n$ (see \Cref{WDVVintro}), \Cref{basisthmintro} implies that all linear relations between the $\Delta_P$ classes can be realized by a sequence of $PGL_2$-invariant degenerations in $(\mb{P}^1)^n$.

Non-equivariantly, there are relations between the classes $\Delta_P\in A^\bullet((\mb{P}^1)^n)$ not generated by these square relations. For example, if $n=4$ we have
\begin{align*}
\Delta_{\{\{1,2,3\},\{4\}\}}+\Delta_{\{\{1,2,4\},\{3\}\}}+\Delta_{\{\{1,3,4\},\{2\}\}}+\Delta_{\{\{2,3,4\},\{1\}\}}&\\=\Delta_{\{\{1,2\},\{3,4\}\}}+\Delta_{\{\{1,3\},\{2,4\}\}}+\Delta_{\{\{1,4\},\{2,3\}\}}&
\end{align*}
in $A^{2}((\mb{P}^1)^4)$.

\begin{rmk}
\label{psi-intro}
All of our theorems can be extended to $n=1,2$ if we include the classes $\psi_i=\pi_i^*c_1(T^{\vee}\mb{P}^1)\in A^\bullet_{PGL_2}((\mb{P}^1)^n)$ pulled back from the $i$th projection $\pi_i$, which for $n \ge 3$ can be written in terms of the $\Delta_{j,k}$-classes via $\psi_i=\Delta_{j,k}-\Delta_{i,j}-\Delta_{i,k}$ for any $j,k \ne i$. They correspond to $-(2H_i+u+v)$ under the map from item (3) of \Cref{bigthmintro} (see \Cref{psidelta}) and their definition is analogous to the $\psi$-classes on $\overline{M}_{0,n}$ \cite[Section 2]{P02}.
\end{rmk}

\subsubsection{Relation of the square relation to the $WDVV$ relation}\label{WDVVintro}
The $WDVV$ relation in $A^\bullet(\overline{\mc{M}}_{0,4})$ says two points in $\overline{\mc{M}}_{0,4}\cong \mb{P}^1$ corresponding to reducible curves have the same class \cite[Section 0.1]{LP11}. It was shown by Keel \cite{Keel} that $A^\bullet(\overline{\mc{M}}_{0,n})$ is generated as a ring by its boundary divisors, and the only nontrivial relations come from pulling back the $WDVV$ relation under forgetful maps $\overline{\mc{M}}_{0,n} \to \overline{\mc{M}}_{0,4}$.
%says that the classes of the points corresponding to the reducible curves $\mc{C}_{12,34}$ and $\mc{C}_{14,23}$ are equal. 
%The interest in this relation is that if we have the flat map $\overline{\mc{M}}_{0,n} \to \overline{\mc{M}}_{0,4}$ forgetting all but $4$ points and stabilizing, then we can pull back the $WDVV$ relation to a non-trivial relation in $A^\bullet(\overline{\mc{M}_{0,n}})$. 
The square relations relate to the $WDVV$ relations as follows. Consider the diagram
\begin{center}
\begin{tikzcd}
\overline{\mc{M}}_{0,4}(\mb{P}^1,1) \ar[r, "\text{ev}"] \ar[d,"\pi"] & (\mb{P}^1)^4\\ \overline{\mc{M}}_{0,4} &
\end{tikzcd}
\end{center}
where $\text{ev}$ is the ($PGL_2$-equivariant) total evaluation map from the Kontsevich mapping space \cite[Section 1]{FP} and $\pi$ remembers only the source of the stable map and stabilizes. The square relation is $\on{ev}_{*}\pi^{*}$ applied to  the $WDVV$ relation. 
%The square relation is $\text{ev}_{*}$ applied to the $WDVV$ relation equating $[\pi^{-1}(\mc{C}_{12,34})]$ and $[\pi^{-1}(\mc{C}_{14,23})]$ in $A^{\bullet}(\overline{\mc{M}}_{0,4}(\mb{P}^1,1))\cong A^{\bullet}(\mb{P}^1)$ as a consequence of the flatness of $\pi$.

Equivalently, for any closed point $a\in \mb{P}^1\cong \overline{\mc{M}}_{0,4}$, we can consider the locus $A_{a}\subset (\mb{P}^1)^n$ consisting of the quadruples of points with cross ratio $a$. The square relation comes from equating the classes of $A_0$ and $A_\infty$. %Since \Cref{basisthmintro} implies the additive relations between the classes $\Delta_P$ are given by square relations, all additive relations in degree $\le n-2$ between the $\Delta_P$ can be obtained geometrically via $PGL_2$-invariant degenerations in $(\mb{P}^1)^n$. 
%Where $\overline{\mc{M}}_{0,4}\cong \mb{P}^1$ is the standard compactification of $\mc{M}_{0,4}$ and $\overline{\mc{M}}_{0,4}(\mb{P}^1,1)$ is the Konsevich mapping space of degree $1$ stable maps from pre-stable curves with $4$ marked points to $\mb{P}^1$. Then as $\pi$ is flat, $[ev_*\pi^{-1}(\mc{C})]$ is independent of the choice of $\mc{C}$. Taking $\mc{C}$ to be the $3$ reducible curves in $\overline{\mc{M}}_{0,4}$ yields $$[P_{1,2}]+[P_{3,4}]=[P_{1,3}]+[P_{2,4}]=[P_{1,4}]+[P_{2,3}].$$
%$$[\{\{1,2\}\}]+[\{\{3,4\}\}]=[\{\{2,3\}\}]+[\{\{4,1\}\}].$$
%Add picture of a square

\subsubsection{Relation to other moduli spaces}
%Figure out the pullback map from PGL_2 ring to GL_2 ring...
%Since $A^{\bullet}_{PGL_2}((\mb{P}^1)^n)\otimes\mb{Q}\to A^{\bullet}_{GL_2}((\mb{P}^1)^n)\otimes\mb{Q}$
%So you don't forget: it feels like PGL_2 cohomology is stronger, because it has that extra torsion element. However, we're projectivizing a vector space, so hopefully the formulas for the diagonals don't involve that extra torsion element? 

If we pick a linearization of the $PGL_2$-action on $(\mb{P}^1)^n$ and there are no strictly semistable points, then excising the unstable locus and applying \cite[Theorem 3]{EG98} gives the rational Chow ring of the GIT quotient. In this case, the ideal given by excision is generated by the classes of the excised strata. See \cite{quiver} for an approach via quiver representations. These GIT quotients are Hassett spaces with total weight $2+\epsilon$ \cite[Section 8]{Hassett} and receive maps from $\overline{\mc{M}}_{0,n}$ via reduction morphisms \cite[Theorem 4.1]{Hassett}, as induced maps between GIT quotients \cite[Theorem 3.4]{HK00}, or by viewing $\overline{\mc{M}}_{0,n}$ as a Chow quotient \cite{Kapranov}. 
\begin{comment}

---------

\begin{thm}
$A^{\le n-2}_{PGL_2}((\mb{P}^1)^n)$ is $\mb{Z}$-linearly spanned by the $[P]$-classes. $A^{\ge n-1}_{PGL_2}((\mb{P}^1)^n)$ is generated by $[Eul(Z_\{1,\ldots,n\})]^k$, and $[Eul'(Z_P)]^k$, where $Eul'$ is the euler class pulled back from either of the two projections $Z_P \to \mb{P}^1$ (they are negations of each other).
\end{thm}
\begin{rmk}
If $\pi_1,\pi_2$ are the two projections $Z_P \to (\mb{P}^1)^2$, then 
\end{rmk}
\begin{proof}
We proceed by induction on $n$. The result is trivially true for $n=2$, so suppose now $n>2$. Then letting $P(i,j)$ be the partition of $\{1,\ldots,n\}$ with $\{i,j\}$ as the only non-singleton part. Then we have the excision exact sequence
$$\bigoplus_{i,j}A^{\bullet-1}_{PGL_2}(Z_{P(i,j)}) \to A^\bullet_{PGL_2}((\mb{P}^1)^n)\to A^\bullet_{PGL_2}((\mb{P}^1)^n \setminus \cup \{Z_{P(i,j)}\})\to 0.$$
But $PGL_2$ acts freely on $(\mb{P}^1)^n \setminus \cup \{Z_{P(i,j)}\}$ with quotient $M_{0,n}$, which is an open subset of $(\mb{A}^{1})^{n-3}$, so $$A^\bullet_{PGL_2}((\mb{P}^1)^n \setminus \cup \{Z_{P(i,j)}\})=A^\bullet(M_{0,n})=0.$$
Now, each $Z_{P(i,j)} \cong (\mb{P}^1)^{n-1}$, and the pushforward of strata in $Z_{P(i,j)} \to (\mb{P}^1)^n$ are $[P]$ classes, so we're done by induction.
\end{proof}

\begin{thm}
$A^{\ge n-1}_{PGL_2}((\mb{P}^1)^n)$is generated by 
\end{thm}
\end{comment}

\subsection{Unordered strata in $[{\rm Sym}^n\mb{P}^1/PGL_2]$}
\label{Unorderedintro}
The $PGL_2$-action on $\mb{P}^1$ induces an action on the symmetric power ${\rm Sym}^n\mb{P}^1\cong \mb{P}^n$, which parameterizes degree $n$ divisors on $\mb{P}^1$. For each partition $\lambda=\{\lambda_1,\ldots,\lambda_d\}$ of $n$, we have the $PGL_2$-invariant subvariety $Z_{\lambda}\subset \mb{P}^n$ consisting of divisors that can be written in the form $\sum_{i=1}^{d}{\lambda_i p_i}$ where $p_i\in \mb{P}^1$. For convenience we often write $\lambda=a_1^{e_1}\ldots a_k^{e_k}$ to be the partition of $n$ where $a_i$ appears $e_i$ times. 

\subsubsection{Integral classes of strata}
We compute the class of $[Z_\lambda]$ in $A^\bullet_{PGL_2}(\mb{P}^n)$. The class of $[Z_\lambda]$ in $A^\bullet_{GL_2}(\mb{P}^n)$ was given in \cite{FNR06}, and we will give a quick independent proof and more compact form in \Cref{FNRformula}. If $n$ is odd, the map $A^\bullet_{PGL_2}(\mb{P}^n)\to A^\bullet_{GL_2}(\mb{P}^n)$ induced by the projection $GL_2\to PGL_2$ is injective (see \Cref{PGLunordered}). Therefore, all of the difficulty lies in computing $[Z_\lambda]$ in $A^\bullet_{PGL_2}(\mb{P}^n)$ for $n$ even. It turns out (see \Cref{integralunordered}) that it suffices to compute the class in $A^\bullet_{PGL_2}(\mb{P}^n)\otimes \mb{Z}/2\mb{Z}$, which takes on a particularly simple form.

%By combining the class in $A^\bullet_{GL_2}(\mb{P}^n)$ given in \cite{FNR06} (which we quickly rederive in a more compact form, see \Cref{FNRformula}), and a computation of the class in $A^\bullet_{PGL_2}(\mb{P}^n)\otimes\mb{Z}/2\mb{Z}$ for $n$ even, we calculate $[Z_\lambda]$ in $A^\bullet_{PGL_2}(\mb{P}^n)$ explicitly (see \Cref{integralunordered}). 
%Given the identification of $A^{\bullet}_{PGL_2}(\mb{P}^n)$ \cite[Corollary 6.3]{FV11}, this determines the class of $[Z_\lambda]$ in $A^\bullet_{PGL_2}(\mb{P}^n)$. 
%One of our main results is the computation of $[Z_\lambda] \in A^\bullet_{PGL_2}(\mb{P}^n)$ explicitly. We will see that computing the classes rationally is straightforward (and the essential computation has appeared in \cite{FNR06}), though we derive a simpler presentation with an alternative derivation). The challenging part is the integral computation due to the presence of $2$-torsion in $A^\bullet_{PGL_2}(\mb{P}^n)$ for $n$ even.

%We include here the computation of $[Z_\lambda]$ in $A^\bullet_{GL_2}(\mb{P}^n)$ and in $A^\bullet_{PGL_2}(\mb{P}^n)\otimes \mb{Z}/2\mb{Z}$, from which we will later see how to easily reconstruct the class in $A^\bullet_{PGL_2}(\mb{P}^n)$.

\begin{thm}
Let $n$ be even and $\lambda=a_1^{e_1}\ldots a_k^{e_k}$ be a partition of $n$ into $d=e_1+\ldots+e_k$ parts. The class of $[Z_\lambda]\in A^\bullet_{PGL_2}(\mb{P}^n)\otimes \mb{Z}/2\mb{Z}\cong \mb{F}_2[c_2,c_3,H]/(q_n(H))$
where $$q_n(t)=\begin{cases}t^{(n+4)/4}(t^3+c_2t+c_3)^{n/4}&n \equiv 0\text{ mod }4\text{, and}
\\
t^{(n-2)/2}(t^3+c_2t+c_3)^{(n+2)/4}&n \equiv 2\text{ mod }4\end{cases}$$
is non-zero precisely when all $a_i$ and $\frac{d!}{e_1!\ldots e_k!}$ are odd and all $e_i$ are even, in which case it is equal to $(\frac{q_n}{q_d})(H)$.
\begin{comment}
\begin{enumerate}
\item Letting $u,v$ be the standard torus characters of diagonal matrices in $GL_2$ and $H=c_1(\ms{O}(1)) \in A^\bullet_{GL_2}(\mb{P}^n)$, we have inside the torsion-free ring $A^\bullet_{GL_2}(\mb{P}^n)=\mb{Z}[u,v]^{S_2}[H]/\prod_{i=0}^n (H+iu+(n-i)v)$ that $(\prod e_i!)[Z_\lambda]$ is the result of expanding out $$\prod_{i=1}^k (z^{a_i}-1)^{e_i}$$ and replacing each monomial $$z^k \mapsto \frac{1}{(u-v)^d}\prod_{j \in \{1,\ldots,n\}\setminus \{k\}}(H+jv+(n-j)u).$$

\item For $n$ even, letting $c_2,c_3\in A^\bullet_{PGL_2}(\mb{P}^n)$ be the second and third equivariant chern classes of the $PGL_2$ representation of $\text{\rm Sym}^2 K^2$ and $H=[\ms{O}(1)]\in A^\bullet_{PGL_2}(\mb{P}^n)$, we have the class of $[Z_\lambda]\in A^\bullet_{PGL_2}(\mb{P}^n)\otimes \mb{Z}/2\mb{Z}=\mb{F}_2[c_2,c_3,H]/q_n(H)$
with $$q_n(t)=\begin{cases}t^{(n+4)/4}(t^3+c_2t+c_3)^{n/4}&n \equiv 0\text{ mod }4\text{, and}
\\
t^{(n-2)/2}(t^3+c_2t+c_3)^{(n+2)/4}&n \equiv 2\text{ mod }4\end{cases}$$
is non-zero precisely when all $a_i$ and $\frac{d!}{e_1!\ldots e_k!}$ are odd and all $e_i$ are even, in which case we have it equal to $(\frac{q_n}{q_d})(H)$.
\end{enumerate}
\end{comment}
\end{thm}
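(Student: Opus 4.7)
The overall strategy is to reduce via \Cref{integralunordered} to computing $[Z_\lambda]$ in $A^\bullet_{PGL_2}(\mb{P}^n) \otimes \mb{F}_2$, and then evaluate this class as a pushforward along an explicit parameterization of $Z_\lambda$. Consider the $PGL_2$-equivariant morphism
$$f : X_\lambda := \prod_{i=1}^k \mathrm{Sym}^{e_i}\mb{P}^1 = \prod_{i=1}^k \mb{P}^{e_i} \to \mathrm{Sym}^n \mb{P}^1 = \mb{P}^n, \qquad (D_1,\ldots,D_k) \mapsto \sum_i a_i D_i.$$
Since the $a_i$ are pairwise distinct, $f$ is generically one-to-one onto $Z_\lambda$, so $[Z_\lambda] = f_*(1)$ and $f^*H = \sum_i a_i h_i$ in $A^\bullet_{PGL_2}(X_\lambda)$, where $h_i$ is the hyperplane class on $\mb{P}^{e_i}$.

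To detect the vanishing conditions, I would factor $f = \mu \circ \sigma$, where $\sigma = \prod \sigma_{a_i}$ with $\sigma_a : \mb{P}^e \to \mb{P}^{ae}$ the ``Veronese-type'' map $D \mapsto aD$, and $\mu : \prod \mb{P}^{a_ie_i} \to \mb{P}^n$ is the summation map. Then $[Z_\lambda] = \mu_*\bigl(\prod_i [Z_{a_i^{e_i}}]\bigr)$, which reduces the problem to analyzing two kinds of ingredients. The single-multiplicity class $[Z_{a^e}]$ should reduce to $0$ modulo $2$ unless $a$ is odd and $e$ is even; for example, if $a$ is even then $\sigma_a$ factors through a multiplication-by-$2$ map, supplying an explicit factor of $2$ in the pushforward. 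The summation pushforward $\mu_*$ contributes a multinomial coefficient $\frac{d!}{\prod e_i!}$ from its fiber degrees, which by Kummer's theorem is odd precisely when the binary expansions of the $e_i$'s are disjoint. Together these recover the three parity/arithmetic conditions in the statement.

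When all three conditions hold, one checks that $n \equiv d \pmod 4$ and that $q_d$ divides $q_n$ in $\mb{F}_2[c_2,c_3][H]$. To identify $[Z_\lambda]$ with $(q_n/q_d)(H)$, the plan is to prove the polynomial identity $q_d\bigl(\sum_i a_i h_i\bigr) = 0$ in $A^\bullet_{PGL_2}(X_\lambda) \otimes \mb{F}_2$. By the projection formula this gives $q_d(H)\cdot[Z_\lambda] = 0$ in $\mb{F}_2[c_2,c_3,H]/q_n(H)$, placing $[Z_\lambda]$ in the cyclic ideal generated by $(q_n/q_d)(H)$. A codimension count (both classes live in codimension $n-d$) plus a single top-dimensional intersection against a suitable test class then pins the remaining scalar to~$1$.

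The main obstacle is establishing the polynomial identity $q_d(\sum_i a_i h_i) = 0$ in $A^\bullet_{PGL_2}(X_\lambda) \otimes \mb{F}_2$ when the parity/arithmetic conditions hold. It encodes a mod-$2$ compatibility between the Chow rings of the various $\mb{P}^{e_i}$'s and reflects the mod-$2$ decomposition of $V_n := \mathrm{Sym}^n K^2 \otimes \mathrm{det}^{-n/2}$ as a $PGL_2$-representation into trivial summands and copies of $\mathfrak{sl}_2$---precisely the source of the factorization of $q_n$ mod~$2$. Handling this identity, together with tracking how the $c_3$-contribution (which is invisible under the reduction $A^\bullet_{PGL_2}(\mb{P}^n)\otimes\mb{F}_2 \to A^\bullet_{GL_2}(\mb{P}^n)\otimes\mb{F}_2$) enters, is the primary technical work.
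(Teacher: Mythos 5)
Your skeleton — resolve $Z_\lambda$ by $\Psi:\prod_i\mb{P}^{e_i}\to\mb{P}^n$, note $\Psi^*H=\sum a_ih_i$, kill $q_d(H)\cdot[Z_\lambda]$, and identify $[Z_\lambda]$ inside the annihilator ideal $((q_n/q_d)(H))$ up to a scalar — is the right shape and matches the paper's proof of \Cref{PGLmod2}. But the justifications you give for the three parity conditions are not correct, and the two steps you defer are exactly where the content lies. First, the factorization $f=\mu\circ\sigma$ does not produce the conditions the way you claim: $\sigma_a:\mb{P}^e\to\mb{P}^{ae}$, $D\mapsto aD$, is birational onto its image for \emph{every} $a$, so its pushforward carries no factor of $2$ when $a$ is even; and since the $a_i$ are pairwise distinct, $\mu$ restricted to $\prod_iZ_{a_i^{e_i}}$ is also birational onto $Z_\lambda$, so $\mu_*$ does not ``contribute'' the multinomial coefficient $\frac{d!}{\prod e_i!}$ through fiber degrees. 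The actual mechanisms are different: when some $a_i$ is even, $\Psi^*H=\sum a_jh_j$ simply fails to involve $h_i$ mod $2$, so the pushforward (computed by extracting top coefficients in each $h_j$) vanishes; when some $e_i$ is odd, one uses that $c_3$ pulls back to $0$ in $A^\bullet_{PGL_2}(\mb{P}^{e_i})$ (by \Cref{PGLunordered}) together with the fact that $c_3$ is a nonzerodivisor in $\mb{F}_2[c_2,c_3,H]/(q_n(H))$, so $c_3[Z_\lambda]=\Psi_*c_3=0$ forces $[Z_\lambda]=0$ mod $2$; and $\frac{d!}{\prod e_i!}$ enters as the degree of the $d$-dimensional multiplication map $\prod_i\mb{P}^{e_i}\to\mb{P}^d$, not of $\mu$.

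Second, the two computations you postpone are the proof. The identity $q_d(\sum a_ih_i)=0$ mod $2$ is in fact easy once set up correctly: with all $a_i$ odd one has $\sum a_ih_i\equiv\sum h_i$, which is the pullback of $H$ under $\prod_i\mb{P}^{e_i}\to\mb{P}^d$, and $q_d(H)=0$ is the defining relation of $A^\bullet_{PGL_2}(\mb{P}^d)\otimes\mb{F}_2$ (note $n\equiv d\bmod 4$ here, so $q_d\mid q_n$); it is not about a representation-theoretic decomposition of $\mathrm{Sym}^nK^2\otimes\det^{-n/2}$. The scalar, by contrast, cannot be pinned down by a codimension count plus an unspecified test class: it is precisely the residue of the multinomial coefficient, and one needs a concrete evaluation device. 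The paper uses the algebraic duality formula $\int\frac{P(H)-P(t)}{H-t}f(H)=\tilde f(t)$ (\Cref{diagonalclass}) applied with $P=q_n$, reducing everything to $\int_{\prod\mb{P}^{e_i}}\frac{q_d(t)-q_d(\sum h_i)}{t-\sum h_i}=\frac{d!}{\prod e_i!}$ via the projection formula for the degree-$\frac{d!}{\prod e_i!}$ map $\prod_i\mb{P}^{e_i}\to\mb{P}^d$. Without this (or an equivalent explicit pairing), your argument only shows $[Z_\lambda]\in\{0,(q_n/q_d)(H)\}$ and cannot distinguish the two cases — in particular it cannot detect the multinomial-parity condition at all.
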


\begin{comment}
\begin{prop}(\Cref{PGLunordered})
For $n$ odd there is no torsion in $A^{\le n}_{PGL_2}(\mb{P}^n)$ and for $n$ even there is $2$-torsion in codimension $\ge 3$. For all $n$ we have $$A^{\le n}_{PGL_2}(\mb{P}^n)/\text{2-torsion}\hookrightarrow A^{\le n}_{GL_2}((\mb{P}^n)),$$ the latter having no torsion.
\end{prop}
%\begin{rmk}
%The entire ring $A^\bullet_{PGL_2}(\mb{P}^n)$ was computed in \cite{FV11} (see \Cref{wholering}).
%\end{rmk}
For unordered strata we will be working with $[Z_\lambda] \in A^{\le n}_{GL_2}((\mb{P}^n))$, so all relations have the caveat that they descend to relations in $A^\bullet({\rm Sym}^n \mc{F})$ for a $\mb{P}^1$-bundle $\mc{F}$ if either $n$ is odd or $\mc{F}$ is the projectivization of a rank $2$ vector bundle, and are true up to $2$-torsion otherwise.
\end{comment}

\begin{comment}As equivariant classes in $A_{PGL_2}^{\bullet}(\mb{P}^n)$, we will work with the normalization 
\begin{align*}
[\lambda]&:= (\prod_{i=1}^{n}{e^\lambda_i!})[Z_{\lambda}] 
\end{align*}
where $e^\lambda_i=\# \{j\mid \lambda_j=i \}$,
\end{comment}

\subsubsection{Relations between strata}
If $\lambda=\{\lambda_1,\ldots,\lambda_d\}=a_1^{e_1}\ldots a_k^{e_k}$ is a partition of $n$, then taking $\Phi:(\mb{P}^1)^n \to {\rm Sym}^n\mb{P}^1$ to be the multiplication map, if $P=\{A_1,\ldots,A_d\}$ is any partition of $[n]$ with $|A_i|=\lambda_i$, we have $$\Phi_*\Delta_P=(\prod e_i!)[Z_\lambda].$$
In particular, every square relation between the classes of ordered strata induces a relation between $[Z_\lambda]$ classes by pushing forward along $\Phi$. %Despite the presence of $2$-torsion in $A_{PGL_2}^{\bullet}(\mb{P}^n)$ for $n$ even, \Cref{unorderedrelations} says rational linear combinations of pushforwards of square relations are in fact still relations between the classes $[Z_\lambda]$, and that they $\mb{Z}$-linearly generate all relations. 

\begin{thm}
\label{unorderedrelations}
(\Cref{integralunordered})
Fix $n$ and choose $a_\lambda\in \mb{Z}$ for each partition of $n$. The following are equivalent:
\begin{enumerate}
\item
$\sum a_\lambda [Z_\lambda]=0$ in $A^\bullet_{PGL_2}(\mb{P}^n)$
\item
$\sum a_\lambda [Z_\lambda]=0$ in $A^\bullet_{GL_2}(\mb{P}^n)$
\item
$\sum a_\lambda [Z_\lambda]$ is formally a rational linear combination of pushforwards of square relations from $A^\bullet_{PGL_2}((\mb{P}^1)^n)$
\item The following identity holds in $\mb{Q}[z]$:
$$\sum_{\lambda=a_1^{e_1}\ldots a_k^{e_k}} \frac{a_\lambda}{\prod_{i=1}^ke_i!}\prod_{i=1}^k (z^{a_i}-1)^{e_i}=0.$$
\end{enumerate}
\end{thm}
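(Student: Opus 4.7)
The plan is to establish the cycle $(3)\Rightarrow(1)\Rightarrow(2)\Rightarrow(4)\Rightarrow(3)$: the equivalence $(2)\Leftrightarrow(4)$ will come from the $GL_2$-substitution formula recorded in \Cref{FNRformula}, and the crux $(4)\Rightarrow(3)$ will be obtained by pulling back along $\Phi\colon (\mb{P}^1)^n\to\on{Sym}^n\mb{P}^1$ and invoking the ordered-strata classification \Cref{basisthmintro}. Throughout I fix a codimension $n-d$ and work with partitions $\lambda$ having $d$ parts, since the Chow ring is graded and the statement decomposes codimension-by-codimension.

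The directions $(3)\Rightarrow(1)$ and $(1)\Rightarrow(2)$ are immediate: any square relation vanishes in $A^\bullet_{PGL_2}((\mb{P}^1)^n)$, so its $\Phi$-pushforward vanishes in $A^\bullet_{PGL_2}(\mb{P}^n)$, and the change-of-groups ring homomorphism $A^\bullet_{PGL_2}(\mb{P}^n)\to A^\bullet_{GL_2}(\mb{P}^n)$ carries $[Z_\lambda]\mapsto [Z_\lambda]$. For $(2)\Leftrightarrow(4)$, I would use \Cref{FNRformula} to write $(\prod e_i!)[Z_\lambda]$ as the image of $F_\lambda(z):=\prod(z^{a_i}-1)^{e_i}$ under a $\mb{Q}$-linear substitution $\sigma_d\colon\mb{Q}[z]_{\le n}\to A^{n-d}_{GL_2}(\mb{P}^n)_\mb{Q}$ sending each $z^k$ to a Lagrange polynomial in $H$. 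After base change to the fraction field $K$ of $\mb{Z}[u,v]^{S_2}$, the Chinese remainder theorem gives $A^\bullet_{GL_2}(\mb{P}^n)\otimes K\cong K^{n+1}$ via the $n+1$ distinct roots $-iu-(n-i)v$, under which $\sigma_d(z^0),\ldots,\sigma_d(z^n)$ become scalar multiples of the standard idempotents and hence a $K$-basis; so $\sigma_d$ is injective over $\mb{Q}$.

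The main content is $(2)\Rightarrow(3)$. Since $\Phi$ is a finite morphism between smooth varieties of the same dimension it is flat of degree $n!$, so flat pullback $\Phi^*$ is defined and $\Phi_*\Phi^*=n!\cdot\on{id}$. Set-theoretically $\Phi^{-1}(Z_\lambda)$ is the union of the $N_\lambda := n!/(\prod(a_i!)^{e_i}\prod e_i!)$ closed strata $\Delta_P$ with $\lambda(P)=\lambda$, and by $S_n$-symmetry the scheme-theoretic multiplicities are all equal; the degree identity pins this common multiplicity down, giving
\[
\Phi^*[Z_\lambda] \;=\; \prod(a_i!)^{e_i}\cdot \Sigma_\lambda,\qquad \Sigma_\lambda := \sum_{P:\,\lambda(P)=\lambda}\Delta_P.
\]
Applying $\Phi^*$ to the hypothesis $(2)$ gives $\sum_\lambda a_\lambda\prod(a_i!)^{e_i}\Sigma_\lambda=0$ in $A^{n-d}_{GL_2}((\mb{P}^1)^n)_\mb{Q}$, and by the injectivity $A^\bullet_{PGL_2}((\mb{P}^1)^n)\hookrightarrow A^\bullet_{GL_2}((\mb{P}^1)^n)$ from \Cref{PGL2injective} the same relation holds in $A^{n-d}_{PGL_2}((\mb{P}^1)^n)_\mb{Q}$. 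By the rational version of \Cref{basisthmintro}, the formal lift $\sum_P a_{\lambda(P)}\prod(a_i!)^{e_i}[P]\in\mb{Q}^{\on{Part}(d,n)}$ lies in $\on{Sq}(d,n)_\mb{Q}$. Dividing by $n!$ and using $\prod(a_i!)^{e_i}/n!=1/(N_\lambda\prod e_i!)$ produces $\xi:=\sum_P b_P[P]\in\on{Sq}(d,n)_\mb{Q}$ with $b_P=a_{\lambda(P)}/(N_{\lambda(P)}\prod e_i!)$; one checks $\Phi_*(\sum_P b_P\Delta_P)=\sum_\lambda a_\lambda[Z_\lambda]$, exhibiting the hypothesized relation as a rational pushforward of square relations.

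The main obstacle will be the pullback formula $\Phi^*[Z_\lambda]=\prod(a_i!)^{e_i}\Sigma_\lambda$: one must invoke flatness of $\Phi$ (finite between smooth varieties of equal dimension) and extract the uniform scheme-theoretic multiplicity from $S_n$-equivariance together with $\Phi_*\Phi^*=n!\cdot\on{id}$. The remainder is a careful bookkeeping of factorials translating among polynomial identities, set-partition symbols and integer-partition classes, and an application of \Cref{basisthmintro} to lift the pulled-back relation into $\on{Sq}(d,n)_\mb{Q}$.
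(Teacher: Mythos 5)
Your cycle $(3)\Rightarrow(1)\Rightarrow(2)\Rightarrow(4)\Rightarrow(3)$ has a genuine gap at its first link. You assert that $(3)\Rightarrow(1)$ is immediate because each pushforward of a square relation vanishes in $A^\bullet_{PGL_2}(\mb{P}^n)$. But condition (3) only provides \emph{rational} coefficients, so evaluating the formal identity in $A^\bullet_{PGL_2}(\mb{P}^n)\otimes\mb{Q}$ yields only that $\sum a_\lambda[Z_\lambda]$ is a torsion class. For $n$ even, $A^\bullet_{PGL_2}(\mb{P}^n)$ has genuine $2$-torsion (\Cref{PGLunordered}), so this does not give (1). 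This is precisely why the paper routes the easy implication as $(3)\Rightarrow(2)$ (the $GL_2$ ring is torsion-free, so rational vanishing implies integral vanishing there) and then proves $(2)\Rightarrow(1)$ separately in \Cref{noextrarelations}. That implication is the real content of the theorem: one must show that a sum which vanishes in $A^\bullet_{GL_2}(\mb{P}^n)$ is not a nonzero $2$-torsion element of $A^\bullet_{PGL_2}(\mb{P}^n)$. The paper does this by computing $[Z_\lambda]$ in $A^\bullet_{PGL_2}(\mb{P}^n)\otimes\mb{Z}/2$ (\Cref{PGLmod2}): the class is $\frac{q_n}{q_d}(H)\neq 0$ exactly when $\lambda$ is ``special,'' and a parity analysis of $\frac{d!}{e_1!\cdots e_k!}\prod a_i^{e_i}$ applied to the polynomial identity (4) shows $\sum_{\lambda\text{ special}}a_\lambda$ is even. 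None of this appears in your proposal, and without it the equivalence with (1) is unproved.

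The rest of your argument is essentially sound. Your $(2)\Leftrightarrow(4)$ via injectivity of the Lagrange substitution matches the paper's use of \Cref{FNRformula} and \Cref{linindeplem}. Your $(2)\Rightarrow(3)$ is a legitimately different route from the paper's: you pull back along the flat finite map $\Phi$, land in $A^{n-d}_{PGL_2}((\mb{P}^1)^n)\otimes\mb{Q}$, and invoke the rational form of \Cref{basisthmintro} to conclude the lifted relation lies in $\on{Sq}(d,n)\otimes\mb{Q}$, whereas the paper runs an explicit reduction (repeatedly applying one square relation to split a third part) to rewrite every $[\lambda]$ in the $[a,b,1^{d-2}]$ basis of \Cref{basis} and then uses linear independence. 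Your version is cleaner conceptually but requires justifying the pullback formula $\Phi^*[Z_\lambda]=\prod(a_i!)^{e_i}\Sigma_\lambda$; the paper's version is constructive and produces the square relations explicitly. Either way, you still need the missing $2$-torsion argument to close the loop back to (1).
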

\begin{cor}
Every $\mb{Z}$-linear relation that holds between Chow classes of relative $Z_\lambda$-cycles in $A^{\bullet}({\rm Sym}^n\mb{P}(V))$ for every rank 2 vector bundle $V\to B$ and base $B$ holds in $A^{\bullet}({\rm Sym}^n\mc{P})$ for every $\mb{P}^1$-bundle $\mc{P}\to B$ and base $B$.
\end{cor}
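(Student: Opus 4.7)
The plan is to reduce the corollary to the equivalence (1) $\Leftrightarrow$ (2) in \Cref{unorderedrelations} via the standard translation between universal relations in families and equivariant Chow classes.

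First, I would establish the following dictionary. For any $\mb{Z}$-linear combination $\sum a_\lambda [Z_\lambda]$, the relation $\sum a_\lambda [Z_\lambda]=0$ holds in $A^\bullet({\rm Sym}^n\mb{P}(V))$ for every rank $2$ vector bundle $V\to B$ over every base $B$ if and only if $\sum a_\lambda [Z_\lambda]=0$ in $A^\bullet_{GL_2}(\mb{P}^n)$. The ``if'' direction is immediate from functoriality: the classifying map $B\to BGL_2$ induces a pullback $A^\bullet_{GL_2}(\mb{P}^n)\to A^\bullet({\rm Sym}^n\mb{P}(V))$ sending each $[Z_\lambda]$ to $[Z_\lambda]$. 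For ``only if'', I would use the Edidin--Graham approximation of $BGL_2$ by (complements of degeneracy loci inside) Grassmannians: for any fixed codimension, there is a smooth variety $U$ with a tautological rank $2$ bundle $V_U\to U$ realizing $A^{\le k}_{GL_2}(\mb{P}^n)\cong A^{\le k}({\rm Sym}^n\mb{P}(V_U))$, so any relation that holds universally specializes to a relation in $A^{\le k}_{GL_2}(\mb{P}^n)$. An entirely analogous argument, using approximations of $BPGL_2$ and the universal $\mb{P}^1$-bundle (which is \emph{not} the projectivization of a vector bundle, this being the whole point), shows that universal relations over all $\mb{P}^1$-bundles $\mc{P}\to B$ correspond exactly to relations in $A^\bullet_{PGL_2}(\mb{P}^n)$ between the $[Z_\lambda]$-classes.

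With this dictionary in place, the corollary is just the implication (2)$\Rightarrow$(1) of \Cref{unorderedrelations}: a relation holding universally over rank $2$ vector bundles is a relation in $A^\bullet_{GL_2}(\mb{P}^n)$, hence by the theorem a relation in $A^\bullet_{PGL_2}(\mb{P}^n)$, hence by the dictionary a relation in $A^\bullet({\rm Sym}^n\mc{P})$ for every $\mb{P}^1$-bundle $\mc{P}\to B$.

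The only non-formal point is verifying the $PGL_2$ half of the dictionary, since one must exhibit honest $\mb{P}^1$-bundles (with potentially non-trivial Brauer class) approximating $BPGL_2$ in each codimension. This is the machinery already assembled in \Cref{universalrelations} and \Cref{eqinttheory} of the paper, so the argument here is essentially a citation. In particular, no new computation is needed beyond \Cref{unorderedrelations} itself.
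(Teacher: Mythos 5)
Your argument is correct and is essentially the paper's intended proof: the corollary is presented as an immediate consequence of the equivalence (1) $\Leftrightarrow$ (2) in \Cref{unorderedrelations} combined with the dictionary between universal relations in bundles and equivariant Chow classes set up in \Cref{universalrelations} and \Cref{eqinttheory}. The only slight over-engineering is in your final paragraph: the corollary needs only the \emph{forward} direction of the $PGL_2$ half of the dictionary (a relation in $A^\bullet_{PGL_2}(\mb{P}^n)$ descends to $A^\bullet({\rm Sym}^n\mc{P})$ for each $\mb{P}^1$-bundle $\mc{P}\to B$ via the formal map $A^G_\bullet(X)\to A_{\dim(B)+\bullet}(\mc{P}\times^G X)$ of \Cref{eqinttheory}), so no approximation of $BPGL_2$ by honest $\mb{P}^1$-bundles is actually required.
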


We remark that there is $2$-torsion in $A_{PGL_2}^{\bullet}(\mb{P}^n)$ for $n$ even, but \Cref{unorderedrelations} implies that if each $a_\lambda$ is even and $\sum a_\lambda [Z_\lambda]$ is zero in $A^\bullet_{PGL_2}(\mb{P}^n)$, then in fact the same is true for $\sum \frac{a_\lambda}{2} [Z_\lambda]$.
%Also, the equivalence of (1) and (2) in \Cref{unorderedrelations} implies that every linear relation that holds in $A^{\bullet}({\rm Sym}^n\mc{P})$ for every $\mc{P}\cong \mb{P}(V)$ for $V\to B$ a rank 2 vector bundle in fact holds in $A^{\bullet}({\rm Sym}^n\mc{P})$ for all $\mb{P}^1$-bundles $\mc{P}\to B$.
%This implies in particular that every universal linear relation that holds between relative classes of unordered incidence strata in $\text{\rm Sym}^n \mb{P}$ of every projectivization of a rank two vector bundle also holds universally between relative classes in $\text{\rm Sym}^n$ for arbitrary $\mb{P}^1$-bundles.

Rather than search for linear relations between $[Z_\lambda]$ classes using \Cref{unorderedrelations} (4), the following corollary identifies certain partitions whose corresponding strata are a $\mb{Q}$-linear basis for $A^{\leq n-2}_{PGL_2}(\mb{P}^n)\otimes\mb{Q}$, and gives an explicit formula for writing every such class in this basis. Every part of \Cref{unorderedintro} can be deduced from \Cref{unorderedrelations} except that the strata that we choose span $A^{\leq n-2}_{PGL_2}(\mb{P}^n)\otimes \mb{Q}$.

For $\lambda=a_1^{e_1}\ldots a_k^{e_k}$, denote by $[\lambda]$ the normalization $$[\lambda]=(\prod e_i!)[Z_\lambda].$$

\begin{cor}[\Cref{basis} and \Cref{ab1formula}]
\label{unorderedintro} 
For fixed $d\ge 2$, the classes $[\{a,b,1^{d-2}\}]$ form a $\mb{Q}$-basis for $A^{n-d}_{PGL_2}(\mb{P}^n)\otimes\mb{Q}\subset A^{n-d}_{GL_2}(\mb{P}^n)\otimes\mb{Q}$. Writing the polynomial
\begin{align*}
-\frac{1}{(z-1)^{d-2}}\prod_{i=1}^{d} (z^{a_i}-1)&=\sum_{\substack{0 \le k_1\leq k_2\\k_1+k_2=n-d+2}}{\alpha_{k_1}(z^{k_1}+z^{k_2})}, 
\end{align*}
we have $\alpha_i \in \mb{Z}$ and
\begin{align*}
[\{a_1,\ldots,a_d\}]=\sum_{\substack{1 \le k_1\leq k_2\\k_1+k_2=n-d+2}}\alpha_{k_1}[\{k_1,k_2,1^{d-2}\}].
\end{align*}
\end{cor}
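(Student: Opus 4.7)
The plan is to apply \Cref{unorderedrelations}(4) to convert the claimed equality into a polynomial identity in $\mb{Q}[z]$, verify that identity, and then deduce the basis claim.

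Given the normalization $[\lambda]=(\prod e_i!)[Z_\lambda]$, the assertion $[\{a_1,\ldots,a_d\}]=\sum \alpha_{k_1}[\{k_1,k_2,1^{d-2}\}]$ translates via \Cref{unorderedrelations}(4) into
\[\prod_{i=1}^d(z^{a_i}-1)=\sum_{\substack{1\leq k_1\leq k_2\\ k_1+k_2=n-d+2}}\alpha_{k_1}(z^{k_1}-1)(z^{k_2}-1)(z-1)^{d-2},\]
and dividing by $(z-1)^{d-2}$ reduces matters to showing that $g_\lambda(z):=\prod(z^{a_i}-1)/(z-1)^{d-2}$ equals $\sum_{k_1\ge 1}\alpha_{k_1}(z^{k_1}-1)(z^{k_2}-1)$. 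First I would expand $(z^{k_1}-1)(z^{k_2}-1)=z^{n-d+2}+1-z^{k_1}-z^{k_2}$; then restricting the defining palindromic expansion $-g_\lambda=\sum_{k_1\ge 0}\alpha_{k_1}(z^{k_1}+z^{k_2})$ to $k_1\ge 1$ contributes $-g_\lambda-\alpha_0(1+z^{n-d+2})$ on the $z^{k_1}+z^{k_2}$ terms, while $\sum_{k_1\ge 0}\alpha_{k_1}=0$ (from evaluating the defining identity at $z=1$ and using $g_\lambda(1)=0$, which holds because $(z-1)^d\mid \prod(z^{a_i}-1)$) gives $\sum_{k_1\ge 1}\alpha_{k_1}=-\alpha_0$. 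Combining these two contributions yields exactly $g_\lambda$.

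For integrality of the $\alpha_i$, note $g_\lambda\in\mb{Z}[z]$ because $(z-1)^{d-2}$ is monic. For $k_1<k_2$, $\alpha_{k_1}$ equals minus the coefficient of $z^{k_1}$ in $g_\lambda$ and is thus integral. The one delicate case is $k_1=k_2=m=(n-d+2)/2$: factor $g_\lambda=(z-1)^2 h_\lambda$ with $h_\lambda=\prod_i(1+z+\cdots+z^{a_i-1})$ palindromic of degree $n-d=2m-2$. The coefficient of $z^m$ in $g_\lambda$ is $h_{m-2}-2h_{m-1}+h_m$, and palindromicity $h_{m-2}=h_m$ makes this $2(h_m-h_{m-1})$, so $\alpha_m=h_{m-1}-h_m\in\mb{Z}$. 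I expect this symmetric-case integrality to be the main technical point of the argument.

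Finally I would deduce the basis claim. The formula just established writes every $[\lambda]$ with $d$ parts as an integer combination of the $[\{k_1,k_2,1^{d-2}\}]$, and the $[\lambda]$ with $d$ parts span $A^{n-d}_{PGL_2}(\mb{P}^n)\otimes\mb{Q}$ because $\Phi_*$ is rationally surjective ($\Phi$ is finite of degree $n!$, so $\Phi_*\Phi^*=n!\cdot\mathrm{id}$) and $\Delta_P$ for $P\in\on{Part}(d,n)$ spans the ordered side by \Cref{basisthmintro}. For linear independence, any relation $\sum\beta_{k_1}[\{k_1,k_2,1^{d-2}\}]=0$ becomes the polynomial identity $\sum\beta_{k_1}(z^{k_1}-1)(z^{k_2}-1)=0$ via \Cref{unorderedrelations}(4); extracting the coefficient of $z^{k_1}$ for each $1\leq k_1\leq \lfloor(n-d+2)/2\rfloor$ yields $-\beta_{k_1}$ (respectively $-2\beta_{k_1}$ in the symmetric case), so every $\beta_{k_1}=0$.
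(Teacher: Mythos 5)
Your proof is correct. The formula part is essentially the paper's argument: both reduce the claimed identity among the $[\lambda]$ to the polynomial identity $\prod_i(z^{a_i}-1)=\sum\alpha_{k_1}(z^{k_1}-1)(z^{k_2}-1)(z-1)^{d-2}$ and verify it by coefficient bookkeeping (you route the translation through \Cref{unorderedrelations}(4), the paper goes directly through \Cref{FNRformula} and \Cref{linindeplem}; the former is built on the latter, and there is no circularity since you only use the equivalence of (1), (2), (4), which is \Cref{noextrarelations} and does not depend on \Cref{basis}). Two points genuinely differ. For integrality of the middle coefficient you do an explicit palindromicity computation with $h_\lambda=\prod_i(1+z+\cdots+z^{a_i-1})$; the paper simply plugs $z=1$ into the defining expansion to get $\sum\alpha_{k_1}=0$ and deduces integrality of the one questionable coefficient from that of the others — slicker, but your computation is valid. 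More substantively, for the spanning half of the basis claim the paper counts dimensions: it proves independence of the $[a,b,1^{d-2}]$ via \Cref{linindeplem}, counts them, and matches this against $\dim A^{n-d}_{PGL_2}(\mb{P}^n)\otimes\mb{Q}$ read off from the presentation in \Cref{PGLunordered}. You instead deduce spanning from the ordered side: $\Phi_*\Phi^*=n!\cdot\mathrm{id}$ makes $\Phi_*$ rationally surjective, the $\Delta_P$ for $P\in\on{Part}(d,n)$ span $A^{n-d}_{PGL_2}((\mb{P}^1)^n)$ (only the spanning half of \Cref{basisthmintro}, i.e.\ \Cref{Pgenerate}, is needed), and your formula then collapses the images $[\lambda]$ onto the $[a,b,1^{d-2}]$. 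This avoids the dimension count against \Cref{PGLunordered} entirely, at the cost of invoking the ordered-strata results; both routes are sound, and I see no gaps.
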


Each relation between classes $[Z_{\lambda}]$ in the equivariant Chow ring $A^{\bullet}_{PGL_2}(\mb{P}^n)$ gives relations between enumerative problems. 

\begin{exmp}
\label{exampleintro}
Suppose $n=6$, then \Cref{unorderedintro} implies
\begin{align*}
[Z_{\{4,1,1\}}]+3[Z_{\{2,2,2\}}]=[Z_{\{3,2,1\}}]. \label{examplerelation}
\end{align*}
Consider the following two instances:
\begin{enumerate}
\item
Let $C_t\subset\mb{P}^2$ be a general pencil of degree 6 plane curves. Then, as we vary $C_t$ over $t\in \mb{P}^1$, the number of hyperflex lines plus thrice the number of tritangent lines is equal to the number of lines that are both flex and bitangent. 
\item
Let $X\subset \mb{P}^3$ be a general degree 6 surface. Then in $\mb{G}(1,3)$, the class of the curve of lines that meet $X$ to order $4$ at a point plus three times the class of the curve of tritangent lines to $X$ is equal to the class of the curve of lines that meet $X$ at three points with multiplicities $1,2,3$. 
\end{enumerate}
Note that in both examples, in the absence of a transversality argument, the equalities need to be taken with appropriate multiplicities. 
\end{exmp}

\begin{rmk}
Lines with prescribed orders of contact with a hypersurface were also studied in \cite[Section 5]{Vainsencher}. Counts of these lines are also related to counting line sections of a hypersurface with fixed moduli \cite{Laza, FML}. For the surface $X\subset \mb{P}^3$ in \Cref{exampleintro}, the points $p\in X$ for which a line meets $X$ at $p$ to order 4 is the \emph{flecnode curve}, which is always of expected dimension 1 if $X$ is not ruled by lines by the Cayley-Salmon theorem \cite[Theorem 6]{Katz}, which is a primary tool for bounding the number of lines on a smooth surface in $\mb{P}^3$ (see \cite{Segre} and \cite[Appendix]{Starr}). 
%For the surface $X\subset \mb{P}^3$ in \Cref{exampleintro}, the points $p\in X$ for which a line meets $X$ at $p$ to order 4 is the \emph{flecnode curve}, which is always of expected dimension 1 if $X$ is not ruled by lines by the Cayley-Salmon theorem \cite[Theorem 6]{Katz}, generalized by Landsberg to arbitrary dimensions \cite{Landsberg}. For a smooth surface $X\subset \mb{P}^3$ of degree $d\geq 3$, the flecnode curve yields a bound of $11d^2-24d$ for maximum number of lines in $X$ \cite[Appendix]{Starr} that is close to bound of $11d^2 - 28 d + 12$ obtained by Segre \cite{Segre}, which is still the best known bound for general $d$ \cite[Section 2]{lines}. 

Also, there is no reason not to consider a general variety $X\subset\mb{P}^N$ other than the difficulty of finding a projective variety of higher codimension that has at least a 3-dimensional family of 6-secant lines. 
\end{rmk}

\subsection{Excision}
\label{excisionintro}
As an application of our results, we compute the rational equivariant Chow ring of the complement of a union of unordered strata $A^\bullet_{PGL_2}(\mb{P}^n\setminus \cup_\lambda Z_\lambda)\otimes \mb{Q}=(A^\bullet_{PGL_2}(\mb{P}^n)\otimes\mb{Q})/(\sum_\lambda I_\lambda\otimes \mb{Q})$, where $I_\lambda$ is the ideal of excision for $Z_\lambda$.

We show that $I_{\lambda}\otimes\mb{Q}$ is generated by the classes of strata contained in $Z_{\lambda}$. 
\begin{thm}[\Cref{pushforwardlemma}]
\label{pushforwardintro}
Given a partition $\lambda$ of $n$, $I_{\lambda}\otimes\mb{Q}$ is generated by $[Z_{\lambda'}]$ for all $\lambda'$ that can be obtained from $\lambda$ by merging parts.
\end{thm}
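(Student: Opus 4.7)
The containment $J_\lambda\subseteq I_\lambda\otimes\mathbb{Q}$, where $J_\lambda$ denotes the ideal generated by the $[Z_{\lambda'}]$ for $\lambda'\le\lambda$, is immediate since each $Z_{\lambda'}\subseteq Z_\lambda$. For the reverse, I would factor the multiplication map $\phi_\lambda:(\mathbb{P}^1)^d\to\mathbb{P}^n$, $(p_1,\ldots,p_d)\mapsto\sum\lambda_ip_i$, as $\iota_\lambda\circ\pi_\lambda$ with $\pi_\lambda:(\mathbb{P}^1)^d\twoheadrightarrow Z_\lambda$ surjective and generically finite of degree $\prod e_i!$. The identity $\pi_{\lambda*}\pi_\lambda^*=\prod e_i!\cdot\mathrm{id}$ forces $\pi_{\lambda*}$ to be rationally surjective, so
$$I_\lambda\otimes\mathbb{Q}=\phi_{\lambda*}\bigl(A^\bullet_{PGL_2}((\mathbb{P}^1)^d)\otimes\mathbb{Q}\bigr),$$
reducing the problem to computing $\phi_{\lambda*}$ on a basis of the left-hand factor.

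I would induct on the number of parts $d$ of $\lambda$. For the base case $d=1$, $\phi_{\{n\}}:\mathbb{P}^1\hookrightarrow\mathbb{P}^n$ is the degree-$n$ rational normal curve embedding; since $\phi_{\{n\}}^*H=nH_{\mathbb{P}^1}$ is a nonzero scalar multiple of the hyperplane class, $\phi_{\{n\}}^*$ is rationally surjective, and the projection formula yields $I_{\{n\}}\otimes\mathbb{Q}=A^\bullet_{PGL_2}(\mathbb{P}^n)_\mathbb{Q}\cdot[Z_{\{n\}}]=J_{\{n\}}$. For the inductive step, I would apply the $\mathbb{Z}$-basis for $A^\bullet_{PGL_2}((\mathbb{P}^1)^d)$ from \Cref{goodbasis}, augmented by $\psi$-classes per \Cref{psi-intro} when $d\le 2$. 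The low-codimension basis elements $\Delta_P$ push forward to $\phi_{\lambda*}(\Delta_P)=c_P[Z_{\mu_P}]$, where $\mu_P\le\lambda$ is obtained from $\lambda$ by merging parts according to $P$ and $c_P$ is a positive integer, placing these pushforwards immediately in $J_\lambda$.

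The crux is handling the high-codimension basis elements $\Delta_{i_P,j_P}^{k-d+2}\Delta_P$ (for $P$ two-part) and $\Delta_{ij}^{k-d+1}\Delta_{\{[d]\}}$. Here I would invoke the identity $\Delta_{ij}\Delta_P=-\psi_i\Delta_P$ from \Cref{psi-intro} (valid when $i,j$ lie in the same part of $P$) and iterate the projection formula to rewrite
$$\Delta_{ij}^m\Delta_P=(\iota_{\Delta_P})_*\bigl((-\psi_A|_{\Delta_P})^m\bigr),$$
where $A$ is the part of $P$ containing $i,j$ and $\iota_{\Delta_P}:\Delta_P\cong(\mathbb{P}^1)^{|P|}\hookrightarrow(\mathbb{P}^1)^d$ is the diagonal inclusion. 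Composing with $\phi_\lambda$, the pushforward becomes $\phi_{\mu_P*}((-\psi_A)^m)$, and since $\mu_P$ has $|P|\le 2<d$ parts (for $d\ge 3$), the inductive hypothesis places it in $I_{\mu_P}\otimes\mathbb{Q}=J_{\mu_P}\subseteq J_\lambda$. The main obstacle is adapting this argument to $d=2$, where \Cref{goodbasis} must be replaced by its $\psi$-class variant: the full-diagonal powers $\Delta_{12}^m$ still reduce to base-case pushforwards via the identity above, while $\phi_{\lambda*}(\psi_i)$ must be computed directly by expanding $\psi_i=-(2H_i+s_1)$ in $A^\bullet_{GL_2}$ and using $\phi_\lambda^*H=n_1H_1+n_2H_2$ to express it as a $J_\lambda$-combination of $[Z_\lambda]$, $H\cdot[Z_\lambda]$, and $[Z_{\{n\}}]$.
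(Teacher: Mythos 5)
Your proposal is correct, but it reaches the conclusion by a genuinely different route from the paper. The paper (\Cref{twocrit} and \Cref{pushforwardlemma}) never uses the high-codimension part of the basis theorem: it reduces the problem to showing that the $A^\bullet_{PGL_2}(\mb{P}^n)\otimes\mb{Q}$-submodule of $A^\bullet_{PGL_2}((\mb{P}^1)^d)\otimes\mb{Q}$ generated by all strata $\Delta_P$ contains the relevant ring of invariants, and then verifies this by hand in the coordinates $H_i'=H_i+\tfrac12(u+v)$ (using $H_i'^2=\tfrac14(u-v)^2$ and explicit linear combinations involving $\Phi_\lambda^*\hat\iota_\lambda^*H'$ to produce the top-degree monomials $\prod_{k\ne i}H_k'$ and $\prod_k H_k'$; degrees $\le d-2$ come from \Cref{generatePlow} alone). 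You instead induct on the number of parts $d$ and run through the full $\mb{Z}$-basis of \Cref{goodbasis}: the low-codimension elements $\Delta_P$ push forward to multiples of $[Z_{\mu_P}]$ immediately, and your key observation --- that $\Delta_{i_P,j_P}^m\Delta_P=(\iota_{\Delta_P})_*((-\psi_A)^m)$, so that $\phi_{\lambda*}$ of a high-codimension basis element factors as $\phi_{\mu_P*}$ of a class on $(\mb{P}^1)^{|P|}$ for the coarser partition $\mu_P$ --- lets the inductive hypothesis absorb everything above degree $d-2$. This is arguably more structural (it explains geometrically why only merged partitions appear), at the cost of invoking the harder half of \Cref{mainthm} and its $d\le 2$ extension via $\psi$-classes, whereas the paper's module-theoretic criterion is also what it needs later for the $GL_2$ and symmetric-invariant variants (\Cref{GL2andaffine}), which your argument does not directly supply.

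Two small points to tighten. First, $\pi_\lambda^*$ is not defined on the Chow groups of the possibly singular $Z_\lambda$, so the identity $\pi_{\lambda*}\pi_\lambda^*=\prod e_i!\cdot\on{id}$ is not quite the right justification; rational surjectivity of proper pushforward along a surjection is what you want (this is how the paper phrases it, via the birational resolution $Y_\lambda\to Z_\lambda$ and the finite cover $(\mb{P}^1)^d\to Y_\lambda$). Second, in your $d=2$ step the recipe ``solve $\phi_\lambda^*H=n_1H_1+n_2H_2$ for $H_i$'' breaks down exactly when $\lambda_1=\lambda_2$ (the case the paper isolates in \Cref{pushforwardlemma}); there you should instead note that $\phi_{\lambda}$ is $S_2$-symmetric, so $\phi_{\lambda*}\psi_1=\tfrac12\phi_{\lambda*}(\psi_1+\psi_2)=-\phi_{\lambda*}\Delta_{1,2}$, which is a multiple of $[Z_{\{n\}}]$ and hence already in your ideal $J_\lambda$. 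With these repairs the argument goes through.
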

\begin{rmk}
\Cref{pushforwardintro} is false if we replace $I_{\lambda}\otimes\mb{Q}$ with $I_{\lambda}$. This already fails nonequivariantly in the case $n=4$ and $\lambda=\{2,1,1\}$. Indeed, $\Phi: \mb{P}^1\times\mb{P}^2\to \mb{P}^4$ maps birationally onto $Z_{\lambda}$. Let $H_1$ and $H_2$ be the hyperplane classes in the factors of $\mb{P}^1\times\mb{P}^2$ and $H$ be the hyperplane class of $\mb{P}^4$. Then $\Phi_{*}H_1=H^2$, while $[Z_{\{2,2\}}]=8H^2$, $[Z_{\{3,1\}}]=6H^2$, and $[Z_{\{2,1,1\}}]=6H$. 
%Considering the following nonequivariant example. Suppose $n=4$ and let $\lambda =\{2,1,1\}$. Then, $Z_{\lambda}$ is the birational image of a map $\mb{P}^1\times\mb{P}^2\to \mb{P}^4$. Let $H_1,H_2$ be the hyperplane classes on $\mb{P}^1\times\mb{P}^1$ and $H$ be the hyperplane class on $\mb{P}^4$. By taking test classes, 1 pushes forward to $6H$, $H_1$ pushes forward to $H^2$. Similarly, $[Z_{\{3,1\}}]=6H^2$ and $[Z_{\{2,2\}}]=8H^2$ nonequivariantly. No $\mb{Z}$-linear combination of $[Z_{\{3,1\}}]=6H^2$ and $[Z_{\{2,2\}}]$ will give $H^2$, which in $I_{\{2,1,1\}}$. 
\end{rmk}

We typically don't need to use every merged partition $\lambda'$ for dimension reasons by \Cref{unorderedintro}. When $\lambda=\{a,1^{n-a}\}$ is a partition with only one part of size greater than 1, we in fact show that $I_{\lambda}\otimes\mb{Q}$ is generated by just two generators. 

\begin{thm}[\Cref{2generators}]
\label{twogenerators}
Given the partition $\lambda=\{a,1^{n-a}\}$ of $n$, $I_{\lambda}\otimes\mb{Q}$ is generated by $[Z_\lambda]$ and $[Z_{\lambda'}]$, where 
\begin{align*}
\lambda'=
\begin{cases}
\{a+1,1^{n-a-1}\}&\qquad\text{if }a\neq \frac{n}{2}\\
\{a,2,1^{n-a-2}\}&\qquad\text{if }a=\frac{n}{2}. 
\end{cases}
\end{align*}
\end{thm}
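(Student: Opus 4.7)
Plan:

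By \Cref{pushforwardintro}, the ideal $I_\lambda\otimes\mb{Q}$ is generated (as an ideal) by $\{[Z_\mu] : \mu \text{ is obtained from } \lambda = \{a,1^{n-a}\} \text{ by merging parts}\}$, so it suffices to show every such $[Z_\mu]$ lies in $J := ([Z_\lambda],[Z_{\lambda'}])$. I proceed by induction on the codimension of $[Z_\mu]$, with trivial base case $\mu=\lambda$.

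The crucial step is codim $a$: the merged partitions with one less part than $\lambda$ are exactly $\{a+1,1^{n-a-1}\}$ and $\{a,2,1^{n-a-2}\}$, which by \Cref{unorderedintro} form a $\mb{Q}$-basis of the corresponding graded piece of $A^{\bullet}_{PGL_2}(\mb{P}^n)\otimes\mb{Q}$. I expand
\[
H\cdot[Z_\lambda] \;=\; \alpha\,[Z_{\{a+1,1^{n-a-1}\}}] + \beta\,[Z_{\{a,2,1^{n-a-2}\}}]
\]
in this basis using the polynomial substitution formula in $A^{\bullet}_{GL_2}(\mb{P}^n)\otimes\mb{Q}$ (\Cref{FNRformula}), pulled back along the embedding of \Cref{bigthmintro}(3). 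A direct computation shows $\alpha\neq 0$ always, while $\beta=0$ precisely when $a=n/2$. When $a\neq n/2$, we take $\lambda'=\{a+1,1^{n-a-1}\}$ and solve $[Z_{\{a,2,1^{n-a-2}\}}] = (H\cdot[Z_\lambda]-\alpha\,[Z_{\lambda'}])/\beta\in J$; when $a=n/2$, we take $\lambda'=\{a,2,1^{n-a-2}\}$, and since $\beta=0$ we get $[Z_{\{a+1,1^{n-a-1}\}}]=(H\cdot[Z_\lambda])/\alpha\in J$. Either way both codim-$a$ merged classes lie in $J$.

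For the inductive step at codim $a+k$ with $k\ge 1$, apply \Cref{unorderedintro} to rewrite each merged $[Z_\mu]$ (with $d=n-a-k$ parts) in the basis $\{[Z_{\{a',b',1^{n-a-k-2}\}}]:a'+b'=a+k+2\}$. The module $J$ contains $H^{k-2j+1}c_2^{j}[Z_\lambda]$ and $H^{k-2j}c_2^{j}[Z_{\lambda'}]$ at codim $a+k$ for valid $j\ge 0$; by the inductive hypothesis together with an explicit expansion via the FNR formula, these products exhaust the $I_\lambda$-subspace in codim $a+k$, because multiplication by $H$ (or $c_2$) takes a codim-$(a+k-1)$ merged generator already in $J$ (by induction) to a codim-$(a+k)$ element of $J$ whose basis decomposition recovers the desired merged class, modulo contributions from lower-codimension classes handled by the inductive hypothesis.

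The main obstacle is the base-case identification $\beta=0\iff a=n/2$. This follows from the arithmetic identity $a(n-a+1)=(a+1)(n-a)\iff a=n/2$: non-equivariantly one has $[Z_{\{b,1^{n-b}\}}]=b(n-b+1)H^{b-1}$, so this identity says $H\cdot[Z_\lambda]$ and $[Z_{\{a+1,1^{n-a-1}\}}]$ have equal non-equivariant leading terms precisely when $a=n/2$; the equivariant $c_2$-corrections match compatibly by direct FNR computation, which yields both the case distinction in the theorem and the formula for $\alpha$.
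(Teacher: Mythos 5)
Your base case is essentially right, modulo two corrections. First, $H$ itself does not lie in $A^\bullet_{PGL_2}(\mb{P}^n)\otimes\mb{Q}$ (only $H'=H+\tfrac{n}{2}(u+v)$ and $(u-v)^2$ do, by \Cref{PGLunordered}), so the multiplier must be $H'$. Second, the two codimension-$a$ merged classes are linearly independent but are \emph{not} a basis of $A^{a}_{PGL_2}(\mb{P}^n)\otimes\mb{Q}$, which has dimension $\lfloor(a+2)/2\rfloor$; what you actually need, and what does hold, is that $H'[Z_\lambda]$ lies in their span. Indeed a computation with \Cref{FNRformula} (equivalently, the $b=1$ specialization of \Cref{uplusv}) gives $n(u+v)[a,1^{n-a}]_0=2a\,[a+1,1^{n-a-1}]_0+(n-2a)\,[a,2,1^{n-a-2}]_0$, which is exactly your dichotomy: the second coefficient vanishes iff $a=n/2$, and the first never does.

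The inductive step is a genuine gap: as written it asserts the conclusion rather than proving it. In codimension $a+k$ the ideal $J$ contributes only the $\le k+2$ classes $H'^{\,k+1-2j}(u-v)^{2j}[Z_\lambda]$ and $H'^{\,k-2j}(u-v)^{2j}[Z_{\lambda'}]$, while the merged classes $[Z_\mu]$ you must capture are typically more numerous and can span the entire graded piece (e.g.\ for $n=10$, $a=4$, $k=2$ there are seven merged partitions with four parts, spanning all of the $4$-dimensional space $A^{6}_{PGL_2}(\mb{P}^{10})\otimes\mb{Q}$, so you would need four specific products to be linearly independent). Your justification --- that $H'\cdot[Z_{\mu'}]$ ``recovers the desired merged class, modulo contributions from lower-codimension classes handled by the inductive hypothesis'' --- does not parse in the graded setting: multiplication by $H'$ is homogeneous, so the error terms are \emph{other merged classes of the same codimension} $a+k$, which are not yet known to lie in $J$. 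Closing that circularity requires an ordering of the merged partitions, nonvanishing of the relevant coefficients in a general-$\lambda$ branching rule (an analogue of \Cref{uplusv,utimesv}), and a rank count, none of which are supplied; this is the entire content of the theorem. The paper avoids the issue by working upstairs on $(\mb{P}^1)^{n-a+1}\to Y_\lambda\to Z_\lambda$: \Cref{2generatorslemma} recovers $H_1'$ from $\Phi_\lambda^*\hat\iota_\lambda^*(H')=aH_1'+\sum_{i\ge 2}H_i'$ together with the classes $\Delta_{1,i}$ (resp.\ $\Delta_{i,j}$ when $a=n/2$), shows that $1$ and $H_1'$ generate the invariant subring as a module over the pullback of $A^\bullet_{PGL_2}(\mb{P}^n)\otimes\mb{Q}$ via elementary symmetric function identities, and then \Cref{twocrit} pushes forward. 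To salvage your downstairs approach you would need to replace the quoted sentence with an honest spanning argument of comparable substance.
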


In fact we will also show the analogous results with $A^{\bullet}_{GL_2}(\mb{P}^n\setminus \cup_{\lambda} Z_\lambda)\otimes \mb{Q}$, and if we further replace $\mb{P}^n\setminus \cup_{\lambda} Z_\lambda$ with its affine cone $\mb{A}^{n+1}\setminus \cup_{\lambda} \wt{Z_\lambda}$ and consider $A^{\bullet}_{GL_2}(\mb{A}^{n+1}\setminus \cup_{\lambda} \wt{Z_\lambda})$ (see \Cref{GL2andaffine}).

%In many situations, one would like to compute the Chow ring $A^{\bullet}_{GL_2}(\mb{P}^n\backslash \bigcup_{\lambda} Z_{\lambda})$ after removing some strata $Z_{\lambda}$. By excision, this is equivalent to computing the ideal $I_{\lambda}$ given by the pushforward
%$$A^{\bullet}_{GL_2}(Z_{\lambda})\to A^{\bullet}_{GL_2}(\mb{P}^n).$$
In the special case $\lambda=\{2,1^{n-2}\}$, computing $I_{\lambda}$ is the technical heart of the computation of Edidan and Fulghesu of the Chow ring of the stack of hyperelliptic curves of even genus \cite{Fulghesu}. 

For $n$ odd and $Z_{\lambda}$ the unstable locus, i.e with $\lambda=\{\frac{n+1}{2},1^{\frac{n-1}{2}}\}$, the rational Chow ring $A^\bullet_{GL_2}(\mb{P}^n \setminus Z_\lambda)\otimes \mb{Q}$ equals $A^\bullet(\mb{P}^n//GL_2)\otimes \mb{Q}$, the rational Chow ring of the GIT quotient \cite[Theorem 3]{EG98}. For all $n$ and $Z_{\lambda}\subset \mb{P}^n$ the locus of unstable and strictly semistable points, Feh\'er, N\'emethi, and Rim\'anyi computed $A^{\bullet}_{GL_2}(\mb{P}^n\backslash Z_{\lambda})\otimes\mb{Q}$ using a spectral sequence and used the result to compute the rational Chow ring of the GIT quotient \cite[Theorems 4.3 and 4.10]{FNR06}. They actually work with the affine space ${\rm Sym}^n K^2$ instead of $\mb{P}^n$, but the two settings are essentially the same (see \Cref{divideleray}). 

%\begin{thm}[\Cref{pushforwardlemma}]
%\label{pushforwardintro}
%Given a partition $\lambda$ of $n$, $I_{\lambda}\otimes\mb{Q}$ is generated by $[\lambda']$ for all $\lambda'$ that can be obtained from $\lambda$ by merging parts. 
%\end{thm}

\begin{rmk}The affine analogue of \Cref{twogenerators} as given in \Cref{GL2andaffine} in the special case $a=\lceil \frac{n}{2}\rceil$ recovers the $GL_2$-equivariant Chow rings of the stable locus computed in \cite[Theorems 4.3 and 4.10]{FNR06} as described above. The Chow ring of the semistable locus required a separate argument.
\end{rmk}

\subsubsection{Multiplicative relations of affine analogues}
We conclude in \Cref{multiplicative} by describing a combinatorial branching rule for multiplying the affine analogue of the class of a strata $[\wt{Z_\lambda}] \in A^\bullet_{GL_2}({\rm Sym}^n K^2)\cong \mb{Z}[u,v]^{S_2}$ by a generator $u+v$ or $uv$. This generalizes \cite[Remark 3.9 (1)]{FNR06}. 
\begin{comment}
\Cref{twogenerators} implies that if $\lambda=\{a,1^{n-a}\}$, then all the classes $[\lambda'']$ can be generated by $[\lambda]$ and $[\lambda']$ over $A_{GL_2}^{\bullet}(\mb{P}^r)$, meaning that there are multiplicative relations. 

Generalizing this, we give a combinatorial way to explicitly generate multiplicative relations in \Cref{multiplicative} and give formulas to generate all of them in terms of the basis given in \Cref{unorderedintro}. 
\end{comment}

\subsection{Acknowledgements}
The authors would like to thank Mitchell Lee and Anand Patel for helpful conversations during the project. The authors would like to thank Jason Starr for helpful comments and references.

\section{Background and conventions}
\noindent Conventions:
\begin{enumerate}
\item 
The base field $K$ is algebraically closed of arbitrary characteristic
\item
$GL_2$ acts linearly on $\mb{P}^1$ and hence on all products $(\mb{P}^1)^n$, symmetric powers ${\rm Sym^n}\mb{P}^1\cong \mb{P}^n$, and their duals
\item
$T\subset GL_2$ is the standard maximal torus with standard characters $u$ and $v$
\item
$[n]$ denotes the set $\{1,\ldots,n\}$
\item
$\Phi: (\mb{P}^1)^n\to {\rm Sym^n}\mb{P}^1\cong \mb{P}^n$ denotes the multiplication map, where $n$ will be clear from context. 
\end{enumerate}
\subsection{Universal relations and equivariant intersection theory}
\label{universalrelations}
Equivariant intersection theory was formalized in \cite{EG98} and will be used to help us analyze the following situation. See also \cite{Anderson} for an exposition. 

Suppose we have a group $G$ (typically $G=T,GL_2,PGL_2$) acting on a variety $X$ (typically $(\mb{P}^1)^n, \text{\rm Sym}^n\mb{P}^1=\mb{P}^n$), and $G$-invariant subvarieties $Y_i$ (typically incidence strata in $(\mb{P}^1)^n$ or $\mb{P}^n$). Given a principal $G$-bundle $\mc{P} \to B$, we have the $X$-bundle $X_\mc{P}\to B$, where $X_\mc{P}:=\mc{P} \times^G X$. Inside $X_{\mc{P}}$, we have the cycles $$(Y_i)_{X_\mc{P}}:=(Y_i)_\mc{P}\subset X_\mc{P}$$ restricting to $Y_i$ in each fiber $X$, inducing classes $[Y_i]_{X_\mc{P}} \in A_\bullet(X_{\mc{P}})$. We would like to understand what ``universal'' linear relations exist between these classes (i.e. which don't depend on $B$ or $\mc{P}$).

For example, if we take $G=PGL_2$, then we are seeking universal relations between classes $[Z_P]_{\mc{F}^n}$ and between classes $[Z_\lambda]_{\text{\rm Sym}^n\mc{F}}$ for $\mc{F}\to B$ a $\mb{P}^1$-bundle. If we use $G=GL_2$ instead the relations hold a priori only for $\mc{F}$ the projectivization of a rank $2$ vector bundle on $B$.

As we will see in \Cref{eqinttheory}, there is a universal group $A_\bullet^G(X)$ approximated by certain $A_\bullet(X_{\mc{P}'})$ which is equipped with maps $A_\bullet^G(X) \to A_\bullet(X_\mc{P})$ for all $\mc{P}$ and there are classes $[Y_i] \in A_\bullet^G(X)$ such that $[Y_i] \mapsto [Y_i]_{\mc{P}}$, so any relations in $A_\bullet^G(X)$ between the $[Y_i]$ descend to relations between the $[Y_i]_{\mc{P}}$. Conversely, we will see by construction that any relation between the $[Y_i]_{\mc{P}}$ for all $\mc{P}$ induces a relation between the $[Y_i]$. 
%Conversely, we will see that by its construction, any relation between the $[Y_i]$ occurs between $[Y_i]_\mc{P}$ for $\mc{P}$ the principal $G$-bundle $U \to U/G$ for $U$ a subset of affine space with a large codimension complement on which $G$ acts freely.

%Furthermore, for us $X$ will always be smooth, so $A^\bullet_G(X)=A_{\dim(X)-\bullet}^G(X)$ \cite[Proposition 4]{EG98} is a ring.% which is  which is frequently easy to describe in terms of generators and relations, and we will exploit this structure frequently throughout the paper.

\subsection{Equivariant intersection theory}
\label{eqinttheory}
The equivariant Chow group $A^G_\bullet(X)$ is defined as follows. Suppose $G$ acts linearly on a vector space $V$ with an open subset $U$ of codimension $c$ on which it acts freely. Then for any $k < c$, we define $A^G_{\dim(X)-k}(X):=A_{\dim(X\times^G U)-k}(X \times^G U)$. Note that $X \times^G U = X_\mc{P}$ where $\mc{P}$ is the principal $G$-bundle $U \to U/G$. This does not depend on the choice of $V$ \cite[Definition-Proposition 1]{EG98}.

For $\mc{P}\to B$ a principal $G$-bundle over an equidimensional base $B$, we have a map $$
A^G_\bullet(X) \to A_{\dim(B)+\bullet}(P\times^G X)$$ via the composition
\begin{align*}
A^G_{> \dim(X)-c}(X) &\cong A_{> \dim(X\times^G U)-c}(X \times^G U)\\
& \to A_{> \dim((P\times X)\times^G U)-c}((P\times X)\times^G U)\\
& \cong A_{> \dim((P\times X)\times^G U)-c}((P \times X)\times^G V)\\
& \cong A_{> \dim(P \times^G X)-c}(P \times^G X)
\end{align*}
where the second map is induced by flat pullback from the projection, the third map follows from excising $(P \times X)\times^G (V\setminus U)$, and the last map follows from the Chow groups of a vector bundle \cite[Theorem 3.3(a)]{Fulton}. 
%\cite[Lemma 2.2]{Affine}.

Now, we define $A^\bullet_G(X)$ to be the ring of operational $G$-equivariant Chow classes on $X$, i.e. $A^i_G(X)$ is all assignments $$(Y \to X) \mapsto (A^G_{\bullet}(Y) \to A^G_{\bullet-i}(Y))$$
for every $G$-equivariant map $Y\to X$, compatible with the standard operations on Chow groups \cite[Section 2.6]{EG98}. %\marginpar{Make Better}
In our case $X$ is always smooth, and we have the Poincar\'e duality isomorphism $A^\bullet_G(X) = A_{\dim(X)-\bullet}^G(X)$ \cite[Proposition 4]{EG98}, and the identification $$A^\bullet([X/G])\cong A^\bullet_G(X),$$ where $[X/G]$ is the quotient stack \cite[Section 5.3]{EG98}. %Because of the ring structure, $A^\bullet_G(X)$ will be especially convenient to work with, and in all cases we consider $X$ will be smooth.

\subsection{$GL_2$ and $T$-equivariant Chow rings of $(\mb{P}^1)^n$ and $\mb{P}^n$}
\label{ringdescriptions}
%For standard facts on equivariant intersection theory, we refer the reader to \cite{Anderson, EG98}.
We will postpone discussing $PGL_2$-equivariant intersection rings to \Cref{PGL2}. The equivariant Chow rings $A_T^{\bullet}((\mb{P}^1)^n),A^\bullet_T(\mb{P}^n)$, (respectively $A_{GL_2}^{\bullet}((\mb{P}^1)^n), A^\bullet_{GL_2}(\mb{P}^n)$) can be approximated by the ordinary Chow rings of $(\mb{P}^1)^n$ and $\mb{P}^n$ bundles over $\mb{P}^N\times \mb{P}^N$ (respectively the Grassmannian of lines $\mb{G}(1,N)$) for $N>>0$.%, as the base approximates the classifying space $BT$ (respectively $BGL_2$) when $K=\mb{C}$. 

%In addition to being the ring of operational Chow classes on maps from test schemes into the quotient stack \cite[Proposition 19]{EG98}, equivariant chow rings are universal in the following sense. Given a principal $T$-bundle (respectively $GL_2$-bundle) $P$ over a quasiprojective variety $X$, $T$-equivariant (respectively $GL_2$-equivariant) chow classes on $(\mb{P}^1)^n$ or $\mb{P}^n$ can be pulled back to $P\times^{T}(\mb{P}^1)^n$ and $P\times^{T}\mb{P}^n$ (respectively $P\times^{GL_2}(\mb{P}^1)^n$ and $P\times^{GL_2}\mb{P}^n$), after replacing $X$ with an affine bundle $X'\to X$. See \cite[Lemma 1.6]{Totaro} for a precise statement and proof using Jouanolou’s trick \cite{Jouanolou}.% and proof using Jouanolou’s trick \cite{Jouanolou} and see \cite[Lemma 1]{EG97} on how to remove the quasiprojective hypothesis.

Let $u,v$ be the standard characters of $T$. If $\pt$ is a point with trivial $GL_2$ action, then
\begin{align*}
A_{T}^{\bullet}(\pt)=\mb{Z}[u,v], \qquad A_{GL_2}^{\bullet}(\pt)=\mb{Z}[u,v]^{S_2}
\end{align*}
where $S_2$ acts on $\mb{Z}[u,v]$ by swapping $u,v$. By the Chow ring of a vector bundle \cite[Theorem 3.3(a)]{Fulton}, the $T$ (respectively $GL_2$) equivariant Chow ring of an affine space is isomorphic to the equivariant Chow ring of a point. By the projective bundle theorem \cite[Theorem 9.6]{3264}, we have
\begin{align*}
A_T^{\bullet}((\mb{P}^1)^n)=\mathbb{Z}[u,v][H_1,\ldots,H_n]/(F(H_i)),\qquad &A^\bullet_T(\mb{P}^n)=\mathbb{Z}[u,v][H]/(G(H)),\\
A_{GL_2}^{\bullet}((\mb{P}^1)^n)=\mathbb{Z}[u,v]^{S_2}[H_1,\ldots,H_n]/(F(H_i)),\qquad &A^\bullet_{GL_2}(\mb{P}^n)=\mathbb{Z}[u,v]^{S_2}[H]/(G(H))
\end{align*}
where $H_i$ is $c_1(\ms{O}_{\mb{P}^1}(1))$ pulled back to $(\mb{P}^1)^n$ under the $i$th projection and $H$ is $c_1(\ms{O}_{\mb{P}^n}(1))$, and we define 
\begin{align*}
F(z)=(z+u)(z+v),\qquad G(z)=\prod_{k=0}^n(z+ku+(n-k)v)
\end{align*}
for the rest of the document. Even though one might want to use $GL_2$-equivariant Chow rings for applications, $GL_2$-equivariant Chow rings inject into $T$-equivariant Chow rings, so it suffices to only consider $T$-equivariant Chow rings.

The formula for the class of the projectivization of a subbundle \cite[Proposition 9.13]{3264} shows the $i$th coordinate hyperplane in $\mb{P}^n$ has class $H+iu+(n-i)v$. This gives the formula for any torus fixed linear space (for example the torus-fixed points) in $(\mb{P}^1)^n$ or $\mb{P}^n$ by multiplying a subset of these classes.

\subsection{Ordered and unordered strata of $n$ points on $\mb{P}^1$}
\begin{defn}
\label{orderedloci}
Given a collection $P=\{A_1, \ldots, A_d\}$ of disjoint subsets of $[n]$, let $\Delta_P\subset (\mb{P}^1)^n$ denote the $d$-dimensional locus of points $(p_1,\ldots,p_n)$ where $p_i=p_j$ whenever $i,j$ are in the same set $A_k$ of $P$.
\end{defn}

\begin{exmp}
If $P=\{\{1,2,4\},\{3,6\}\}$ and $A=[6]$, then $Z_P\subset (\mb{P}^1)^6$ consists of points $(p_1,\ldots,p_6)$ such that $p_1=p_2=p_4$ and $p_3=p_6$. 
\end{exmp}

\begin{defn}
\label{unorderedloci}
Given a partition $\lambda=\{\lambda_1,\ldots,\lambda_d\}$ of a positive integer $n$, we define the $d$-dimensional subvariety $Z_{\lambda}\subset {\rm Sym^n}\mb{P}^1\cong \mb{P}^n$ to be the image of $\Delta_P$ under the multiplication map $\Phi: (\mb{P}^1)^n\to \mb{P}^n$, where $P=\{A_1, \ldots, A_d\}$ is any partition of $[n]$ with $|A_i|=\lambda_i$. 
\end{defn}

\begin{rmk}
If we view  ${\rm Sym^n}\mb{P}^1\cong \mb{P}^n$ as binary degree $n$ forms on the dual of $\mb{P}^1$, then $Z_{\lambda}$ is the closure of the degree $n$ forms with multiplicity sequence given by $\lambda$, whose equivariant Chow classes were studied by Feh\'er, N\'emethi, and Rim\'anyi \cite{FNR06}.
\end{rmk}

\noindent In order to compactify notation, we make the following definitions.
\begin{defn}
\label{bracket}
Given $P$ a partition of $[n]$ and $\lambda$ a partition of $n$, we let
\begin{align*}
\Delta_P&:=[\Delta_P]\in H_G^{\bullet}((\mb{P}^1)^n)\\
[\lambda]&:= (\prod_{i=1}^{n}{e^\lambda_i!})[Z_{\lambda}] \in H_G^{\bullet}(\mb{P}^n),
\end{align*}
where $G$ is $T$, $GL_2$ or $PGL_2$, depending on the context and $e^\lambda_i=\# \{j\mid \lambda_j=i \}$. For $\lambda=\{a_1,\ldots,a_d\}$, we will often write $[a_1,\ldots,a_d]$ or $[1^{e_1^\lambda},\ldots,n^{e_n^{\lambda}}]$ for $[\lambda]$.
\end{defn}

\begin{rmk}
For any such partition $P$ and $\lambda$ as in \Cref{unorderedloci}, then $\Phi$ maps $\Delta_P$ onto $Z_{\lambda}$ with degree $\prod_{i=1}^{n}{e^\lambda_i!}$, so $\Phi_{*}\Delta_P=[\lambda]$.
\end{rmk}

\subsection{Affine and projective Thom polynomials}
\label{reconstructionsection}
\begin{defn}
\label{cone}
Given a $T$-invariant subvariety $V\subset \mb{P}^n$, let $\wt{V}\subset  \mb{A}({\rm Sym}^n K^2)$ denote the cone of $
V\subset\mb{P}^n$ in $\mb{A}({\rm Sym}^n K^2)\cong \mb{A}^{n+1}$. 
\end{defn}

Given a $T$-invariant subvariety $V\subset \mb{P}^n$, its class $[V]\in A_T^{\bullet}(\mb{P}^n)$ is a polynomial $p(H,u,v)$ of degree at most $n$. The degree 0 term in $H$, $p_0(u,v)$, is $[\wt{V}]\in A_T^{\bullet}(\mb{A}^{n+1})\cong \mb{Z}[u,v]$. This is seen by considering the diagram
\begin{align*}
A_{T}^{\bullet}(\mb{P}^n)\stackrel{\sim}{\leftarrow}A_{T\times \Gm}^{\bullet}(\mb{A}^{n+1}\backslash\{0\})\to A_{T}^{\bullet}(\mb{A}^{n+1}\backslash\{0\})
\end{align*}
and noting that $A_{T}^{k}(\mb{A}^{n+1}\backslash\{0\})\cong A_{T}^{k}(\mb{A}^{n+1})$ for $k\leq n$. 

It turns out $p_0(u,v)$ determines all of $p$. 
\begin{lem}
[{\cite[Theorem 6.1]{FNR05}}]
\label{reconstruction}
We have $p(u,v)=p_0(u+\frac{H}{d},v+\frac{H}{d})$. 
\end{lem}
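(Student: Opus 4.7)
The plan is to realize both $p$ and $p_0$ as specializations of a single $T \times \Gm$-equivariant class, by introducing an auxiliary $\Gm$ acting on $\mb A^{n+1}$ by scaling with character $s$. First I would set up the comparison: identifying $A^\bullet_{T \times \Gm}(\mb A^{n+1}) = \mb Z[u, v, s]$, excision from $\{0\}$ together with the $\Gm$-torsor $\mb A^{n+1} \setminus \{0\} \to \mb P^n$ yields in degrees $\le n$ a ring isomorphism
\[ \mb Z[u, v, s] \;\cong\; A^\bullet_T(\mb P^n) = \mb Z[u, v, H]/G(H) \]
sending $s \mapsto H$, because $\mb A^{n+1} \setminus \{0\}$ is the frame bundle of $\mc O(-1)$ and the associated line bundle for the standard $\Gm$-character is $\mc O(1)$ (its sections being the degree-one homogeneous functions, i.e.\ the linear forms on $\mb A^{n+1}$). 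Under this isomorphism $[\wt V]_{T \times \Gm}$ maps to $[V] = p(H, u, v)$, while restriction along $T \hookrightarrow T \times \Gm$ (setting $s = 0$) produces $[\wt V]_T = p_0(u, v)$.

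Next I would exploit the cone structure of $\wt V$. The one-parameter subgroup $\kappa \colon \Gm \hookrightarrow T \times \Gm$, $\lambda \mapsto ((\lambda, \lambda), \lambda^{-n})$, acts trivially on $\mb A^{n+1}$ — the diagonal $\Gm \subset T$ scales $\on{Sym}^n K^2$ by $\lambda^n$, cancelling the $\Gm$-scaling by $\lambda^{-n}$ — so the $T \times \Gm$-action on the cone $\wt V$ factors through $(T \times \Gm)/\kappa$. Consequently $[\wt V]_{T \times \Gm}$ lies in the subring of characters trivial on $\kappa$, which over $\mb Q$ is $\mb Q[u + s/n, v + s/n]$. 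Writing $[\wt V] = f(u + s/n, v + s/n)$ for a unique $f \in \mb Q[x, y]$, the specialization $s = 0$ identifies $f = p_0$, and then specializing $s = H$ gives $p(H, u, v) = p_0(u + H/n, v + H/n)$ in $A^\bullet_T(\mb P^n)$, so the $d$ in the statement is the dimension $n$ of the ambient projective space.

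The one subtle point is integrality, since $p_0(u + H/n, v + H/n)$ a priori produces denominators of $n$. This is automatic from the setup: $[\wt V]_{T \times \Gm}$ is by construction an integer polynomial in $\mb Z[u, v, s]$, so its image at $s = H$ lies integrally in $\mb Z[u, v, H]/G(H)$. Concretely, $\mb Z[u, v, s] \cap \mb Q[u + s/n, v + s/n] = \mb Z[u - v, nu + s]$, and it is this integral subring in which $[\wt V]$ actually sits; the denominators are then absorbed by the relation $G(H)$ upon passing to the projective ring.
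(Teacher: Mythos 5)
Your argument is correct and is essentially the paper's own proof sketch: both introduce the auxiliary scaling $\Gm$, observe that its action on $\mb{A}^{n+1}$ is absorbed into the $T$-action (you phrase this via the anti-diagonal cocharacter acting trivially, forcing $[\wt{V}]_{T\times\Gm}$ into the subring $\mb{Q}[u+\tfrac{s}{n},v+\tfrac{s}{n}]$), and then specialize $s=0$ to get $p_0$ and $s=H$ to get $p$. Your identification $d=n$ and the integrality remark are both correct, so nothing further is needed.
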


\begin{proof}[Proof sketch.]As $(\mb{A}^{n+1}\backslash\{0\})/\Gm\cong \mb{P}^n$, $p$ can be computed from $[\wt{V}]\in A_{T\times \Gm}^{\bullet}(\mb{A}^{n+1})$ by mapping to $A_T^{\bullet}(\mb{P}^n)$ via
$$A_{T\times \Gm}^{\bullet}(\mb{A}^{n+1})\to A_{T\times \Gm}^{\bullet}(\mb{A}^{n+1}\backslash\{0\})\cong A_T^{\bullet}(\mb{P}^n).$$ However, the diagonal action of $\Gm$ on $\mb{A}^{n+1}$ actually factors through the action of $T$ on $\mb{A}^{n+1}$, so $A_{T\times \Gm}^{\bullet}(\mb{A}^{n+1})$ contains no more information than $A_T^{\bullet}(\mb{A}^{n+1})$. Taking the class $p_0$ and following it from $A_T^{\bullet}(\mb{A}^{n+1})$ to $A_{T\times \Gm}^{\bullet}(\mb{A}^{n+1})$ and finally to $A_T^{\bullet}(\mb{P}^n)$ yields \Cref{reconstruction}. This argument is written down precisely and in its natural generality in \cite[Theorem 6.1]{FNR05}.
\end{proof}

\section{$PGL_2$ and $GL_2$-equivariant Chow rings}
\label{PGL2}
In this section we compare certain $PGL_2$-equivariant Chow rings to their $GL_2$-equivariant counterparts, which are easier to work with because $GL_2$ is special, so restricting to the maximal torus is an injection on equivariant Chow rings \cite[Proposition 6]{EG98}.

In particular, we show in \Cref{PGL2injective} that $A^{\bullet}_{PGL_2}((\mb{P}^1)^n)\to A^{\bullet}_{GL_2}((\mb{P}^1)^n)$ is injective and identify its image. For the unordered case, we show in \Cref{PGLunordered} that $A^{\bullet}_{PGL_2}(\mb{P}^n)\to A^{\bullet}_{GL_2}(\mb{P}^n)$ is injective for $n$ odd and injective up to $2$-torsion when $n$ is even.

%Hence as we will primarily work with $T$ and $GL_2$-equivariant Chow rings for unordered strata, the relations we find between unordered strata have the caveat that they descend to relations in $A^\bullet({\rm Sym}^n \mc{F})$ for a $\mb{P}^1$-bundle $\mc{F}$ if either $n$ is odd or $\mc{F}$ is the projectivization of a rank $2$ vector bundle, and are true up to $2$-torsion otherwise. There is no such caveat in the ordered case.

To start, we recall a lemma. 
\begin{lem}
[{\cite[Lemma 2.1]{classical}}]
\label{quotientlemma}
Given a linear algebraic group $G$ acting on a smooth variety $X$, let $H$ be a normal subgroup of $G$ that acts freely on $X$ with quotient $X/H$. Then, there is a canonical isomorphism of graded rings
$$A_{G}^{\bullet}(X)\cong A_{G/H}^\bullet(X/H).$$
\end{lem}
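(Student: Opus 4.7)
The plan is to identify both sides of the claimed isomorphism with the Chow ring of a common geometric model via the Edidin--Graham approximation, exploiting the normality of $H$ to realize the quotient stack $[X/G]$ as $[(X/H)/(G/H)]$.

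First, I would fix an integer $k$ and choose a $G$-representation $V$ together with a $G$-stable open subscheme $V^\circ\subset V$ whose complement has codimension $>k$ and on which $G$ acts freely. By the definition of equivariant Chow rings, $A^k_G(X)=A^k\bigl((X\times V^\circ)/G\bigr)$. Since $H$ is normal in $G$ and acts freely on $X$ (and hence on $X\times V^\circ$), the intermediate quotient $Y:=(X\times V^\circ)/H$ is a scheme that inherits a free $G/H$-action with $Y/(G/H)=(X\times V^\circ)/G$. The standard identification $A^\bullet_{\Gamma}(S)=A^\bullet(S/\Gamma)$ for a free action of $\Gamma$ then gives $A^k_G(X)\cong A^k_{G/H}(Y)$.

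Next I would reduce $Y$ to $X/H$ by a vector-bundle argument. The first projection $X\times V^\circ\to X$ descends to a $G/H$-equivariant morphism $Y\to X/H$. Because $X\to X/H$ is a principal $H$-bundle and $V$ is a linear $H$-representation, the associated-bundle construction realizes $X\times^H V\to X/H$ as a $G/H$-equivariant vector bundle, and $Y=X\times^H V^\circ$ is the open subscheme obtained by restricting to $V^\circ\subset V$, whose complement still has codimension $>k$. The equivariant vector-bundle homotopy isomorphism together with equivariant excision give
$$A^j_{G/H}(X/H)\;\xrightarrow{\ \sim\ }\;A^j_{G/H}(X\times^H V)\;\xrightarrow{\ \sim\ }\;A^j_{G/H}(Y)$$
for all $j\le k$. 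Composing with the previous step yields $A^k_G(X)\cong A^k_{G/H}(X/H)$, and since every arrow is a flat pullback or excision, the ring structure is preserved.

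The remaining step is to let $k\to\infty$ by enlarging $V$ and to verify that the resulting isomorphism is canonical, which I would handle by the standard double-approximation trick: two choices $V_1,V_2$ factor compatibly through the common refinement $V_1\oplus V_2$. The main technical point I expect is verifying that $X\times^H V\to X/H$ really is a vector bundle---this is where the freeness of $H$ on $X$ is essential, as it makes $X\to X/H$ an honest $H$-torsor so the linearity of the $H$-action on $V$ promotes the associated-bundle construction to a genuine vector bundle---along with confirming that all intermediate quotients exist as schemes in sufficient generality, which follows from the assumption that $X/H$ does.
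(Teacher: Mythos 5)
Your argument is correct and is precisely the ``directly from the definitions'' proof that the paper attributes to its citation \cite[Lemma 2.1]{classical}: approximate $A^\bullet_G(X)$ by $A^\bullet((X\times V^\circ)/G)$, take the quotient in two stages using normality of $H$, and identify the intermediate quotient $X\times^H V^\circ$ with an open subset of small-codimension complement in the $G/H$-equivariant vector bundle $X\times^H V\to X/H$, so that homotopy invariance and excision finish the comparison. The paper itself gives no proof beyond the citation (its remark only sketches the alternative one-line derivation via $[[X/H]/(G/H)]\cong[X/G]$ and the invariance of $A^\bullet_G(X)$ under presentations of the quotient stack), and you have correctly isolated the one genuine technical point, namely that freeness of the $H$-action makes $X\to X/H$ a torsor so that the associated-bundle construction descends to an honest vector bundle.
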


\begin{rmk}
\Cref{quotientlemma} was proven in \cite[Lemma 2.1]{classical} directly from the definitions, but it can also be seen as a consequence of the fact that the ring $A^{\bullet}_G(X)$ depends only on the quotient stack $[X/G]$ \cite[Proposition 16]{EG98} and $[[X/H]/(G/H)]\cong [X/G]$ (see \cite[Remark 2.4]{Romagny} or \cite[Lemma 4.3]{Shamil}). 
\end{rmk}
\begin{comment}
\begin{lem}[{\cite[Remark 2.4]{Romagny} \cite[Lemma 4.3]{Shamil}}]
\label{doublequotient}
Suppose $G$ is an algebraic group with a normal subgroup $H$ and $G$ acts on a scheme $X$. If the stack $[X/H]$ is an algebraic space then $G/H$ acts on $[X/H]$ and $[[X/H]/(G/H)]\cong [X/G]$. 
\end{lem}

\begin{lem}[{\cite[Proposition 16]{EG98}}]
\label{WTFEG98}
Let $G$ and $H$ be algebraic groups acting on $X$ and $Y$ respectively. If $[X/G]\cong [Y/H]$ as quotient stacks, then $A^{G}_{\bullet+g}(X)\cong A^{H}_{\bullet+h}(Y)$, where $g=\dim(G)$ and $h=\dim(H)$. 
\end{lem}
\end{comment}

\begin{thm}
\label{PGL2injective}
For $n\ge 1$, the ring homomorphism
\begin{align*}
A^{\bullet}_{PGL_2}((\mb{P}^1)^n)&\to A^{\bullet}_{GL_2}((\mb{P}^1)^n)
%A^{\bullet}_{PGL_2}(\mb{P}^n)&\to A^{\bullet}_{GL_2}(\mb{P}^n) False is n is even
\end{align*}
induced by the quotient map $GL_2\to PGL_2$ is an injection, and the image is generated by the classes $-(2H_i+u+v)$ and $\Delta_{i,j}=H_i+H_j+u+v$.
%If $n\geq 3$, the image is precisely the subring of $A^{\bullet}_{GL_2}((\mb{P}^1)^n)$ generated by the classes $[\{i,j\}]$ for $1\leq i<j\leq n$. 
\end{thm}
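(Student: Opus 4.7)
The plan is to reduce the $G$-equivariant Chow ring of $(\mb{P}^1)^n$ to the $T_G$-equivariant Chow ring of $(\mb{P}^1)^{n-1}$ for $G \in \{GL_2, PGL_2\}$ via the Borel quotient $\mb{P}^1 = G/B_G$, and then exploit the splitting $T_{GL_2} \cong T_{PGL_2} \times Z$, where $Z \cong \Gm$ is the center of $GL_2$. This makes both the injectivity and the image identification transparent.

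First, I would verify that $\Delta_{i,j}$ and $-\psi_i$ lie in the image: $\Delta_{i,j}$ is a $PGL_2$-invariant subvariety whose $GL_2$-class is $H_i + H_j + u+v$ by fixed-point localization, and $\psi_i := \pi_i^*c_1(T_{\mb{P}^1}^\vee)$ is defined $PGL_2$-equivariantly (since $T_{\mb{P}^1}$ is $PGL_2$-equivariant) and maps to $-(2H_i + u+v)$. Next, I would apply \Cref{quotientlemma} to the commuting free actions of $G$ and $B_G$ on $(\mb{P}^1)^{n-1} \times G$ to obtain $A^\bullet_G((\mb{P}^1)^n) \cong A^\bullet_{T_G}((\mb{P}^1)^{n-1})$, using that the unipotent radical contributes trivially to Chow. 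Because the character sequence $0 \to X^*(T_{PGL_2}) \to X^*(T_{GL_2}) \to X^*(Z) \to 0$ of free abelian groups splits, $T_{GL_2} \cong T_{PGL_2} \times Z$ as algebraic groups, and since $Z$ acts trivially on $(\mb{P}^1)^{n-1}$ we get
\begin{align*}
A^\bullet_{T_{GL_2}}((\mb{P}^1)^{n-1}) \cong A^\bullet_{T_{PGL_2}}((\mb{P}^1)^{n-1})[s],
\end{align*}
where $s$ generates $X^*(Z)$. The pullback along $T_{GL_2} \to T_{PGL_2}$ is the inclusion of constant-in-$s$ elements, proving injectivity.

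For the image identification, the projective bundle theorem presents $A^\bullet_{T_{PGL_2}}((\mb{P}^1)^{n-1})$ as a $\mb{Z}[\chi]$-algebra (with $\chi = u-v$) generated by twisted hyperplane classes $H'_i = H_i + s$, arising from viewing $\mb{P}^1 = \mb{P}(K^2 \otimes L_{-s})$ as a genuine $T_{PGL_2}$-equivariant projective bundle with weights $0$ and $-\chi$. Tracking through the Borel isomorphism for $GL_2$, which sends $H_n \mapsto -u = -s$ (by collapsing the $n$-th factor to a $T$-fixed point), yields the correspondences $\psi_n \leftrightarrow \chi$ and $\Delta_{i,n} \leftrightarrow H_i + v = H'_i - \chi$, so the image is the subring of $A^\bullet_{GL_2}((\mb{P}^1)^n)$ generated by $\{\psi_n, \Delta_{i,n} : i \leq n-1\}$. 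The identities $\Delta_{i,j} = \Delta_{i,n} + \Delta_{j,n} + \psi_n$ and $-\psi_i = 2\Delta_{i,n} + \psi_n$ (valid in $A^\bullet_{GL_2}((\mb{P}^1)^n)$) then show that the symmetric collection $\{\Delta_{i,j}, -\psi_i\}$ generates the same subring.

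The main obstacle is the bookkeeping: the Borel reduction singles out the $n$-th $\mb{P}^1$ factor, whereas the theorem statement is symmetric in all $n$ factors. Carefully tracking the twist by $L_{-s}$ through the Borel identifications is essential to connect the $T_{PGL_2}$-description back to the symmetric generators on $(\mb{P}^1)^n$, and one must also choose conventions carefully (e.g., $s = u$) so the correspondences above come out correctly.
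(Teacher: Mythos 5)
Your argument is correct, and it reaches the theorem by a genuinely different route from the paper. The paper trades the $PGL_2$-quotient for a $GL_2$-quotient by replacing one $\mb{P}^1$ factor with $\mb{A}^2\setminus\{0\}$, so that the center of $GL_2$ has something to act freely on (via \Cref{quotientlemma}); it then unwinds all the factors to $(\mb{A}^2\setminus\{0\})^n$ with auxiliary $\Gm$-actions and computes the composite ring map explicitly ($u\mapsto u+H_1$, $v\mapsto v+H_1$, $H_i\mapsto H_i-H_1$), reading off injectivity and the generators $2H_1+u+v$ and $H_i-H_1$ by hand. You instead replace one $\mb{P}^1$ factor by $G/B_G$ and reduce to the maximal torus, where the splitting $T_{GL_2}\cong T_{PGL_2}\times Z$ (valid because $X^*(Z)$ is free) and the K\"unneth formula for a trivially acting $\Gm$ make the comparison map literally the split inclusion $R\hookrightarrow R[s]$; injectivity becomes structural rather than computational, which is the main thing your approach buys. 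Your image identification via the twisted bundle $\mb{P}(K^2\otimes L_{-u})$ with weights $0,-\chi$, together with the resymmetrization identities $\Delta_{i,j}=\Delta_{i,n}+\Delta_{j,n}+\psi_n$ and $-\psi_i=2\Delta_{i,n}+\psi_n$, checks out and exactly parallels the paper's final step (the paper distinguishes the first factor where you distinguish the last, and its generators $2H_1+u+v$, $H_i-H_1$ are $-\psi_1$ and $\Delta_{i,1}+\psi_1$). The one point you should make explicit is the compatibility of the two Borel isomorphisms, i.e., the commutativity of the square relating $A^\bullet_{PGL_2}((\mb{P}^1)^n)\cong A^\bullet_{T_{PGL_2}}((\mb{P}^1)^{n-1})$ and $A^\bullet_{GL_2}((\mb{P}^1)^n)\cong A^\bullet_{T_{GL_2}}((\mb{P}^1)^{n-1})$ with the vertical pullbacks along $GL_2\to PGL_2$ and $T_{GL_2}\to T_{PGL_2}$: this is routine and of exactly the same nature as the stack-level diagram chase the paper carries out for its square, but it is the step on which the whole comparison rests, so it should not be left implicit.
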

\begin{rmk}
\label{redundantrmk}
We will show in \Cref{psidelta} that $\psi_i:=\pi_i^*c_1(T^{\vee}\mb{P}^1)=-(2H_i+u+v)$, as mentioned in \Cref{psi-intro}. For $n \ge 3$ this class is redundant as $$-(2H_i+u+v)=\Delta_{j,k}-\Delta_{i,j}-\Delta_{i,k}.$$
\end{rmk}

\begin{comment}
\begin{rmk}
One can interpret \Cref{PGL2injective} as saying $A^{\bullet}_{PGL_2}((\mb{P}^1)^n)$ is precisely the subring of $A^{\bullet}_{GL_2}((\mb{P}^1)^n)$ generated by the classes of the strata in \Cref{orderedloci} for $n\geq 3$. 

For all $n\ge 1$, the subring is generated by $\{H_i-H_1\mid 2\leq i\leq n\}\cup \{2H_1+u+v\}$. 
\end{rmk}
\end{comment}

\begin{proof}
We show the injectivity of $A^{\bullet}_{PGL_2}((\mb{P}^1)^n)\xrightarrow{\iota} A^{\bullet}_{GL_2}((\mb{P}^1)^n)$ using the commutativity of the diagram
\begin{center}
\begin{tikzcd}
A^{\bullet}_{PGL_2}((\mb{P}^1)^n) \ar[r,"\sim", "q_1"']  \ar[d, "\iota"] & A^{\bullet}_{GL_2}((\mb{A}^2\backslash 0)\times (\mb{P}^1)^{n-1}) \ar[d,"f"] \\
A^{\bullet}_{GL_2}((\mb{P}^1)^n) \ar[r,"\sim", "q_2"'] & A^{\bullet}_{GL_2\times \Gm}((\mb{A}^2\backslash\{0\})\times(\mb{P}^1)^{n-1}) 
\end{tikzcd}
\end{center}
with $f$ induced by the multiplication map $GL_2 \times \Gm \to GL_2$.

%The isomorphisms $q_1$ and $q_2$ are from the fact that given an algebraic group $G$ acting on a scheme $X$, the equivariant chow $A_{\bullet}^G(X)$ depends only on the stack $[X/G]$. 
%We have the isomorphism $[(\mb{A}^2\backslash 0)\times (\mb{P}^1)^{n-1}/\Gm]\cong (\mb{P}^1)^n$ by \Cref{doublequotient}, so we have the isomorphisms $q_1$ and $q_2$ by \Cref{WTFEG98}. %\marginpar{WTF is EG98\\Dennis: fixed}
We have the isomorphisms $q_1$ and $q_2$ by \Cref{quotientlemma}. 

To prove commutativity of the diagram, we can identify each of the rings $A^\bullet_G(X)$ with $A^\bullet([X/G])$ as in \Cref{eqinttheory}, so it suffices to show the following diagram of stacks is commutative.%will use the fact that given an algebraic group $G$ acting on a scheme $X$, $A_{G}^{i}(X)$ is equivalent to maps $A_{\bullet}(S)\to A_{\bullet-i}(S)$ for each map $S\to [X/G]$ compatible with proper pushforward, flat pullbacks, and intersection products for maps of schemes into $[X/G]$ \cite[Proposition 19]{EG98}. Therefore, it suffices to show the following diagrams of stacks is commutative
\begin{center}
\begin{tikzcd}
\left[(\mb{P}^1)^n/PGL_2\right]  & \left[(\mb{A}^2\backslash\{0\})\times(\mb{P}^1)^{n-1}/GL_2\right] \ar[l,"\sim",swap] \\
\left[(\mb{P}^1)^n/GL_2\right]  \ar[u] & \left[(\mb{A}^2\backslash\{0\})\times(\mb{P}^1)^{n-1}/(GL_2\times \Gm)\right] \ar[u] \ar[l,"\sim",swap]
\end{tikzcd}
\end{center}
Suppose we start with a principal $GL_2\times \Gm$-bundle $P\to S$ together with a $GL_2\times \Gm$-equivariant map $P\to (\mb{A}^2\backslash\{0\})\times (\mb{P}^1)^{n-1}$, giving a map $S\to [(\mb{A}^2\backslash\{0\})\times(\mb{P}^1)^{n-1}/(GL_2\times \Gm)]$. Following the diagram around clockwise or counterclockwise, we get a map $S\to [(\mb{P}^1)^n/PGL_2]$ given by a $PGL_2$-equivariant morphism
$$P\times^{GL_2\times \Gm} GL_2\times^{GL_2}PGL_2\cong P\times^{GL_2\times \Gm}PGL_2\to (\mb{P}^1)^n.$$ When going counterclockwise, the product $P\times^{GL_2\times \Gm} GL_2$ is taken with respect to the multiplication map $GL_2\times \Gm\to GL_2$, while when going clockwise, the product is taken with respect to the projection map $GL_2\times \Gm\to GL_2$. However, the resulting principal $PGL_2$-bundle is the same as the compositions with the quotient $GL_2\to PGL_2$ are identical. 

Now, we will find the induced map 
$$A^{\bullet}_{GL_2}((\mb{A}^2\backslash 0)\times (\mb{P}^1)^{n-1})\to A^{\bullet}_{GL_2}((\mb{P}^1)^n)$$
in terms of generators and show it is injective. Consider the diagram
\begin{center}
\begin{tikzcd}
A^{\bullet}_{GL_2}((\mb{A}^2\backslash 0)\times (\mb{P}^1)^{n-1}) \ar[r,"f"] \ar[d,"\sim"]& A^{\bullet}_{GL_2\times \Gm}((\mb{A}^2\backslash\{0\})\times(\mb{P}^1)^{n-1})\ar[d,"\sim"]  \ar[r,"q_2","\sim"']& A^{\bullet}_{GL_2}((\mb{P}^1)^n)\ar[d,"\sim"]\\
A^{\bullet}_{GL_2\times (\Gm)^{n-1}}((\mb{A}^2\backslash 0)^{n}) \ar[r, "f'"] & A^{\bullet}_{GL_2\times (\Gm)^{n}}((\mb{A}^2\backslash\{0\})^n) \ar[r, "q_2'","\sim"'] & A^{\bullet}_{GL_2\times (\Gm)^{n}}((\mb{A}^2\backslash\{0\})^n)
\end{tikzcd}
\end{center}
where $GL_2$ acts in the standard way in all cases. In the middle term of the top row, $\Gm$ acts by scaling $\mb{A}^2\setminus\{0\}$. In the last term of the second row, $(\Gm)^n$ acts by having the $i$th copy of $\Gm$ scale the $i$th copy of $\mb{A}^2\setminus\{0\}$. In the middle term of the second row, $(\Gm)^n$ acts by having the first copy of $\Gm$ act by scaling all copies of $\mb{A}^2\setminus\{0\}$ and the $i$th copy of $\Gm$ with $2 \le i \le n$ acting by scaling the $i$th copy of $\mb{A}^2\setminus\{0\}$. In the first term of the second row, the $i$th copy of $\Gm^{n-1}$ scales the $i+1$st copy of $\mb{A}^2\setminus\{0\}$. 

%every map in the bottom row is induced by group maps, the map $f'$ is induced by the multiplication map $GL_2\times \Gm\to GL_2$, the last $n-1$ $\Gm$'s in the bottom three scale rings act by scaling the respective copies of $\mb{A}^2\setminus\{0\}$, the first $\Gm$ in the domain of $q_2'$ scales all coordinates in $(\mb{A}^2\setminus\{0\})^n$ and the first $\Gm$ in the codomain of $q_2'$ scales the first copy of $\mb{A}^2\setminus\{0\}$.
To compute $f'$, we let $H_1$ be the standard character on the first factor of $\Gm$ in $GL_2\times(\Gm)^{n}$ and let $H_2,\ldots,H_n$ be the standard characters on the remaining $n-1$ factors and the $n-1$ factors of $\Gm$ in $GL_2\times(\Gm)^{n-1}$.  The induced map $T\times (\Gm)^n\to T\times (\Gm)^{n-1}$ of tori induces $u\mapsto u+H_1$ and $v\mapsto v+H_1$. Therefore, 
$$f': \frac{\mb{Z}[u,v]^{S_2}[H_2,\ldots,H_n]}{(uv,F(H_2),\ldots, F(H_n))}\to \frac{\mb{Z}[u,v]^{S_2}[H_1][H_2,\ldots,H_n]}{(uv,F(H_2+H_1),\ldots, F(H_n+H_1))},$$
where  $u\mapsto u+H_1$, $v\mapsto v+H_1$, and $H_i\mapsto H_i$. 
%The map $f: A^{\bullet}_{GL_2}((\mb{A}^2\backslash 0)\times (\mb{P}^1)^{n-1})\to A^{\bullet}_{GL_2\times \Gm}((\mb{A}^2\backslash\{0\})\times(\mb{P}^1)^{n-1})$ is given by the multiplication map $GL_2\times \Gm\to GL_2$, where $\Gm$ is identified with the scalar matrices in $GL_2$. If we let $t$ be the standard character on $\Gm$, then the induced map $T\times \Gm\to T$ of tori induces $u\mapsto u+t$ and $v\mapsto v+t$. Therefore, $f$ is the injective ring map
%$$\frac{\mb{Z}[u,v]^{S_2}[H_2,\ldots,H_n]}{(uv,F(H_2),\ldots, F(H_n))}\to \frac{\mb{Z}[u,v]^{S_2}[t][H_2,\ldots,H_n]}{(uv,F(H_2+t),\ldots, F(H_n+t))},$$
%where  $u\mapsto u+t$, $v\mapsto v+t$, and $H_i\mapsto H_i$. 

For $q_2'$, the induced map $T\times (\Gm)^n\to T\times (\Gm)^{n}$ of tori induces $H_1\mapsto H_1$, $H_i\mapsto H_i-H_1$ for $2\leq i\leq n$ and $u\mapsto u$, $v\mapsto v$, and gives the map $$q_2': 
\frac{\mb{Z}[u,v]^{S_2}[H_1][H_2,\ldots,H_n]}{(uv,F(H_2+H_1),\ldots, F(H_n+H_1))}\to \frac{\mb{Z}[u,v]^{S_2}[H_1,\ldots,H_n]}{(F(H_1),\ldots, F(H_n))}.$$ The composite 
$$q_2'\circ f': \frac{\mb{Z}[u,v]^{S_2}[H_2,\ldots,H_n]}{(uv,F(H_2),\ldots, F(H_n))}\to \frac{\mb{Z}[u,v]^{S_2}[H_1,\ldots,H_n]}{(F(H_1),\ldots, F(H_n))}$$
is given by $u\mapsto u+H_1$, $v\mapsto v+H_1$, $H_i\mapsto H_i-H_1$ for $2\leq i\leq n$. The image is therefore generated by $2H_1+u+v$ and $H_i-H_1$ for $2\leq i\leq n$. If $n\geq 3$, then this is generated by the collection 
$$\{H_i+H_j+u+v\mid 1\leq i<j\leq n\}=\{\Delta_{i,j}\mid 1\leq i<j\leq n\}$$
(see \Cref{psidelta}).
\end{proof}

\begin{rmk}
\label{equivariantcohomology}
Suppose our base field is $\mb{C}$. We have a commutative diagram
\begin{center}
\begin{tikzcd}
A^{\bullet}_{PGL_2}((\mb{P}^1)^{n}) \ar[r, hook,"q_1"]\ar[d] & A^{\bullet}_{GL_2}((\mb{P}^1)^{n}) \ar[d]\\
H^{\bullet}_{PGL_2}((\mb{P}^1)^{n}) \ar[r,"q^H_1"'] & H^{\bullet}_{GL_2}((\mb{P}^1)^{n})
\end{tikzcd}
\end{center}
The map $A^{\bullet}_{GL_2}((\mb{P}^1)^{n})\to H^{\bullet}_{GL_2}((\mb{P}^1)^{n})$ is an isomorphism by the Leray-Hirsch theorem applied to $\mb{P}^1_{\mb{C}}$-bundles. Running the proof of \Cref{PGL2injective} for the map $q_1^H$ shows $q_1^H$ is injective. Here we replace the projective bundle theorem in algebraic geometry by the Leray-Hirsch theorem applied to $\mb{P}^1_{\mb{C}}$-bundles and the application of \Cref{quotientlemma}
%\Cref{doublequotient,WTFEG98} 
with the fact that if $G$ acts on $X$ and $H$ is a normal subgroup which acts freely, then $(X\times EG)/G\cong ((X\times EG)/H)/(G/H)$, and $(X\times EG)/H$ is homotopy equivalent to $X/H$ and has a free action by $G/H$. 

This implies $A^{\bullet}_{PGL_2}((\mb{P}^1)^{n})\to H^{\bullet}_{PGL_2}((\mb{P}^1)^{n}) $ is an isomorphism.
\end{rmk}

By \cite[Theorem 1]{Pandharipande}, the injection $SO(3)\to GL_3$ induces a surjection $A^{\bullet}_{GL_3}(\pt)\to A^{\bullet}_{SO(3)}(\pt)$ expressing $A^\bullet_{SO(3)}(\pt)\cong \mb{Z}[c_1,c_2,c_3]/(c_1,2c_3)$, where $c_1,c_2,c_3$ are the generators of $A^{\bullet}_{GL_3}(\pt)$. \Cref{PGLGL} expresses the map $A^{\bullet}_{PGL_2}(\pt)\to A^{\bullet}_{GL_2}(\pt)$ in terms of this presentation. 
\begin{lem}
\label{PGLGL}
Under the composition, 
$$A^{\bullet}_{GL_3}(\pt)\to A^{\bullet}_{SO(3)}(\pt)\cong A^{\bullet}_{PGL_2}(\pt)\to A^{\bullet}_{GL_2}(\pt)\to A^{\bullet}_T(\pt),$$
we have $c_1\mapsto 0$, $c_2\mapsto -(u-v)^2$, $c_3\mapsto 0$. 
\end{lem}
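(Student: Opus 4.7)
My plan is to trace through the composite homomorphism of algebraic groups
$$T\hookrightarrow GL_2 \twoheadrightarrow PGL_2 \xrightarrow{\sim} SO(3)\hookrightarrow GL_3,$$
which is what induces (contravariantly) the stated composition on equivariant Chow rings of a point. The pullback of the universal Chern class $c_i\in A^\bullet_{GL_3}(\pt)$ along this composite equals the $i$-th Chern class of the resulting $3$-dimensional representation of $T$, i.e. the $i$-th elementary symmetric polynomial in its characters. So the problem reduces to identifying that representation and reading off its $T$-weights.

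The isomorphism $PGL_2\xrightarrow{\sim} SO(3)$ used in Pandharipande's presentation is realized by the adjoint representation of $PGL_2$ on its Lie algebra $\mathfrak{sl}_2$, with the invariant symmetric form $(X,Y)\mapsto \on{tr}(XY)$ making the adjoint action factor through $SO(\mathfrak{sl}_2)\cong SO(3)$. Therefore the composite $PGL_2\to GL_3$ coincides with $\on{Ad}:PGL_2\to GL(\mathfrak{sl}_2)$, and its further pullback along $GL_2\twoheadrightarrow PGL_2$ is simply the conjugation action of $GL_2$ on trace-zero $2\times 2$ matrices. Restricting to $T$ and using the basis $\{E_{12},\,E_{21},\,E_{11}-E_{22}\}$ of $\mathfrak{sl}_2$, direct computation shows these vectors are $T$-eigenvectors of characters $u-v$, $v-u$, and $0$ respectively.

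Hence the total Chern class of the $T$-representation is
$$(1+0)\bigl(1+(u-v)\bigr)\bigl(1+(v-u)\bigr)=1-(u-v)^2,$$
giving $c_1\mapsto 0$, $c_2\mapsto -(u-v)^2$, $c_3\mapsto 0$, as claimed.

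The only real subtlety is ensuring that the identification $A^\bullet_{SO(3)}(\pt)\cong A^\bullet_{PGL_2}(\pt)$ from \cite{Pandharipande} is indeed realized by the adjoint representation, rather than by some other (a priori twisted) embedding; once this is checked against Pandharipande's setup, the remaining argument is a direct computation with torus characters and requires no further input.
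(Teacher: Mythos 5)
Your proposal is correct and follows essentially the same route as the paper: both realize $PGL_2\cong SO(3)$ via the conjugation action on trace-zero $2\times 2$ matrices equipped with the trace form, and both read off the $T$-weights $u-v$, $v-u$, $0$ to conclude $c_1\mapsto 0$, $c_2\mapsto -(u-v)^2$, $c_3\mapsto 0$. The "subtlety" you flag is handled the same way in the paper, which simply takes this conjugation action as the definition of the isomorphism $SO(3)\cong PGL_2$.
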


\begin{proof}
\Cref{PGLGL} amounts to finding the map $$T\to GL_2\to PGL_2\cong SO(3)\to GL_3$$ 
inducing the maps of rings.

To describe the isomorphism $SO(3)\cong PGL_2$, recall that $GL_2$ acts on the space $K^{2\times 2}$ of 2 by 2 matrices by conjugation. There is a pairing $\langle \bullet , \bullet \rangle$ on $K^{2\times 2}$ given by $\langle A,B\rangle=\on{Tr}(AB)$ that restricts to a nondegenerate form on the three-dimensional vector space of trace zero matrices $V\subset K^{2\times 2}$. Since the action of $GL_2$ preserves $\langle \bullet , \bullet \rangle$ and the scalar matrices inside $GL_2$ act trivially, we have an injection $PGL_2\to SO(3)$, which is an isomorphism for dimension reasons. 

Under this isomorphism, $\begin{pmatrix} u & 0 \\ 0 & v\end{pmatrix}\in T$ maps into diagonal matrices in $GL_3$ and acts on $V$ with characters $u-v$, $v-u$ and 0 (written additively). Therefore, 
\begin{align*}
c_1&\mapsto (u-v)+(v-u)=0\\
c_2&\mapsto (u-v)(v-u)=-(u-v)^2\\
c_3&\mapsto 0(u-v)(v-u)=0. 
\end{align*}
\end{proof}

\begin{prop}
\label{PGLunordered}
We have
\begin{align*}
A^{\bullet}_{PGL_2}(\mb{P}^n) &\cong 
\begin{cases}
\mb{Z}[u,v]^{S_2}/(\prod_{i=0}^{n}((\frac{n+1}{2}-i)u+(\frac{-n+1}{2}+i)v)) &\qquad\text{if $n$ is odd}\\
\mb{Z}[c_2,c_3,H]/(2c_3,p_n(H))&\qquad\text{if $n$ is even}
\end{cases}
\end{align*}
where $p_n(t)\in A^{\bullet}_{PGL_2}(\pt)[t]$ is defined as
\begin{align*}
p_n(t) &=
\begin{cases}
t\prod_{k=1}^{\frac{n}{2}}(t^2+k^2c_2)+t^{\frac{n}{4}+1}\sum_{k=1}^{\frac{n}{4}}{\binom{\frac{n}{4}}{k}(t^3+c_2t)^{\frac{n}{4}-k}c_3^k}&\quad n\equiv 0\pmod{4}\\
t\prod_{k=1}^{\frac{n}{2}}{(t^2+k^2c_2)}+t^{\frac{n-2}{4}}\prod_{k=1}^{\frac{n+2}{4}}{\binom{\frac{n+2}{4}}{k}(t^3+c_2t)^{\frac{n+2}{4}-k}c_3^k}&\quad n\equiv 2\pmod{4}.
\end{cases}
\end{align*}
The map 
\begin{align*}
A^{\bullet}_{PGL_2}(\mb{P}^n)\to A^{\bullet}_{GL_2}(\mb{P}^n)
\end{align*}
induced by $GL_2\to PGL_2$ is given by 
\begin{alignat*}{2}
&u\mapsto H+\frac{n+1}{2}u+\frac{n-1}{2}v \quad v\mapsto H+\frac{n-1}{2}u+\frac{n+1}{2}v &\qquad\text{if $n$ is odd, and}\\
&c_2\mapsto -(u-v)^2 \quad c_3\mapsto 0\quad H\mapsto H+\frac{n}{2}(u+v) &\qquad\text{if $n$ is even.}
\end{alignat*}
Finally, $A^{\bullet}_{PGL_2}(\mb{P}^n)\to A^{\bullet}_{GL_2}(\mb{P}^n)$ is injective for $n$ odd and injective up to $2$-torsion when $n$ is even.
\end{prop}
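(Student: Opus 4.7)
The plan is to address the two cases ($n$ odd, $n$ even) separately, each hinging on the existence of a suitable $GL_2$-equivariant line bundle on $\mb{P}^n$.

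For $n$ odd, I would consider the $GL_2$-equivariant line bundle $L = \mc{O}_{\mb{P}^n}(-1)\otimes\det^{(1-n)/2}$. The center $\Gm\subset GL_2$ acts on $\mc{O}(-1)$ by $\lambda^n$ and on $\det^{(1-n)/2}$ by $\lambda^{1-n}$, so the center acts on $L$ as the identity character $\lambda\mapsto\lambda$. This trivializes the $B\Gm$-gerbe $[\mb{P}^n/GL_2]\to [\mb{P}^n/PGL_2]$ and yields an equivalence of stacks $[\mb{P}^n/GL_2]\cong [\mb{P}^n/PGL_2]\times B\Gm$. Applying the K\"unneth formula produces $A^\bullet_{GL_2}(\mb{P}^n)\cong A^\bullet_{PGL_2}(\mb{P}^n)[t_\Gm]$ as graded rings with $t_\Gm = c_1(L) = -H + \tfrac{1-n}{2}(u+v)$. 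This immediately yields injectivity of the map to $A^\bullet_{GL_2}$, and the presentation follows by setting $t_\Gm=0$, i.e., substituting $H = \tfrac{1-n}{2}(u+v)$ into $G(H) = 0$ and reindexing the resulting product to match the stated relation. The formulas $u\mapsto H + \tfrac{n+1}{2}u + \tfrac{n-1}{2}v$ and $v\mapsto H + \tfrac{n-1}{2}u + \tfrac{n+1}{2}v$ are then read off by computing the lifts $s_1 = (u+v) - 2t_\Gm$ and $s_2 = uv - (u+v)t_\Gm + t_\Gm^2$ of the $S_2$-invariant generators.

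For $n$ even, I would instead use that $\mc{O}_{\mb{P}^n}(-1)\otimes\det^{-n/2}$ is $PGL_2$-equivariant (the center acts trivially), so $V:={\rm Sym}^n K^2 \otimes \det^{-n/2}$ is a genuine $PGL_2$-representation of dimension $n+1$ with $\mb{P}(V)\cong \mb{P}^n$. The projective bundle formula then gives $A^\bullet_{PGL_2}(\mb{P}^n) = A^\bullet(BPGL_2)[H']/(p_n(H'))$ with $H' = c_1(\mc{O}_{\mb{P}(V)}(1))$ corresponding to $H + \tfrac{n}{2}(u+v)$ under the map to $A^\bullet_{GL_2}(\mb{P}^n)$, and $p_n$ the integral Chern polynomial of $V$ in $A^\bullet(BPGL_2) = \mb{Z}[c_2,c_3]/(2c_3)$. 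Restricting to the maximal torus $T_{PGL_2}$ (where $V$ has weights $(k-n/2)t$ for $k=0,\ldots,n$) recovers the rational main term $H'\prod_{k=1}^{n/2}(H'^2 + k^2 c_2)$; the 2-torsion $c_3$-correction terms (invisible on the torus) would then be obtained by a Clebsch-Gordan–type decomposition of $V$ in terms of the adjoint representation (whose Chern classes are precisely $c_2, c_3$), with the case distinction $n\bmod 4$ reflecting different decomposition patterns. Injectivity up to 2-torsion would be verified by checking that the quotient $A^\bullet_{PGL_2}(\mb{P}^n)/(c_3)\to A^\bullet_{GL_2}(\mb{P}^n)$ is injective: under $c_2\mapsto -(u-v)^2$, $H'\mapsto H+\tfrac{n}{2}(u+v)$, the rational main term of $p_n$ maps exactly to $G(H) = \prod_{k=0}^n (H + ku + (n-k)v)$, so Hilbert series considerations force the map to be injective.

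The hardest step will be determining the integral $c_3$-corrections in $p_n$ for $n$ even, since torus restriction only pins down the rational main term. Nailing down the explicit combinatorial formulas in the stated two cases ($n\equiv 0,2\pmod 4$) requires carefully tracking how $V$ interacts with the adjoint representation modulo $2c_3 = 0$, and this representation-theoretic bookkeeping is where the main work of the proof lies.
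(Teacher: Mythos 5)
Your overall strategy matches the paper's. For $n$ even you use exactly the paper's device: the twist $V={\rm Sym}^nK^2\otimes\det^{-n/2}$ descends to a genuine $PGL_2$-representation, and the projective bundle theorem gives $A^\bullet_{PGL_2}(\mb{P}^n)\cong \mb{Z}[c_2,c_3,H']/(2c_3,p_n(H'))$ with $H'\mapsto H+\tfrac{n}{2}(u+v)$ and $c_2\mapsto-(u-v)^2$, $c_3\mapsto 0$ (the latter computed in the paper via the identification $PGL_2\cong SO(3)$ acting on trace-zero matrices). For $n$ odd your gerbe-trivialization via $L=\mc{O}(-1)\otimes\det^{(1-n)/2}$ is a clean repackaging of the paper's argument, which instead passes to the affine cone and uses Vistoli's isomorphism $GL_2/\mu_n\cong GL_2$, $A\mapsto(\det A)^{(n-1)/2}A$; these are the same underlying untwisting, and your lifts $s_1=(u+v)-2t_{\Gm}$, $s_2=uv-(u+v)t_{\Gm}+t_{\Gm}^2$ correctly reproduce the stated formulas. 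Your verification of injectivity (resp. injectivity up to $2$-torsion) is also sound and is in fact more detail than the paper gives.

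The genuine gap is the one you flag yourself: the explicit integral formula for $p_n(t)$ when $n$ is even. Torus restriction for $PGL_2$ is only rationally injective, so it pins down only the term $t\prod_{k=1}^{n/2}(t^2+k^2c_2)$; the $2$-torsion $c_3$-corrections, and in particular the case split between $n\equiv 0$ and $n\equiv 2\pmod 4$, are precisely what the proposition asserts and precisely what your "Clebsch--Gordan--type decomposition \ldots representation-theoretic bookkeeping" leaves undone. Since $p_n$ appears verbatim in the statement being proved, this is not a detail that can be deferred. The paper does not carry out this computation either: it imports the Chern classes of $({\rm Sym}^nV)\otimes(D^{\vee})^{\otimes n/2}$ as a $PGL_2$-equivariant bundle over a point from \cite[Corollary 6.3]{FV11} (with a caveat about errors elsewhere in that reference). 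To complete your proof you would either need to cite that computation or actually execute the mod-$2c_3$ decomposition you sketch; as written, the hardest and most specific claim of the proposition is asserted rather than established.
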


\begin{proof}
The injectivity statements immediately follow from the explicit descriptions of all of the rings maps in the statement of \Cref{PGLunordered}, we omit the verification.

We do the cases $n$ is odd and even separately. First suppose $n$ is odd. Consider the commutative diagram
\begin{center}
\begin{tikzcd}
 & A^\bullet_{PGL_2}(\mb{P}^n) \ar[r]\ar[d,"\sim"] & A^\bullet_{GL_2}(\mb{P}^n)\ar[d,"\sim"]\\
A^\bullet_{GL_2}(\mb{A}^{n+1}\setminus\{0\}) \ar[r,"\phi_1"',"\sim"] & A^\bullet_{GL_2/\mu_{n}}(\mb{A}^{n+1}\setminus\{0\}) \ar[r, "\phi_2"] & A^\bullet_{GL_2\times \Gm}(\mb{A}^{n+1}\setminus \{0\})
\end{tikzcd}
\end{center}
The map $\phi_1$ is induced by the isomorphism $GL_2/\mu_n \to GL_2$ given by $A\mapsto (\det A)^{\frac{n-1}{2}}A$ \cite[Proposition 4.4]{Vistoli}. To determine $A^\bullet_{GL_2}(\mb{A}^{n+1}\setminus\{0\}) $ it suffices to check how the maximal torus $T\subset GL_2$ acts on $\mb{A}^{n+1}$. Since the inverse of $GL_2\to GL_2/\mu_n$ is given by $A\mapsto (\det A)^{\frac{1-n}{2n}} A$, $\begin{pmatrix} \lambda_1 & \\ & \lambda_2\end{pmatrix}$ maps to $\begin{pmatrix} \lambda_1^{\frac{n+1}{2n}}\lambda_2^{\frac{1-n}{2n}} & \\ & \lambda_1^{\frac{1-n}{2n}}\lambda_2^{\frac{n+1}{2n}}\end{pmatrix}$ in $GL_2/\mu_{n}$ and acts on $\mb{A}^{n+1}$ with characters $\{(\frac{n+1}{2}-i)u+(\frac{-n+1}{2}+i)v\mid 0\leq i\leq n\}$. This shows  
\begin{align*}
A^{\bullet}_{PGL_2}(\mb{P}^n) &= \mb{Z}[u,v]^{S_2}/(\prod_{i=0}^{n}((\frac{n+1}{2}-i)u+(\frac{-n+1}{2}+i)v))
\end{align*}
in this case. 

To find the map $A^\bullet_{GL_2}(\mb{A}^{n+1}\setminus\{0\})\to A^\bullet_{GL_2\times \Gm}(\mb{A}^{n+1}\setminus \{0\})$, we consider the map $GL_2\times \Gm\to GL_2$ and find it maps the pair $\begin{pmatrix} \lambda_1 & \\ & \lambda_2\end{pmatrix},\lambda$ to $\lambda^{\frac{1}{n}} \begin{pmatrix} \lambda_1 & \\ & \lambda_2\end{pmatrix}$ in $GL_2/\mu_n$ and $\begin{pmatrix} \lambda \lambda_1^{\frac{n+1}{2}}\lambda_2^{\frac{n-1}{2}} & \\ & \lambda \lambda_1^{\frac{n-1}{2}}\lambda_2^{\frac{n+1}{2}}\end{pmatrix}$ in $GL_2$. This shows the map 
\begin{align*}
\mb{Z}[u,v]^{S_2}/(\prod_{i=0}^{n}((\frac{n+1}{2}-i)u+(\frac{-n+1}{2}+i)v))\to \mb{Z}[u,v]^{S_2}[H]/(\prod_{i=0}^{n}{(H+iu+(n-i)v)})
\end{align*}
giving $A^\bullet_{GL_2}(\mb{A}^{n+1}\setminus\{0\})\to A^\bullet_{GL_2\times \Gm}(\mb{A}^{n+1}\setminus \{0\})$ is given by 
\begin{align*}
u\mapsto H+\frac{n+1}{2}u+\frac{n-1}{2}v \quad v\mapsto H+\frac{n-1}{2}u+\frac{n+1}{2}v.
\end{align*}
Now, we do the case $n$ is even. Let $V\cong K^2$ be a 2-dimensional vector space with the standard representation of $GL_2$. Let $D\cong K$ be a 1-dimensional vector space where $GL_2$ acts by multiplication by the determinant. Then, $({\rm Sym}^nV)\otimes (D^{\vee})^{\otimes n}$ is a $GL_2$ representation that descends to a $PGL_2$ representation. 

To determine 
$$A^\bullet_{PGL_2}(\mb{P}^n)\cong A^\bullet_{PGL_2}(\mb{P}(({\rm Sym}^nV)\otimes (D^{\vee})^{\otimes n/2}))$$
it suffices to find the chern classes of the $PGL_2$ representation $({\rm Sym}^nV)\otimes (D^{\vee})^{\otimes n/2}$ regarded as a $PGL_2$-equivariant vector bundle over a point. These chern classes are given in \cite[Corollary 6.3]{FV11}. The reader should also note that \cite{FV11} contains mistakes elsewhere in the document (see \cite[Introduction]{Lorenzo}). As a result, we have $A^{\bullet}_{PGL_2}(\mb{P}^n)$ is $\mb{Z}[c_2,c_3,H]/(2c_3,p_n(H))$, where $p_n(t)\in A_{PGL_2}(\pt)[t]$ is given as in the statement of the proposition. 

Therefore, we have 
$$A^\bullet_{PGL_2}(\mb{P}(({\rm Sym}^nV)\otimes (D^{\vee})^{\otimes n/2}))\to A^\bullet_{GL_2}(\mb{P}(({\rm Sym}^nV)\otimes (D^{\vee})^{\otimes n/2}))$$
given by $c_2\mapsto -(u-v)^2$ and $c_3\mapsto 0$ by \Cref{PGLGL}. Also, the $\mathscr{O}_{\mb{P}(({\rm Sym}^nV)\otimes (D^{\vee})^{\otimes n/2})}(1)$ class in $A^\bullet_{PGL_2}(\mb{P}(({\rm Sym}^nV)\otimes (D^{\vee})^{\otimes n/2})$ maps to the $\mathscr{O}_{\mb{P}(({\rm Sym}^nV)\otimes (D^{\vee})^{\otimes n})}(1)$ class in $A^\bullet_{GL_2}(\mb{P}(({\rm Sym}^nV)\otimes (D^{\vee})^{\otimes n/2})$ by the projective bundle formula. 

Finally, since $({\rm Sym}^nV)\otimes (D^{\vee})^{\otimes n/2}$ is a twist of ${\rm Sym}^nV$ by a $GL_2$-equivariant line bundle, the $\mathscr{O}_{\mb{P}(({\rm Sym}^nV)\otimes (D^{\vee})^{\otimes n/2})}(1)$ class in $A^\bullet_{GL_2}(\mb{P}(({\rm Sym}^nV)\otimes (D^{\vee})^{\otimes n/2})$ maps to $\mathscr{O}_{\mb{P}({\rm Sym}^nV)}(1)+c_1^{GL_2}(D^{\otimes \frac{n}{2}})$ in $A^\bullet_{GL_2}(\mb{P}({\rm Sym}^nV)^{\otimes n/2})$. Since $c_1^{GL_2}(D^{\otimes n/2})=\frac{n}{2}(u+v)$, we find the composite map 
$$A^\bullet_{PGL_2}(\mb{P}(({\rm Sym}^nV)\otimes (D^{\vee})^{\otimes n/2})\to A^\bullet_{GL_2}(\mb{P}({\rm Sym}^nV))$$
is given by 
$$c_2\mapsto -(u-v)^2 \quad c_3\mapsto 0\quad H\mapsto H+\frac{n}{2}(u+v).$$
%In this case, we note that the $PGL_2$ action on $\mb{P}^n$ lifts to a representation on $\mb{A}^{n+1}$ (regarded as an $n+1$-dimensional vector space), as, under the natural $SL_2$ action on $\mb{A}^{n+1}$ induced by the inclusion $SL_2\subset GL_2$, the subgroup $\{\pm I_2\}\subset SL_2$ acts trivially. 

%Therefore, $A^\bullet_{PGL_2}(\mb{P}^n)$ is given by the projective formula once we know the chern classes of the vector bundle over a point given by the $PGL_2$ action on $\mb{A}^{n+1}$. These chern classes are given in \cite[Corollary 6.3]{FV11}. The reader should also note that \cite{FV11} contains mistakes (see \cite[Introduction]{Lorenzo}). As a result, we have $A^{\bullet}_{PGL_2}(\mb{P}^n)$ is $\mb{Z}[c_2,c_3,H]/(2c_3,p_n(H))$, where $p_n(t)\in A_{PGL_2}(\pt)[t]$ is given as in the statement of the proposition. 
\end{proof}

\section{Formulas and initial reductions}

In this section we express the $\Delta_P$ and $[\lambda]$ classes in terms of our equivariant Chow ring presentations. 

After this, we compute formulas for $\Delta_P\in A^\bullet_T((\mb{P}^1)^n)$ and give a quick, alternative computation of the classes $[Z_\lambda]\in A^\bullet_T(\mb{P}^n)$ given in \cite[Theorem 3.4]{FNR06}. The simple presentation for the class of the diagonal in $(\mb{P}^1)^n$ works especially well with the formula for the pushforward $\Phi_*:A^\bullet_T((\mb{P}^1)^n) \to A^\bullet_T(\mb{P}^n)$ via the classes of torus fixed points, and appears not to have been previously exploited in this fashion.
%\Cref{PGL2injective} shows that we lose no information in the ordered case by expressing these classes with $T$ and $GL_2$-equivariant Chow classes/rings.%We were not able to find the presentation in \Cref{diagonal} in the literature, which yields especially compact presentations for \Cref{FormulaP} and \Cref{FNRformula} that are crucial for all subsequent results in this paper.

\subsection{Class of the diagonal in $(\mb{P}^1)^n$}
We now compute the $T$-equivariant class of the diagonal $\Delta_{\{[n]\}} \subset (\mb{P}^1)^n$. This formula would also follow from localization to the torus fixed points, but the derivation below is simpler.
\begin{prop}
\label{diagonal}
The class of $\Delta_{\{[n]\}}$ in $A^\bullet_T((\mb{P}^1)^n)$ is given by
$$\Delta_{\{[n]\}}=\frac{1}{u-v}\left(\displaystyle\prod_{i=1}^{n} (H_i+u)-\displaystyle\prod_{i=1}^{n} (H_i+v)\right).$$
\end{prop}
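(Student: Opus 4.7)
My plan is to prove the formula by induction on $n$. For the base case $n=1$, both sides equal $1$: the left-hand side is the class of all of $\mb{P}^1$, and the right-hand side is $\frac{(H_1+u)-(H_1+v)}{u-v}=1$. (If one prefers to bootstrap at $n=2$, note that the right-hand side expands to $H_1+H_2+u+v$, which matches the known class of the pairwise diagonal.)

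For the inductive step I would first observe that $\Delta_{\{[n]\}}$ is the scheme-theoretic intersection
$$\Delta_{\{[n]\}}=\Delta_{\{[n-1]\}}\cap\Delta_{\{n-1,n\}}$$
inside $(\mb{P}^1)^n$, and that this intersection is transverse: the codimensions add correctly $(n-2)+1=n-1$, and a tangent-space check at a general diagonal point $(p,\ldots,p)$ shows the two tangent spaces span the ambient one. Thus in $A^\bullet_T((\mb{P}^1)^n)$ we have
$$\Delta_{\{[n]\}}=\Delta_{\{[n-1]\}}\cdot\Delta_{\{n-1,n\}}.$$
The factor $\Delta_{\{n-1,n\}}=H_{n-1}+H_n+u+v$ is standard (it is the $n=2$ case of the very formula being proved, pulled back along the projection to the $(n-1,n)$ coordinates), and $\Delta_{\{[n-1]\}}$ is supplied by the inductive hypothesis applied to the first $n-1$ factors.

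Writing $A=\prod_{i=1}^{n-1}(H_i+u)$ and $B=\prod_{i=1}^{n-1}(H_i+v)$, what remains is the purely algebraic identity
$$(H_{n-1}+H_n+u+v)(A-B)=(H_n+u)A-(H_n+v)B$$
in the quotient $\mb{Z}[u,v][H_1,\ldots,H_n]/(F(H_i))$. After subtracting $H_n(A-B)$ from both sides this collapses to
$$(H_{n-1}+u+v)(A-B)=uA-vB.$$
The crux, and the only place the ring relations enter, is the pair of factor-level identities
$$(H_{n-1}+u+v)(H_{n-1}+u)=u(H_{n-1}+u),\qquad (H_{n-1}+u+v)(H_{n-1}+v)=v(H_{n-1}+v),$$
each of which follows by substituting $H_{n-1}^2=-(u+v)H_{n-1}-uv$ from $F(H_{n-1})=0$. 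Since $A$ carries the factor $(H_{n-1}+u)$ and $B$ carries $(H_{n-1}+v)$, multiplying through by $H_{n-1}+u+v$ sends $A\mapsto uA$ and $B\mapsto vB$, finishing the induction.

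The only real obstacle is recognizing that the relation $F(H_{n-1})=0$ should be used at the level of the single factor $(H_{n-1}+u)$ inside $A$ (and symmetrically for $B$); once one spots this simplification, the induction is essentially automatic and avoids any need for torus-localization at the $2^n$ fixed points of $(\mb{P}^1)^n$.
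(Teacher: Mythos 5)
Your argument is correct, but it takes a genuinely different route from the paper's. The paper's proof is a two-line fixed-point computation: intersecting $\Delta_{\{[n]\}}$ with the invariant divisors $\{[0:1]\}\times(\mb{P}^1)^{n-1}$ and $\{[1:0]\}\times(\mb{P}^1)^{n-1}$, whose classes are $H_1+u$ and $H_1+v$, cuts out the torus-fixed points $[0:1]^n$ and $[1:0]^n$, whose classes are $\prod_i(H_i+u)$ and $\prod_i(H_i+v)$; subtracting gives $(u-v)\Delta_{\{[n]\}}=\prod_i(H_i+u)-\prod_i(H_i+v)$ in one step, with no induction. Your proof instead factors the small diagonal as a transverse intersection $\Delta_{\{[n-1]\}}\cap\Delta_{\{n-1,n\}}$ and pushes the identity through the relation $F(H_{n-1})=0$. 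Note that your key identities $(H+u+v)(H+u)=u(H+u)$ and $(H+u+v)(H+v)=v(H+v)$ are exactly the statement that $H_i+u$ and $H_i+v$ behave like the orthogonal classes of the two fixed points of $\mb{P}^1$, so in effect you are rediscovering the localization structure one factor at a time. The paper's version buys brevity and puts the geometry up front; yours buys a self-contained algebraic verification (and both implicitly use that $u-v$ is a non-zero-divisor in the free $\mb{Z}[u,v]$-module $A^\bullet_T((\mb{P}^1)^n)$, which is worth saying out loud).

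One point needs tightening: your induction cannot genuinely start at $n=1$. For $n=2$ the decomposition reads $\Delta_{\{1,2\}}=(\mb{P}^1)^2\cap\Delta_{\{1,2\}}$, and the ``standard'' factor $\Delta_{\{n-1,n\}}=H_{n-1}+H_n+u+v$ that powers every later step \emph{is} the $n=2$ case of the proposition. In the paper's own logical order this class is only derived as a consequence of the present proposition (via \Cref{FormulaP} and \Cref{psidelta}), so it cannot be quoted as already known here. You must therefore make $n=2$ the true base case and prove it independently --- for instance, the diagonal in $\mb{P}^1\times\mb{P}^1$ is the zero locus of the canonical section of $\ms{O}(1)\boxtimes\ms{O}(1)\otimes\wedge^2K^2$, giving equivariant class $H_1+H_2+u+v$; alternatively, run the paper's fixed-point argument just for $n=2$, or use the invariant degeneration of the diagonal to $\{0\}\times\mb{P}^1\cup\mb{P}^1\times\{\infty\}$. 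With that base case supplied, the rest of your induction is complete and correct.
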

\begin{proof}
This is a result of the fact that $\Delta_{\{[n]\}}$ intersected with $\{[0:1]\}\times (\mb{P}^1)^{n-1}$ and $\{[1:0]\}\times (\mb{P}^1)^{n-1}$ are the torus-fixed points $[0:1]^n$ and $[1:0]^n$ respectively, so
\begin{align*}
((H_1+u)-(H_1+v))\Delta_{\{[n]\}} = \prod_{i=1}^{n}{(H_i+u)}-\prod_{i=1}^{n}{(H_i+v)}. 
\end{align*}
\end{proof}

\subsection{Formula for $\Delta_P$}
When two strata $\Delta_P$ and $\Delta_{P'}$ intersect transversely in $(\mb{P}^1)^n$, it is easy to describe their intersection as another stratum. 
\begin{prop}\label{trivialfacts}
The class $\Delta_P \in A^{n-d}_{PGL_2}((\mb{P}^1)^n)$ for $P$ a partition of $[n]$ into $d$ parts is given by the product $\prod_{\{i,j\} \in \text{Edge}(T)} \Delta_{i,j}$, where $T$ is any forest with vertex set $[n]$ consisting of one spanning tree for each part of $P$. In particular,
\begin{enumerate}
\item If $i,j$ are in distinct parts of $P$, then if $P_{ij}$ is the partition merging the parts containing $i$ and $j$, we have $\Delta_{i,j}\Delta_P=\Delta_{P_{ij}}$.
\item If $i,j,i',j'$ are in the same part of $P$, we have $\Delta_{i,j}\Delta_P=\Delta_{i',j'}\Delta_P$.
\end{enumerate}
\begin{comment}
Given disjoint sets $A,B$, suppose we have partitions $P_A,P_B$ of $A,B$ respectively. Then in $(\mb{P}^1)^{A\sqcup B}$ we have
$$\Delta_{P_A \sqcup P_B}=\Delta_{P_A}\Delta_{P_B}.$$
Given sets $C,D$ such that $|C \cap D|=1$ and partitions $P_C,P_D$ of $C,D$ respectively, if $P_{C\cup D}$ is the partition of $C \cup D$ formed by merging the parts of $P_C$ and $P_D$ containing $C \cap D$, then in $(\mb{P}^1)^{C\cup D}$, we have
$$\Delta_{P_{C \cup D}}=\Delta_{P_C}\cdot \Delta_{P_D}.$$
\end{comment}
\end{prop}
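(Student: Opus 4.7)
The plan is to prove the forest formula by induction on the number of edges of $T$, then deduce (1) directly and (2) via a divisor-restriction calculation.

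For the base case, the empty forest corresponds to the discrete partition $P_0=\{\{1\},\ldots,\{n\}\}$, and $\Delta_{P_0}=(\mb{P}^1)^n$ has class $1$, matching the empty product. For the inductive step I order the edges of $T$ and adjoin them one at a time. Suppose $\prod_{e\in T'}\Delta_e=\Delta_{P'}$ has been established, and we add an edge $\{i,j\}$. Since $T'\cup\{\{i,j\}\}$ is still a forest, $i$ and $j$ lie in distinct parts $A_k,A_\ell$ of $P'$, and adjoining the edge produces the partition $P''$ obtained by merging these two parts. Thus it suffices to show $\Delta_{i,j}\cdot\Delta_{P'}=\Delta_{P''}$. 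The key geometric observation is that $\Delta_{P'}$ is smooth: it is the image of the diagonal embedding $(\mb{P}^1)^{d'}\hookrightarrow(\mb{P}^1)^n$ sending $(q_1,\ldots,q_{d'})$ to the $n$-tuple whose $m$-th coordinate is $q_s$ whenever $m\in A_s$. Under this parameterization, $\Delta_{i,j}\cap\Delta_{P'}$ is cut out by the single equation $q_k=q_\ell$, which defines a reduced smooth divisor of the expected codimension in $\Delta_{P'}$ whose image is $\Delta_{P''}$. Hence the intersection is proper and transverse and $\Delta_{i,j}\cdot\Delta_{P'}=\Delta_{P''}$ as classes, finishing the induction.

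Statement (1) is then immediate: given any spanning forest $T$ of $P$ and indices $i,j$ in distinct parts, $T\cup\{\{i,j\}\}$ is a spanning forest of $P_{ij}$, so
\[
\Delta_{i,j}\cdot\Delta_P=\Delta_{i,j}\cdot\prod_{e\in T}\Delta_e=\prod_{e\in T\cup\{\{i,j\}\}}\Delta_e=\Delta_{P_{ij}}.
\]

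For statement (2) the intersection is no longer transverse when $i,j$ lie in the same part (indeed $\Delta_P\subset\Delta_{i,j}$), so I compute it via the projection formula applied to the inclusion $\iota\colon\Delta_P\hookrightarrow(\mb{P}^1)^n$. By construction $\Delta_{i,j}$ is the pullback of the diagonal of $\mb{P}^1\times\mb{P}^1$ along the projection $\pi_{ij}$ to the $i$-th and $j$-th factors, so equivariantly $\ms{O}(\Delta_{i,j})\cong\pi_i^*\ms{O}(1)\otimes\pi_j^*\ms{O}(1)$. When $i,j,i',j'$ all lie in the same part $A_k$, the four projections $\pi_i,\pi_j,\pi_{i'},\pi_{j'}$ all restrict to the same map $\Delta_P\cong(\mb{P}^1)^d\to\mb{P}^1$, namely projection to the $k$-th factor. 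Hence $\iota^*\Delta_{i,j}=\iota^*\Delta_{i',j'}$ in $A^\bullet_{PGL_2}(\Delta_P)$, and applying $\iota_*$ gives $\Delta_{i,j}\cdot\Delta_P=\Delta_{i',j'}\cdot\Delta_P$.

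The main obstacle is the transversality verification in the inductive step; once that is in place everything else is formal. All arguments are automatically $PGL_2$-equivariant because each $\Delta_{i,j}$ and each $\Delta_{P'}$ is $PGL_2$-invariant and the divisor-pullback identification in (2) lifts to an equivariant isomorphism of equivariant line bundles.
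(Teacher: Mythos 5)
Your proof is correct. For the forest formula and item (1) you use exactly the paper's mechanism: the intersection $\Delta_{i,j}\cap\Delta_{P'}$ is transverse when $i,j$ lie in distinct parts, and induction on the edges of $T$ does the rest; your explicit verification (parameterizing $\Delta_{P'}\cong(\mb{P}^1)^{d'}$ and noting that $\Delta_{i,j}$ restricts to the reduced smooth divisor $\{q_k=q_\ell\}$) is the detail the paper leaves implicit. For item (2) you diverge: the paper deduces it from the first part by repeated applications of the diagonal relation $\Delta_{i,j}\Delta_{i,k}=\Delta_{i,j}\Delta_{j,k}$ (itself an instance of the forest formula, since both products are spanning trees of $\{i,j,k\}$), i.e.\ a purely combinatorial re-routing of edges, whereas you argue geometrically via the projection formula, observing that $\iota^*\Delta_{i,j}$ depends on $i,j$ only through the restricted projections $\pi_i\circ\iota=\pi_j\circ\iota$. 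Both are valid; the paper's route stays inside the already-established multiplicative identities, while yours is more self-contained and makes the non-transversality of $\Delta_{i,j}\cap\Delta_P$ harmless by design. One small caveat: $\ms{O}(1)$ on $\mb{P}^1$ is not $PGL_2$-linearizable, so the asserted equivariant isomorphism $\ms{O}(\Delta_{i,j})\cong\pi_i^*\ms{O}(1)\otimes\pi_j^*\ms{O}(1)$ should either be phrased for the tensor product as a whole (which is $PGL_2$-equivariant, being $\ms{O}$ of an invariant divisor), or carried out in $A^\bullet_{GL_2}$ and transported back via the injection of \Cref{PGL2injective}; in the $GL_2$ ring one has $\iota^*\Delta_{i,j}=\iota^*(H_i+H_j+u+v)=\iota^*(H_{i'}+H_{j'}+u+v)=\iota^*\Delta_{i',j'}$ by \Cref{psidelta}, which is the cleanest form of your observation.
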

\begin{proof}
Item (1) follows from the transversality of the intersection $\Delta_{i,j}\cap \Delta_P$, from which we can deduce the first part of the proposition, and item (2) then follows from the first part and repeated applications of the diagonal relation $\Delta_{i,j}\Delta_{i,k}=\Delta_{i,j}\Delta_{j,k}$.
\begin{comment}
\Cref{trivialfacts} follows from the fact that the corresponding intersections
\begin{align*}
\Delta_{P_A}\cap \Delta_{P_B} = \Delta_{P_A\sqcup P_B}\text{ and }\Delta_{P_C}\cap \Delta_{P_D} = \Delta_{P_C\cup P_D}.
\end{align*}
are transverse.
\end{comment}
\end{proof}

\begin{prop}
\label{FormulaP}
Let $P=\{V_1, \ldots ,V_d\}$ be a partition of $[n]$, then
\begin{align*}
\Delta_P&=\frac{1}{(u-v)^d}\prod_{i=1}^{d}{\left(\prod_{j\in V_i}{(H_j+u)}-\prod_{j\in V_i}{(H_j+v)}\right)}.
\end{align*}
\end{prop}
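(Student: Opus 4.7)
The plan is to reduce to the $d=1$ case already handled by \Cref{diagonal}. For a partition $P=\{V_1,\ldots,V_d\}$, I would observe that $\Delta_P$ factors as a transverse intersection of "block diagonals," one for each part of $P$. Concretely, let $\pi_{V_i}\colon (\mb{P}^1)^n\to (\mb{P}^1)^{V_i}$ be the projection onto the coordinates indexed by $V_i$, and let $\Delta_{\{V_i\}}\subset (\mb{P}^1)^{V_i}$ be the small diagonal. Then
\begin{align*}
\Delta_P=\bigcap_{i=1}^{d}\pi_{V_i}^{-1}(\Delta_{\{V_i\}}),
\end{align*}
and this intersection is transverse because the preimages involve pairwise disjoint sets of factors. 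Multiplicativity of equivariant Chow classes under transverse intersection therefore gives
\begin{align*}
\Delta_P=\prod_{i=1}^{d}\pi_{V_i}^{*}\Delta_{\{V_i\}}.
\end{align*}

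Next I would apply \Cref{diagonal} to each factor $(\mb{P}^1)^{V_i}$. That proposition expresses the small diagonal as
\begin{align*}
\Delta_{\{V_i\}}=\frac{1}{u-v}\left(\prod_{j\in V_i}(H_j'+u)-\prod_{j\in V_i}(H_j'+v)\right)
\end{align*}
in $A^{\bullet}_T((\mb{P}^1)^{V_i})$, where $H_j'$ denotes the hyperplane class pulled back from the $j$th factor of $(\mb{P}^1)^{V_i}$. Since $\pi_{V_i}^{*}H_j'=H_j$ in $A^{\bullet}_T((\mb{P}^1)^n)$, pulling back preserves the shape of the formula, and multiplying the $d$ resulting expressions yields the claimed identity.

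I do not expect a genuine obstacle here; the only thing to check carefully is the transversality of the intersection $\bigcap \pi_{V_i}^{-1}(\Delta_{\{V_i\}})$, which is immediate from the fact that the parts $V_i$ of $P$ are disjoint so the normal bundles of the $\pi_{V_i}^{-1}(\Delta_{\{V_i\}})$ involve disjoint sets of tangent directions. One could alternatively deduce \Cref{FormulaP} from \Cref{trivialfacts} by choosing a spanning forest and expanding $\prod_{\{i,j\}\in\text{Edge}(T)}\Delta_{i,j}$ using the explicit formula for $\Delta_{i,j}$ coming from the $|V_i|=2$ case of \Cref{diagonal}, but the transverse-intersection approach above avoids any combinatorial bookkeeping.
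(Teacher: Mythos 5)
Your proof is correct and is essentially the paper's argument: the paper likewise writes $\Delta_P=\prod_{i=1}^{d}\Delta_{\{V_i\}}$ using transversality of the block diagonals (via \Cref{trivialfacts}) and then applies \Cref{diagonal} to each factor. You have merely made the projections $\pi_{V_i}$ and the pullback of the hyperplane classes explicit, which the paper leaves implicit.
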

\begin{proof}
From \Cref{trivialfacts}, $\Delta_P=\prod_{i=1}^{d}\Delta_{\{V_i\}}$. Now apply \Cref{diagonal}. 
\end{proof}
\subsection{The $\psi_i$ and $\Delta_{i,j}$ classes}
At this point, we can prove the formula for $\Delta_{i,j}$ in item (3) of \Cref{bigthmintro} and for $\psi_i$ as mentioned in \Cref{psi-intro}.
\begin{prop}
\label{psidelta}
We have
\begin{align*}
\Delta_{i,j}&=H_i+H_j+u+v\\
\psi_i&=-(2H_i+u+v).
\end{align*}
\end{prop}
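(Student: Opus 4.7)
The plan is to verify both identities inside the ring $A^\bullet_{GL_2}((\mb{P}^1)^n) = \mb{Z}[u,v]^{S_2}[H_1,\ldots,H_n]/(F(H_1),\ldots,F(H_n))$, since by \Cref{PGL2injective} the ring $A^\bullet_{PGL_2}((\mb{P}^1)^n)$ injects here, and the right-hand sides are naturally expressed in this presentation.

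For the first identity, I would apply \Cref{FormulaP} to the partition $P$ of $[n]$ whose only non-singleton part is $\{i,j\}$, which has $d = n-1$ parts. Each of the $n-2$ singleton parts $\{k\}$ contributes the factor $(H_k+u)-(H_k+v) = u-v$, while the pair $\{i,j\}$ contributes
$$(H_i+u)(H_j+u)-(H_i+v)(H_j+v) = (u-v)(H_i+H_j+u+v).$$
Collecting these factors and dividing by $(u-v)^{n-1}$ yields $\Delta_{i,j} = H_i + H_j + u + v$.

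For the second identity, it suffices to compute $c_1(T^\vee_{\mb{P}^1}) \in A^\bullet_T(\mb{P}^1) = \mb{Z}[u,v][H]/((H+u)(H+v))$ and then pull back along the $i$-th projection. I would use the equivariant Euler sequence
$$0 \to \ms{O}(-1) \to V \otimes \ms{O} \to Q \to 0$$
on $\mb{P}^1 = \mb{P}(V)$, with $V$ the standard representation of $GL_2$ (so $c_1(V) = u+v$ by \Cref{ringdescriptions}) and $\ms{O}(-1)$ the tautological sub-line-bundle (so $c_1(\ms{O}(-1)) = -H$). This gives $c_1(Q) = (u+v)+H$, and since $T_{\mb{P}^1} \cong \ms{O}(1) \otimes Q$, we obtain
$$c_1(T_{\mb{P}^1}) = c_1(\ms{O}(1)) + c_1(Q) = 2H + u + v,$$
so $c_1(T^\vee_{\mb{P}^1}) = -(2H+u+v)$, and hence $\psi_i = -(2H_i+u+v)$.

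Both computations are routine bookkeeping once the preceding apparatus is in place. The only delicate point is convention-tracking for the equivariant Euler sequence: one must verify that $\ms{O}(-1)$ sits inside $V \otimes \ms{O}$ (rather than $V^\vee \otimes \ms{O}$) so that the sign $c_1(\ms{O}(-1)) = -H$ is consistent with the projective bundle formula used in \Cref{ringdescriptions}. As a sanity check, one can alternatively compute $c_1(T^\vee_{\mb{P}^1})$ by localizing at the two torus fixed points of $\mb{P}^1$, using that classes in $A^\bullet_T(\mb{P}^1)$ are determined by their restrictions to these fixed points together with the relation $(H+u)(H+v)=0$.
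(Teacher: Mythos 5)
Your proof is correct. The first identity is handled exactly as in the paper: an immediate application of \Cref{FormulaP} to the partition whose only non-singleton part is $\{i,j\}$, and your expansion $(H_i+u)(H_j+u)-(H_i+v)(H_j+v)=(u-v)(H_i+H_j+u+v)$ is right. For the second identity you take a genuinely different route. The paper avoids the Euler sequence entirely: it uses the general fact that $c_{\rm top}(T_X)$ is the pullback of the class of the diagonal under the diagonal embedding $X\to X\times X$, so $c_1(T_{\mb{P}^1})$ is obtained by restricting $\Delta_{1,2}=H_1+H_2+u+v$ along $H_1,H_2\mapsto H$, giving $2H+u+v$ at once. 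This reuses the first identity and completely sidesteps the sub-versus-quotient convention for $\mb{P}(V)$ that you correctly flag as the delicate point of your argument. Your Euler-sequence computation is standard and self-contained, and your convention is in fact consistent with the paper's relation $F(H)=(H+u)(H+v)$ (the tautological subbundle convention yields exactly $\xi^2+c_1(V)\xi+c_2(V)=0$, i.e.\ $(H+u)(H+v)=0$); the suggested localization sanity check would also work. Each approach buys something: yours is independent of the $\Delta$-computation, the paper's is shorter and convention-free once $\Delta_{1,2}$ is known.
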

\begin{proof}
The formula for $\Delta_{i,j}$ is an immediate consequence of \Cref{FormulaP}.

To compute $\psi_i$, it suffices to show that $c_1(T_{\mb{P}^1})\in A^\bullet_{GL_2}(\mb{P}^1)$ is $2H+u+v$, where $H=c_1(\ms{O}(1))$. We note that $c_{{\rm top}}(T_X)$ for any smooth $X$ is the pullback of the diagonal under the diagonal map $X\to X \times X$. The pullback $A^{\bullet}_{GL_2}((\mb{P}^1)^2)\to A^{\bullet}_{GL_2}(\mb{P}^1)$ under the inclusion $\mb{P}^1\cong \Delta_{1,2}\hookrightarrow \mb{P}^1\times \mb{P}^1$ is given by $H_1,H_2 \mapsto H$. Under this map, $\Delta_{1,2}$ pulls back to $2H+u+v$ as desired.
\end{proof}

\subsection{Pullback and Pushforward under $\Phi$}
\label{pushforward}
The pullback map $\Phi^*:A^\bullet_T(\mb{P}^n)\to A^\bullet_T((\mb{P}^1)^n)$ is induced by $$\Phi^*(H)=\sum_{i=1}^n H_i.$$

We now consider $\Phi_{*}$. By considering the classes of the torus-fixed loci, we have for any $A\subset [n]$, $$\Phi_*\left(\prod_A (H_i+u)\prod_{[n]\setminus A} (H_j+v)\right) = \prod_{k \in [n]\setminus\{|A|\}} (H+kv+(n-k)u).$$
This in fact uniquely characterizes $\Phi_*$, which can be seen either from localization \cite[Theorem 2]{EG98b} or because
\begin{align*}
\frac{\prod_A (H_i+u)\prod_{[n]\setminus A} (H_j+v)}{\prod_A (-v+u)\prod_{[n]\setminus A} (-u+v)}
\end{align*}
is a Lagrange interpolation basis for polynomials in $H_1,\ldots,H_n$ modulo $F(H_i)$ for each $i$.

\subsection{Formula for $[\lambda]$}
Feh\'er, N\'emethi, and Rim\'anyi computed the class of $[\lambda]$ for $\lambda$ a partition of $n$ \cite[Theorem 3.4]{FNR06}. We can give a quick self-contained computation from \Cref{pushforward,diagonal} as follows.

\begin{thm}[{\cite[Theorem 3.4]{FNR06}}]
\label{FNRformula}
The class $[a_1,\ldots,a_d]$ is the result of first expanding the polynomial $$\prod_{i=1}^d (z^{a_i}-1)=\sum_{k\geq 0}{c_k z^k}\qquad (c_k\in \mb{Z}),$$ and then replacing each monomial $$z^k\mapsto \frac{1}{(u-v)^d}\prod_{j \in [n]\setminus \{k\}}(H+jv+(n-j)u).$$
\end{thm}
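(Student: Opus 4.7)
The plan is to combine three earlier ingredients: the identity $[\lambda]=\Phi_{\ast}\Delta_P$, the formula for $\Delta_P$ in \Cref{FormulaP}, and the pushforward formula for $\Phi_{\ast}$ given in \Cref{pushforward}. Since each step is essentially algebraic, the proof reduces to a careful bookkeeping of an expansion.

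First, I would pick an ordered partition $P=\{V_1,\ldots,V_d\}$ of $[n]$ with $|V_i|=a_i$, so that by the remark following \Cref{bracket} we have $[a_1,\ldots,a_d]=\Phi_{\ast}\Delta_P$. Applying \Cref{FormulaP},
\[
\Delta_P=\frac{1}{(u-v)^d}\prod_{i=1}^d\Bigl(\prod_{j\in V_i}(H_j+u)-\prod_{j\in V_i}(H_j+v)\Bigr).
\]
Expanding the product over $i$ as a signed sum over subsets $S\subseteq[d]$ and denoting $A_S=\bigsqcup_{i\in S}V_i\subseteq[n]$ (so $|A_S|=\sum_{i\in S}a_i$) gives
\[
\Delta_P=\frac{1}{(u-v)^d}\sum_{S\subseteq[d]}(-1)^{d-|S|}\prod_{j\in A_S}(H_j+u)\prod_{j\in[n]\setminus A_S}(H_j+v).
\]

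Next, I would apply $\Phi_{\ast}$ term by term using the formula from \Cref{pushforward}: for any subset $A\subseteq[n]$,
\[
\Phi_{\ast}\Bigl(\prod_{j\in A}(H_j+u)\prod_{j\in[n]\setminus A}(H_j+v)\Bigr)=\prod_{k\in[n]\setminus\{|A|\}}(H+kv+(n-k)u).
\]
Applying this to each subset $A_S$, whose size is $k_S:=\sum_{i\in S}a_i$, yields
\[
[a_1,\ldots,a_d]=\frac{1}{(u-v)^d}\sum_{S\subseteq[d]}(-1)^{d-|S|}\prod_{k\in[n]\setminus\{k_S\}}(H+kv+(n-k)u).
\]

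Finally, I would observe that the generating function identity
\[
\prod_{i=1}^d(z^{a_i}-1)=\sum_{S\subseteq[d]}(-1)^{d-|S|}z^{k_S}
\]
matches the sum above after the substitution $z^k\mapsto \frac{1}{(u-v)^d}\prod_{j\in[n]\setminus\{k\}}(H+jv+(n-j)u)$, completing the proof. There is no real obstacle here; the only point requiring slight care is to confirm that the pushforward formula is applied correctly even when two subsets $S,S'$ happen to give the same value $k_S=k_{S'}$, which is fine since the formula is applied to each $S$ separately before summing. In particular, the resulting expression is a polynomial in $H,u,v$ despite the formal denominator $(u-v)^d$, since the same denominator is already present in $\Delta_P$.
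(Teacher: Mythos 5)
Your proposal is correct and follows exactly the paper's argument: expand the product formula for $\Delta_P$ from \Cref{FormulaP} into terms of the form $\prod_{j\in A}(H_j+u)\prod_{j\notin A}(H_j+v)$, push each forward via \Cref{pushforward}, and match the signed sum over subsets with the expansion of $\prod_i(z^{a_i}-1)$. The paper's proof is just a terser version of the same bookkeeping.
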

\begin{proof}
Let $P=\{V_1, \ldots ,V_d\}$ be a partition of $[n]$ with $|V_i|=a_i$. We expand the formula from \Cref{FormulaP}
\begin{align*}
\Delta_P &=\frac{1}{(u-v)^d}\prod_{i=1}^{d}{\left(\prod_{j\in V_i}{(H_i+u)}-\prod_{j\in V_i}{(H_i+v)}\right)}
\end{align*}
to a sum of terms of the form $\prod_{i \in A} (H_i+u) \prod_{j \in [n]\setminus A}(H_j+v)$. Then, \Cref{pushforward} implies that each such term pushes forward to $\prod_{j \in [n]\setminus \{|A|\}}(H+jv+(n-j)u)$. The result follows immediately.
%Then $\frac{1}{(u-v)^r}\prod (z^{a_i}-1)$ is obtained by replacing each $\prod_{i \in A} (H_i+u) \prod_{j \in [n]\setminus A} (H_j+v)$ with $z^{|A|}$. The result then follows from the formula for $\Phi_*\left(\prod_A (H_i+u)\prod_{[n]\setminus A} (H_j+v)\right)$.
\end{proof}

\section{Strata in $[(\mb{P}^1)^n/PGL_2]$}
\label{orderedstratasection}
In this section we prove all of our results on ordered point configurations in $\mb{P}^1$. Up to \Cref{algandexmp}, the only result that we use is \Cref{PGL2injective}, and in particular the identification of $\Delta_{i,j}$ in $A^\bullet_{GL_2}((\mb{P}^1)^n)$ as $H_i+H_j+u+v$.

\begin{rmk}
Whenever we write $\Delta_{i,j}$ in any context, we will always treat $\{i,j\}$ as an unordered tuple, so that implicitly $$\Delta_{i,j}:=\Delta_{j,i}$$
for $i>j$. 
\end{rmk}

Recall from \Cref{PGL2injective} and \Cref{ringdescriptions}, we have the inclusions
$$A^\bullet_{PGL_2}((\mb{P}^1)^n)\subset A^\bullet_{GL_2}((\mb{P}^1)^n) \subset A^\bullet_{T}((\mb{P}^1)^n).$$
We first consider the square relation in $(\mb{P}^1)^4$. 
\begin{prop}
\label{P14}
In $A_{PGL_2}^{\bullet}((\mb{P}^1)^4)$, we have the square relation
\begin{align*}
\Delta_{1,2}+\Delta_{3,4}=\Delta_{2,3}+\Delta_{4,1}.
\end{align*}
\end{prop}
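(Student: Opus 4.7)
The plan is to verify the square relation by passing to $A^\bullet_{GL_2}((\mb{P}^1)^4)$, where the classes $\Delta_{i,j}$ have an especially simple expression. By \Cref{PGL2injective}, the map $A^\bullet_{PGL_2}((\mb{P}^1)^4)\hookrightarrow A^\bullet_{GL_2}((\mb{P}^1)^4)$ is injective, so it suffices to verify the identity on the image side.

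Applying \Cref{psidelta}, we have $\Delta_{i,j}=H_i+H_j+u+v$ for each pair $\{i,j\}\subset[4]$. Hence
\begin{align*}
\Delta_{1,2}+\Delta_{3,4}&=(H_1+H_2+u+v)+(H_3+H_4+u+v)\\
&=H_1+H_2+H_3+H_4+2(u+v),\\
\Delta_{2,3}+\Delta_{1,4}&=(H_2+H_3+u+v)+(H_1+H_4+u+v)\\
&=H_1+H_2+H_3+H_4+2(u+v).
\end{align*}
These are visibly equal in $A^\bullet_{GL_2}((\mb{P}^1)^4)$, and by injectivity the relation lifts uniquely to $A^\bullet_{PGL_2}((\mb{P}^1)^4)$.

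There is no real obstacle here, as the identification of $\Delta_{i,j}$ with $H_i+H_j+u+v$ (\Cref{psidelta}) together with the injectivity of $A^\bullet_{PGL_2}\hookrightarrow A^\bullet_{GL_2}$ (\Cref{PGL2injective}) reduce the statement to a one-line linear check. The conceptual content — that this equality reflects the $PGL_2$-invariant deformation equating the two cross-ratio-$0$ and cross-ratio-$\infty$ loci, as discussed in \Cref{WDVVintro} — is encoded geometrically but is not needed for the algebraic verification.
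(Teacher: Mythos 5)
Your proof is correct and is essentially identical to the paper's: both compute each side as $H_1+H_2+H_3+H_4+2(u+v)$ via the identification $\Delta_{i,j}=H_i+H_j+u+v$ from \Cref{psidelta}, relying on the injectivity from \Cref{PGL2injective}. Nothing further is needed.
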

\begin{proof}
Both sides are equal to $H_1+H_2+H_3+H_4+2(u+v)$ by \Cref{psidelta}. This can also be shown using the fact that the diagonal $\Delta \subset \mb{P}^1\times\mb{P}^1$ has a torus-equivariant deformation to $\{0\}\times \mb{P}^1\cup \mb{P}^1\times\{\infty\}$. It also holds by \Cref{WDVVintro}. 
\end{proof}

\begin{defn}
Let $R(n)$ be the ring 
$$R(n)=\mb{Z}[\{\Delta_{i,j}\mid 1\leq i<j\leq n\}]/\text{relations},$$
generated by the symbols $\Delta_{i,j}$ together with the relations
\begin{enumerate}
\item $\Delta_{i,j}+\Delta_{k,l}=\Delta_{i,k}+\Delta_{j,l}$ for distinct $i,j,k,l$ \hfill (square relations)
\item $\Delta_{i,j}\Delta_{i,k}=\Delta_{i,j}\Delta_{j,k}$ for distinct $i,j,k$. \hfill (diagonal relations)
\end{enumerate}
given in \Cref{bigthmintro} (1). If $n$ is clear from context or irrelevant, we will let $R:=R(n)$. If we let each $\Delta_{i,j}$ have degree 1, then the ideal of relations is homogenous, so $R$ is a graded ring, and we will denote by $R_k$ the $k$th graded part of $R$.
\end{defn}

By \Cref{PGL2injective}, we can identify $A^\bullet_{PGL_2}((\mb{P}^1)^n)$ as a subring of $A^\bullet_{GL_2}((\mb{P}^1)^n)$, where the image 
$$A^\bullet_{PGL_2}((\mb{P}^1)^n) \hookrightarrow A^\bullet_{GL_2}((\mb{P}^1)^n)=\mb{Z}[u,v]^{S_2}[H_1,\ldots,H_n]/(F(H_1),\ldots,F(H_n))$$
is generated by $\Delta_{i,j}=H_i+H_j+u+v$ for $n\geq 3$. If $n\leq 2$, we also have to add the classes $\psi_i = -(2H_i+u+v)$ (see \Cref{psidelta}). Therefore for $n \ge 3$ by \Cref{P14}, we have a surjective map 
\begin{equation}
R \twoheadrightarrow A^\bullet_{PGL_2}((\mb{P}^1)^n), \label{Rsurject}
\end{equation}
sending each symbol $\Delta_{i,j}\in R$ to $\Delta_{i,j}\in A^\bullet_{PGL_2}((\mb{P}^1)^n)$. To show \Cref{bigthmintro} (1), we need to show this surjection is an isomorphism for $n\ge 3$. 

As $A^\bullet_{GL_2}((\mb{P}^1)^n)$ is free as an abelian group, $A^k_{PGL_2}((\mb{P}^1)^n)$ is a free abelian group for each $k$. We first compute the rank of these groups for varying $k$.

\begin{lem}
\label{bettinumber}
For every $n\ge 1$, the free abelian group $A^k_{PGL_2}((\mb{P}^1)^n)$ has rank
$$\sum_{\substack{i \le k\\i \equiv k\text{ mod 2}}}\binom{n}{i}.$$
\end{lem}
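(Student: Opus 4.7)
The plan is to prove \Cref{bettinumber} by induction on $n$, using the injection $A^\bullet_{PGL_2}((\mb{P}^1)^n) \hookrightarrow A^\bullet_{GL_2}((\mb{P}^1)^n)$ from \Cref{PGL2injective}. Since the target is free as a $\mb{Z}$-module, so is each $A^k_{PGL_2}((\mb{P}^1)^n)$, and only the rank computation remains.

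For the base case $n=1$, the image of $A^\bullet_{PGL_2}(\mb{P}^1)\to A^\bullet_{GL_2}(\mb{P}^1)$ is the subring generated by $\psi_1 = -(2H_1+u+v)$, and a direct computation using $F(H_1)=0$ shows $\psi_1^2 = (u-v)^2$. Thus the image is $\mb{Z}[\psi_1]$, of rank $1$ in each degree, which matches $\binom{1}{k\bmod 2}=1$.

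For the inductive step, I write $R_n := A^\bullet_{PGL_2}((\mb{P}^1)^n)$ and use the pullback $\pi^*\colon R_{n-1}\hookrightarrow R_n$ along the $PGL_2$-equivariant projection $\pi\colon(\mb{P}^1)^n\to(\mb{P}^1)^{n-1}$ that forgets the $n$th factor. The key claim will be that
$$R_n = R_{n-1}\oplus R_{n-1}\cdot \Delta_{1,n}$$
as an $R_{n-1}$-module. For generation, I recall from \Cref{PGL2injective} (and the identifications given there) that $R_n$ is generated as a ring by the classes $\Delta_{i,j}=H_i+H_j+u+v$ and $\psi_i = -(2H_i+u+v)$; those involving only indices in $[n-1]$ lie in $R_{n-1}$, while the remaining generators reduce to the required form via the identities
$$\Delta_{i,n} = (\Delta_{1,i}+\psi_1)+\Delta_{1,n}\qquad(i\ne 1,n),\qquad \psi_n = -\psi_1 - 2\Delta_{1,n},$$
both verified immediately by substitution. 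The quadratic relation $\Delta_{1,n}^2 = -\psi_1\cdot \Delta_{1,n}$, which also follows from a short expansion in $A^\bullet_{GL_2}((\mb{P}^1)^n)$ using $H_i^2=-(u+v)H_i-uv$, reduces all higher powers of $\Delta_{1,n}$.

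For freeness of the decomposition, I appeal to the projective bundle theorem to write $A^\bullet_{GL_2}((\mb{P}^1)^n)$ as a free module of rank $2$ over $A^\bullet_{GL_2}((\mb{P}^1)^{n-1})$ with basis $\{1,H_n\}$. Since $\Delta_{1,n}=H_n+(H_1+u+v)$, the set $\{1,\Delta_{1,n}\}$ differs from $\{1,H_n\}$ by a unipotent change of basis over $A^\bullet_{GL_2}((\mb{P}^1)^{n-1})$, so it is also a basis. Combining this with the injection $R_{n-1}\hookrightarrow A^\bullet_{GL_2}((\mb{P}^1)^{n-1})$, any relation $r_0+r_1\Delta_{1,n}=0$ with $r_i\in R_{n-1}$ forces $r_0=r_1=0$. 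The claim then yields the recurrence $\on{rank}R_n^k = \on{rank}R_{n-1}^k + \on{rank}R_{n-1}^{k-1}$, and applying Pascal's identity $\binom{n}{i}=\binom{n-1}{i}+\binom{n-1}{i-1}$ shows that $\sum_{i\le k,\,i\equiv k\,(2)}\binom{n}{i}$ satisfies the same recurrence, closing the induction. The main subtle point is integrality: I choose $\xi=\Delta_{1,n}$ rather than $\xi=\psi_n$ (which would satisfy the more symmetric $\xi^2=(u-v)^2$) because expressing $\Delta_{1,n}$ in terms of $\psi_n$ and $R_{n-1}$ would require a division by $2$.
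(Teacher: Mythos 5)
Your proof is correct, but it takes a genuinely different route from the paper's. The paper argues directly and rationally: after passing to $A^\bullet_{PGL_2}((\mb{P}^1)^n)\otimes\mb{Q}\subset A^\bullet_{GL_2}((\mb{P}^1)^n)\otimes\mb{Q}$, it uses the generators $H_i'=H_i+\tfrac{1}{2}(u+v)$ and the relation $H_i'^2=\tfrac{1}{4}(u-v)^2$ to write down an explicit spanning set $\bigl(\tfrac{u-v}{2}\bigr)^{k-|B|}\prod_{i\in B}H_i'$ of the correct cardinality, and checks linear independence by specializing $u=1$, $v=-1$; freeness over $\mb{Z}$ comes, as in your first paragraph, from the injection of \Cref{PGL2injective}. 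You instead induct on $n$ via the integral module decomposition $R_n=R_{n-1}\oplus R_{n-1}\Delta_{1,n}$, whose two halves I have verified: the identities $\Delta_{i,n}=(\Delta_{1,i}+\psi_1)+\Delta_{1,n}$, $\psi_n=-\psi_1-2\Delta_{1,n}$, and $\Delta_{1,n}^2=-\psi_1\Delta_{1,n}$ all check out against $F(H_i)=0$, and the unipotent change of basis from $\{1,H_n\}$ to $\{1,\Delta_{1,n}\}$ over $A^\bullet_{GL_2}((\mb{P}^1)^{n-1})$ correctly gives directness. The rank recurrence then matches Pascal's identity. What your approach buys is an integral statement that the paper does not record: iterating the decomposition exhibits the monomials $\psi_1^a\prod_{i\in S}\Delta_{1,i}$ (with $S\subset\{2,\ldots,n\}$) as a $\mb{Z}$-basis of $A^\bullet_{PGL_2}((\mb{P}^1)^n)$, distinct from the good-partition basis of \Cref{goodbasis}; what the paper's argument buys is brevity and no induction. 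Your closing remark about choosing $\Delta_{1,n}$ over $\psi_n$ to avoid dividing by $2$ is exactly the right integrality point, and is the same issue that forces the paper to work rationally with $H_i'$.
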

\begin{proof}
We compute the rank of $A^k_{PGL_2}((\mb{P}^1)^n)$ by working instead with the rational subring 
$$A^\bullet_{PGL_2}((\mb{P}^1)^n) \otimes \mb{Q}\subset A^\bullet_{GL_2}((\mb{P}^1)^n))\otimes \mb{Q},$$
which is generated by the elements $H_i':=H_i +\frac{1}{2}(u+v)$ by \Cref{PGL2injective}.
Noting that $H_i'^2=\frac{1}{4}(u-v)^2$, we see the $\mb{Q}$-vector space $A^k_{PGL_2}((\mb{P}^1)^n)\otimes \mb{Q}$ is spanned by the elements
\begin{align*}
\mc{B}=\{\left(\frac{u-v}{2}\right)^{n-d-|B|}\prod_{i \in B} H_i'\mid B\subset [n],\ |B|\leq n-d,\ |B|\equiv n-d\pmod{2}\},
\end{align*}
which has size $$|\mc{B}|=\displaystyle\sum_{\substack{i \le k\\i \equiv k\text{ mod 2}}}\binom{n}{i}.$$
To finish, it suffices to show that the elements of $\mc{B}$ are linearly independent. Indeed, the elements of $\mc{B}$ become distinct monomials in the $H_i'$ after setting $u=1$ and $v=-1$ (after which the defining relations $F(H_i)=0$ become $H_i^{\prime 2}=1$ for each $i$). 
\end{proof}
\begin{defn}
Let $\on{Part}(d,n)$ denote the set of partitions of $[n]$ into $d$ parts. For $P \in \on{Part}(d,n)$, for any forest $T$ with vertex set $[n]$ consisting of one spanning tree for each part of $P$, we define $$\Delta_P=\prod_{\{i,j\}\in \text{Edge}(T)}\Delta_{i,j}\in R$$
Note that by the diagonal relations this is independent of the choice of $T$, and $\Delta_P \mapsto \Delta_P$ under the map $R \to A^\bullet_{PGL_2}((\mb{P}^1)^n)$ by \Cref{trivialfacts}.
\end{defn}
\begin{rmk}
\label{trivialremark}
The two items (1), (2) in \Cref{trivialfacts} are also true for the elements $\Delta_P \in R$ as the proof only uses the diagonal relations in $A^{\bullet}_{PGL_2}((\mb{P}^1)^n)$.
\end{rmk}
%We now show that for $k \le n-2$, $R_k$ is generated by the elements $\Delta_P$.
\begin{lem}
\label{Pgenerate}
For $k \le n-2$, $R_k$ is generated by $\{\Delta_{P}\mid P\in \on{Part}(n-k,n)\}$.
\end{lem}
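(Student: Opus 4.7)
The plan is to prove the lemma by induction on a complexity measure of monomials in the $\Delta_{i,j}$, using the diagonal relations to eliminate cycles and the square relations to eliminate multi-edges. Given a monomial $\prod_{\ell=1}^k \Delta_{a_\ell, b_\ell}$, associate to it the multi-graph $F$ on vertex set $[n]$ whose edges (with multiplicity) are the $\{a_\ell, b_\ell\}$, and let $c(F)$ be the number of components of the underlying simple graph. Define the excess $e(F) := k + c(F) - n \ge 0$. When $e(F) = 0$, $F$ is a simple forest with exactly $n - k$ components, and \Cref{trivialfacts} together with \Cref{trivialremark} identifies the monomial with $\Delta_P$, where $P \in \on{Part}(n-k, n)$ is the component partition of $F$. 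So the base case is immediate, and we induct on $e(F)$.

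For the inductive step with $e(F) > 0$, I would first reduce to the case where $F$ has a multi-edge. If $F$ has no multi-edges but has a cycle $v_1, v_2, \ldots, v_r, v_1$, the diagonal relation gives
\[ \Delta_{v_{r-1}, v_r} \Delta_{v_r, v_1} = \Delta_{v_{r-1}, v_r} \Delta_{v_{r-1}, v_1}, \]
which either produces a multi-edge (if $\{v_{r-1}, v_1\}$ was already present in $F$, which always occurs when $r = 3$) or replaces the cycle by one of length $r - 1$ together with a pendant edge. Since these are equalities in $R$, iterating reduces without branching to a monomial whose multi-graph contains a multi-edge.

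Now assume $F$ has a multi-edge $\{a, b\}$ of multiplicity $\ge 2$. Its underlying simple graph has at most $k - 1$ distinct edges, so at least $n - k + 1 \ge 3$ components, and this is exactly where the hypothesis $k \le n - 2$ enters. Choose vertices $c, d$ in two distinct components different from the component of $\{a, b\}$, so that the edges $\{a,c\}, \{b,d\}, \{c,d\}$ are all absent from $F$. Applying the square relation
\[ \Delta_{a,b} = \Delta_{a,c} + \Delta_{b,d} - \Delta_{c,d} \]
to a single copy of $\Delta_{a,b}$ rewrites the monomial as a signed sum of three monomials; in each, the multiplicity of $\{a,b\}$ has dropped by one, the added edge merges two previously distinct components, and no new multi-edge has been created. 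Each new multi-graph therefore has excess exactly $e(F) - 1$, and the induction closes.

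The main obstacle is verifying in the multi-edge case that the square substitution genuinely decreases the excess without introducing a replacement multi-edge or destroying the drop in $c(F)$; this is why $c$ and $d$ must be chosen in components distinct from each other and from the $\{a,b\}$-component. The hypothesis $k \le n - 2$ is sharp in this argument: it is precisely the condition guaranteeing enough spare components for such a choice, and incidentally this proof yields an explicit algorithm rewriting any monomial as a $\mb{Z}$-linear combination of the $\Delta_P$.
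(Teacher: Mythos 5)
Your proof is correct, and it is organized differently from the paper's even though it turns on the same two moves. The paper inducts on the degree $k$: it writes the first $k-1$ factors of a monomial as a $\mathbb{Z}$-combination of $\Delta_{P'}$ with $P'\in \on{Part}(n-k+1,n)$, and then analyzes a single product $\Delta_{i_k,j_k}\Delta_{P'}$ — if $i_k,j_k$ lie in different parts of $P'$ this is another stratum by \Cref{trivialfacts}, and if they lie in the same part the substitution $\Delta_{i_k,j_k}=\Delta_{i_k,x_2}-\Delta_{x_2,x_3}+\Delta_{x_3,j_k}$ with $x_2,x_3$ chosen in two other parts (available exactly because $n-k+1\ge 3$, i.e. $k\le n-2$) expresses the product as a sum of three strata. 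By returning to the normal form $\sum a_{P'}\Delta_{P'}$ after each factor, the paper never confronts a general monomial, so your cycle-versus-multi-edge case analysis and the diagonal-relation massaging are absorbed into the well-definedness of $\Delta_{P'}$. Your argument works directly on arbitrary monomials via the excess $e(F)=k+c(F)-n$, which makes the termination of the rewriting process and the sharpness of the hypothesis $k\le n-2$ more transparent, at the cost of the extra bookkeeping needed to check that the square substitution creates no new multi-edge and drops the excess by exactly one in each of the three branches. Both arguments yield explicit rewriting algorithms, and the auxiliary-vertex square-relation trick — the real content — is identical.
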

\begin{proof}
Given a product $\prod_{\ell=1}^k \Delta_{i_\ell,j_\ell}$, we will produce an algorithm for rewriting this product in terms of $\Delta_P$ with $P$ a partition of $[n]$ into $n-k$ parts.

By induction, we can write $\prod_{\ell=1}^{k-1} \Delta_{i_\ell,j_\ell}$ as $\sum_{P'\in \on{Part}(n-k+1,n)} a_{P'}\Delta_{P'}$, so it suffices to show that $\Delta_{i_k,j_k}\Delta_{P'}$ for $P'\in \on{Part}(n-k+1,n)$ can be written as a $\mb{Z}$-linear combination $\sum_{P\in\on{Part}(n-k,n)}{a_P \Delta_P}$.

If $i_k,j_k$ are in different parts of $P'$, then $\Delta_{i_k,j_k}\Delta_P'=\Delta_{P}$ where $P$ merges the parts containing $i_k$ and $j_k$, and we are done. Otherwise, if they are in the same part $A_1$, let $A_2,A_3$ be two parts of $P'$ distinct from $A_1$ (which exist as $n-k+1 \ge 3$), with elements $x_2 \in A_2$ and $x_3 \in A_3$. By applying a square relation, we have
$$\Delta_{i_k,j_k}\Delta_{P'}=(\Delta_{i_k,x_2}-\Delta_{x_2,x_3}+\Delta_{x_3,j_k})\Delta_{P'},$$
and each of the three terms on the right is some $\Delta_P$ with $P\in \on{Part}(n-k,n)$.
\end{proof}
%Hence, for $k \le n-2$ we have a surjective map of abelian groups $$\mb{Z}[\Delta_P]_{P \in \on{Part}(n-k,n)}\twoheadrightarrow R^k.$$

\begin{defn}
Given a partition $P$ of $[n]$ and $i,j\in [n]$ in distinct parts of $P$, let $P_{i,j}$ be the partition of $[n]$ obtained by merging the parts in $P$ containing $i$ and $j$.
\end{defn}
From \Cref{trivialremark}, the following relations hold in $R(n)$ (and hence also in $A^\bullet_{PGL_2}((\mb{P}^1)^n)$).
\begin{defn}
\label{squarerelation}
For $i_1,i_2,i_3,i_4$ in distinct parts of a partition $P$ of $[n]$, define the \emph{square relation} for $P$ associated to $i_1,i_2,i_3,i_4$ to be the relation
\begin{align*}
\Delta_{P_{i_1,i_2}}-\Delta_{P_{i_2,i_3}}+\Delta_{P_{i_3,i_4}}-\Delta_{P_{i_4,i_1}}=0.
\end{align*}
\end{defn}

\begin{defn}
Inside the free abelian group $\mb{Z}^{\on{Part}(d,n)}$, denote by $\on{Sq}(d,n)$ the subgroup generated by formal square relations $P_{i,j}-P_{j,k}+P_{k,l}-P_{l,i}$ for $P \in \on{Part}(d+1,n)$ and $i,j,k,l$ distinct. Then we define $$\mathcal{A}(d,n):=\mb{Z}^{\on{Part}(d,n)}/\on{Sq}(d,n).$$
\end{defn}
\Cref{Pgenerate} shows for $d \ge 2$ we have a surjection
$$\mathcal{A}(d,n)\twoheadrightarrow R_{n-d}$$ 
that sends $P\mapsto \Delta_P$. We will in fact show this is an isomorphism. 

\begin{defn}
Say a partition $P \in \on{Part}(d,n)$ for $d \ge 2$ is \emph{good} if $P$ can be written as $P=\{A_1,\ldots,A_n\}$ with $A_1 \sqcup A_2$ a partition of an initial segment of $[n]$, and $A_3,\ldots,A_n$ all contiguous intervals. Denote
$$\on{Good}(d,n):=\{P \in \on{Part}(d,n) \mid P\text{ good}\}.$$ 
\end{defn}

\begin{lem}
\label{generatePlow}
For $d \le n-2$, $\mathcal{A}(d,n)$ is generated by the set of $P\in \on{Good}(d,n)$.
\end{lem}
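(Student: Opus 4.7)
The plan is to induct on $n$, with an inner induction on the non-negative integer $n-\tau(P)$, where $\tau(P)=n-m+1$ is the length of the longest tail interval $[m,n]$ contained in the part $A_n$ of $P$ containing $n$. The case $d=2$ is trivial since any $2$-part partition of $[n]$ is automatically good, so I assume $d\geq 3$.

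First I would handle the reduction step: when $A_n\neq[m,n]$, split $A_n$ into $A'=A_n\cap[1,m-2]$ and $A''=[m,n]$ to produce a refinement $Q$ of $P$ into $d+1$ parts. Letting $B$ be the part of $P$ containing $m-1$ and $C$ any other part of $P$ (which exists since $d\geq 3$), the square relation for $Q$ with $i\in A'$, $j\in A''$, $k=m-1\in B$, $l\in C$ yields $P=Q_{j,k}-Q_{k,l}+Q_{l,i}$. In $Q_{k,l}$ and $Q_{l,i}$ the part containing $n$ is the interval $[m,n]$; in $Q_{j,k}$ it contains $[m-1,n]$ strictly, so $\tau$ strictly increases and the inner induction handles $Q_{j,k}$.

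It remains to treat the case $A_n=[m,n]$, in which $P=\{[m,n]\}\sqcup P'$ with $P'\in\on{Part}(d-1,m-1)$, so $m\in\{d,d+1,\ldots,n\}$. If $m\geq d+2$, then $d-1\leq(m-1)-2$, and the outer inductive hypothesis expresses $P'$ as a $\mb{Z}$-combination of good partitions of $[1,m-1]$ into $d-1$ parts; adjoining $\{[m,n]\}$ yields good partitions of $[n]$. If $m=d$, then $P=\{\{1\},\ldots,\{d-1\},[d,n]\}$ is already good. In the remaining case $m=d+1$, the partition $P'$ consists of a single pair $\{a,b\}$ (with $a<b$) together with singletons, and a direct check shows $P$ is good precisely when $\{a,b\}\subseteq\{1,2,3\}$ or $b=a+1$ with $a\geq 3$. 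I would handle this case by a separate combinatorial induction on the maximum coordinate $b$: when $b-a\geq 3$, the square relation at the singletons $\{b-1\}$ and $\{a+1\}$ rewrites $P$ as a signed sum involving the good pairs $\{b-1,b\},\{a,a+1\}$ and the pair $\{a+1,b-1\}$ of strictly smaller maximum; when $b-a=2$ with $a\geq 2$, the singletons $\{a+1\}$ and $\{1\}$ serve the same purpose.

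The main obstacle is the edge case $m=d+1$, which lies outside both inductive hypotheses. Its resolution requires the self-contained combinatorial induction on the pair's maximum coordinate sketched above, with the square-relation choice depending on the spread $b-a$; checking that the base cases $b\le 3$ are all good pairs completes the argument.
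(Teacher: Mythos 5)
Your argument is correct, and it takes a genuinely different route from the paper's. The paper inducts on $n$ by peeling off the single element $n$: if $n$ sits with $n-1$ it strips $n$ and invokes the statement for $\mathcal{A}(d,n-1)$; if $n$ is isolated it invokes $\mathcal{A}(d-1,n-1)$; otherwise one square relation (splitting off $\{n\}$ and using $n-1$, $n$, another element of $A_n$, and an element of a third part) reduces to those two cases. You instead peel off the entire maximal tail interval $[m,n]$ of the part containing $n$, growing that tail via square relations until the part \emph{is} a tail interval and then recursing into $\mathcal{A}(d-1,m-1)$. The trade-off is visible at the boundary: your recursion always lands strictly inside the lemma's stated range ($d-1\le (m-1)-2$ when $m\ge d+2$), but the cases $m=d$ and especially $m=d+1$ fall outside it and must be handled by hand — the $m=d+1$ case being a self-contained induction on the non-singleton pair $\{a,b\}\subseteq[d]$, whose case analysis ($b-a\ge 3$ versus $b-a=2$, $a\ge 2$) and base cases I checked and which are all correct, including your characterization of which pair-partitions are good. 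By contrast, the paper's first case can recurse into $\on{Part}(d,n-1)$ with $d=(n-1)-1$ when $d=n-2$, which strictly speaking sits outside the range of its own inductive hypothesis; your version avoids that at the cost of the extra combinatorial work in the $m=d+1$ case. Both proofs ultimately rest on the same two mechanisms — square relations to push the part containing $n$ toward a tail interval, and the well-defined maps $\mathcal{A}(d',n')\to\mathcal{A}(d,n)$ adjoining an interval — so the difference is one of bookkeeping granularity rather than of underlying idea.
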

\begin{proof}
We use induction on $n$ and $d$. For $d=2$ every partition is good, and for $n=2$ the result is trivial. Suppose now we have $n,d>2$. Take $Q \in \on{Part}(d,n)$.

If $n-1$ and $n$ are in the same part, then $Q':=Q \setminus n \in \on{Part}(d,n-1)$, and by the induction hypothesis applied to $\mathcal{A}(d,n-1)$ we can write $Q'=\sum_{P'\in \on{Good}(d,n-1)} a_{P'}P'$. There is a map
$$\mathcal{A}(d,n-1)\to \mathcal{A}(d,n)$$ mapping each $P'$ for $P'\in \on{Part}(d,n-1)$ to $P$, where $P$ is obtained by adding $n$ to the same part as $n-1$ in $P'$. Furthermore, under this map $P\in \on{Good}(d,n)$ if $P'\in \on{Good}(d,n-1)$, so we get $Q$ as a $\mb{Z}$-linear combination of $P$ for $P\in \on{Good}(d,n)$.

If $n$ is isolated in $Q$, then let $Q'=Q \setminus n \in \on{Part}(d-1,n-1)$. By the induction hypothesis applied to $\mc{A}(d-1,n-1)$, we can write $Q=\sum_{P'\in \on{Good}(d-1,n-1)} a_{P'}P'$. There is a map
$$\mc{A}(d-1,n-1)\to \mc{A}(d,n)$$ mapping each $P'$ for $P'\in \on{Part}(d-1,n-1)$ to $P$, where $P$ is obtained by adding $n$ as an isolated part. Furthermore, under this map $P\in \on{Good}(d,n)$ if $P'\in \on{Good}(d-1,n-1)$, so we get $Q$ as a $\mb{Z}$-linear combination of $P$ for $P\in \on{Good}(d,n)$.

If neither of the above two cases hold, then $n-1$ and $n$ are not in the same part and $n$ is not isolated in $Q$. Let $x\in [n]$ be another element in the same part as $n$, and let $y\in [n]$ be in a different part as $n-1$ and $n$ (which exists as $d>2$). Then if we let $\wt{Q}\in \on{Part}(d+1,n)$ be the result of taking $Q$ and isolating $n$ into its own part, the square relation for $\wt{Q}$ associated to $n-1,n,x,y$ yields $Q$ as a combination of $3$ terms, each of which either has $n$ isolated or $n-1,n$ in the same group.
\end{proof}

\begin{lem}
\label{howgood}
For $d\ge 2$, 
\begin{align*}
\# \on{Good}(d,n) &= \sum_{\substack{i \le n-d\\i\equiv n-d\text{ mod 2}}} \binom{n}{i}.
\end{align*}
%the number of good partitions into $d$ parts is given by $$\sum_{\substack{i \le n-d\\i\equiv n-d\text{ mod 2}}} \binom{n}{i}.$$ %Equivalently, $\mc{A}_{[n],n-2}\to A_{T}^{n-2}((\mb{P}^1)^n) \otimes \mathbb{Q}$ is an isomorphism onto the rational span of the $[P]$ classes over all partitions of $[n]$ into $d$ parts.
\end{lem}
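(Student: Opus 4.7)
The plan is to show that both sides satisfy the Pascal-type recursion
$$f(d,n) = f(d, n-1) + f(d-1, n-1) \qquad (d \ge 3, \ n > d),$$
together with matching boundary values at $d=2$ and at $n = d$, and then induct on $n$.

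For the right-hand side, splitting $\binom{n}{i} = \binom{n-1}{i} + \binom{n-1}{i-1}$ inside the defining sum yields this recursion immediately, since $i$ and $i-1$ have opposite parities and the two summand families correspond to the truncated sums for $k = n-d$ and $k = n-d-1$ respectively. The base case $d=2$ is handled directly: a good partition of $[n]$ into two parts is simply any unordered bipartition (the initial-segment condition on $A_1 \sqcup A_2 = [n]$ is automatic and there are no further interval parts), so $\#\on{Good}(2,n) = 2^{n-1} - 1$, which equals $\sum_{i \le n-2,\ i \equiv n \pmod 2}\binom{n}{i} = 2^{n-1} - \binom{n}{n}$. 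The base case $n = d$ is the all-singletons partition, counted once on both sides.

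For the recursion on the left-hand side, fix $d \ge 3$ and consider a good partition $P = \{A_1, A_2, A_3, \ldots, A_d\}$ of $[n]$ with $A_1 \sqcup A_2 = \{1, \ldots, m\}$. Because $\{m+1, \ldots, n\}$ must accommodate $d - 2 \ge 1$ nonempty consecutive intervals, we have $m \le n - (d-2)$, so in particular $n \notin A_1 \sqcup A_2$. Thus $n$ lies in one of the interval parts $A_3, \ldots, A_d$, and that part has the form $\{j, j+1, \ldots, n\}$. If this interval equals $\{n\}$, deleting it gives a good partition of $[n-1]$ into $d-1$ parts, and conversely any such partition extends uniquely by appending $\{n\}$ as a final trivial interval, yielding a bijection with $\on{Good}(d-1,n-1)$. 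Otherwise $n - 1$ lies in the same interval as $n$, and removing $n$ shortens this interval and gives a good partition of $[n-1]$ into $d$ parts; conversely, given a good partition of $[n-1]$ into $d$ parts, I append $n$ to the interval containing $n-1$, which is valid because the same bound $m \le (n-1) - (d-2)$ ensures $n-1$ lies in an interval part rather than in $A_1 \cup A_2$. This yields a bijection with $\on{Good}(d,n-1)$.

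The main subtlety, and the reason we must treat $d = 2$ separately, is exactly that when $d \ge 3$ we are guaranteed $n \notin A_1 \sqcup A_2$; this is what makes the case analysis clean and avoids having to track boundary cases where the top element might belong to $A_1 \cup A_2$ and thereby obstruct the reverse map. Everything else is routine bookkeeping.
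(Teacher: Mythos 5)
Your proof is correct. It shares its overall skeleton with the paper's argument: both sides are shown to satisfy the same Pascal-type recursion (your $f(d,n)=f(d,n-1)+f(d-1,n-1)$ is, after reindexing, exactly the paper's $G_{d,n}+G_{d+1,n}=G_{d+1,n+1}$), with the same base cases at $d=2$ and at the diagonal, followed by induction on $n$. The genuine difference is how the recursion is established for $\#\on{Good}(d,n)$. The paper first extracts the explicit closed form $\sum_{k=1}^{n-d+2}(2^{k-1}-1)\binom{n-k-1}{n-k-d+2}$ by summing over the length of the initial segment $A_1\sqcup A_2$, and then verifies the recursion algebraically via Pascal's identity on those binomial coefficients. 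You instead prove the recursion directly on the sets $\on{Good}(d,n)$ by a bijection, splitting according to whether $n$ is isolated or lies in the same interval as $n-1$, with the observation that $d\ge 3$ forces $n\notin A_1\sqcup A_2$ doing the work of keeping the case analysis clean. Your route avoids the intermediate closed form entirely, and it has the aesthetic advantage of mirroring the case split already used in the paper's \Cref{generatePlow} (where $\mc{A}(d,n)$ is generated by good partitions via the same trichotomy on the position of $n$); the paper's route, in exchange, records an explicit counting formula for $\#\on{Good}(d,n)$ that your argument never produces.
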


\begin{proof}
%\Cref{basisthm} says that the span of the $\Delta_P$ over all $P\in \on{Part}(d,[n])$ is a free $\mb{Z}$-module with $\mb{Z}$-basis given by $\{\Delta_P\mid P \in \mc{G}(d,n)\}$. 
From the definition of $\on{Good}(d,n)$,
\begin{align*}
%|\mc{G}(d,n)| = \sum_{k=1}^{n-d+2}{(2^{k-1}-1)\binom{n-k-1}{d-3}}. 
\# \on{Good}(d,n) = \sum_{k=1}^{n-d+2}{(2^{k-1}-1)\binom{n-k-1}{n-k-d+2}}. 
\end{align*}
Let 
\begin{align*}
G_{d,n}&=\sum_{k=1}^{n-d+2}{(2^{k-1}-1)\binom{n-k-1}{n-k-d+2}}\\
G'_{d,n} &= \sum_{\substack{i \le n-d\\i\equiv n-d\text{ mod 2}}} \binom{n}{i}. 
\end{align*}

We will show $G_{d,n}=G'_{d,n}$ for all $n\geq 2$ and $d\geq 2$ by induction on $n$. For the base case if $n=2$ and $d\geq 2$ arbitrary, we have two cases: if $d=2$, $|\mc{G}(2,2)|=G_{2,2}=1$ and if $d>2$, $|\mc{G}(d,2)|=G_{d,2}=0$. If $d=2$ and $n\geq 2$ arbitrary, then $G'_{d,n}=2^{n-1}-1$ by the binomial theorem, and $G_{d,n}=2^{n-1}-1$ because only the $k=n$ term $(2^{n-1}-1)\binom{-1}{0}$ contributes. 

Now, assume we know $G_{d,n}=G'_{d,n}$ for some $n$ and all $d\geq 2$. For the induction step, 
\begin{align*}
G_{d,n}+G_{d+1,n} &= \sum_{k=1}^{n-d+2}{(2^{k-1}-1)\left(\binom{n-k-1}{n-k-d+2}+\binom{n-k-1}{n-k-d+1}\right)}\\
%&= \sum_{k=1}^{n-d+2}{(2^{k-1}-1)\binom{n-k}{n-k-d+2}}\\
&= \sum_{k=1}^{n-d+2}{(2^{k-1}-1)\binom{n-k}{n-k-d+2}}= G_{d+1,n+1},
\end{align*}
and similarly applying Pascal's identity, $G_{d,n}'+G_{d+1,n}'=G'_{d+1,n+1}$.
\end{proof}

\begin{cor}
\label{3equal}
For $d\geq 2$, and $n \ge 3$ we have the isomorphisms
\begin{align*}
\mb{Z}^{\on{Good}(d,n)}\xrightarrow{\sim} \mc{A}(d,n)\xrightarrow{\sim} R_{n-d}\xrightarrow{\sim} A^{n-d}_{PGL_2}((\mb{P}^1)^d). 
\end{align*}
\end{cor}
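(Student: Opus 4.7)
The plan is to observe that we already have a chain of surjections
\[
\mb{Z}^{\on{Good}(d,n)} \xrightarrow{\alpha} \mc{A}(d,n) \xrightarrow{\beta} R_{n-d} \xrightarrow{\gamma} A^{n-d}_{PGL_2}((\mb{P}^1)^n),
\]
where $\alpha$ is surjective by \Cref{generatePlow}, $\beta$ is surjective by \Cref{Pgenerate}, and $\gamma$ is surjective by the definition of the map (\ref{Rsurject}). So the key point is to promote surjectivity into bijectivity, and for this it suffices to show the composite $\gamma\beta\alpha$ is an isomorphism.

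Next, I would invoke the two rank computations already in place. By \Cref{bettinumber}, $A^{n-d}_{PGL_2}((\mb{P}^1)^n)$ is a free abelian group of rank $\sum_{i\le n-d,\ i\equiv n-d\ (\mathrm{mod}\ 2)} \binom{n}{i}$, and by \Cref{howgood} the set $\on{Good}(d,n)$ has exactly this cardinality. Thus $\gamma\beta\alpha$ is a surjection from $\mb{Z}^N$ onto a free abelian group of the same rank $N$; tensoring with $\mb{Q}$ turns it into a surjection of $\mb{Q}$-vector spaces of equal dimension, hence an isomorphism, which forces the kernel of $\gamma\beta\alpha$ to be torsion in $\mb{Z}^N$ and therefore trivial. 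So $\gamma\beta\alpha$ is an isomorphism.

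Finally, I would deduce that each of $\alpha$, $\beta$, $\gamma$ is individually an isomorphism: since $\gamma\beta\alpha$ is injective and $\alpha$ is surjective, $\alpha$ is injective (if $\alpha(x)=0$ then $\gamma\beta\alpha(x)=0$ so $x=0$), hence $\alpha$ is an isomorphism. Then $\gamma\beta = (\gamma\beta\alpha)\alpha^{-1}$ is an isomorphism, so by the same argument applied on the right end, $\beta$ is injective and $\gamma$ is injective, making both isomorphisms.

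There is no real obstacle here: the work has already been done in \Cref{generatePlow} (spanning by good partitions), \Cref{Pgenerate} (spanning of $R_{n-d}$ by $\Delta_P$), \Cref{bettinumber} (free of the correct rank), and \Cref{howgood} (matching combinatorial count). The corollary is essentially the tautology that a surjection $\mb{Z}^N \twoheadrightarrow \mb{Z}^N$ is an isomorphism, applied to the ``sandwich'' above; the only thing to emphasize in the write-up is that $A^{n-d}_{PGL_2}((\mb{P}^1)^n)$ is \emph{free} (as a subgroup of the free abelian group $A^\bullet_{GL_2}((\mb{P}^1)^n)$ via \Cref{PGL2injective}), so that the rank comparison really does yield an isomorphism rather than only a rational one.
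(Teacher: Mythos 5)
Your proposal is correct and follows essentially the same route as the paper: the same chain of surjections from \Cref{Pgenerate}, \Cref{generatePlow} and \eqref{Rsurject}, combined with the rank count from \Cref{bettinumber} and \Cref{howgood} and the freeness of $A^{n-d}_{PGL_2}((\mb{P}^1)^n)$ to conclude the composite surjection of free $\mb{Z}$-modules of equal rank is an isomorphism. The only addition is your explicit final paragraph deducing that each individual map is an isomorphism, which the paper leaves implicit.
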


\begin{proof}
By \Cref{Pgenerate,generatePlow} and \eqref{Rsurject}, we have
$$\mb{Z}^{\on{Good}(d,n)}\twoheadrightarrow \mc{A}(d,n) \twoheadrightarrow  R^{n-d} \twoheadrightarrow A^{n-d}_{PGL_2}((\mb{P}^1)^d).$$
Since $A^{n-d}_{PGL_2}((\mb{P}^1)^d)$ is a finitely generated, free $\mb{Z}$-module of rank equal to the rank of $\mb{Z}^{\on{Good}(d,n)}$ by \Cref{bettinumber,howgood}, the composite $\mb{Z}^{\on{Good}(d,n)}\to A^{n-d}_{PGL_2}((\mb{P}^1)^d)$ is an isomorphism. 
\end{proof}

%Hence, we have for $d \ge 2$, $$\mb{Z}[\Delta_P]_{P \in \on{Part}(d,n)}/\text{relations} \twoheadrightarrow  R^{n-d} \twoheadrightarrow A^{n-d}_{PGL_2}((\mb{P}^1)^d)$$ and as we have a generating set for the first group with the same cardinality as the rank of the last free ableian group, these are in fact all isomorphic.

We now find an explicit basis for $R_k$ for $k>n-2$ of size $2^{n-1}$. 
%Now consider $R^k$ for $k>n-2$. We will show that $R^k$ is generated by $2^{n-1}$ elements of a certain form.

\begin{lem}
\label{generatePhigh}
For each partition $P\in \on{Part}(d,n)$ for $d\leq 2$, arbitrarily choose $i_P,j_P$ that lie in the same part. Then for $k>n-2$, $R_k$ is generated by the $2^{n-1}$ elements
\begin{align*}
S_k:=\{\Delta_{\{[n]\}}\Delta_{i_{\{[n]\}},j_{\{[n]\}}}^{k-n+1}\}\cup \{\Delta_{P}\Delta_{i_P,j_P}^{k-n+2}\mid P\in\on{Part}(2,n)\}.
\end{align*}
%$\Delta_{\{[n]\}}\Delta_{i_{\{[n]\}},j_{\{[n]\}}}^{n-k+1}$ and for $P$ a partition of $[n]$ into two parts, $\Delta_{P}\Delta_{i_P,j_P}^{n-k+2}$, where for a partition $Q$, $i_Q,j_Q$ are fixed elements of $[n]$ chosen to lie in the same part of $Q$.
\end{lem}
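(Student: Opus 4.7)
The plan is to argue by induction on $k \ge n-1$. For the base case $k = n-1$, \Cref{Pgenerate} gives that $R_{n-2}$ is spanned by $\{\Delta_P : P \in \on{Part}(2,n)\}$, so $R_{n-1}$ is spanned by products $\Delta_{i,j}\Delta_P$ for distinct $i,j \in [n]$ and $P$ a 2-part partition. I would split into three cases. If $i,j$ lie in different parts of $P$, then item (1) of \Cref{trivialfacts} gives $\Delta_{i,j}\Delta_P = \Delta_{\{[n]\}} \in S_{n-1}$. If $i,j$ lie in the same part of $P$ as $i_P,j_P$, then item (2) of \Cref{trivialfacts} gives $\Delta_{i,j}\Delta_P = \Delta_{i_P,j_P}\Delta_P \in S_{n-1}$. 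Otherwise $i,j$ lie in the part of $P$ opposite to $i_P,j_P$, so all four indices are distinct and the square relation gives $\Delta_{i,j} + \Delta_{i_P,j_P} = \Delta_{i,i_P} + \Delta_{j,j_P}$; multiplying by $\Delta_P$ and applying item (1) to each term on the right yields
\[\Delta_{i,j}\Delta_P = 2\Delta_{\{[n]\}} - \Delta_{i_P,j_P}\Delta_P,\]
a $\mb{Z}$-linear combination of elements of $S_{n-1}$. (Observe that if one of the two parts of $P$ is a singleton this third case cannot occur, since it requires two distinct elements on each side.)

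For the inductive step, assume $R_{k-1}$ is spanned by $S_{k-1}$ for some $k \ge n$. Every element of $R_k$ is a $\mb{Z}$-linear combination of products $\Delta_{i,j}\cdot s$ with $s \in S_{k-1}$ and $i \neq j$. When $s = \Delta_{\{[n]\}}\Delta_{i_{\{[n]\}},j_{\{[n]\}}}^{k-n}$, item (2) applied to the one-part partition $\{[n]\}$ gives $\Delta_{i,j}\Delta_{\{[n]\}} = \Delta_{i_{\{[n]\}},j_{\{[n]\}}}\Delta_{\{[n]\}}$, so $\Delta_{i,j}\cdot s \in S_k$. When $s = \Delta_P\Delta_{i_P,j_P}^{k-n+1}$ with $P$ a 2-part partition, the same three-case analysis applied to the factor $\Delta_{i,j}\Delta_P$ produces one of $\Delta_{i_P,j_P}^{k-n+1}\Delta_{\{[n]\}}$, $\Delta_P\Delta_{i_P,j_P}^{k-n+2}$, or $2\Delta_{i_P,j_P}^{k-n+1}\Delta_{\{[n]\}} - \Delta_P\Delta_{i_P,j_P}^{k-n+2}$. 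Any occurrence of $\Delta_{i_P,j_P}^{m}\Delta_{\{[n]\}}$ equals $\Delta_{i_{\{[n]\}},j_{\{[n]\}}}^{m}\Delta_{\{[n]\}} \in S_k$ by $m$-fold iteration of item (2) with the one-part partition $\{[n]\}$: at each step the intermediate product remains divisible by $\Delta_{\{[n]\}}$, so one application of item (2) replaces a single factor $\Delta_{i_P,j_P}$ by $\Delta_{i_{\{[n]\}},j_{\{[n]\}}}$, and the induction on $m$ closes.

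The main obstacle is the opposite-part case in the three-case analysis: this is the only step that invokes an honest square relation rather than the merge and same-part identities of \Cref{trivialfacts}, and the resulting coefficient of $2$ in front of $\Delta_{\{[n]\}}$ is precisely what forces $\Delta_{\{[n]\}}\Delta_{i_{\{[n]\}},j_{\{[n]\}}}^{k-n+1}$ to be included as a separate generator of $S_k$ rather than being absorbed into the family $\{\Delta_P\Delta_{i_P,j_P}^{k-n+2}\}_{P \in \on{Part}(2,n)}$.
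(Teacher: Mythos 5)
Your proof is correct and follows essentially the same route as the paper's: reduce via \Cref{Pgenerate} to multiplying a two-part stratum by single $\Delta_{i,j}$ factors, then split into the same three cases (merge, same part as $i_P,j_P$, opposite part), with the square relation producing the identity $\Delta_{i,j}\Delta_P = 2\Delta_{\{[n]\}}\Delta_{i_P,j_P}^{k-n+1}-\Delta_P\Delta_{i_P,j_P}^{k-n+2}$ in the opposite-part case. The only difference is bookkeeping — you induct on spanning sets $S_{k-1}$ directly and spell out the $m$-fold application of the same-part identity, which the paper leaves implicit.
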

\begin{proof}
Let $P=\{A,B\}\in \on{Part}(2,n)$. By \Cref{Pgenerate}, it suffices to show $\Delta_P \prod_{a=1}^{k-n+2}\Delta_{i_a,j_a}$ is generated by $S_k$ for any choices of $i_a\ne j_a$. We proceed by induction on $k>n-2$. For the base case $k=n-1$, it suffices to show $\Delta_{i,j}\Delta_{P}$ is generated by $S_k$ for any $i\neq j$. If $k>n-1$, then by the induction hypothesis, it suffices to show $\Delta_{i,j}\Delta_{P}\Delta_{i_P,j_P}^{k-n+1}$ and $\Delta_{i,j}\Delta_{\{[n]\}}\Delta_{i_{\{[n]\}},j_{\{[n]\}}}^{k-n}$ are generated by $S_k$. Both the base case and the induction step will work in the same way.

First, $\Delta_{i,j}\Delta_{\{[n]\}}\Delta_{i_{\{[n]\}},j_{\{[n]\}}}^{k-n}=\Delta_{\{[n]\}}\Delta_{i_{\{[n]\}},j_{\{[n]\}}}^{k-n+1}$ by \Cref{trivialremark} (2). To deal with $\Delta_{i,j}\Delta_{P}\Delta_{i_P,j_P}^{k-n+1}$, we have two cases. 

\begin{enumerate}
\item
If $\{i,j\}$ is not contained in $A$ or $B$, then $\Delta_{i,j}\Delta_P$ is the diagonal $\Delta_{\{[n]\}}$ by \Cref{trivialremark} (1). Then, by \Cref{trivialremark} (2), $\Delta_{i,j}\Delta_{P}\Delta_{i_P,j_P}^{k-n+1}=\Delta_{i_{\{[n]\}},j_{\{[n]\}}}^{k-n+1}\Delta_{\{[n]\}}$. 
\item
Suppose now each $\{i,j\}$ is in $A$ or $B$, and that without loss of generality, $i_P,j_P \in A$. If $i,j \in A$, then using \Cref{trivialremark} (2) we may replace $\Delta_{i,j}$ with $\Delta_{i_P,j_P}$. If $i,j \in B$, we can use a square relation to replace it with $\Delta_{i,i_P}-\Delta_{i_P,j_P}+\Delta_{j_P,i}$. We then have $\Delta_{i,i_P}\Delta_P=\Delta_{\{[n]\}}=\Delta_{j_P,i}\Delta_P$, so
\begin{align*}
\Delta_{i,j}\Delta_{P}\Delta_{i_P,j_P}^{k-n+1}= 2\Delta_{i_{\{[n]\}},j_{\{[n]\}}}^{k-n+1}\Delta_{\{[n]\}}-\Delta_{P}\Delta_{i_P,j_P}^{k-n+2}
\end{align*}
by \Cref{trivialremark} (2).
%Then as in (1), multiplying the remaining $\Delta_{i,j}$'s with either of the $\Delta_{\{[n]\}}$ yields $\Delta_{i_{\{[n]\}},j_{\{[n]\}}}^{k-n+1}\Delta_{\{[n]\}}$ by repeated applications of the diagonal relation again. For the term involving $\Delta_{i_P,j_P}$, we can apply induction on $k$.
\end{enumerate}
\end{proof}

%The lemma provides us with a generating set of $R^k$ of the same cardinality as the rank of the free abelian group $A^k_{PGL_2}((\mb{P}^1)^n)$ onto which it surjects. Hence $R^k \cong A^k_{PGL_2}((\mb{P}^1)^n)$ for $k>n-2$. Putting this together with the previous result yields
%$$R \cong A^\bullet_{PGL_2}((\mb{P}^1)^n).$$

\begin{thm}
\label{mainthm}
For $n \ge 3$, the natural surjection $R\twoheadrightarrow A^\bullet_{PGL_2}((\mb{P}^1)^n)$ is an isomorphism. Furthermore, $R_k$ has $\mb{Z}$-basis given by
\begin{enumerate}
\item
$\{\Delta_P\mid P\in \on{Good}(n-k,n)\}$ for $k\leq n-2$
\item 
$S_k=\{\Delta_{\{[n]\}}\Delta_{i_{\{[n]\}},j_{\{[n]\}}}^{k-n+1}\}\cup \{\Delta_{P}\Delta_{i_P,j_P}^{k-n+2}\mid P\in\on{Part}(2,n)\}$,
where for each partition $P\in \on{Part}(d,n)$ for $d\leq 2$, arbitrarily choose $i_P,j_P$ that lie in the same part.
\end{enumerate}
\end{thm}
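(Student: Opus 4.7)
For the low-degree range $k \le n-2$, the statement reduces immediately to \Cref{3equal} applied with $d = n-k \ge 2$: that result already exhibits the composite $\mb{Z}^{\on{Good}(n-k,n)} \to \mc{A}(n-k,n) \to R_{n-k} \to A^{n-k}_{PGL_2}((\mb{P}^1)^n)$ as an isomorphism, simultaneously identifying $R_k \xrightarrow{\sim} A^k_{PGL_2}$ and exhibiting the good-partition classes as a $\mb{Z}$-basis.

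For $k \ge n-1$, \Cref{generatePhigh} already shows that $S_k$ generates $R_k$, and counting gives $|S_k| = 1 + (2^{n-1}-1) = 2^{n-1}$. By \Cref{bettinumber} the rank of $A^k_{PGL_2}((\mb{P}^1)^n)$ is $\sum_{i \le k,\; i \equiv k \pmod{2}} \binom{n}{i}$, and when $k \ge n-1$ this sum collects every $\binom{n}{i}$ of a single fixed parity, hence equals $2^{n-1}$. Since the target is free abelian of the matching rank and $S_k$ already maps to a generating set, proving both that $R_k \to A^k_{PGL_2}$ is an isomorphism and that $S_k$ is a basis reduces to showing that the images of $S_k$ are $\mb{Q}$-linearly independent in $A^k_{PGL_2}((\mb{P}^1)^n) \otimes \mb{Q}$.

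The plan for this linear independence is a torus-localization argument. Composing the inclusion $A^\bullet_{PGL_2}((\mb{P}^1)^n) \otimes \mb{Q} \hookrightarrow A^\bullet_T((\mb{P}^1)^n) \otimes \mb{Q}$ from \Cref{PGL2injective} with restriction to each of the $2^n$ torus-fixed points $p_\tau$ indexed by $\tau \in \{\pm 1\}^n$, a direct computation from \Cref{FormulaP} shows that for a two-part partition $P = \{A, A^c\}$ the restriction $\Delta_P|_{p_\tau}$ vanishes unless $\tau$ is constant on each of $A$ and $A^c$, while $\Delta_{\{[n]\}}|_{p_\tau}$ vanishes unless $\tau$ is globally constant; whenever these restrictions are nonzero they are explicit signed powers of $u-v$. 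Starting from a putative relation $c_0 \Delta_{\{[n]\}}\Delta_{i_0,j_0}^{k-n+1} + \sum_P c_P \Delta_P \Delta_{i_P,j_P}^{k-n+2} = 0$, restriction to the fixed point $p_{\tau(A)}$ defined by $\tau(A)|_A \equiv +1$ and $\tau(A)|_{A^c} \equiv -1$ kills the $\Delta_{\{[n]\}}$ term as well as every $\Delta_{P'}$ term with $P' \ne \{A, A^c\}$, while the surviving $P = \{A, A^c\}$ term evaluates (up to a sign) to a nonzero multiple of $(u-v)^k$, forcing $c_P = 0$. Iterating this over all two-part partitions clears every $c_P$, and a final restriction to $p_{\tau \equiv +1}$, at which $\Delta_{\{[n]\}}\Delta_{i_0,j_0}^{k-n+1}$ is manifestly nonzero, gives $c_0 = 0$.

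The main step requiring care is identifying these ``separating'' fixed points $p_{\tau(A)}$ and verifying the vanishing table for the restrictions $\Delta_P|_{p_\tau}$; once that is settled, the remaining work is a short bookkeeping of powers of $u-v$.
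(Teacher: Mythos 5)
Your proof is correct and follows the paper's route: the case $k \le n-2$ is exactly \Cref{3equal}, and for $k > n-2$ you invoke the same generation statement (\Cref{generatePhigh}) and the same rank count (\Cref{bettinumber}). The only difference is that your final torus-localization step is superfluous: once $\mb{Z}^{S_k}\to A^k_{PGL_2}((\mb{P}^1)^n)$ is a surjection of free $\mb{Z}$-modules of equal finite rank $2^{n-1}$, it is automatically an isomorphism, which is how the paper concludes; your localization argument is nevertheless valid (and is essentially the coefficient-extraction procedure the paper records separately in \Cref{algandexmp}).
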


\begin{proof}
If $k\leq n-2$, we have $R_k\twoheadrightarrow A^k_{PGL_2}((\mb{P}^1)^n)$ is an isomorphism with $\mb{Z}$-basis given by $\{\Delta_P\mid P\in \on{Good}(n-k,n)\}$ by \Cref{3equal}. Now, we consider the case $k>n-2$. 

The $S_k$ span $R_k$ by \Cref{generatePhigh}, so applying \eqref{Rsurject} yields 
\begin{align*}
\mb{Z}^{S_k}\twoheadrightarrow R_k\twoheadrightarrow {A}^{k}_{PGL_2}((\mb{P}^1)^n),
\end{align*}
whose composite is a surjection of free $\mb{Z}$-modules of the same rank $2^{n-1}$ by \Cref{generatePhigh,bettinumber}, so it is an isomorphism. This proves $R_k\twoheadrightarrow A^k_{PGL_2}((\mb{P}^1)^n)$  is an isomorphism and identifies $S_k$ as a basis. 
\end{proof}

\subsection{Algorithm and Example}
\label{algandexmp}
We can describe an algorithm for writing arbitrary classes in $A^\bullet_{PGL_2}((\mb{P}^1)^d)$ in terms of our $\mb{Z}$-basis. The key fact is that if $\on{pr}^n:(\mb{P}^1)^n \to (\mb{P}^1)^{n-1}$ is projection by forgetting the last factor, then by definition of the pushforward of a cycle
$$\on{pr}^n_*\Delta_P=\begin{cases}\Delta_{P \setminus n}&\text{if $n$ is not isolated, and}\\0&\text{if $n$ is isolated.}\end{cases}$$
%Indeed, if $n$ is isolated then $\Delta_P=\pi^*\Delta_{P \setminus n}$ and if $n$ is not isolated then we may write $\Delta_P=\Delta_{n,k}\Delta_{P'}$ with $P'$ a partition of $[n]\setminus \{k\}$ for some $k$. 
At the level of formulae, if we write our class as a polynomial in the $H_i,u,v$ with each $H_i$ appearing to degree at most $1$, then $\on{pr}^n_*$ extracts the $H_n$-coefficient. Also, if we have a $\Delta_P$ and we know that either $n$ is isolated or $n-1,n$ are in the same part, then as $(H_n-H_{n-1})\cap \Delta_{n-1,n}=0$ we also have
$$\on{pr}^n_*(\Delta_P\cap(H_n-H_{n-1}))=\begin{cases}0&\text{if $n-1,n$ are in the same group, and}\\\Delta_{P\setminus n}&\text{if $n$ is isolated.}\end{cases}$$

Suppose we have a class
$$\alpha=\sum_{P \in \on{Good}(d,n)}a_P\Delta_P=\sum_{\substack{P \in \on{Good}(d,n)\\n\text{ isolated}}}a_P\Delta_P+\sum_{\substack{P \in \on{Good}(d,n)\\n-1,n\text{ together}}}a_P\Delta_P$$ and we want to find the coefficients $a_P$.

We first show how to reduce down to the case $d=2$. By the above, we have
$$\on{pr}^n_*\alpha=\sum_{\substack{P \in \on{Good}(d,n)\\n-1,n\text{ together}}}a_P\Delta_{P\setminus n}, \qquad \on{pr}^n_*(\alpha\cap(H_{n}-H_{n-1}))=\sum_{\substack{P \in \on{Good}(d,n)\\n\text{ isolated}}}a_P\Delta_{P\setminus n}.$$
In the first case each $P \setminus n\in \on{Good}(d-1,n)$, and in the second case each $P\setminus n \in \on{Good}(d-1,n-1)$ so we can apply induction to determine all of these coefficients.

Once we have reduced down to the case $d=2$, we can now identify each $a_P$ separately for $P=\{A,B\}$ a partition of $[n]$ into two parts by evaluating at $H_i=-u$ for $i \in A$ and $H_i=-v$ for $i \in B$ (which is localization at a torus-fixed point). By \Cref{FormulaP}, this evaluates to $a_{\{A,B\}}(u-v)^{n-2}(-1)^{|A|-1}$.

The same method for $d=2$ works for elements $\alpha \in A^k((\mb{P}^1)^n)$ with $k>n-2$. Applying the same substitution to $$\alpha=\sum a_P\Delta_{i_P,j_P}^{k-n+2}\Delta_P+a_{\{[n]\}}\Delta_{i_{\{[n]\}},j_{\{[n]\}}}^{n-k+1}\Delta_{\{[n]\}}$$
extracts the $a_P$-coefficient for $P=\{A,B\}$ a partition of $[n]$ into two parts as this is the only term that does not vanish under this substitution. Then, we subtract off all of these terms to recover $a_{[n]}$.

\begin{exmp}
As a simple example, consider the $PGL_2$-orbit closure of a generic point in $(\mb{P}^1)^5$. The formula computed in \cite[Corollary 4.8]{FML} shows that the class of this orbit is
$$\alpha=e_2(H_1,H_2,H_3,H_4,H_5)+2(u+v)(H_1+H_2+H_3+H_4+H_5)+(3u^2+4uv+3v^2),$$
where $e_2$ is the second elementary symmetric polynomial. We have
\begin{center}
\begin{tikzpicture}[sibling distance=12em, every node/.style = {align=center}]
  \node {$\alpha$}
    child
    {
    	node {$\on{pr}_*^5(\alpha)$}
        child
        {
        	node {$\on{pr}_*^4(\on{pr}_5^*(\alpha))$}
            child
            {
            	node{$\on{pr}_*^3(\on{pr}_*^4(\on{pr}^5_*(\alpha)))$\\$=0$}
            }
            child
            {
            	node{$\on{pr}_*^3(\on{pr}_*^4(\on{pr}^*_5(\alpha))\cap (H_3-H_2))$\\$=\Delta_{\{\{1\},\{2\}\}}$}
            }
        }
        child
        {
        	node {$\on{pr}^4_*(\on{pr}^5_*(\alpha)\cap (H_4-H_3))$\\$=\Delta_{\{\{1,2\},\{3\},\{4\}\}}$}
        }
    }
    child { node {$\on{pr}^5_*(\alpha \cap (H_5-H_4))$\\ $=\Delta_{\{\{1,2,3\},\{4\}\}}$}};
\end{tikzpicture}
\end{center}

\begin{align*}
&\on{pr}^5_* \alpha = (H_1+H_2+H_3+H_4)+2(u+v)\\
&\on{pr}^5_*(\alpha \cap (H_5-H_4))=e_2(H_1,H_2,H_3)+(u+v)(H_1+H_2+H_3)+(u^2+uv+v^2)\\
&\on{pr}^4_*(\on{pr}^5_*\alpha)=1\\
&\on{pr}^4_*(\on{pr}^5_*\alpha \cap (H_4-H_3))=H_1+H_2+u+v\\
&\on{pr}_*^3(\on{pr}_*^4(\on{pr}^5_*\alpha))=0\\
&\on{pr}_*^3(\on{pr}_*^4(\on{pr}^5_*\alpha)\cap (H_3-H_2))=1.
\end{align*}
The only non-trivial identification was  $\on{pr}^5_*(\alpha \cap (H_5-H_4))=\Delta_{\{\{1,2,3\},\{4\}\}}$, which we can identify as follows. Substitute $-u$'s and $-v$'s for the $H_i$ corresponding to all nontrivial partitions $\{A,B\}$ of $[4]$ into two parts. We find the only choice that gives a nonzero result is $A=\{1,2,3\},B=\{4\}$, yielding $(u-v)^2$, which is the same as for $\Delta_{\{\{1,2,3\},\{4\}\}}$ by \Cref{FormulaP}.
Putting this together yields
\begin{align*}
\alpha &= \Delta_{\{\{1\},\{2\},\{3,4,5\}\}}+\Delta_{\{\{1,2\},\{3\},\{4,5\}\}}+\Delta_{\{\{1,2,3\},\{4\},\{5\}\}}.
\end{align*}
\begin{comment}
We have
\begin{align*}
\on{pr}^5_*\alpha &=(H_1+H_2+H_3+H_4)+2(u+v).\\
\on{pr}^4_*(\on{pr}^5_*\alpha)&=1.\\
\on{pr}_*^3(\on{pr}_*^4(\on{pr}^5_*\alpha))&=0.\\
\on{pr}_*^3(\on{pr}_*^4(\on{pr}^5_*\alpha)\cap (H_3-H_2))=1&=\Delta_{\{\{1\},\{2\}\}}\\
\on{pr}^4_*(\on{pr}^5_*\alpha \cap (H_4-H_3))=H_1+H_2+u+v&=\Delta_{\{\{1,2\},\{3\}\}}\\
\on{pr}^5_*(\alpha \cap (H_5-H_4))=H_1H_2+H_1H_3+H_2H_3&+(u+v)(H_1+H_2+H_3)+(u^2+uv+v^2)\\
&=\Delta_{\{\{1,2,3\}\},\{4\}\}}.
\end{align*}
Hence, we get the decomposition
$$\alpha=\Delta_{\{\{1\},\{2\},\{3,4,5\}\}}+\Delta_{\{\{1,2\},\{3\},\{4,5\}\}}+\Delta_{\{\{1,2,3\},\{4\},\{5\}\}}.$$
\end{comment}
We remark that the $PGL_2$-orbit closure $X_n\subset(\mb{P}^1)^n$ of a general point in $(\mb{P}^1)^n$ decomposes into good incidence strata as
\begin{equation}
[X_n]=\sum_{a=1}^{n-2} \Delta_{\{\{1,\ldots,a\},a+1,\{a+2,\ldots,n\}\}} \label{orbit}
\end{equation}
which can be geometrically explained as follows. Consider the diagram
\begin{center}
\begin{tikzcd}
\overline{\mc{M}}_{0,n}(\mb{P}^1,1) \ar[r,"\text{ev}"]\ar[d,"\pi"] & (\mb{P}^1)^n\\
\overline{\mc{M}}_{0,n}
\end{tikzcd}
\end{center}
(see \Cref{WDVVintro} for notation). The left and right hand side of \eqref{orbit} can both be described as $\text{ev}_*\pi^*(\pt)$ for $\pt \in \overline{\mc{M}}_{0,n}$ being a general point and the point corresponding to a chain of $n-2$ rational curves (respectively), and the result follows from the flatness of $\pi$. See \cite[Section 4]{FML} for a generalization of this degeneration to $PGL_{r+1}$ orbits closures of general points in $(\mb{P}^r)^n$.
\end{exmp}
%\begin{exmp}
%Take the $PGL_2$-orbit $X$ of a general point in $(\mb{P}^1)^n$. There is an explicit series of $PGL_2$-invariant degenerations of $X$ into the sum $$[X]=\sum_{a=1}^{n-2} \Delta_{\{1,\ldots,a\},a+1,\{a+2,\ldots,n\}},$$ and each of these $\Delta_P$'s has $P$ good.
%\end{exmp}

\section{$GL_2$-equivariant classes of strata in ${\rm Sym}^n\mb{P}^1$}
Recall from \Cref{bracket} that $[\lambda]\in A_{GL_2}^{\bullet}(\mb{P}^n)$ for $\lambda$ a partition of $n$ is the pushforward of $\Delta_P\in  A_{GL_2}^{\bullet}((\mb{P}^1)^n)$ under the multiplication map $(\mb{P}^1)^n\to \mb{P}^n$ for $P$ a partition of $[n]$ into subsets with cardinalities given by $\lambda$. Up to a constant factor given in \Cref{bracket}, this is the class of the closure $Z_{\lambda}$ given in \Cref{unorderedloci} of degree $n$ forms on $(\mb{P}^{1})^{\vee}$ whose roots have multiplicities given by $\lambda$  as studied by Feh\'er, N\'emethi, and Rim\'anyi \cite{FNR06}. 

\begin{defn}
Denote by $[a,b,1^c]:=[\{a,b,1,1,\ldots,1\}]$ where there are $c$ $1$'s.
\end{defn}

From writing the expressions for $[\lambda]$ in \Cref{FNRformula} using generating functions, we find the following new Corollary. 

\begin{cor}
\label{ab1formula}
For $d\geq 2$, consider the polynomial
\begin{align*}
-\frac{1}{(z-1)^{d-2}}\prod_{i=1}^{d} (z^{a_i}-1)&=\sum_{\substack{0 \le k_1\leq k_2\\k_1+k_2=n-d+2}}{\alpha_{k_1}(z^{k_1}+z^{k_2})}. 
\end{align*}
Then $\alpha_i \in \mb{Z}$ and
\begin{align*}
[a_1,\ldots,a_d]=\sum_{\substack{1 \le k_1\leq k_2\\k_1+k_2=n-d+2}}\alpha_{k_1}[k_1,k_2,1^{d-2}]
\end{align*}
\end{cor}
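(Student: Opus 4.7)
The strategy is to reduce the claim to the polynomial identity
$$\prod_{i=1}^{d}(z^{a_i}-1) = \sum_{\substack{1\leq k_1 \leq k_2\\k_1+k_2 = n-d+2}} \alpha_{k_1}(z^{k_1}-1)(z^{k_2}-1)(z-1)^{d-2}$$
in $\mb{Q}[z]$, and then to apply the $\mb{Q}$-linear map $L:\mb{Q}[z]_{\leq n} \to A_{GL_2}^{\bullet}(\mb{P}^n)\otimes\mb{Q}$ sending $z^k \mapsto \frac{1}{(u-v)^d}\prod_{j\in [n]\setminus\{k\}}(H+jv+(n-j)u)$ from \Cref{FNRformula}. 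By that theorem, $L$ carries the left side to $[a_1,\ldots,a_d]$ and each summand on the right to $[k_1,k_2,1^{d-2}]$, yielding the desired class formula.

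Setting $p(z):=\prod_i(z^{a_i}-1) = (z-1)^d\prod_i[a_i]_z$ with $[a]_z := 1 + z + \cdots + z^{a-1}$, the first step is to establish the palindromic expansion of $-p(z)/(z-1)^{d-2} = -(z-1)^2\prod_i[a_i]_z$ (a polynomial of degree $n-d+2$) with integer coefficients $\alpha_{k_1}$. Each $[a]_z$ is palindromic, as is $-(z-1)^2$, so the product is palindromic of degree $n-d+2$ and can be written as $\sum_{0 \leq k_1 \leq k_2,\,k_1+k_2=n-d+2}\alpha_{k_1}(z^{k_1}+z^{k_2})$. When $k_1<k_2$ the coefficient $\alpha_{k_1}$ is visibly an integer; in the edge case $k_1 = k_2 = (n-d+2)/2$ (requiring $n-d$ to be even), a direct computation shows the middle coefficient of $-(z-1)^2 Q(z)$, with $Q:=\prod_i[a_i]_z$ palindromic of degree $n-d$, equals $2(Q_{k_1-1}-Q_{k_1-2})$ by the palindromy $Q_{k_1+1}=Q_{k_1-1}$, which is even, so $\alpha_{k_1}\in\mb{Z}$ in all cases. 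Evaluating the palindromic expansion at $z=1$ gives $0 = 2\sum_{k_1\geq 0}\alpha_{k_1}$, so $\sum_{k_1\geq 0}\alpha_{k_1}=0$.

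To verify the polynomial identity, expand $(z^{k_1}-1)(z^{k_2}-1) = z^{n-d+2}+1 - (z^{k_1}+z^{k_2})$; the right-hand side of the desired identity then becomes
$$\bigl(\textstyle\sum_{k_1\geq 1}\alpha_{k_1}\bigr)(z^{n-d+2}+1)(z-1)^{d-2} - (z-1)^{d-2}\sum_{k_1\geq 1}\alpha_{k_1}(z^{k_1}+z^{k_2}).$$
Substituting $\sum_{k_1 \geq 1}\alpha_{k_1} = -\alpha_0$ from the previous step converts this into $-(z-1)^{d-2}\sum_{k_1\geq 0}\alpha_{k_1}(z^{k_1}+z^{k_2}) = -(z-1)^{d-2}\cdot\bigl(-p(z)/(z-1)^{d-2}\bigr) = p(z)$, as desired. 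The main conceptual obstacle is recognizing that the missing $k_1=0$ term in the class sum is exactly compensated by the switch from $(z^{k_1}+z^{k_2})$ in the palindromic expansion to $(z^{k_1}-1)(z^{k_2}-1)$ in the class formula, via the vanishing $\sum_{k_1\geq 0}\alpha_{k_1}=0$; the only other delicate point is integrality of the center coefficient when $n-d$ is even, handled by the palindromic computation above.
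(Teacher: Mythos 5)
Your proof is correct and follows essentially the same route as the paper: reduce via \Cref{FNRformula} to the polynomial identity $\prod_{i}(z^{a_i}-1)=\sum\alpha_{k_1}(z^{k_1}-1)(z^{k_2}-1)(z-1)^{d-2}$ and verify it using $\sum_{k_1\ge 0}\alpha_{k_1}=0$, which is exactly the $z=1$ evaluation the paper invokes when comparing coefficients. The only differences are cosmetic: the paper deduces integrality of the middle coefficient directly from plugging in $z=1$ rather than your parity computation of the center coefficient, and in that computation the palindromy instance you actually need is $Q_{k_1}=Q_{k_1-2}$ rather than the stated $Q_{k_1+1}=Q_{k_1-1}$ --- a harmless index slip, since the resulting value $2(Q_{k_1-1}-Q_{k_1-2})$ is correct.
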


\begin{proof}
Clearly all $\alpha_i \in \mb{Z}$ except possibly $\alpha_{\frac{n-d+2}{2}}$, which a priori only lies in $\mb{Z}[\frac{1}{2}]$. But plugging in $z=1$ to both sides shows the integrality.

By \Cref{FNRformula}, it suffices to show
\begin{align*}
\prod_{i=1}^{d}{(z^{a_i}-1)}=\sum_{\substack{1 \le k_1\leq k_2\\k_1+k_2=n-d+2}}\alpha_{k_1}(z^{k_1}-1)(z^{k_2}-1)(z-1)^{d-2}.
\end{align*}
or equivalently
\begin{align*}
\frac{1}{(z-1)^{d-2}}\prod_{i=1}^{d}{(z^{a_i}-1)}=\sum_{\substack{k_1\leq k_2\\k_1+k_2=n-d+2}}\alpha_{k_1}(z^{k_1}-1)(z^{k_2}-1).
\end{align*}
%Indeed, dividing both sides by $(z-1)^{d-2}$, 
By definition of $\alpha_k$, the coefficients of both sides agree except possibly the $z^0$ and $z^{n-(d-2)}$-coefficient. Also, the coefficients of $z^0$ and $z^{n-(d-2)}$ are equal to each other on the left hand side, and the same is true on the right side. To see they agree between the left and right sides, we note both sides are $0$ after substituting $z=1$.
\end{proof}

\begin{lem}
\label{linindeplem}
The rational $GL_2$-equivariant classes in $\mb{P}^n$ of the torus fixed points $$\prod_{j \in [n]\setminus \{k\}} (H+jv+(n-j)u)\in A^\bullet_{T}(\mb{P}^n)\otimes \mb{Q}$$ are linearly independent.
\end{lem}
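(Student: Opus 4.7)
The plan is to exhibit the $n+1$ classes (indexed by $k\in\{0,1,\ldots,n\}$) as a Lagrange interpolation basis and read off linear independence from the fact that they separate the torus-fixed values of $H$. Setting $\beta_k := -(kv + (n-k)u) \in \mb{Q}[u,v]$, the polynomial $G(H)$ factors (after the reindexing $k \mapsto n-k$) as $\prod_k (H - \beta_k)$, so the class $L_k(H) := \prod_{j \ne k}(H + jv + (n-j)u) = \prod_{j \ne k}(H - \beta_j)$ in the statement is the Lagrange basis polynomial vanishing at every $\beta_j$ with $j \ne k$.

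Starting from a hypothetical vanishing relation $\sum_k c_k L_k = 0$ in $\mb{Q}[u,v][H]/G(H)$, the first step I would carry out is a degree reduction: each $L_k$ is monic of degree $n$ in $H$, while $G(H)$ is monic of degree $n+1$, so the congruence lifts to an honest polynomial identity $\sum_k c_k L_k(H) = 0$ in $\mb{Q}[u,v,H]$.

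The second step is to substitute $H = \beta_m$ for each $m$: by construction $L_k(\beta_m) = 0$ whenever $k \ne m$, so the identity collapses to $c_m L_m(\beta_m) = 0$. A short direct computation using $\beta_k - \beta_j = (j-k)(u-v)$ gives $L_m(\beta_m) = (-1)^{n-m}\, m!(n-m)!\,(u-v)^n$, which is a nonzero element of the integral domain $\mb{Q}[u,v]$, so $c_m = 0$. The same argument works verbatim with coefficients $c_m \in \mb{Q}[u,v]$, yielding linear independence over the full coefficient ring (so the $L_k$ are in fact a $\mb{Q}[u,v]$-basis of $A^\bullet_T(\mb{P}^n)\otimes\mb{Q}$, since the rank is $n+1$).

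There is no serious obstacle: the only substantive input is that the $n+1$ torus-fixed values $\beta_k$ remain pairwise distinct after inverting $u-v$, which is immediate from the explicit difference $\beta_k - \beta_j = (j-k)(u-v)$. Everything else is the formal Lagrange-interpolation manipulation above.
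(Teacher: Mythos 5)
Your proof is correct and is essentially the paper's argument: both evaluate at the roots $\beta_m$ of $G(H)$ (the torus-fixed values of $H$) and use that each class $L_k$ is nonzero at exactly one such root, the value being a nonzero multiple of $(u-v)^n$ in the integral domain $\mb{Q}[u,v]$. The only quibble is a harmless sign slip ($\beta_k-\beta_j=(k-j)(u-v)$, not $(j-k)(u-v)$), which does not affect the nonvanishing of $L_m(\beta_m)$ or the conclusion.
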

\begin{proof}
For fixed $k$, $H\mapsto -ku-(n-k)v$ maps $\prod_{j\in [n]\setminus\{k'\}}(H+jv+(n-j)u)$ to $0$ if and only if $k' \ne k$
\end{proof}

\begin{thm}
\label{basis}
For fixed $c\geq 0$, the classes $[a,b,1^c]$ with $a+b=n-c$ and $a \ge b$ form a $\mb{Q}$-basis for
$A^{n-c-2}_{PGL_2}(\mb{P}^n)\otimes \mb{Q}\subset A^{n-c-2}_{GL_2}(\mb{P}^n)\otimes \mb{Q}$.
%$all $[V]\in A_{GL_2}^{\bullet}(\mb{P}^n)$ for any $GL_2$-invariant subvariety $V\subset \mb{P}^n$ of dimension $c+2$ over $\mb{Q}$.
\end{thm}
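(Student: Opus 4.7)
The plan is to establish spanning and linear independence separately. For spanning, I would use that $\Phi \colon (\mathbb{P}^1)^n \to \mathbb{P}^n$ is finite of degree $n!$, which makes $\Phi_* \otimes \mathbb{Q}$ rationally surjective (since $\Phi_* \Phi^* = n! \cdot \operatorname{id}$). By \Cref{Pgenerate} applied in degree $k = n - c - 2 \le n - 2$, the group $A^{n-c-2}_{PGL_2}((\mathbb{P}^1)^n)$ is additively generated by the classes $\Delta_P$ for $P$ a partition of $[n]$ into $c + 2$ parts, whose pushforwards under $\Phi$ are, up to the normalization constants of \Cref{bracket}, exactly the classes $[\lambda]$ for $\lambda$ a partition of $n$ into $c + 2$ parts. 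Hence these $[\lambda]$ span $A^{n-c-2}_{PGL_2}(\mathbb{P}^n) \otimes \mathbb{Q}$, and by \Cref{ab1formula} each such $[\lambda]$ is a $\mathbb{Z}$-linear combination of the classes $[a, b, 1^c]$, so these classes span.

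For linear independence, I would apply \Cref{FNRformula} to obtain
$$[a, b, 1^c] = \frac{1}{(u-v)^{c+2}} \sum_{k=0}^{n} c_{a,b,k} \prod_{j \in [n] \setminus \{k\}} (H + jv + (n-j)u),$$
where $c_{a,b,k}$ is the coefficient of $z^k$ in $(z^a - 1)(z^b - 1)(z - 1)^c$. Since $A^\bullet_{GL_2}(\mathbb{P}^n) \otimes \mathbb{Q}$ injects into $A^\bullet_T(\mathbb{P}^n) \otimes \mathbb{Q}$ and \Cref{linindeplem} supplies linear independence of the products $\prod_{j \ne k}(H + jv + (n-j)u)$ there, linear independence of the $[a, b, 1^c]$ reduces to linear independence of the polynomials $(z^a - 1)(z^b - 1)(z - 1)^c$ in $\mathbb{Q}[z]$. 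After cancelling the common factor $(z - 1)^c$, this amounts to linear independence of $(z^a - 1)(z^b - 1) = z^{n-c} - z^a - z^b + 1$ over pairs $a \ge b \ge 1$ with $a + b = n - c$. Examining the $z^k$-coefficient for $1 \le k \le n - c - 1$ picks out a unique pair $\{a, b\} = \{k, n - c - k\}$, forcing all coefficients in any dependence relation to vanish.

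The main obstacle will be the spanning step, in the sense that it requires combining the rational surjectivity of $\Phi_*$ with the generation result from the ordered case and the change-of-basis formula of \Cref{ab1formula}, rather than being a direct computation. The linear-independence calculation, while doing the bulk of the work, is essentially a routine unwinding once the dictionary between strata classes and polynomials in $z$ supplied by \Cref{FNRformula} and \Cref{linindeplem} is in place.
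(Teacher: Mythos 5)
Your proof is correct, and the linear-independence half coincides with the paper's own argument: both reduce, via \Cref{FNRformula} and \Cref{linindeplem}, to the linear independence of the polynomials $(z^a-1)(z^b-1)(z-1)^c$, detected by looking at the coefficients of $z^a$ and $z^b$ after cancelling $(z-1)^c$. Where you genuinely diverge is the spanning half. The paper gets spanning essentially for free from a dimension count: having established independence, the span of the $[a,b,1^c]$ has dimension $\lfloor (n-c)/2\rfloor$, which is checked against the rank of $A^{n-c-2}_{PGL_2}(\mb{P}^n)\otimes\mb{Q}$ read off from the explicit presentation in \Cref{PGLunordered}. You instead argue geometrically: $\Phi_*\Phi^*=n!\cdot\mathrm{id}$ makes $\Phi_*$ rationally surjective, \Cref{Pgenerate} (through the surjection $R\twoheadrightarrow A^\bullet_{PGL_2}((\mb{P}^1)^n)$) says the $\Delta_P$ for $P\in\on{Part}(c+2,n)$ generate the ordered Chow group in codimension $n-c-2$, their pushforwards are the normalized classes $[\lambda]$, and \Cref{ab1formula} rewrites each $[\lambda]$ as a combination of the $[a,b,1^c]$. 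This is non-circular, since \Cref{Pgenerate} and \Cref{ab1formula} are proved independently of \Cref{basis}. The paper's route is shorter but leans on knowing the ring $A^\bullet_{PGL_2}(\mb{P}^n)$ explicitly; yours avoids that presentation entirely, makes the spanning mechanism constructive, and in effect also re-derives the spanning assertion of \Cref{unorderedintro}. Two minor caveats: the surjection $R\twoheadrightarrow A^\bullet_{PGL_2}((\mb{P}^1)^n)$ is only stated for $n\ge 3$, so the degenerate case $n=2$, $c=0$ should be dispatched separately (it is trivial); and when $a=b$ the coefficient extracted from $z^a$ is $-2$ rather than $-1$, which of course still forces the coefficient to vanish over $\mb{Q}$.
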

\begin{proof}
%\Cref{AB1s} shows that all the unsymmetrized classes $[P]$ for $P$ a partition of $[n]$ are generated over $\mb{Z}$ by the classes for which $P$ contains at most two sets with size greater than 1. The statement about the classes $[a,b;c]$ generated all symmetrized classes $[\lambda]$ follow from pushing forward via the multiplication map $\Phi: (\mb{P}^1)^n\to \mb{P}^n$. 

To show the linear independence, first note that $\prod_{j\in [n]\setminus\{k\}}(H+jv+(n-j)u)$ are linearly independent in $A^\bullet_T(\mb{P}^n)\otimes \mb{Q}$ by \Cref{linindeplem}. Therefore, it suffices to show for fixed $c$ that the polynomials $(z^a-1)(z^b-1)(z-1)^c$ with $a \ge b$ and $a+b=n-c$ are linearly independent. Indeed, dividing out by $(z-1)^c$, we note that $(z^a-1)(z^b-1)$ is the only such polynomial which contains either of the monomials $z^a$ or $z^b$.

To see that the $\mb{Q}$-linear span of the classes $[a,b,1^c]$ is precisely $A^{n-c-2}_{PGL_2}(\mb{P}^n)\otimes \mb{Q}$, we note that we have just shown that the dimension of the $\mb{Q}$-linear span of the $[a,b,1^c]$ is precisely $\lfloor \frac{n-c}{2} \rfloor$ by linear independence, which we can check is the same as the dimension of $A^{n-c-2}_{PGL_2}(\mb{P}^n)\otimes \mb{Q}$ by \Cref{PGLunordered}. 

\end{proof}
%\begin{rmk}
%The only closed $GL_2$-invariant subvariety of $\mb{P}^n$ of dimension $1$ is $[\{n\}]$, so the $[\{a_1,\ldots,a_r\}]$ generate all $[V] \in A^\bullet_{GL_{2}}(\mb{P}^n)\otimes \mb{Q}$.
%\end{rmk}

\section{Integral classes of unordered strata in $[\text{\rm Sym}^n\mb{P}^1/PGL_2]$}
\label{integralunordered}
In this section, we compute the integral classes of $[Z_{\lambda}]\in A^\bullet_{PGL_2}(\mb{P}^n).$ By \Cref{PGLunordered}, if $n$ is odd, then $A_{PGL_2}^{\bullet}(\mb{P}^n)\to A_{GL_2}^{\bullet}(\mb{P}^n)$ is injective and we know the image of the $[Z_{\lambda}]$ in $A_{GL_2}^{\bullet}(\mb{P}^n)$ by \Cref{FNRformula}, so it suffices to consider the case $n$ is even, which we assume for the remainder of this section. 

Recall the polynomials $p_n(t)\in A^{\bullet}_{PGL_2}(\pt)[t]$ defined in \Cref{PGLunordered} for even $n$ and let $q_n$ be the image of $p_n$ in $A^{\bullet}_{PGL_2}(\pt)/(2)[t]\cong \mb{F}_2[c_2,c_3,t]$. It is easy to see by the binomial theorem or directly from \cite[Lemma 6.1]{FV11} that
$$q_n(t)= \begin{cases}t^{(n+4)/4}(t^3+c_2t+c_3)^{n/4}&\text{if $n\equiv 0$ mod $4$, and}\\t^{(n-2)/4}(t^3+c_2t+c_3)^{(n+2)/4}&\text{if $n \equiv 2$ mod $4$,} \end{cases}$$
and $q_n(t) \mid q_{n+k}(t)$ for $k=0$ or $k \ge 4$ for any $n$. 

By \Cref{PGLunordered}, for $n$ even,
\begin{align*}
A_{PGL_2}^{\bullet}(\mb{P}^n)\cong \mb{Z}[c_2,c_3,H]/(2c_3,p_n(H)),
\end{align*}
which is isomorphic to 
\begin{align*}
\left(\bigoplus_{i=0}^n\mb{Z}[c_2]H^i\right)\oplus \left(\bigoplus_{i=0}^nc_3\mb{F}_2[c_2,c_3]H^i\right)
\end{align*}
as abelian groups. So to determine the class $[Z_{\lambda}]\in A^\bullet_{PGL_2}(\mb{P}^n)$, it suffices to find its image in $\bigoplus_{i=0}^n\mb{Z}[c_2]H^i$ and $\bigoplus_{i=0}^nc_3\mb{F}_2[c_2,c_3]H^i$. Equivalently, if we write the class of $[Z_{\lambda}]$ as a polynomial in $c_2$, $c_3$, and $H$ with degree at most $n$ in $H$, then it suffices to consider the terms not containing $c_3$ and the terms containing $c_3$ separately. Under the map $A_{PGL_2}^{\bullet}(\mb{P}^n)\to A_{GL_2}^{\bullet}(\mb{P}^n)$, \Cref{PGLunordered} shows that the first factor maps injectively and the second factor maps to zero. 

We can determine the image of $[Z_{\lambda}]$ in the first factor using \Cref{FNRformula}, so it suffices to determine the image of $[Z_{\lambda}]$ in the second factor to identify its class. To do this, we will work modulo $2$ and determine $[Z_{\lambda}]\in A^\bullet_{PGL_2}(\mb{P}^n)\otimes \mb{Z}/2\mb{Z}$. Discarding those monomials not containing $c_3$ then yields the image of $[Z_\lambda]$ in the second factor.

\begin{defn}
We say a partition $\lambda=a_1^{e_1}\ldots a_k^{e_k}$ of $n$ into $d=\sum_{i=1}^k e_i$ parts is \emph{special} if all $a_i$ and $\frac{d!}{e_1!\cdots e_k!}$ are odd, and all $e_i$ are even. 
\end{defn}

\begin{thm}
\label{PGLmod2}
Let $d$ and $n$ be integers with $n$ even. The class of $[Z_\lambda]\in A^\bullet_{PGL_2}(\mb{P}^n)\otimes \mb{Z}/2\mb{Z}$ for $\lambda$ a partition of $n$ into $d$ parts is given by
$$\begin{cases}\frac{q_n}{q_{d}}(H)&\text{if $\lambda$ is special, and}
\\ 0&\text{otherwise.}\end{cases}$$
\end{thm}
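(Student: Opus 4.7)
The plan is to prove \Cref{PGLmod2} by combining information from the $GL_2$-equivariant image of $[Z_\lambda]$ with an additional argument pinning down the $c_3$-content. By \Cref{PGLunordered} we have
\[A^\bullet_{PGL_2}(\mb{P}^n)\otimes\mb{F}_2 \cong \mb{F}_2[c_2,c_3,H]/(q_n(H)),\]
and I would first verify that the map $A^\bullet_{PGL_2}(\mb{P}^n)\otimes\mb{F}_2\to A^\bullet_{GL_2}(\mb{P}^n)\otimes\mb{F}_2$ has kernel exactly the ideal $(c_3)$. This can be checked by the change of variables $H'=H+\tfrac{n}{2}(u+v)$, which identifies $q_n|_{c_3=0}(H')$ with $\prod_{k=0}^n(H+ku+(n-k)v)\bmod 2$ (after using $(u-v)^2\equiv c_2$ and grouping even/odd residues of $k-n/2$). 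Consequently $[Z_\lambda]\pmod 2=A_\lambda+c_3\,B_\lambda$, with $A_\lambda$ determined by the $GL_2$-image and $B_\lambda$ requiring further work.

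For the $A_\lambda$ part, I would apply \Cref{FNRformula} to the equation $[\lambda]=(\prod e_i!)[Z_\lambda]$ and reduce mod $2$, exploiting the Frobenius identity $(z^a-1)^{2^k}\equiv z^{a\cdot 2^k}-1\pmod 2$. Tracking how $\prod e_i!$ divides into the simplified polynomial $\prod_i(z^{a_i}-1)^{e_i}$ shows that $[Z_\lambda]\equiv0\pmod2$ unless: all $a_i$ are odd (otherwise $(z^{a_i}-1)\equiv(z^{a_i/2}-1)^2$ contributes an extra factor of $2$ after pushforward), all $e_i$ are even (so the Frobenius collapse $(z^{a_i}-1)^{e_i}\equiv(z^{2a_i}-1)^{e_i/2}$ absorbs half of the $e_i!$ factor), and the multinomial $\binom{d}{e_1,\ldots,e_k}$ is odd (controlling the remaining parity after collapse, via Lucas's theorem). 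In the special case, the surviving expression in $A^\bullet_{GL_2}(\mb{P}^n)\otimes\mb{F}_2$ is seen to agree with the image of $\frac{q_n}{q_d}(H)$ by comparing Lagrange-type expansions using \Cref{linindeplem,ab1formula}.

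Finally, to determine $B_\lambda$ I would induct on $n-d$. The base case $n=d$ forces $\lambda=1^n$ with $Z_\lambda=\mb{P}^n$ and $[Z_\lambda]=1=q_n/q_n$, which matches tautologically. For the inductive step, I would relate $[Z_\lambda]$ to classes $[Z_{\lambda'}]$ for $\lambda'$ obtained from $\lambda$ by merging or splitting parts so as to stay within the special locus, using an excess intersection calculation for the inclusion $Z_{\lambda'}\subset Z_\lambda$. The polynomial $H^3+c_2H+c_3$ is the chern polynomial of the adjoint representation $V$ of $PGL_2$ (by the computation in \Cref{PGLGL}), and the factorization $q_n/q_d=H^a(H^3+c_2H+c_3)^b$ should be matched against these excess normal bundle contributions factor by factor. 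The hardest step, and the main obstacle I anticipate, is showing that the induction closes up \emph{exactly} to $\frac{q_n}{q_d}(H)$ rather than differing by a possibly nonzero $c_3$-multiple; controlling this requires going beyond the $GL_2$-visible information, likely via a direct geometric identification of an appropriate excess cycle on $Z_\lambda$ that witnesses the $c_3$-content, or via a further auxiliary restriction to a subgroup of $PGL_2$ other than $GL_2$ that is sensitive to $c_3$.
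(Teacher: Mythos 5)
Your proposal correctly isolates the real difficulty --- the decomposition $[Z_\lambda]\equiv A_\lambda+c_3B_\lambda \pmod 2$ with $A_\lambda$ visible in $A^\bullet_{GL_2}(\mb{P}^n)\otimes\mb{F}_2$ and $B_\lambda$ invisible there --- and your analysis of $A_\lambda$ via \Cref{FNRformula}, Frobenius, and parity of $\prod e_i!$ is essentially sound. But the proof is genuinely incomplete at exactly the point you flag: you have no working mechanism for computing $B_\lambda$, and the proposed induction on $n-d$ via excess intersection for inclusions $Z_{\lambda'}\subset Z_\lambda$ is not a proof. There is no specified base-to-step recursion that stays inside the special locus, no identification of the relevant excess normal bundles (the $Z_\lambda$ are singular, so even setting up excess intersection requires care), and no argument that the recursion determines $B_\lambda$ uniquely. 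Note also that the gap affects the \emph{non-special} case just as much as the special one: showing $A_\lambda=0$ does not show $[Z_\lambda]\equiv 0\pmod 2$, since $c_3B_\lambda$ could survive.

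The paper closes this gap with two devices you don't have. First, it replaces $Z_\lambda$ by the birational resolution $\Psi:\prod_{i=1}^k\mb{P}^{e_i}\to\mb{P}^n$, $(D_1,\ldots,D_k)\mapsto\sum a_iD_i$, so that $[Z_\lambda]=\Psi_*1$ and everything is computed in the rings $A^\bullet_{PGL_2}(\mb{P}^{e_i})\otimes\mb{F}_2$, which are fully known (including their $c_3$-content) from \Cref{PGLunordered}. This immediately kills the case where some $e_i$ is odd: $c_3$ pulls back to zero on $\mb{P}^{e_i}$ for $e_i$ odd, so $c_3[Z_\lambda]=\Psi_*c_3=0$, and since $c_3$ is a non-zero-divisor in $\mb{F}_2[c_2,c_3,H]/(q_n(H))$ this forces $[Z_\lambda]\equiv 0$. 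Second, for all $e_i$ even it extracts the class via the reproducing-kernel identity of \Cref{diagonalclass}, $\int\frac{q_n(t)-q_n(H)}{t-H}f(H)=\tilde f(t)$, combined with the projection formula: the computation reduces to the vanishing $q_d(\sum H_i)=0$ on $\prod\mb{P}^{e_i}$ and the degree $\frac{d!}{e_1!\cdots e_k!}$ of the multiplication map $\prod\mb{P}^{e_i}\to\mb{P}^d$, and the factorization $\frac{q_n(t)-q_n(\sum H_i)}{t-\sum H_i}=\frac{q_n}{q_d}(t)\cdot\frac{q_d(t)-q_d(\sum H_i)}{t-\sum H_i}$ then yields $\frac{q_n}{q_d}(t)\cdot\frac{d!}{e_1!\cdots e_k!}$ in one stroke. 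If you want to salvage your outline, the missing ingredient is precisely this: a $c_3$-sensitive computation of $\Psi_*1$, rather than an inductive comparison of strata.
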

\begin{rmk}
If $[Z_\lambda]\in A^\bullet_{PGL_2}(\mb{P}^n)\otimes \mb{Z}/2\mb{Z}$ is zero, then the component in $\bigoplus_{i=0}^n\mb{Z}[c_2]H^i$ is a multiple of $2$, and the component in $\bigoplus_{i=0}^nc_3\mb{F}_2[c_2,c_3]H^i$ is zero.

Furthermore, given the statement of the theorem, if $[Z_\lambda]\in A^\bullet_{PGL_2}(\mb{P}^n)\otimes \mb{Z}/2\mb{Z}$ is non-zero, then the component in $\bigoplus_{i=0}^nc_3\mb{F}_2[c_2,c_3]H^i$ is non-zero and is given by discarding anything with a $c_3^0$-coefficient in $\frac{q_n}{q_{d}}(H)$.
\end{rmk}

\begin{lem}
\label{diagonalclass}
Given a ring $R[H]/(P(H))$ for $P$ a monic polynomial of degree $n+1$, define the $R$-linear map $\int:R[H]/(P(H)) \to R$ given by taking a polynomial $f(H)$, and outputting the $H^{n}$-coefficient of the reduction $\wt{f}(H)$ of $f(H)\pmod{P(H)}$ to a polynomial of degree $\le n$. Then letting $t$ be an indeterminate, we have
$$\int \frac{P(H)-P(t)}{H-t} f(H)=\wt{f}(t).$$
\end{lem}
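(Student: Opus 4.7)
The plan is to use $R$-linearity to reduce to a basis check. Both sides of the identity are $R$-linear in $f$, and every class in $R[H]/(P(H))$ has a unique representative of degree $\le n$, so it suffices to verify $\int Q(H,t) H^k = t^k$ for each $k \in \{0,\ldots,n\}$, where I write $Q(H,t) := (P(H)-P(t))/(H-t)$.

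I would then write $P(H) = \sum_{i=0}^{n+1} p_i H^i$ with $p_{n+1}=1$ and expand $Q(H,t) = \sum_{j=0}^{n} c_j(H)\, t^j$ via the telescoping identity $(H^i-t^i)/(H-t) = \sum_{j=0}^{i-1} H^{i-1-j} t^j$, obtaining $c_j(H) = \sum_{i=j+1}^{n+1} p_i H^{i-1-j}$. Matching coefficients of $t^j$, the claim becomes $\int c_j(H) H^k = \delta_{jk}$, i.e.\ $\{c_j\}_{0\le j\le n}$ is the dual basis to $\{H^k\}_{0\le k\le n}$ under the pairing $(f,g)\mapsto \int fg$.

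Next I would split the verification by the sign of $j-k$. For $j\ge k$, the product $c_j(H)H^k$ already has degree $\le n$ in $H$, so no reduction mod $P(H)$ is required and the $H^n$-coefficient can be read off directly; only $i=n+1$ can produce an $H^n$ term, and it does so precisely when $j=k$, contributing $p_{n+1}=1$. For $j<k$, I would invoke the polynomial identity $H^{j+1}c_j(H) = P(H) - \sum_{i=0}^{j} p_i H^i$ (an immediate rearrangement of the definition of $c_j$), which gives
$$c_j(H)\, H^k \equiv -H^{k-j-1}\sum_{i=0}^{j} p_i H^i \pmod{P(H)},$$
a polynomial of degree $\le k-1\le n-1$ whose $H^n$-coefficient is $0$.

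No step poses a serious obstacle. The only mildly subtle point is the case $j<k$, where $c_j(H)H^k$ has degree exceeding $n$ and a genuine reduction modulo $P(H)$ must be carried out before extracting the $H^n$-coefficient; the identity for $H^{j+1}c_j(H)$ above handles this cleanly by recognizing $c_j$ as the ``cofactor of $H^{j+1}$'' in the expansion of $P(H)$, which replaces the illegal operation of dividing by $H^{j+1}$.
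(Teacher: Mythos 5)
Your proof is correct, but it takes a genuinely different route from the paper's. The paper never expands $P$ into coefficients: it reduces $f$ to $\wt f$ and then applies the three-term decomposition
$$\frac{P(H)-P(t)}{H-t}\,\wt{f}(H)=P(H)\frac{\wt{f}(H)-\wt{f}(t)}{H-t}-P(t)\frac{\wt{f}(H)-\wt{f}(t)}{H-t}+\frac{P(H)-P(t)}{H-t}\,\wt{f}(t),$$
killing the first term because it is a multiple of $P(H)$, the second because the divided difference of $\wt f$ has $H$-degree $\le n-1$, and evaluating the third because $(P(H)-P(t))/(H-t)$ is monic of degree $n$ in $H$; this handles all $f$ at once with no case analysis. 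You instead reduce by $R$-linearity to monomials and verify directly that the $t$-coefficients $c_j(H)$ of $(P(H)-P(t))/(H-t)$ form the dual basis to $\{H^k\}_{0\le k\le n}$ under the pairing $(f,g)\mapsto \int fg$, splitting into $j\ge k$ (no reduction needed, only $i=n+1$ contributes) and $j<k$ (using $H^{j+1}c_j(H)=P(H)-\sum_{i\le j}p_iH^i$ to reduce to degree $\le k-1$). Both arguments are complete; yours is more computational and makes the dual-basis structure explicit, which is additional information, while the paper's is shorter and avoids coefficient bookkeeping. The one point worth making explicit in either write-up is that $\int$ is being applied after extending scalars to $R[t]$, i.e.\ as the $R[t]$-linear map $R[t][H]/(P(H))\to R[t]$; both you and the paper use this implicitly and it is harmless since $P$ remains monic over $R[t]$.
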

\begin{proof}
We have
%\begin{align*}
%\int \frac{P(H)-P(t)}{H-t} (f(H)-\wt{f}(t))&= \int (P(H)-P(t))\frac{f(H)-\wt{f}(t)}{H-t}\\
%\int \frac{P(H)-P(t)}{H-t} (\wt{f}(H)-\wt{f}(t))&= \int (P(H)-P(t))\frac{\wt{f}(H)-\wt{f}(t)}{H-t}\\
%\int \frac{P(H)-P(t)}{H-t} \wt{f}(H)-\int \frac{P(H)-P(t)}{H-t}\wt{f}(t)&= \int (-P(t))\frac{\wt{f}(H)-\wt{f}(t)}{H-t}\\
%\end{align*}
\begin{align*}
&\int \frac{P(H)-P(t)}{H-t} f(H)\\
=&\int \frac{P(H)-P(t)}{H-t}\wt{f}(H)\\
=&\int P(H)\frac{\wt{f}(H)-\wt{f}(t)}{H-t}-\int P(t)\frac{\wt{f}(H)-\wt{f}(t)}{H-t}+\int \frac{P(H)-P(t)}{H-t}\wt{f}(t)\\
=&0+0+\wt{f}(t)=\wt{f}(t).
\end{align*}
Where in the second last equality, the first term is zero because the integrand is a multiple of $P(H)$, the second term is zero because $\frac{\wt{f}(H)-\wt{f}(t)}{H-t}$ is a polynomial of degree at most $n-1$, and the last term is $\wt{f}(t)$ because $\frac{P(H)-P(t)}{H-t}$ is monic of degree $n$.
\end{proof}
\begin{rmk}
Let $G$ be a linear algebraic group and $V$ be a representation. Then, 
\begin{align*}
A^{\bullet}_G(\mb{P}(V))&\cong A^{\bullet}_G(\pt)[H]/(P(H))\\
A^{\bullet}_G(\mb{P}(V)\times \mb{P}(V))&\cong A^{\bullet}_G(\pt)[H_1,H_2]/(P(H_1),P(H_2)),
\end{align*}
where $P\in A^{\bullet}_G[T]$ is $T^{\dim(V)}+c_1^G(V)T^{\dim(V)-1}+\cdots+c^G_{\dim(V)}(V)$ by the projective bundle theorem and the class of the diagonal in $\mb{P}(V)\times \mb{P}(V)$ is $(P(H_1)-P(H_2))/(H_1-H_2)$, giving a geometric interpretation of \Cref{diagonalclass}. This can be proven, for example, by first noting that it suffices to consider the case $G=GL(V)$. Then, we can restrict to a maximal torus \cite[Proposition 6]{EG98} and use the fact that the diagonal in $\mb{P}(V)\times \mb{P}(V)$ admits a torus-equivariant deformation into a union of products of coordinate linear spaces \cite[Theorem 3.1.2]{Brion}. 
\end{rmk}

\begin{proof}[Proof of \Cref{PGLmod2}]
Note that when all $a_i$ are odd and all $e_i$ are even then $n=\sum a_ie_i$ is either equal to $\sum e_i$, or exceeds it by at least $4$, so $q_{e_1+\ldots+e_k} \mid q_n$ and the claimed expression for $[Z_\lambda]$ is well-defined.

We resolve $Z_\lambda$ birationally with the map $$\Psi:\prod_{i=1}^k\mb{P}^{e_i} \to \mb{P}^n$$
taking $(D_1,\ldots,D_k) \mapsto a_1D_1+\ldots+a_kD_k$ (treating $P^r=\text{\rm Sym}^r\mb{P}^1$ for all $r$).

If at least one $e_i$ is odd, then we claim $c_3[Z_{\lambda}]=0$. Indeed, $$c_3[Z_\lambda]=\Psi_* c_3,$$ and $c_3\in A^{\bullet}_{PGL_2}(\pt)$ maps to $0$ in $A^\bullet_{PGL_2}(\prod_{i=1}^k \mb{P}^{e_i})$ as the projection $\prod_{i=1}^k \mb{P}^{e_i}\to \pt$ can be factored as the composite $\prod_{i=1}^k \mb{P}^{e_i} \to \mb{P}^{e_i} \to \pt$, and if $e_i$ is odd then $c_3$ pulls back to zero in $A^\bullet_{PGL_2}(\mb{P}^{e_i})$ by \Cref{PGLunordered}.

Hence, as $c_3[Z_\lambda]=0$, we must have $[Z_\lambda]$ is zero in $A^{\bullet}_{PGL_2}(\mb{P}^n)\otimes\mb{Z}/2\mb{Z}$.

%To compute $[Z_\lambda]$, we note that the expression $\frac{q_n(t)-q_n(H)}{t-H}$ for indeterminate $t$ has the property that under the map
%$$\int: A^\bullet_{PGL_2}(\mb{P}^n)\otimes \mb{Z}/2\mb{Z} \to A^\bullet_{PGL_2}(\pt)\otimes \mb{Z}/2\mb{Z},$$ we have $\int [Z_\lambda] \frac{q_n(t)-q_n(H)}{t-H} = [Z_\lambda](t)$, where $[Z_\lambda](t)$ is the polynomial formed by taking the reduced expression for $[Z_\lambda]$ mod $q_n(t)$ and replacing all $H$'s with the $t$. 

Now, suppose that all $e_i$ are even. For the remainder of the proof all integrals are in Chow rings after tensoring with $\mb{Z}/2\mb{Z}$. By \Cref{diagonalclass}, it suffices to show
\begin{align*}
\int_{\mb{P}^n}\frac{q_n(t)-q_n(H)}{t-H}\cap \Psi_{*}1=\begin{cases}\frac{q_n}{q_{d}}(t)&\text{if all $a_i$ and $\frac{d!}{e_1!\ldots e_k!}$ are odd and}
\\ 0&\text{otherwise.}\end{cases}
\end{align*}
By the projection formula applied to $\Psi$, we have
$$\int_{\mb{P}^n}\frac{q_n(t)-q_n(H)}{t-H}\cap \Psi_*1=\int_{\prod_{i=1}^k \mb{P}^{e_i}}\frac{q_n(t)-q_n(\sum a_iH_i)}{t-\sum a_iH_i}.$$

Now, if any $a_i$ is even, then as we are working modulo $2$, $\frac{q_n(t)-q_n(\sum a_iH_i)}{t-\sum a_iH_i}$ will not contain $H_i$, so the integral is clearly zero. Hence we may assume from now on that all $a_i$ are odd, so that $\sum a_iH_i = \sum H_i$ mod 2.

We claim that $q_{d}(\sum H_i)=0$ and that $$\int_{\prod_{i=1}^k \mb{P}^{e_i}}\frac{q_{d}(t)-q_{d}(\sum H_i)}{t-\sum H_i}=\frac{d!}{e_1!\cdots e_k!}.$$ The first of these follows from pulling back $q_d(H)$ under the multiplication map $\prod_{i=1}^k \mb{P}^{e_i} \to \mb{P}^{d}$, and the second of these follows from applying \Cref{diagonalclass} to $1\in A^{\bullet}_{PGL_2}(\mb{P}^d)$ together with the projection formula as the multiplication map has degree $\frac{d!}{e_1!\cdots e_k!}$.

From the vanishing of $q_d(\sum H_i)$, we have 
\begin{align*}
\frac{q_n(t)-q_n(\sum H_i)}{t-\sum H_i}&=\frac{q_{n}}{q_{d}}(t)\frac{q_{d}(t)-q_{d}(\sum H_i)}{t-\sum H_i}+q_d(\sum H_i)\frac{\frac{q_n(t)}{q_d(t)}-\frac{q_n(\sum H_i)}{q_d(\sum H_i)}}{t-\sum H_i}\\
&=\frac{q_{n}}{q_{d}}(t)\frac{q_{d}(t)-q_{d}(\sum H_i)}{t-\sum H_i},
\end{align*}
and the result now follows from the second claim after applying $\int_{\prod_{i=1}^{k}\mb{P}^{e_i}}$ to both sides.
\end{proof}

We now prove surprisingly that despite the presence of occasional $2$-torsion, integral relations between $[Z_\lambda]$ classes in $A^\bullet_{GL_2}(\mb{P}^n)$ are equivalent to integral relations between $[Z_{\lambda}]$-classes in $A^\bullet_{PGL_2}(\mb{P}^n)$.

\begin{thm}
\label{noextrarelations}
Let $n,d$ be integers. A linear combination $\sum a_\lambda [Z_\lambda]$ with  $a_\lambda \in \mb{Z}$ and each $\lambda$ a partition of $n$ into $d$ parts is zero in $A^\bullet_{PGL_2}(\mb{P}^n)$ if and only if it is zero in $A^\bullet_{GL_2}(\mb{P}^n)$. In particular, $\sum a_\lambda[Z_\lambda]=0$ if and only if $$\sum_{\lambda=a_1^{e_1}\ldots a_n^{e_k}} a_\lambda \prod_{i=1}^k \frac{(z^{a_i}-1)^{e_i}}{e_i!}=0.$$
\end{thm}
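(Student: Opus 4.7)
The plan is to handle the two assertions of the theorem in sequence: first the equivalence between vanishing in $A^\bullet_{PGL_2}(\mb{P}^n)$ and in $A^\bullet_{GL_2}(\mb{P}^n)$, and then the polynomial reformulation. Throughout I will use that $A^\bullet_{GL_2}(\mb{P}^n)$ is torsion-free, so a $\mb{Z}$-relation there is equivalent to a $\mb{Q}$-relation.

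One direction (vanishing in $A^\bullet_{PGL_2}(\mb{P}^n)$ implies vanishing in $A^\bullet_{GL_2}(\mb{P}^n)$) is immediate from functoriality. For the converse, when $n$ is odd the map $A^\bullet_{PGL_2}(\mb{P}^n)\to A^\bullet_{GL_2}(\mb{P}^n)$ is injective by \Cref{PGLunordered}, so there is nothing to do. The interesting case is $n$ even, which I treat next.

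Assume $n$ is even and that $\sum a_\lambda [Z_\lambda]=0$ in $A^\bullet_{GL_2}(\mb{P}^n)$. By \Cref{PGLunordered} we have the splitting of abelian groups
\[
A^\bullet_{PGL_2}(\mb{P}^n) = \Bigl(\bigoplus_{i=0}^n \mb{Z}[c_2]H^i\Bigr)\oplus\Bigl(\bigoplus_{i=0}^n c_3\,\mb{F}_2[c_2,c_3]H^i\Bigr),
\]
the map to $A^\bullet_{GL_2}(\mb{P}^n)$ is injective on the first summand and zero on the second. Writing $\sum a_\lambda [Z_\lambda]=f+c_3g$ with $f$ in the first summand, the hypothesis forces $f=0$, so $\sum a_\lambda [Z_\lambda]=c_3g$, and it remains to show $c_3g=0$. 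Since $c_3g$ is $2$-torsion, this is equivalent to showing that its reduction mod $2$ vanishes in $A^\bullet_{PGL_2}(\mb{P}^n)\otimes\mb{Z}/2\mb{Z}\cong\mb{F}_2[c_2,c_3,H]/(q_n(H))$.

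By \Cref{PGLmod2}, applied to all $\lambda$ with $d$ parts (fixed by the hypothesis on the partitions),
\[
\sum a_\lambda[Z_\lambda]\bmod 2 \;=\; b_d\,\frac{q_n}{q_d}(H),\qquad b_d:=\!\!\sum_{\lambda\text{ special}}\!\!\bar a_\lambda\ \in\mb{F}_2.
\]
Since $\bar f=0$, this also equals $c_3 g$ mod $2$, whose image under $c_3\mapsto 0$ in $\mb{F}_2[c_2,H]/(q_n|_{c_3=0}(H))$ is zero. On the other hand, from the explicit formula for $q_n$ and $q_d$ given in \Cref{PGLunordered},
\[
\left.\tfrac{q_n}{q_d}\right|_{c_3=0}(H)=H^{(\alpha_n-\alpha_d)+(\beta_n-\beta_d)}(H^2+c_2)^{\beta_n-\beta_d}
\]
is a nonzero polynomial in $H$ of degree $n-d<n+1$, hence in normal form, hence nonzero in the quotient ring. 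Therefore $b_d=0$, so $c_3g\equiv 0\pmod 2$, so $c_3g=0$ as needed.

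For the polynomial characterization, \Cref{FNRformula} identifies $[Z_\lambda]\in A^\bullet_{GL_2}(\mb{P}^n)\otimes\mb{Q}$ with the image of $\prod_i(z^{a_i}-1)^{e_i}/\prod_i e_i!\in\mb{Q}[z]_{\le n}$ under the linear map
\[
z^k\longmapsto \frac{1}{(u-v)^d}\prod_{j\in[n]\setminus\{k\}}(H+jv+(n-j)u).
\]
By \Cref{linindeplem} this map is injective, so $\sum a_\lambda[Z_\lambda]=0$ in $A^\bullet_{GL_2}(\mb{P}^n)\otimes\mb{Q}$ (equivalently, in $A^\bullet_{GL_2}(\mb{P}^n)$, which is torsion-free) if and only if $\sum_\lambda a_\lambda\prod_i(z^{a_i}-1)^{e_i}/e_i!=0$ in $\mb{Q}[z]$. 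Combined with the first part, this is also equivalent to vanishing in $A^\bullet_{PGL_2}(\mb{P}^n)$.

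The main obstacle is the $n$-even case: what makes the argument work is the happy fact that the mod-$2$ representative $\frac{q_n}{q_d}(H)$ from \Cref{PGLmod2} has a nonvanishing $c_3=0$ reduction of degree exactly $n-d$ in $H$, so its $c_3$-free part is already detected in $A^\bullet_{GL_2}(\mb{P}^n)$. Without this compatibility between \Cref{PGLmod2} and the decomposition of \Cref{PGLunordered}, extra $2$-torsion relations could appear in $A^\bullet_{PGL_2}(\mb{P}^n)$ beyond those visible in $A^\bullet_{GL_2}(\mb{P}^n)$.
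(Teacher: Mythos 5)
Your proof is correct, and it follows the same overall skeleton as the paper's: the trivial direction via functoriality, the odd case via injectivity of $A^\bullet_{PGL_2}(\mb{P}^n)\to A^\bullet_{GL_2}(\mb{P}^n)$, and the even case via the decomposition of \Cref{PGLunordered} together with the mod-$2$ computation of \Cref{PGLmod2}. Where you genuinely diverge is in the decisive step of showing that $b_d=\sum_{\lambda\ \text{special}}\bar a_\lambda$ vanishes. The paper derives this combinatorially from the polynomial identity: it multiplies by $d!/(z-1)^d$, evaluates at $z=1$ to obtain $\sum a_\lambda \tfrac{d!}{e_1!\cdots e_k!}\prod a_i^{e_i}=0$, and then checks by a parity analysis of multinomial coefficients that $\tfrac{d!}{e_1!\cdots e_k!}\prod a_i^{e_i}$ is odd exactly when $\lambda$ is special. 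You instead extract $b_d=0$ directly from the ring structure: the hypothesis forces the class into the $c_3$-divisible summand, so its image under the specialization $c_3\mapsto 0$ vanishes, whereas $\tfrac{q_n}{q_d}\big|_{c_3=0}(H)$ is a nonzero polynomial of $H$-degree $n-d\le n$ and hence nonzero in $\mb{F}_2[c_2,H]/(q_n|_{c_3=0}(H))$ (a free $\mb{F}_2[c_2]$-module on $1,H,\dots,H^n$). This is a clean substitute that makes the parity constraint appear as a forced consequence of the geometry rather than a separately verified arithmetic fact; the paper's route, conversely, yields as a byproduct an elementary combinatorial proof that the polynomial identity alone implies $\sum_{\lambda\ \text{special}}a_\lambda\equiv 0\pmod 2$. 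Two small points you leave implicit but which are harmless: when no special $\lambda$ exists (e.g.\ $d$ odd) the sum $b_d$ is empty and the expression $\tfrac{q_n}{q_d}$ need not be invoked; and when a special $\lambda$ does exist one has $n-d\equiv 0\pmod 4$, so $q_d\mid q_n$ and your displayed factorization of $\tfrac{q_n}{q_d}\big|_{c_3=0}$ is legitimate.
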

\begin{proof}
One direction is trivial, as we have the map $A^\bullet_{PGL_2}(\mb{P}^n) \to A^\bullet_{GL_2}(\mb{P}^n)$ induced by $GL_2\to PGL_2$, so if a linear relation holds in $A^\bullet_{PGL_2}(\mb{P}^n)$, then it also holds in $A^\bullet_{GL_2}(\mb{P}^n)$. Conversely, suppose that we have $\sum a_\lambda [Z_\lambda]=0$ in $A^\bullet_{GL_2}(\mb{P}^n)$. We only have to care about the case that $n$ is even, because when $n$ is odd, $A^{\bullet}_{PGL_2}(\mb{P}^n) \hookrightarrow A^{\bullet}_{GL_2}(\mb{P}^n)$ is an injection by \Cref{PGLunordered}. 

For $n$ even, suppose we have a sum $\sum a_\lambda [Z_\lambda]$, which is $0$ in $A^\bullet_{GL_2}(\mb{P}^n)$. Then since the kernel of $A^{\bullet}_{PGL_2}(\mb{P}^n)\to A^{\bullet}_{GL_2}(\mb{P}^n)$ is $2$-torsion by \Cref{PGLunordered}, we know $\sum a_\lambda [Z_\lambda]$ is $2$-torsion in $A^\bullet_{PGL_2}(\mb{P}^n)$. By \Cref{PGLmod2}, the class $[Z_\lambda]$ in $A^{\bullet}_{PGL_2}(\mb{P}^n)\otimes\mb{Z}/2\mb{Z}$ is either $0$ or $\frac{q_n}{q_d}(H)$, and the second possibility occurs precisely when $\lambda$ is special. Hence to prove \Cref{noextrarelations}, by \Cref{FNRformula} and \Cref{linindeplem} it suffices to show that if 
\begin{equation}
\sum_{\lambda=a_1^{e_1}\ldots a_n^{e_k}} a_\lambda \prod_{i=1}^k \frac{(z^{a_i}-1)^{e_i}}{e_i!}=0,
\label{paleq}
\end{equation}
then
\begin{align*}
\sum_{\lambda\text{ special}}a_\lambda\equiv 0\pmod{2}. 
\end{align*}

Note first that if no special $\lambda$ appears we are done, so we may assume that at least one special $\lambda$ appears. As $d=\sum_{i=1}^k e_i$ for any partition $\lambda=a_1^{e_1}\ldots a_n^{e_k}$ appearing, we must have $d$ is even if a special $\lambda$ appears. Multiplying \eqref{paleq} by $\frac{d!}{(z-1)^d}$ and plugging in $z=1$, we have

$$\sum_{\lambda=a_1^{e_1}\ldots a_n^{e_n}} a_\lambda \frac{d!}{e_1!\cdots e_k!}\prod_{i=1}^k {a_i}^{e_i}=0.$$
Now we claim that $\frac{d!}{e_1!\cdots e_k!}$ is even if any $e_i$ is odd. Indeed, as $d$ is even, if not all $e_i$ are even, then at least two of the $e_i$ are odd. If $e_i,e_j$ are both odd, then replacing $e_i!e_j!$ in $\frac{d!}{e_1!\cdots e_k!}$ with $(e_i-1)!(e_j+1)!$ yields an integer with a smaller power of $2$ dividing it.

Hence, $\frac{d!}{e_1!\cdots e_k!}\prod_{i=1}^k {a_i}^{e_i}$ is odd precisely when $\lambda$ is special. Taking the equality $\pmod{2}$ then yields the desired result.
\end{proof}

We complete the proof of \Cref{unorderedrelations}. 

\begin{proof}[Proof of \Cref{unorderedrelations}]
We have (1), (2) and (4) are equivalent by \Cref{noextrarelations}. Also (3) implies (2) is clear as $A^{\bullet}_{GL_2}(\mb{P}^n)$ is free as an abelian group, so $A^{\bullet}_{GL_2}(\mb{P}^n)\hookrightarrow A^{\bullet}_{GL_2}(\mb{P}^n) \otimes \mb{Q}$.

To finish, it suffices to show (2) implies (3). Let $\lambda=(\lambda_1,\ldots,\lambda_d)$ for $\lambda_1\geq \cdots\geq \lambda_d$. 
\begin{clm*}
Suppose $\lambda_3>1$. Then using pushforwards of square relations in $A^{\bullet}_{PGL_2}((\mb{P}^1)^n)$, we can express $[\lambda]\in A^{\bullet}_{PGL_2}(\mb{P}^n)$ in terms of classes $[\lambda']$ where $\lambda'=(\lambda_1',\ldots,\lambda_d')$ where $\lambda_1+\lambda_2 > \lambda_1'+\lambda_2'$. 
\end{clm*}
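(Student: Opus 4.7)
The plan is to realize $[\lambda]$ as one of the four terms in the pushforward of a single square relation from $A^{\bullet}_{PGL_2}((\mb{P}^1)^n)$ along $\Phi$. Because $\lambda_3\geq 2$, I can refine a partition of $[n]$ with part sizes $\lambda$ by splitting off a singleton from the block of size $\lambda_3$, yielding a partition $P$ of $[n]$ with $d+1$ parts of sizes $(\lambda_1,\lambda_2,\lambda_3-1,1,\lambda_4,\ldots,\lambda_d)$.

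Next I would choose four representatives $i_1,i_2,i_3,i_4$ in four distinct parts of $P$, taking $i_1$ in the new singleton and $i_2$ in the block of size $\lambda_3-1$ so that the merged partition $P_{i_1,i_2}$ recovers a partition with part sizes exactly $\lambda$. The square relation from \Cref{squarerelation} applied to $P$ reads
$$\Delta_{P_{i_1,i_2}}-\Delta_{P_{i_2,i_3}}+\Delta_{P_{i_3,i_4}}-\Delta_{P_{i_4,i_1}}=0,$$
and pushing forward along $\Phi$ using the formula $\Phi_{*}\Delta_Q=[\lambda(Q)]$ together with the normalization in \Cref{bracket} produces a relation in $A^{\bullet}_{PGL_2}(\mb{P}^n)$ of the form $c_0[\lambda]=c_1[\lambda^{(1)}]-c_2[\lambda^{(2)}]+c_3[\lambda^{(3)}]$ with $c_0\neq 0$ (and the $c_k$ small integers), expressing $[\lambda]$ as an integer combination of three other unordered strata classes.

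The main step is then to choose $i_3,i_4$ (and, in degenerate cases, which part of $\lambda$ to split) so that each of the three resulting partitions $\lambda^{(k)}$ satisfies the inequality $\lambda_1+\lambda_2>\lambda^{(k)}_1+\lambda^{(k)}_2$ on top-two sums. This reduces to a case analysis of the multiset of part sizes in each of the three mergings $P_{i_2,i_3},P_{i_3,i_4},P_{i_4,i_1}$ and identifying the two largest parts after sorting. I expect the main obstacle to lie in the low-$d$ and degenerate-equal-part cases (e.g.~$d=3$, where $P$ has only four parts and the choice of $i_3,i_4$ is forced, or cases where several $\lambda_i$ coincide so that the reordering of the merged partition rearranges which two parts are the largest); in these cases one may need to split a different part $\lambda_j$ with $j\geq 3$, or to take a nontrivial combination of several square relations, to secure the strict inequality.

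Granted the claim, iterated application reduces any $[\lambda]$ (with $\lambda_3>1$) modulo pushforwards of square relations to classes $[\lambda']$ for which the claim's hypothesis fails, i.e.~of the form $[\{a,b,1^{d-2}\}]$, whereupon \Cref{unorderedintro} completes the implication (2)$\Rightarrow$(3) in \Cref{unorderedrelations}.
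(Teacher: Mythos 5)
Your construction---refine a partition of $[n]$ with part sizes $\lambda$ by splitting the third block, apply a single square relation whose distinguished term recovers $\Delta_P$ with part sizes $\lambda$, and push forward along $\Phi$---is exactly the paper's proof (the paper splits $A_3$ into two arbitrary nonempty pieces rather than a piece and a singleton, which changes nothing). The gap is the step you defer to a future case analysis: you aim to choose $i_3,i_4$ so that every resulting $\lambda^{(k)}$ has strictly \emph{smaller} top-two sum, and no such choice exists. Each $\lambda^{(k)}$ arises from the refined multiset $(\lambda_1,\lambda_2,\lambda_3-1,1,\lambda_4,\ldots,\lambda_d)$ by merging two of its parts, and merging two parts of a multiset never strictly decreases the sum of its two largest elements: if neither merged part is among the top two, those two survive; if at least one is, the merged part together with a surviving top part already has sum $\ge\lambda_1+\lambda_2$. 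So the inequality you are chasing is unattainable by any single square relation, and the anticipated escape routes (splitting a different $\lambda_j$, combining several relations) will not rescue it. You can also see the direction must be wrong from the intended termination: the classes $[\{a,b,1^{d-2}\}]$ at which the iteration is supposed to stop have top-two sum $n-d+2$, the \emph{maximum} over partitions of $n$ into $d$ parts, so the monovariant has to increase toward them, not decrease away from them.

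The inequality in the Claim as printed is reversed (a typo; the paper's own proof establishes $\lambda_1+\lambda_2<\lambda_1'+\lambda_2'$), and with that correction your construction closes with no case analysis at all. Take $i_3\in A_1$ and $i_4\in A_2$. The three other mergings have part-size multisets $\{\lambda_1+\lambda_3-1,\lambda_2,1,\lambda_4,\ldots,\lambda_d\}$, $\{\lambda_1+\lambda_2,\lambda_3-1,1,\lambda_4,\ldots,\lambda_d\}$, and $\{\lambda_1,\lambda_2+1,\lambda_3-1,\lambda_4,\ldots,\lambda_d\}$, whose top-two sums are $\lambda_1+\lambda_2+\lambda_3-1$, at least $\lambda_1+\lambda_2+1$, and $\lambda_1+\lambda_2+1$ respectively---each strictly larger than $\lambda_1+\lambda_2$ because $\lambda_3\ge 2$. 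The degenerate low-$d$ and equal-parts cases you worry about therefore do not arise, and with the normalization $[\lambda]=(\prod e_i!)[Z_\lambda]$ of \Cref{bracket} all four pushforward coefficients are $\pm 1$, so a single square relation expresses $[\lambda]$ exactly as in the paper's displayed identity. Termination of the iteration, and hence the reduction to the basis of \Cref{basis}, then follows from the upper bound $\lambda_1'+\lambda_2'\le n-d+2$.
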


\begin{proof}[Proof of Claim]
Pick a partition $P=\{A_1,\ldots,A_d\}$ of $[n]$ with $|A_i|=\lambda_i$. Since $|A_3|>1$, we can partition it as $A_3=A_3'\sqcup A_3''$ into nonempty parts. Now, applying the square relation associated to $P'=\{A_1,A_2,A_3',A_3'',\ldots,A_d\}$ of $[n]$ into $d+1$ parts and the parts $A_1,A_2,A_3,A_3''$ shows
\begin{align*}
[\lambda]=[\lambda_1]+[\lambda_2]-[\lambda_3],
\end{align*}
where $\lambda_3'=|A_3'|$ and $\lambda_3''=|A_3''|$ and
\begin{align*}
\lambda_1&=\{\lambda_1+\lambda_3',\lambda_2,\lambda_3'',\ldots,\lambda_d\}\\
\lambda_2 &= \{\lambda_1,\lambda_2+\lambda_3'',\lambda_3',\ldots,\lambda_d\}\\
\lambda_3 &= \{\lambda_1+\lambda_2,\lambda_3',\lambda_3'',\ldots,\lambda_d\}.
\end{align*}
\end{proof}
Returning to the proof of \Cref{unorderedrelations}, iterating the claim shows that the pushforward of square relations allow us to rewrite any $[\lambda]$ in terms of the $\mb{Q}$-basis found in \Cref{basis}, which shows (2) implies (3).
\end{proof}

\section{Excision of unordered strata in $[\text{\rm Sym}^n\mb{P}^1/PGL_2]$}
\label{excision}
As an application of our results in the ordered case, we will prove the following result on the $PGL_2$-equivariant Chow ring of $\mb{P}^n$ with strata excised, which we will adapt in the next section to the case of $GL_2$-equivariant Chow rings with strata in both $\mb{P}^n$ and in $\mb{A}^{n+1}$.

\begin{comment}
In \Cref{excision}, we will always let $GL_2$ act on $\mb{A}^{n+1}$ via the symmetric power of the standard representation ${\rm Sym}^{n}K^2\cong \mb{A}^{n+1}$. 

\begin{defn}
Given a partition $\lambda$ of $n$, let $\wt{Z}_{\lambda}\subset \mb{A}^{n+1}$ be the cone of $Z_{\lambda}\subset\mb{P}^n$ under the projection $\mb{A}^{n+1}\dashrightarrow \mb{P}^n$, and let $[\wt{Z}_{\lambda}]\in A_{GL_2}^{\bullet}(\mb{A}^{n+1})$ be its class. 
\end{defn}

\begin{rmk}
\label{constantterm}
By \Cref{reconstruction}, if $[Z_{\lambda}]\in A_{GL_2}^{\bullet}(\mb{P}^{n})$ is written as a polynomial in $H,u,v$  that is degree at most $n$ in $H$, then $[\wt{Z}_{\lambda}]$ is the constant term in $H$. 
\end{rmk}
\end{comment}

\begin{thm}
\label{pushforwardtheorem}
Given a partition $\lambda=\{\lambda_1,\ldots,\lambda_d\}$ of $n$,
\begin{align*}%A^\bullet_{G}(\mathbb{A}^{n+1}\setminus \wt{Z_{(a_1^{e_1}\ldots a_k^{e_k})}})\otimes \mathbb{Q}=
A^\bullet_{PGL_2}(\mathbb{P}^n\setminus Z_{\lambda}) &= A^{\bullet}_{PGL_2}(\mb{P}^n)/I,%\\
%A^\bullet_{GL_2}(\mb{A}^{n+1}\setminus \wt{Z}_{\lambda})\otimes \mathbb{Q} &= A^{\bullet}_{GL_2}(\mb{A}^{n+1})\otimes\mb{Q}/\wt{I}\otimes \mb{Q},
\end{align*}
where the ideal $I\otimes\mb{Q}\subset A^\bullet_{PGL_2}(\mb{P}^n)\otimes\mb{Q}$ is generated by all $[\lambda']$ for $\lambda'$ a partition formed by merging some of the parts of $\lambda$.
%$[\lambda_{A,B}]$ as $A,B\subset \{1,\ldots,d\}$ vary over all disjoint (possibly empty) subsets and $\lambda_{A,B}$ is 
%$$\{\sum_{i\in A}\lambda_i, \sum_{i\in B}\lambda_i\}\cup\{\lambda_i\mid i\in [d]\backslash (A\cup B)\}$$
%given by merging the parts of $\lambda$ corresponding to $A$ and $B$.
% Similarly, the ideal $\wt{I}\subset A^\bullet_{GL_2}(\mb{A}^{n+1})$ is generated by the classes $[\wt{Z_{\lambda_{A,B}}}]\in A^{\bullet}_{GL_2}(\mb{A}^{n+1})$.
\end{thm}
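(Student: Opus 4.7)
The plan is to begin with excision: the closed immersion $i: Z_\lambda \hookrightarrow \mb{P}^n$ yields the right-exact sequence
$$A_\bullet^{PGL_2}(Z_\lambda) \xrightarrow{i_*} A_\bullet^{PGL_2}(\mb{P}^n) \to A_\bullet^{PGL_2}(\mb{P}^n \setminus Z_\lambda) \to 0,$$
and Poincar\'e duality on the smooth ambient $\mb{P}^n$ identifies $A_\bullet^{PGL_2}(\mb{P}^n) \cong A^\bullet_{PGL_2}(\mb{P}^n)$. By the projection formula $I_\lambda := \on{image}(i_*)$ is an ideal, and for every merging $\lambda'$ of $\lambda$ the factorization $Z_{\lambda'} \hookrightarrow Z_\lambda \hookrightarrow \mb{P}^n$ exhibits $[\lambda']$ as an element of $I_\lambda$. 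It remains to prove the reverse inclusion after tensoring with $\mb{Q}$.

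Fix a partition $P$ of $[n]$ whose block sizes are the parts of $\lambda$ and consider the composition
$$\Phi_P: \Delta_P \cong (\mb{P}^1)^d \hookrightarrow (\mb{P}^1)^n \xrightarrow{\Phi} \mb{P}^n,$$
which is proper and surjective onto $Z_\lambda$. Proper surjective morphisms induce rationally surjective pushforwards on Chow groups, so $I_\lambda \otimes \mb{Q}$ coincides with the image of $(\Phi_P)_*$ on rational equivariant Chow. Thus it suffices to check that $(\Phi_P)_*(\alpha)$ lies in the ideal generated by mergings of $\lambda$ for every $\alpha \in A^\bullet_{PGL_2}((\mb{P}^1)^d) \otimes \mb{Q}$.

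I would then induct on $\lambda$ in the coarsening partial order. The base case $\lambda = \{n\}$ follows since $Z_{\{n\}} \subset \mb{P}^n$ is the degree-$n$ Veronese image of $\mb{P}^1$: the embedding sends $H$ to $nH$, so $i^*$ is rationally surjective, and the projection formula gives $I_{\{n\}} \otimes \mb{Q} = [\{n\}] \cdot (A^\bullet_{PGL_2}(\mb{P}^n) \otimes \mb{Q})$. For the inductive step, I would expand $\alpha$ in the $\mb{Z}$-basis of \Cref{mainthm} (supplemented by $\psi$-classes when $d \leq 2$, as in \Cref{psi-intro}). Each low-codimension basis class $\Delta_R$ for $R$ a good partition of $[d]$ pushes forward to a scalar multiple of $[Z_{\lambda_R}]$, where $\lambda_R$ denotes the merging of $\lambda$ formed by summing the parts of $\lambda$ indexed by each block of $R$, which is already one of the allowed generators. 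Each high-codimension basis class $\Delta_R \Delta_{a,b}^k$ with $|R| \leq 2$ is supported on $\Delta_R \subset (\mb{P}^1)^d$, so its pushforward lies in $I_{\lambda_R}$; whenever $\lambda_R < \lambda$ strictly, the inductive hypothesis places the pushforward inside the ideal generated by mergings of $\lambda_R$, each of which is also a merging of $\lambda$.

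The main obstacle is handling the degenerate cases $\lambda_R = \lambda$, which can occur only when $d \leq 2$. In these small cases I would use the explicit identities $\Phi_P^* H = \sum_i \lambda_i H_i$ and $(\Phi_P)_* \Delta_{1,2} = [\{n\}]$ (the latter since $\Delta_{1,2}$ on $\Delta_P \cong (\mb{P}^1)^2$ is the diagonal $\mb{P}^1$, which maps isomorphically onto $Z_{\{n\}}$), combined with the projection formula, to express each remaining pushforward as a combination in $A^\bullet_{PGL_2}(\mb{P}^n) \otimes \mb{Q}$ of $[\lambda]$ and $[\{n\}]$, both of which are allowed generators.
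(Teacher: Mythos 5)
Your proposal is correct, and its skeleton matches the paper's: excision plus Poincar\'e duality, then replacing $Z_\lambda$ by the proper surjective (rationally pushforward-surjective) cover $\Delta_P\cong(\mb{P}^1)^d\to Z_\lambda$, then reducing to a generation statement in $A^\bullet_{PGL_2}((\mb{P}^1)^d)\otimes\mb{Q}$; in both arguments the classes $\Delta_R$ in codimension $\le d-2$ are handled by the fact that they additively span (\Cref{generatePlow}/\Cref{mainthm}) and push forward to multiples of $[Z_{\lambda_R}]$. Where you genuinely diverge is in the top codimensions. The paper (\Cref{twocrit} together with \Cref{pushforwardlemma}) proves the stronger module statement that the $\Phi_\lambda^*\hat\iota_\lambda^*(A^\bullet_{PGL_2}(\mb{P}^n)\otimes\mb{Q})$-span of all $\Delta_R$ contains the relevant invariant subring, and it does so by explicit identities in the $H_i'=H_i+\tfrac12(u+v)$ expressing $\prod_{k\ne i}H_k'$ and $\prod_k H_k'$ in terms of $\Phi_\lambda^*\hat\iota_\lambda^*H'$ and lower-order strata. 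You instead invoke the high-codimension $\mb{Z}$-basis $\Delta_R\Delta_{i_R,j_R}^k$ of \Cref{mainthm}(2), observe these are pushed forward from $\Delta_R$ with $|R|\le 2<d$, and run an induction on the coarsening order so that these classes land in $I_{\lambda_R}\otimes\mb{Q}$ for strictly coarser $\lambda_R$. This trades the paper's explicit top-degree computations for an appeal to the structure theorem plus induction, which is arguably cleaner, at the cost of having to settle the base cases $d\le 2$ by hand — and there your identities ($\Phi_P^*H'=\sum\lambda_iH_i'$, $H_i'^2=\tfrac14(u-v)^2$, $(\Phi_P)_*\Delta_{1,2}=[\{n\}]$) are exactly the ones the paper uses in its $d\le2$ cases of \Cref{pushforwardlemma}. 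One small point you should make explicit: when $d=2$ and $\lambda_1=\lambda_2$ you cannot isolate $H_1'$ from $H_2'$ via those identities alone; you need that $\Phi_P$ factors through $\operatorname{Sym}^2\mb{P}^1$, so $(\Phi_P)_*H_1'=(\Phi_P)_*H_2'=\tfrac12[\{n\}]$ by symmetry (this is the same phenomenon that forces the paper to restrict to the $S_2$-invariant subring in that case).
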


Even though \Cref{pushforwardtheorem} requires many generators for $I$, in some cases fewer generators suffice.

\begin{thm}
\label{2generators}
Given the partition $\lambda=\{a,1^{n-a}\}$ of $n$, the ideal $I\otimes \mb{Q}$ in \Cref{pushforwardtheorem} is generated by $[\lambda]$ and $[\lambda']$, where 
\begin{align*}
\lambda'=
\begin{cases}
\{a+1,1^{n-a-1}\}&\qquad\text{if }a\neq \frac{n}{2}\\
\{a,2,1^{n-a-2}\}&\qquad\text{if }a=\frac{n}{2}. 
\end{cases}
\end{align*}
%Similarly, the ideal $\wt{I}$ in \Cref{pushforwardtheorem} is generated by $[\wt{Z_{\lambda}}]$ and $[\wt{Z_{\lambda'}}]$. 
\end{thm}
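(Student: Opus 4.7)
The plan has two main stages. By \Cref{pushforwardtheorem}, $I_\lambda\otimes\mb{Q}$ is generated by the classes $[\mu]$ as $\mu$ ranges over mergings of $\lambda=\{a,1^{n-a}\}$, so it suffices to show every such $[\mu]$ lies in $J:=([\lambda],[\lambda'])$.

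The first stage reduces the generating set to the three classes $[\lambda]$, $[\{a+1,1^{n-a-1}\}]$, and $[\{a,2,1^{n-a-2}\}]$. Every non-trivial merging $\mu\neq \lambda$ of $\lambda$ is itself a merging of one of the two codimension-one mergings: if $\mu$ has a part of size $>a$ then $\mu$ is a merging of $\{a+1,1^{n-a-1}\}$, and otherwise $\mu$ has a part of size $a$ together with some part of size $\geq 2$, making $\mu$ a merging of $\{a,2,1^{n-a-2}\}$. Combining this combinatorial observation with \Cref{pushforwardtheorem} applied inductively to the two codimension-one strata yields
\begin{equation*}
I_\lambda\otimes\mb{Q} = ([\lambda],[\{a+1,1^{n-a-1}\}],[\{a,2,1^{n-a-2}\}]).
\end{equation*}

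The second stage finds an explicit relation among these three generators that enables eliminating one. The key computation is multiplication of $[\lambda]$ by the codim-one PGL$_2$-equivariant class $2H+n(u+v)$, which, by \Cref{PGLunordered}, lies in the image of $A^\bullet_{PGL_2}(\mb{P}^n)\otimes\mb{Q}$ in $A^\bullet_{GL_2}(\mb{P}^n)\otimes\mb{Q}$ for both parities of $n$. Using the Lagrange-basis description from \Cref{FNRformula}, multiplication by $H$ acts on a monomial $z^k$ by $-(kv+(n-k)u)$ modulo $G(H)=0$, so $2H+n(u+v)$ acts by $(2k-n)(u-v)$, corresponding at the polynomial level to the operator $(u-v)(2z\tfrac{d}{dz}-n)$. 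Applying this to $P_\lambda(z)=(z^a-1)(z-1)^{n-a}$ and simplifying yields
\begin{equation*}
(2H+n(u+v))\cdot [\lambda] \;=\; 2a\cdot [\{a+1,1^{n-a-1}\}] + (n-2a)\cdot [\{a,2,1^{n-a-2}\}]
\end{equation*}
in $A^\bullet_{GL_2}(\mb{P}^n)\otimes\mb{Q}$, which lifts to the same identity in $A^\bullet_{PGL_2}(\mb{P}^n)\otimes\mb{Q}$ by the injectivity statement of \Cref{PGLunordered}.

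The dichotomy then drops out immediately. The coefficient $2a$ is always nonzero (as $a\geq 1$), whereas $n-2a$ vanishes precisely when $a=n/2$. When $a\neq n/2$, the nonzero coefficient $n-2a$ allows us to solve for $[\{a,2,1^{n-a-2}\}]$ in terms of $[\lambda]$ and $[\{a+1,1^{n-a-1}\}]$, so the former is eliminable. When $a=n/2$, the identity collapses to $[\{a+1,1^{n-a-1}\}] = \tfrac{1}{n}(2H+n(u+v))\cdot[\lambda]$, showing this class lies in the principal ideal $([\lambda])$ and is therefore redundant. The main obstacle is making the first stage rigorous: the recursive application of \Cref{pushforwardtheorem} must be organized carefully (e.g.\ by induction on $n$ and $a$), since the second stratum $\{a,2,1^{n-a-2}\}$ is not of the form $\{a',1^{n-a'}\}$ to which \Cref{2generators} itself applies; alternatively, one can argue directly via the resolution $\Phi_\lambda\colon \mb{P}^1\times \mb{P}^{n-a}\to Z_\lambda$ and the projection formula, identifying $I_\lambda\otimes\mb{Q}$ with the image of $\Phi_{\lambda,*}$ and computing a small generating set via a module basis of $A^\bullet_{PGL_2}(\mb{P}^1\times \mb{P}^{n-a})\otimes\mb{Q}$ over $A^\bullet_{PGL_2}(\mb{P}^n)\otimes\mb{Q}$.
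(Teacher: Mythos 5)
Your second stage is correct, and it is in fact a special case of a relation the paper proves independently in the appendix: specializing \Cref{uplusv} to $b=1$, $c=n-a-1$ and lifting from $A^\bullet_{GL_2}(\mathrm{Sym}^nK^2)$ to $\mb{P}^n$ via \Cref{reconstruction} gives exactly
$$(2H+n(u+v))[\lambda]=2a\,[\{a+1,1^{n-a-1}\}]+(n-2a)\,[\{a,2,1^{n-a-2}\}],$$
and your observations that $2H+n(u+v)$ lies in the image of $A^\bullet_{PGL_2}(\mb{P}^n)\otimes\mb{Q}$ for both parities of $n$ and that the identity descends to $A^\bullet_{PGL_2}$ by rational injectivity are right. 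This cleanly explains the dichotomy at $a=n/2$ and would let you pass from three generators to two.

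The genuine gap is in the first stage, which carries essentially all the content of the theorem. The identity $I_\lambda\otimes\mb{Q}=([\lambda],[\{a+1,1^{n-a-1}\}],[\{a,2,1^{n-a-2}\}])$ does not follow from the observation that every deeper merging of $\lambda$ is a merging of one of the two codimension-one strata combined with \Cref{pushforwardtheorem}: applying \Cref{pushforwardtheorem} to $\mu_1=\{a+1,1^{n-a-1}\}$ only says that $I_{\mu_1}\otimes\mb{Q}$ is generated by \emph{all} classes $[\mu]$ for $\mu$ a merging of $\mu_1$ --- that is, it bounds $I_{\mu_1}$ from above by the very classes you are trying to place in $([\mu_1])$, and gives nothing in the other direction. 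What you need is a few-generators statement of the same shape as \Cref{2generators} for $\mu_1$, for $\mu_2=\{a,2,1^{n-a-2}\}$, and recursively for their mergings; since $\mu_2$ is not of the form $\{a',1^{n-a'}\}$, no induction on the theorem as stated closes the loop, and the one relation per stratum supplied by multiplication by $2H+n(u+v)$ is not enough to account for the several codimension-two mergings. This is precisely why the paper does not argue stratum-by-stratum: it proves the module-generation criterion \Cref{twocrit} over the resolution $(\mb{P}^1)^{n-a+1}\to Y_\lambda\to Z_\lambda$ and then verifies in \Cref{2generatorslemma} that the two chosen strata, together with powers of $\Phi_\lambda^*\hat{\iota}_\lambda^*H'$, generate the full $S_1\times S_{n-a}$-invariant subring, the only delicate point being the recovery of $H_1'$ (which is where the case split $a\neq n-a$ versus $a=n-a$ enters). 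Your closing sentence points at exactly this alternative, but pointing at it is not carrying it out; as written, the first stage is circular.
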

See \Cref{Feherrmk} for the connection to similar results proved in \cite{FNR06}.

By the excision exact sequence \cite[Proposition 1.8]{Fulton}, the ideal $I$ is the same as the pushforward ideal $I_\lambda$ which we define in \Cref{Ilambda}.

%Recall that we used $\wt{\bullet}$ to denote the cone over a $T$-invariant subvariety of $\mb{P}^n$ (\Cref{cone}).

\begin{defn}\label{Ilambda}
Given a partition $\lambda$ of $n$ and for $G=PGL_2$ or $GL_2$, let $I^G_{\lambda}$ be the ideal of $A_G^\bullet(\mb{P}^n)$ given by the pushforward via the inclusion $\iota_{\lambda}: Z_\lambda\hookrightarrow \mb{P}^n$
$$I^G_{\lambda}=(\iota_\lambda)_*A^{G}_{\bullet}(Z_{\lambda})\subset A^G_{\bullet}(\mb{P}^n)$$
and the identification $A^G_{\bullet}(\mb{P}^n)\cong A_G^{n-\bullet}(\mb{P}^n)$ via Poincar\'e duality \cite[Proposition 4]{EG98}. When $G$ is clear from context we will simply write $I_\lambda$.
%and similarly the ideal $\wt{I}_{\lambda}\subset A_{GL_2}^\bullet(\mb{A}^{n+1})$ be the ideal given by the image of the pushforward map
%\begin{align*}
%\iota:A_{GL_2}^{\bullet}(\wt{Z}_{\lambda})\to A_{GL_2}^\bullet(\mb{A}^{n+1}).
%\end{align*}
\end{defn}

Since $Z_\lambda$ is possibly singular, we will want to instead work with a desingularization (as was done in \cite{FNR06}). 

\begin{defn}\label{thePs}
Given a partition $\lambda=\{\lambda_1,\ldots,\lambda_d\}$ of $n$, let $e^\lambda_i=\# \{j\mid \lambda_j=i\}$ and $Y_\lambda=\prod_{i=1}^{n}{\mb{P}^{e^\lambda_i}}$. We have a map $$\hat{\iota}_{\lambda}: Y_\lambda\to \mb{P}^n$$ that is birational onto its image $Z_\lambda$ given by the composition $$Y_\lambda \hookrightarrow \prod_{i=1}^{n}{\mb{P}^{i e^\lambda_i}}\to \mb{P}^n$$ of the $i$th power map on each factor $\mb{P}^{e_i}$ together with the multiplication map. Equivalently, if we view projective space $\mb{P}^n$ as parameterizing degree $n$ divisors on $\mb{P}^1$, then the map is given by $(D_1,\ldots,D_n)\mapsto \sum_{i=1}^{n}{i D_i}$. 
\end{defn}

In particular, $I_\lambda$ is also given by the image of $(\hat{\iota}_\lambda)_{*}$. Since we are working rationally, we can take a finite cover of $Y_\lambda$. 

\begin{defn}
Given a partition $\lambda=\{\lambda_1,\ldots,\lambda_d\}$ of $n$, define the finite map $\Phi_\lambda:(\mb{P}^1)^d \to Y_\lambda$ to be
$$\Phi_\lambda:(\mb{P}^1)^d= \prod_{i=1}^{n}{(\mb{P}^1)^{e^\lambda_i}}\to \prod_{i=1}^{n}{\mb{P}^{e^\lambda_i}}= Y_\lambda $$
given by the multiplication map $(\mb{P}^1)^{e^\lambda_i}\to \mb{P}^{e^\lambda_i}$ on each factor. %Here, $(\mb{P}^1)^d\hookrightarrow (\mb{P}^1)^n$ is an isomorphism onto $\Delta_{\{A_1,\ldots,A_d\}}$, where $\{A_1,\ldots,A_d\}$ is any partition of $[n]$ with $|A_i|=\lambda_i$ and $\Phi$ is the multiplication map. By symmetry it is clear that $\Phi_\lambda$ is independent of the partition of $[n]$ we choose.
\end{defn}

Since $\Phi_\lambda$ is finite, $$(\Phi_\lambda)_*:A^\bullet_{PGL_2}((\mb{P}^1)^d)\otimes \mb{Q} \to A^\bullet_{PGL_2}(Y_\lambda)\otimes \mb{Q}$$ is surjective, so $I_\lambda\otimes \mb{Q}$ is the image of $$(\hat{\iota}_\lambda \circ \Phi_\lambda)_*:A^\bullet_{PGL_2}((\mb{P}^1)^d)\otimes \mb{Q}\to A^\bullet_{PGL_2}((\mb{P}^1)^n)\otimes \mb{Q}.$$ The map $\Phi_\lambda$ has the nice property that given a partition $P$ of $[d]$, the pushforward of the strata $(\hat{\iota}_\lambda \circ \Phi_\lambda)_*\Delta_P$ is $[\lambda']$, where $\lambda'$ is the partition of $n$ given by merging the parts of $\lambda$ according to the partition $P$. From this, we will be able to deduce certain symmetrized strata generate $I_\lambda \otimes \mb{Q}$ based on the generation properties of strata in $(\mb{P}^1)^d$.

\begin{defn}
Given a set of partitions $\mc{P}$ of $[d]$ and $G=PGL_2$ or $GL_2$, let $\Lambda^G_{\mc{P}} \subset A^\bullet_G((\mb{P}^1)^d) \otimes \mb{Q}$ be the submodule over $A^{\bullet}_G(\mb{P}^n)\otimes \mb{Q}$ generated by the classes $\Delta_P$. Explicitly,
$$\Lambda^G_{\mc{P}}=\sum_{P \in \mc{P}}\Delta_P \cap \Phi_\lambda^*\hat{\iota}_\lambda^*(A^\bullet_G(\mb{P}^n)\otimes \mb{Q}).$$
When $G$ is clear from context we will notate $\Lambda^G_{\mc{P}}$ simply by $\Lambda_{\mc{P}}$.
\end{defn}

\begin{lem}\label{twocrit}
Let $\lambda=\{\lambda_1,\ldots,\lambda_d\}$ be a partition of $n$, and let $G=PGL_2$ or $GL_2$. Suppose we have a collection of partitions $\mc{P}$ of $[d]$ such that in $A^\bullet_{G}((\mb{P}^1)^d)\otimes \mb{Q}$
%$$\Phi_\lambda^*(A^\bullet_{G}(Y_\lambda)\otimes \mb{Q})\subset \Lambda^G_{\mc{P}}.$$
$$A^\bullet_{G}((\mb{P}^1)^d)^{\prod_{i=1}^{n}{S_{e^\lambda_i}}}\otimes \mb{Q}\subset \Lambda^G_{\mc{P}}.$$
Then $\{(\hat{\iota}_\lambda \circ \Phi_\lambda)_*\Delta_P\mid P\in \mc{P}\}$ generates $I_\lambda^G\otimes \mb{Q}\subset A^\bullet_G(\mb{P}^n)\otimes \mb{Q}.$
\end{lem}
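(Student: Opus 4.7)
The plan is to reduce the lemma to a direct application of the projection formula once the $W$-invariants in $A^\bullet_G((\mb{P}^1)^d)\otimes\mb{Q}$ are identified (up to pushforward) with $A^\bullet_G(Y_\lambda)\otimes\mb{Q}$, where $W:=\prod_i S_{e^\lambda_i}$. First I will verify that the composite pushforward $(\hat\iota_\lambda\circ\Phi_\lambda)_*$ already surjects onto $I_\lambda^G\otimes\mb{Q}$ when restricted to the $W$-invariant subring; then the hypothesis will do the rest.

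For the first step, since $\Phi_\lambda$ is the quotient of $(\mb{P}^1)^d$ by the finite group $W$ acting by permuting factors within each $(\mb{P}^1)^{e^\lambda_i}$, pullback $\Phi_\lambda^*\colon A^\bullet_G(Y_\lambda)\otimes\mb{Q}\xrightarrow{\sim} A^\bullet_G((\mb{P}^1)^d)^W\otimes\mb{Q}$ is an isomorphism with inverse $\tfrac{1}{|W|}(\Phi_\lambda)_*$. In particular $(\Phi_\lambda)_*$ restricted to the $W$-invariants already surjects onto $A^\bullet_G(Y_\lambda)\otimes\mb{Q}$, so composing with $(\hat\iota_\lambda)_*$ and invoking the definition of $I_\lambda^G$ yields
\[
I_\lambda^G\otimes\mb{Q}=(\hat\iota_\lambda\circ\Phi_\lambda)_*\bigl(A^\bullet_G((\mb{P}^1)^d)^W\otimes\mb{Q}\bigr).
\]

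The hypothesis then writes every $W$-invariant $\beta$ as a finite sum $\beta=\sum_{P\in\mc{P}}\Delta_P\cdot(\hat\iota_\lambda\circ\Phi_\lambda)^*\alpha_P$ for some $\alpha_P\in A^\bullet_G(\mb{P}^n)\otimes\mb{Q}$, using $(\hat\iota_\lambda\circ\Phi_\lambda)^*=\Phi_\lambda^*\hat\iota_\lambda^*$. Applying the projection formula to the finite (hence proper) composite $\hat\iota_\lambda\circ\Phi_\lambda$ gives
\[
(\hat\iota_\lambda\circ\Phi_\lambda)_*(\beta)=\sum_{P\in\mc{P}}(\hat\iota_\lambda\circ\Phi_\lambda)_*(\Delta_P)\cdot\alpha_P,
\]
exhibiting $I_\lambda^G\otimes\mb{Q}$ as generated by $\{(\hat\iota_\lambda\circ\Phi_\lambda)_*\Delta_P\mid P\in\mc{P}\}$ as an $A^\bullet_G(\mb{P}^n)\otimes\mb{Q}$-module.

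This argument is essentially formal and presents no serious obstacle. The substantive work in the subsequent sections will lie, for each specific $\lambda$ relevant to \Cref{pushforwardtheorem} and \Cref{2generators}, in constructing a small explicit $\mc{P}$ for which $\Lambda_\mc{P}^G$ already contains all $W$-invariants. That combinatorial step will draw on the explicit $\mb{Z}$-basis of $A^\bullet_{PGL_2}((\mb{P}^1)^d)$ from \Cref{mainthm} together with the square and diagonal relations.
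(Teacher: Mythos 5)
Your proof is correct and takes essentially the same route as the paper: identify $A^\bullet_G(Y_\lambda)\otimes\mb{Q}$ with the $\prod_i S_{e^\lambda_i}$-invariants via $\Phi_\lambda$ (the paper only uses $(\Phi_\lambda)_*\Phi_\lambda^*=\deg\Phi_\lambda$ together with the fact that $\Phi_\lambda^*$ lands in the invariants, which is all your argument actually needs), and then push the hypothesis forward using the projection formula for the proper map $\hat{\iota}_\lambda\circ\Phi_\lambda$. No changes are needed.
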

\begin{proof}
Since 
$$\Phi_\lambda^*(A^\bullet_{G}(Y_\lambda)\otimes \mb{Q})\subset A^\bullet_{G}((\mb{P}^1)^d)^{\prod_{i=1}^{n}{S_{e^\lambda_i}}}\otimes \mb{Q}\subset \Lambda^G_{\mc{P}},$$
we have $$(\Phi_\lambda)_*\Lambda_{\mc{P}}^{G}\supset (\Phi_\lambda)_*(\Phi_\lambda^*(A^\bullet_{G}(Y_\lambda))\otimes \mb{Q})=A^\bullet_{G}(Y_\lambda) \otimes \mb{Q}$$
and by the projection formula, $(\Phi_\lambda)_*\Lambda_{\mc{P}}^{G}$ is
$$(\Phi_\lambda)_*\sum_{P \in \mc{P}}\Delta_P\cap \Phi_\lambda^*\hat{\iota}_\lambda^*(A^\bullet_{G}(\mb{P}^n)\otimes \mb{Q})=\sum_{P \in \mc{P}}(\Phi_\lambda)_*\Delta_P \cap \hat{\iota}_\lambda^*(A^\bullet_G(\mb{P}^n)\otimes \mb{Q}).$$
By the projection formula again, we thus have $$I^G_\lambda\otimes \mb{Q}=(\hat{\iota}_\lambda)_*(A^\bullet_G(Y_\lambda)\otimes \mb{Q})=\sum_{P \in \mc{P}}(\hat{\iota}_\lambda\circ \Phi_\lambda)_*\Delta_P\cap A^\bullet_G(\mb{P}^n) \otimes \mb{Q}$$ as desired.
\end{proof}
\begin{comment}
\begin{lem}\label{twocrit}
Let $\lambda=\{\lambda_1,\ldots,\lambda_d\}$ be a partition of $n$. Suppose we have a collection of partitions $\mc{P}$ of $[d]$ such that $\mb{Q}[u,v]^{S_2}$-span of the collection $$\{\Phi_\lambda^*(\iota_\lambda^* H)^k\cap \Delta_P\mid P\in \mc{P}, k\geq 0\}$$ contains the subring $\Phi_\lambda^*(A^\bullet_{GL_2}(Z_\lambda))\otimes \mb{Q} \subset A^\bullet_{GL_2}((\mb{P}^1)^d) \otimes \mb{Q}$. Then $\{(\iota_\lambda \circ \Phi_\lambda)_*\Delta_P\mid P\in \mc{P}\}$ generates the ideal $I_\lambda \otimes \mb{Q}$.
\end{lem}

\begin{proof}
We have $$(\Phi_\lambda)_*(\Phi_\lambda^*(A^\bullet_{GL_2}(Z_\lambda))\otimes \mb{Q})=A^\bullet_{GL_2}(Z_\lambda) \otimes \mb{Q}$$
and by the projection formula,
$$(\Phi_\lambda)_*(\Phi_\lambda^*(\iota_\lambda^* H)^k\cap \Delta_P)=(\iota_\lambda^* H)^k\cap (\Phi_\lambda)_*\Delta_P,$$ so these classes generate $A^\bullet_{GL_2}(Z_\lambda) \otimes \mb{Q}$ as a $\mb{Q}[u,v]^{S_2}$-module. Hence $I_\lambda$ is the $\mb{Q}[u,v]^{S_2}$-span of
$$(\iota_\lambda)_*((\iota_\lambda^* H)^k\cap (\Phi_\lambda)_*\Delta_P)=H^k \cap (\iota_\lambda \circ \Phi_\lambda)_*\Delta_P.$$
\end{proof}
\end{comment}

\begin{lem}\label{pushforwardlemma}
Let $\lambda=\{\lambda_1,\ldots,\lambda_d\}$ be a partition of $[n]$ and $\mc{P}$ be all partitions of $[d]$. Then

$$
\Lambda^{PGL_2}_{\mc{P}}=
\begin{cases}
A^{\bullet}_{PGL_2}((\mb{P}^1)^2)^{S_2}\otimes\mb{Q}&\text{if $d=2$ and $\lambda_1=\lambda_2$, and}\\
A^{\bullet}_{PGL_2}((\mb{P}^1)^d)\otimes\mb{Q}&\text{otherwise.}
\end{cases}
$$

In particular, given a partition $\lambda=\{\lambda_1,\ldots,\lambda_d\}$ of $n$, $I^{PGL_2}_{\lambda}\otimes \mb{Q}$ is generated by all $[\lambda']$ with $\lambda'$ formed by merging parts of $\lambda$.
\begin{comment}
$[\lambda_{A,B}]$ where we define $\lambda_{A,B}$ by taking $A$ and $B$ a disjoint pair of (possibly empty) subsets $A,B\subset [d]$ and merge the $\{\lambda_i\}_{i \in A}$ and $\{\lambda_j\}_{j \in B}$ in $\lambda$ to get
$$\lambda_{A,B}:=\{\sum_{i\in A}\lambda_i, \sum_{i\in B}\lambda_i\}\cup\{\lambda_i\mid i\in [d]\backslash (A\cup B)\}.$$
\end{comment}
\end{lem}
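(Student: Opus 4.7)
The plan is to first deduce the ``in particular'' statement from the main equality via \Cref{twocrit}: since $(\hat{\iota}_\lambda\circ\Phi_\lambda)_*\Delta_P$ is a positive rational multiple of $[\lambda']$, where $\lambda'$ is the partition of $n$ obtained by merging the parts of $\lambda$ indexed by the partition $P$ of $[d]$, the claim on $I^{PGL_2}_{\lambda}\otimes\mb{Q}$ follows as $P$ varies over all partitions of $[d]$.

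For the main equality, I would treat the symmetric case $d=2$, $\lambda_1=\lambda_2$ separately. Here $\Phi_\lambda$ factors through the quotient $(\mb{P}^1)^2\to{\rm Sym}^2\mb{P}^1$, forcing the pullback ring $\Phi_\lambda^*\hat{\iota}_\lambda^*(A^\bullet_{PGL_2}(\mb{P}^n)\otimes\mb{Q})$ into the $S_2$-invariants; together with the $S_2$-invariance of $\Delta_{\{\{1\},\{2\}\}}=1$ and $\Delta_{\{\{1,2\}\}}=\Delta_{1,2}$, this gives the inclusion $\Lambda^{PGL_2}_\mc{P}\subset A^\bullet_{PGL_2}((\mb{P}^1)^2)^{S_2}\otimes\mb{Q}$, with the reverse inclusion a quick direct computation.

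In all other cases, my plan is to show $\Lambda^{PGL_2}_\mc{P}$ contains a $\mb{Q}$-basis of $A^\bullet_{PGL_2}((\mb{P}^1)^d)\otimes\mb{Q}$. By \Cref{Pgenerate}, the $\Delta_P$'s already $\mb{Z}$-span $A^{\leq d-2}_{PGL_2}((\mb{P}^1)^d)\otimes\mb{Q}$, so only higher codimensions remain. By the basis description in \Cref{mainthm}, it suffices to show $\Delta_{\{[d]\}}\Delta_{i,j}^k$ and $\Delta_Q\Delta_{i_Q,j_Q}^k$ (for $Q\in\on{Part}(2,d)$ and $k\geq 1$) lie in $\Lambda^{PGL_2}_\mc{P}$. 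The identity $\Delta_{i,j}^2\Delta_R=-c_2\Delta_R$ whenever $i,j$ are in the same part of $R$ (derived from $\psi_i\Delta_R=\psi_j\Delta_R=-\Delta_{i,j}\Delta_R$ together with $\psi_i^2=-c_2$) reduces the problem to the cases $k=1$. Setting $H_\lambda':=\Phi_\lambda^*\hat{\iota}_\lambda^*(H)=-\tfrac{1}{2}\sum\lambda_i\psi_i$ (using the formula $H\mapsto H+\tfrac{n}{2}(u+v)$ from \Cref{PGLunordered}), one has $H_\lambda'\Delta_{\{[d]\}}=\tfrac{n}{2}\Delta_{\{[d]\}}\Delta_{i,j}$, settling the first family since $n\neq 0$. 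For $Q=\{A,B\}$, the analogous identity $H_\lambda'\Delta_Q=\tfrac{1}{2}(\Lambda_A\Delta_Q\Delta_{i_A,j_A}+\Lambda_B\Delta_Q\Delta_{i_B,j_B})$ with $\Lambda_A=\sum_{i\in A}\lambda_i$ provides one linear equation in (at most) two unknowns; auxiliary equations from $H_\lambda'^2\Delta_R$ for refinements $R$ of $Q$, expanded using $\psi_i\psi_j=2\Delta_{i,j}^2+c_2$, supply the additional information needed to isolate each unknown.

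The main obstacle will be a careful case analysis confirming that these linear systems always have full rank outside the exceptional $(d=2,\lambda_1=\lambda_2)$ case, as balance conditions such as $\Lambda_A=\Lambda_B$ or $\lambda_i=n/2$ can cause individual equations to become degenerate; these are resolved by invoking the equations arising from multiple auxiliary partitions, and the precise pattern of degeneracy confirms that the $(d=2,\lambda_1=\lambda_2)$ exception is the only one.
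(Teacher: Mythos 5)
Your overall architecture is sound, and several of your reductions are correct: the deduction of the ``in particular'' statement via \Cref{twocrit}, the treatment of the symmetric $d=2$ case, the reduction of powers via $\Delta_{i,j}^2\Delta_R=(u-v)^2\Delta_R$ (note $(u-v)^2=\Phi_\lambda^*\hat{\iota}_\lambda^*(-c_2)$ does lie in the coefficient ring, so this is legitimate), and the identity $H'_\lambda\Delta_{\{[d]\}}=\tfrac{n}{2}\Delta_{\{[d]\}}\Delta_{i,j}$. But the last step — showing $\Delta_Q\Delta_{i_Q,j_Q}\in\Lambda^{PGL_2}_{\mc{P}}$ for every $Q\in\on{Part}(2,d)$ — is exactly where the content of the lemma lives, and you have deferred it rather than proved it. The degeneracy you worry about is real, not hypothetical: take $d=3$, $\lambda=\{1,1,2\}$, $Q=\{\{1,2\},\{3\}\}$. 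Writing $H_i'=H_i+\tfrac12(u+v)$, one computes $H'_\lambda\Delta_Q=(H_1'+H_2'+2H_3')\Delta_Q=2\Delta_{\{[3]\}}$, so the primary equation carries no information about $\Delta_{1,2}\Delta_Q$ whatsoever, and one is forced to bring in $\Phi_\lambda^*\hat{\iota}_\lambda^*(H')^2$ applied to the discrete partition. You assert that such auxiliary equations always restore full rank, but give no argument, and the pattern of which equations degenerate (any $Q=\{A,B\}$ with $\sum_{i\in A}\lambda_i=\sum_{i\in B}\lambda_i$) depends on $\lambda$ in a way that makes a clean uniform argument in your basis genuinely nontrivial. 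A secondary issue: your appeal to the basis of \Cref{mainthm} is only available for $d\ge 3$, so $d=1$ and the asymmetric $d=2$ case need the separate (easy) direct computations.

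The paper avoids this entire case analysis by changing basis. Since $H_i'^2=\tfrac14(u-v)^2$ and $(u-v)^2$ lies in the coefficient ring, it suffices to produce the squarefree monomials $\prod_{i\in C}H_i'$; those of degree $\le d-2$ are already in $\Lambda_{\mc{P}}$ because the $\Delta_P$ span $A^{\le d-2}$, and for the top two degrees one multiplies the known classes $\prod_{k\ne i,j}H_k'$ for $\{i,j\}\subset\{1,2,3\}$ by $\Phi_\lambda^*\hat{\iota}_\lambda^*H'=\sum\lambda_kH_k'$. This produces the three combinations $\tfrac{1}{\lambda_i}\prod_{k\ne i}H_k'+\tfrac{1}{\lambda_j}\prod_{k\ne j}H_k'$ modulo known lower-order terms, and the resulting $3\times 3$ system is invertible for \emph{every} $\lambda$ — no balance condition can defeat it. If you want to salvage your version, the cleanest fix is to abandon the $\Delta_Q\Delta_{i_Q,j_Q}$ basis for the high-codimension part and argue with these monomials instead; otherwise you must actually exhibit, for each degenerate $Q$, an explicit auxiliary class and verify the rank claim.
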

\begin{proof}
Given the description of $\Lambda_\mc{P}^{PGL_2}$, the result about $I^{PGL_2}_\lambda \otimes \mb{Q}$ follows directly from \Cref{twocrit}. We will now show the description of $\Lambda_{\mc{P}}^{PGL_2}$.

We may identify $A^\bullet_{PGL_2}(\mb{P}^n)\otimes \mb{Q}\subset A^\bullet_{GL_2}(\mb{P}^n)\otimes \mb{Q}$ as the subring generated by $H+\frac{n}{2}(u+v)$ and $(u-v)^2$ by \Cref{PGLunordered}. Define
\begin{align*}
H_i'&=H_i+\frac{1}{2}(u+v)\quad\text{and}\quad H'=H+\frac{n}{2}(u+v).
\end{align*}
Note that with these definitions, we have $$\Phi_\lambda^*\hat{\iota}_\lambda^*(H')=\sum \lambda_iH_i',\qquad H_i'^2=\frac{1}{4}(u-v)^2.$$

We have the $\mb{Q}$-linear span $$\Lambda_{\mc{P}}=\on{Span}_{\mb{Q}}(\{\Delta_P(u-v)^{2k}(\sum \lambda_iH_i')^{\ell}\mid k,l\ge 0, P\in \mc{P}\}).$$

\begin{comment}
the $\mb{Q}[u,v]^{S_2}$-span of $$\{\Phi_\lambda^*(\iota_\lambda^*H)^k\cap \Delta_P \mid P\in \mc{P}, k \ge 0\},$$
where $\mathcal{P}$ is the set of partitions of $[d]$ with at most two non-singleton parts.
\end{comment}
The trivial partition is in $\mc{P}$, so $1$ is automatically in $\Lambda_{\mc{P}}$.

%Showing $\Lambda=A^\bullet_{GL_2}((\mb{P}^1)^d) \otimes \mb{Q}$ is equivalent to producing an expression in $\Lambda$ of the form $$\prod_{i \in C} H_i+\text{lower order terms in the $H_i$}$$ for each $C \subset [d]$. The trivial partition is in $\mc{P}$, so $1$ is automatically in $\Lambda$ and we can restrict our attention to non-empty $C$.

Recall by \Cref{psidelta} that $$\Delta_{i,j}=H_i'+H_j',$$ and that $A^\bullet_{PGL_2}((\mb{P}^1)^d)\otimes \mb{Q}$ is generated by the $H_i'$ and $(u-v)^2$. As $H_i'^2=\frac{1}{4}(u-v)^2$, to show $\Lambda_\mc{P}=A^\bullet_{PGL_2}((\mb{P}^1)^d)\otimes \mb{Q}$ it suffices to show that every monomial $\prod_{i \in C}H_i'$ is in $\Lambda_{\mc{P}}$ for $C \subset [n]$.

For $d=1$, $\lambda=\{[n]\}$, we are done as $H_1'=\frac{1}{\lambda_1}\Phi_\lambda^*\iota_\lambda^*H'$.

For $d=2$ and $\lambda_1 \ne \lambda_2$, 
\begin{align*}
H_1'& =\frac{1}{\lambda_1-\lambda_2}(\Phi_\lambda^*\hat{\iota}_\lambda^*(H')-\lambda_2\Delta_{1,2})\\
H_2'&=\frac{1}{\lambda_2-\lambda_1}(\Phi_\lambda^*\hat{\iota}_\lambda^*(H')-\lambda_1\Delta_{1,2})\\
H_1'H_2'&=\frac{1}{2\lambda_1\lambda_2}(\Phi_\lambda^*\hat{\iota}_\lambda^*(H')^2-\frac{1}{4}(\lambda_1^2+\lambda_2^2)(u-v)^2).
\end{align*}
%and 
%$$H_1'H_2'=\frac{1}{2\lambda_1\lambda_2}(\Phi_\lambda^*\iota_\lambda^*(H')^2-\frac{1}{4}(\lambda_1^2+\lambda_2^2)(u-v)^2).$$
%where for the last expression we note that we can replace $H_i^2$ with $-(u+v)H_i-uv$.

For $d=2$ and $\lambda_1=\lambda_2=a$, we have to show $\Lambda_{\mc{P}}=A^\bullet_{PGL_2}((\mb{P}^1)^2)^{S_2}\otimes \mb{Q}$. As $H_i'^2=\frac{1}{4}(u-v)^2$, %note that $\Phi_\lambda^*(A_{GL_2}^{\bullet}(Z_\lambda))$ is invariant under permuting $H_1$ with $H_2$, so 
it suffices to show $1,H_1'+H_2'$ and $H_1'H_2'$ are in $\Lambda_{\mc{P}}$. We already know that $1\in \Lambda_{\mc{P}}$, and
\begin{align*}
H_1'+H_2'&=\frac{1}{a}\Phi_\lambda^*\hat{\iota}_\lambda^*H',\\
H_1'H_2'&=\frac{1}{2a^2}(\Phi_\lambda^*\hat{\iota}_\lambda^*(H')^2-\frac{1}{2}a^2(u-v)^2).
\end{align*}

\begin{comment}
$(\Phi_\lambda)_*\Phi^*_\lambda$ is multiplication by $\deg(\Phi_\lambda)$, so it suffices to show that the images of $A^\bullet_{GL_2}((\mb{P}^1)^2)$ under $(\iota \circ \Phi_\lambda)_*$ lie in $I_\lambda$. We have
\begin{align*}
(\iota \circ \Phi_\lambda)_*1&=[\lambda]\\
\iota \circ (\Phi_\lambda)_*(H_1)&=\iota \circ (\Phi_\lambda)_*(H_2)=\frac{1}{2a}(\iota \circ \Phi_\lambda)_*(\Phi_\lambda^*(\iota^* H))=\frac{1}{2a}[H][\lambda],\\ \intertext{and}
\iota_* (\Phi_\lambda)_* (H_1H_2)&= \iota_* (\Phi_\lambda)_* (\Phi_\lambda^*\iota^*(\frac{1}{2a^2}H^2+\frac{1}{2a}(u+v)H+uv))\\
&=(\frac{1}{2a^2}[H]^2+\frac{1}{2a}(u+v)[H]+uv)[\lambda]
\end{align*}
by the projection formula, so the result again follows.
\end{comment}

We will now show that $\Lambda_{\mc{P}}=A^\bullet_{PGL_2}((\mb{P}^1)^d)\otimes \mb{Q}$ when $d\geq 3$.

Up to degree $d-2$, we can take $k,\ell=0$ as the classes $\Delta_P$ for $P\in \mc{P}$ generate $A^{\le d-2}_{PGL_2}((\mb{P}^1)^d)$ by \Cref{generatePlow}. Hence to conclude the proof of \Cref{pushforwardlemma}, it suffices to show that $\prod_{k \ne i}H_k'$ for all $i$ and $\prod H_k'$ are in $\Lambda_{\mc{P}}$. 

For $\prod_{k \ne i}H_k$, without loss of generality suppose $i=1$. We have each of
\begin{align*}
\frac{1}{a_{1}a_2}(\prod_{k\ne 1,2}H_k') \cap \Phi_\lambda^*\hat{\iota}_\lambda^*H' &= \frac{1}{a_1}\prod_{k \ne 1}H_k'+\frac{1}{a_2}\prod_{k \ne 2}H_k'+\frac{1}{4a_1a_2}(u-v)^2\sum_{j \ne 1,2}a_j\prod_{k \ne 1,2,j}H_k'\\
\frac{1}{a_{1}a_3}(\prod_{k\ne 1,3}H_k') \cap \Phi_\lambda^*\hat{\iota}_\lambda^*H &= \frac{1}{a_1}\prod_{k \ne 1}H_k+\frac{1}{a_3}\prod_{k \ne 3}H_k+\frac{1}{4a_2a_3}(u-v)^2\sum_{j \ne 2,3}a_j\prod_{k \ne 2,3,j}H_k'\\
\frac{1}{a_{2}a_3}(\prod_{k\ne 2,3}H_k') \cap \Phi_\lambda^*\hat{\iota}_\lambda^*H &= \frac{1}{a_2}\prod_{k \ne 2}H_k+\frac{1}{a_3}\prod_{k \ne 3}H_k+\frac{1}{4a_1a_3}(u-v)^2\sum_{j \ne 1,3}a_j\prod_{k \ne 1,3,j}H_k'
\end{align*}
lie in $\Lambda_{\mc{P}}$ as we have already shown each $\prod_{k\ne i,j}H_k$ lies in $\Lambda$. Also, the last term on each right hand side lies in $\Lambda_{\mc{P}}$ as the number of terms in the $H_k'$ monomial is $d-3$. Hence taking a linear combination we get $\prod_{k \ne 1}H_k'\in \Lambda_{\mc{P}}$.

%If $d=2$ and if $a_1=a_2=a$ then $\Phi'_*(H_1)=\Phi'_*(H_2)=\frac{1}{a}\Phi'_*(\Phi'^*(H))$ so the result again follows. If $a_1 \ne a_2$ then $H_1=\frac{1}{a_1-a_2}(\Phi'^*(H)-a_2([\{1,2\}]-(u+v)))$ and $H_2=\frac{1}{a_2-a_1}(\Phi'^*(H)-a_1([\{1,2\}]-(u+v)))$, so the result again follows. For $d=1$, $H_1=\frac{1}{a_1}\Phi'^*(H)$, so the result again follows.

To show $\prod_{i=1}^{n} H_i'\in \Lambda_{\mc{P}}$, we can proceed similarly to above, or expand $$\frac{1}{a_1\ldots a_n}\Phi_\lambda^{*}\hat{\iota}_\lambda^*(H')^d=\prod H_i' + (u-v)^2\left(\text{lower order terms in the $H_i'$}\right),$$
using $H_i'^2=\frac{1}{4}(u-v)^2$.
\end{proof}

\begin{comment}
\begin{thm}
$[a_1,\ldots,a_d]$ can be written in terms of $[a,b;d-2]$ as follows. Define $\alpha_i$ by
$$1+s^{n-(d-2)}-\frac{\prod (s^{a_i}-1)}{(s-1)^{d-2}}=\sum_{2i \le n-d-2} \alpha_i (s^i+s^{n-(d-2)-i}).$$
Then $$[a_1,\ldots,a_d]=\sum_{2i \le n-(d-2)} \alpha_i [i,n-(d-2)-i;d-2].$$
\end{thm}
\begin{proof}
Note that using square relations we can reduce every symmetrized class down to $[a,b;d-2]$ classes, so it suffices to show the above expression is preserved under applying a square relation, and is true for $[a,b;d-2]$ classes. The former follows from the identity
\begin{align*}
&(1-s^{a+b})(1-s^c)(1-s^d)+(1-s^a)(1-s^b)(1-s^{c+d})\\
=&(1-s^{a+c})(1-s^b)(1-s^d)+(1-s^{b+d})(1-s^a)(1-s^c)
\end{align*}
and the latter is immediately clear.
\end{proof}
\end{comment}

\begin{proof}[Proof of \Cref{pushforwardtheorem}]
%Consider $(\mb{P}^1)^{\sum e_i} \to \mb{P}^n$. By \Cref{pushforwardlemma}, we only need to push forward $[\{A,B\}]$ classes.
This follows from the excision exact sequence \cite[Proposition 1.8]{Fulton} and \Cref{pushforwardlemma}.
%and the affine case follows from the projective case, \Cref{divideleray}, and \Cref{constantterm}.
\begin{comment}
The first statement about $A^\bullet_{GL_2}(\mathbb{P}^n\setminus Z_{\lambda})\otimes \mathbb{Q}$ follows from the exact sequence \cite[Proposition 1.8]{Fulton}
\begin{align*}
A^{\bullet}_{GL_2}(Z_{\lambda})\otimes \mb{Q}\to A^\bullet_G(\mathbb{P}^n)\otimes \mb{Q}\to A^\bullet_G(\mathbb{P}^n\setminus Z_{\lambda})\otimes \mathbb{Q}\to 0
\end{align*}
and \Cref{pushforwardlemma}. 
\end{comment}
\end{proof}

\begin{lem}
\label{2generatorslemma}
Let $\lambda=\{a,1^b\}$ be a partition of $n$. Define $\mc{P}_\lambda$ to be the set of partitions
\begin{align*}
\mc{P}_\lambda=\{T\}\sqcup
\begin{cases}
\{T_{1,i}\}_{i \ge 2}&\text{a $\ne$ b}\\
\{T_{i,j}\}_{2 \le i<j \le n}&\text{a=b},
\end{cases}
\end{align*}
where $T$ is the trivial partition and $T_{i,j}$ is the partition with $n-1$ parts and $i,j$ in the same part. Then $$\Lambda^{PGL_2}_{\mc{P_\lambda}}=A^\bullet_{PGL_2}((\mb{P}^1)^{b+1})^{S_1 \times S_b}\otimes \mb{Q}.$$
\end{lem}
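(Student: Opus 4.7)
The approach is to view $\Lambda := \Lambda^{PGL_2}_{\mc{P}_\lambda}$ as a $\mb{Q}[c][S]$-submodule of the $S_1 \times S_b$-invariant subring of $A^\bullet_{PGL_2}((\mb{P}^1)^{b+1}) \otimes \mb{Q}$, following the setup in the proof of \Cref{pushforwardlemma}: set $c = \tfrac{1}{4}(u-v)^2$, $H_i' = H_i + \tfrac{1}{2}(u+v)$, $E_k := e_k(H_2',\ldots,H_{b+1}')$, and $S := \Phi_\lambda^* \hat{\iota}_\lambda^* H' = aH_1' + E_1$. Since $H_i'^2 = c$, the invariant subring is free over $\mb{Q}[c]$ with basis $\{H_1'^\epsilon E_k : \epsilon \in \{0,1\},\ 0 \le k \le b\}$; the inclusion $\Lambda \subseteq A^\bullet_{PGL_2}((\mb{P}^1)^{b+1})^{S_1 \times S_b} \otimes \mb{Q}$ is automatic since $\mc{P}_\lambda$ is $S_1 \times S_b$-stable and the coefficients come from the invariant subring, so it suffices to exhibit each $H_1'^\epsilon E_k$ in $\Lambda$.

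First I would produce $H_1' \in \Lambda$. For $a \ne b$, summing the generators $\Delta_{T_{1,i}} = H_1' + H_i'$ over $i = 2, \ldots, b+1$ gives $b H_1' + E_1 \in \Lambda$, and subtracting $S$ yields $(b-a) H_1' \in \Lambda$, hence $H_1' \in \Lambda$. For $a = b$ (necessarily $\ge 2$), summing $\Delta_{T_{i,j}} = H_i' + H_j'$ over $2 \le i < j \le b+1$ gives $(b-1) E_1 \in \Lambda$, yielding $E_1 \in \Lambda$ and then $H_1' = (S - E_1)/a \in \Lambda$. The small cases $b \le 1$ are handled directly.

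Then I would induct on $k$ that $V_k := \on{span}_{\mb{Q}[c]}\{H_1'^\epsilon E_j : \epsilon \in \{0,1\},\ 0 \le j \le k\} \subseteq \Lambda$, with base case $V_0 = \on{span}\{1, H_1'\} \subseteq \Lambda$ established above. The key tool is the Newton-type identity
\begin{equation*}
E_1 E_j = (j+1) E_{j+1} + (b - j + 1)\, c\, E_{j-1},
\end{equation*}
obtained by expanding $(\sum_{i \ge 2} H_i') E_j$ and separating terms where $H_i'$ squares to $c$ from those where it joins $j$ new factors; iterating yields $E_1^k \equiv k!\, E_k \pmod{V_{k-1}}$. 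Expanding $S^k = \sum_j \binom{k}{j} a^j H_1'^j E_1^{k-j}$ and reducing via $H_1'^{2m} = c^m$ and $H_1'^{2m+1} = c^m H_1'$, the $j = 0$ summand contributes $k!\, E_k$ modulo $V_{k-1}$, the $j = 1$ summand contributes $a k!\, H_1' E_{k-1} \in V_{k-1}$, and the $j \ge 2$ summands involve $E_{k-j}$ with $k - j \le k - 2$, hence lie in $V_{k-1}$. Therefore $S^k \equiv k!\, E_k \pmod{V_{k-1}}$, and since $S^k \in \Lambda$ with $V_{k-1} \subseteq \Lambda$, we get $E_k \in \Lambda$. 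Multiplying by $H_1' \in \Lambda$ and using $H_1' V_{k-1} \subseteq V_{k-1}$ (from $H_1'^2 = c$), the analogous computation yields $H_1' S^k \equiv k!\, H_1' E_k \pmod{V_{k-1}}$, so $H_1' E_k \in \Lambda$.

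The main obstacle is the induction step: establishing the Newton-type identity in the quotient ring where $H_i'^2 = c$, and carefully tracking which terms in the binomial expansion of $S^k$ survive modulo $V_{k-1}$ versus which get absorbed, so that the leading coefficient $k!$ permits recovery of $E_k$ and $H_1' E_k$ at each stage.
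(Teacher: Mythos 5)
Your proof is correct and takes essentially the same route as the paper's: extract $H_1'$ as a $\mb{Q}$-combination of $S=\Phi_\lambda^*\hat{\iota}_\lambda^*(H')$ and the generators $\Delta_{T_{i,j}}$, then use the identity $E_1E_j=(j+1)E_{j+1}+(b-j+1)cE_{j-1}$ together with $H_i'^2=c$ to recover every $H_1'^{\epsilon}e_j(H_2',\ldots,H_{b+1}')$ from powers of $S$ (the paper organizes this as an induction on $j$ after first replacing $S$ by $E_1$, rather than a binomial expansion of $S^k$ against a filtration, but the content is identical, and your coefficient $b-j+1$ is the correct one). The only caveat, shared with the paper, is that what is actually established (and all that \Cref{twocrit} requires) is the containment $\supseteq$; the reverse containment is not ``automatic'' as you assert, since an individual generator such as $\Delta_{T_{1,2}}=H_1'+H_2'$ is not $S_b$-invariant.
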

\begin{proof}
Define \begin{align*}
H'&=H+\frac{n}{2}(u+v)\quad\text{and}\quad H_i'=H_i+\frac{1}{2}(u+v).
\end{align*}
Then in particular,
\begin{align*}
\Delta_{i,j}&=H_i'+H_j'\\
\Phi_\lambda^*\hat{\iota}_\lambda^*(H')&=aH_1'+H_2'+\ldots+H_{b+1}',
\end{align*}
%$$\Delta_{i,j}=H_i'+H_j'.$$
%We have $$\Phi_\lambda^*\hat{\iota}_\lambda^*(H')=aH_1'+H_2'+\ldots+H_{b+1}',$$
so $\Lambda_{\mc{P}_{\lambda}}$ is the $\mb{Q}$-linear span
$$\Lambda_{\mc{P}_{\lambda}}=\on{Span}_{\mb{Q}}\{\Delta_P(u-v)^{2k}(aH_1'+H_2'+\ldots+H_{b+1}')^\ell\mid k,\ell \ge 0,\ P \in \mc{P}_\lambda\}.$$
%$$\sum_{\substack{k,l \ge 0\\P \in \mc{P}_\lambda}} \mb{Q} \cdot \Delta_P(u-v)^{2k}(aH_1'+H_2'+\ldots+H_{b+1}')^\ell.$$
We first show that $H_1' \in \Lambda_{\mc{P}_\lambda}$. Consider the case $b \ne a$. Then
$$H_1'=\frac{1}{a-b}\left(\Phi_\lambda^*\hat{\iota}_\lambda^*(H')-\sum_{i \ge 2}\Delta_{1,i}\right) \in \Lambda_{\mc{P}_\lambda}.$$
Now consider the case $b=a$. Then
$$H_1'=\frac{1}{a}\left(\Phi_\lambda^*\hat{\iota}_\lambda^*(H')-\frac{1}{a-1}\sum_{2 \le i < j \le a+1}\Delta_{i,j}\right)\in \Lambda_{\mc{P}_\lambda}.$$
Now that we have shown that $H_1' \in \Lambda$, it therefore suffices to show that the invariant subring $A^\bullet_{PGL_2}((\mb{P}^1)^{b+1})^{S_1 \times S_b}$ is given by
$$\on{Span}_{\mb{Q}}\{(u-v)^{2k}(aH_1'+H_2'+\ldots+H_{b+1}')^\ell,\ H_1' (u-v)^{2k}(aH_1'+H_2'+\ldots+H_{b+1}')^\ell\mid k,\ell\ge 0\}$$
%$$\sum \mb{Q} \cdot (u-v)^{2k}(aH_1'+H_2'+\ldots+H_{b+1}')^\ell+\sum \mb{Q} \cdot H_1'\cdot (u-v)^{2k}(aH_1'+H_2'+\ldots+H_{b+1}')^\ell$$
Note that by using the relation $H_1'^2=\frac{1}{4}(u-v)^2$, we see this is the same as
\begin{align*}
&\on{Span}_{\mb{Q}}\{H_1'^k(aH_1'+H_2'+\ldots+H_{b+1}')^{\ell}\mid k,\ell\geq 0\}\\
=&\on{Span}_{\mb{Q}}\{H_1'^k(H_2'+\ldots+H_{b+1}')^{\ell}\mid k,\ell\geq 0\}\\
=& \on{Span}_{\mb{Q}}\{(u-v)^{2k}(H_1'+H_2'+\ldots+H_{b+1}')^\ell,\\ &H_1' (u-v)^{2k}(H_1'+H_2'+\ldots+H_{b+1}')^\ell\mid k,\ell\ge 0\}.
\end{align*}
%\begin{align*}
%&\sum \mb{Q}\cdot H_1'^k(aH_1'+H_2'+\ldots+H_{b+1}')^{\ell}\\
%=&\sum \mb{Q}\cdot H_1'^k(H_2'+\ldots+H_{b+1}')^{\ell}\\
%=&\sum \mb{Q} \cdot (u-v)^{2k}(H_2'+\ldots+H_{b+1}')^\ell+\sum \mb{Q} \cdot H_1'\cdot (u-v)^{2k}(H_2'+\ldots+H_{b+1}')^\ell.
%\end{align*}
By using the relations $H_i'^2=\frac{1}{4}(u-v)^2$ whenever possible, we see that an element of the invariant subring is a sum of terms of the form $(u-v)^{2k}e_j(H_2',\ldots,H_{b+1}')$ and $(u-v)^{2k}H_1'e_j(H_2',\ldots,H_{b+1}')$ where $e_j$ is the $j$th elementary symmetric polynomial, hence it suffices to show that
$$\on{Span}_{\mb{Q}}\{(u-v)^{2k} e_j(H_2',\ldots,H_{b+1}')\mid j,k\geq 0 \}\subset \on{Span}_{\mb{Q}}\{(u-v)^{2k}(H_2'+\ldots+H_{b+1}')^{\ell}\mid k,\ell\geq 0\}.$$
This follows by induction on $j$ and the relation
\begin{align*}
&e_j(H_2',\ldots,H_{b+1}')(H_2'+\ldots+H_{b+1}')
\\&=(j+1)e_{j+1}(H_2'+\ldots+H_{b+1}')+\frac{1}{4}(u-v)^2(n-j+1)e_{j-1}(H_2',\ldots,H_{b+1}').
\end{align*}

\end{proof}

\begin{proof}[Proof of \Cref{2generators}]
This follows from the excision exact sequence \cite{Fulton}[Proposition 1.8], \Cref{twocrit}, and \Cref{2generatorslemma}.
\end{proof}

\section{Excision of unordered strata in $[\text{\rm Sym}^n\mb{P}^1/GL_2]$ and $[\text{\rm Sym}^nK^2/GL_2]$}
In this section, we show how our results about excision of unordered strata in $[\text{\rm Sym}^n\mb{P}^1/PGL_2]$ imply similar results in $[\text{\rm Sym}^n\mb{P}^1/GL_2]$ and $[\text{\rm Sym}^nK^2/GL_2]$, recovering and extending some results of \cite{FNR06} (see \Cref{Feherrmk}).

\begin{defn}
Given a partition $\lambda$ of $n$, let $\wt{I}_\lambda$ be the ideal of $A^\bullet_{GL_2}(\mb{A}^{n+1})$ given by the image of the pushforward $A^{GL_2}_\bullet(\wt{Z_\lambda}) \hookrightarrow A^{GL_2}_\bullet(\mb{A}^{n+1})$ and the identification $A^{GL_2}_\bullet(\mb{A}^{n+1})\cong A^{n+1-\bullet}_{GL_2}(\mb{A}^{n+1})$ via Poincar\'e duality \cite[Proposition 4]{EG98}.
\end{defn}

\begin{thm}
\label{GL2andaffine}
$I_\lambda^{GL_2}\otimes \mb{Q}$ (respectively $\wt{I}_\lambda\otimes\mb{Q}$) is generated by all $[Z_{\lambda'}]$ (respectively $[\wt{Z}_{\lambda'}]$) with $\lambda'$ formed by merging parts of $\lambda$. For $\lambda=\{a,1^{n-a}\}$ only two generators are required, namely $[Z_\lambda]$ (respectively $[\wt{Z}_\lambda]$) and $[Z_{\lambda'}]$ (respectively $[\wt{Z}_{\lambda'}]$) where
$$\lambda'=\begin{cases}\{a+1,1^{n-a-1}\}&\text{if $a \ne \frac{n}{2}$}\\
\{a,2,1^{n-a-2}\}&\text{if $a = \frac{n}{2}$.}\end{cases}$$
\end{thm}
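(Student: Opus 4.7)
The plan is to adapt the arguments of \Cref{pushforwardtheorem} and \Cref{2generators} from the $PGL_2$-equivariant setting to the $GL_2$-equivariant setting on both $\mb{P}^n$ and $\mb{A}^{n+1}$, using a common lift to the $GL_2\times\Gm$-equivariant Chow ring of $\mb{A}^{n+1}$ (with $\Gm$ acting on $\mb{A}^{n+1}$ by scaling).

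For the $GL_2$-equivariant statement on $\mb{P}^n$, the key observation is that rationally the inclusion $A^\bullet_{PGL_2}(X)\otimes\mb{Q}\hookrightarrow A^\bullet_{GL_2}(X)\otimes\mb{Q}$ is obtained by adjoining the equivariant parameter $u+v$ for both $X=(\mb{P}^1)^d$ and $X=\mb{P}^n$, as follows from \Cref{PGL2injective} and \Cref{PGLunordered}. Since $u+v\in A^\bullet_{GL_2}(\pt)\subset A^\bullet_{GL_2}(\mb{P}^n)$ pulls back into $\Phi_\lambda^*\hat\iota_\lambda^*(A^\bullet_{GL_2}(\mb{P}^n)\otimes\mb{Q})$, we have $\Lambda^{GL_2}_{\mc{P}}=\Lambda^{PGL_2}_{\mc{P}}[u+v]$ as $\mb{Q}$-modules. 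Combined with \Cref{pushforwardlemma} (respectively \Cref{2generatorslemma} for the two-generator case), this yields $\Lambda^{GL_2}_{\mc{P}}=A^\bullet_{GL_2}((\mb{P}^1)^d)\otimes\mb{Q}$ (or the relevant invariant subring), so \Cref{twocrit} with $G=GL_2$ gives the desired generation of $I_\lambda^{GL_2}\otimes\mb{Q}$ in both claims.

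For the affine case, we pass through the $GL_2\times\Gm$-equivariant Chow ring. By $GL_2\times\Gm$-equivariant homotopy invariance, $A^\bullet_{GL_2\times\Gm}(\mb{A}^{n+1})=\mb{Q}[u,v]^{S_2}[H]$ with $H$ the $\Gm$-parameter; combining \Cref{quotientlemma} applied to the $\Gm$-quotient $\mb{A}^{n+1}\setminus\{0\}\to\mb{P}^n$ with excision of the origin yields $A^\bullet_{GL_2}(\mb{P}^n)\cong\mb{Q}[u,v]^{S_2}[H]/G(H)$, while restricting the group from $GL_2\times\Gm$ to $GL_2$ gives $A^\bullet_{GL_2}(\mb{A}^{n+1})\cong\mb{Q}[u,v]^{S_2}[H]/(H)$. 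Under the compatible excisions for $\wt{Z}_\lambda$, the lifted ideal $\wt{I}_\lambda^{GL_2\times\Gm}\subset\mb{Q}[u,v]^{S_2}[H]$ maps onto $I_\lambda^{GL_2}\otimes\mb{Q}$ modulo $G(H)$ and onto $\wt{I}_\lambda\otimes\mb{Q}$ modulo $H$. Hence by the projective case just established, $\wt{I}_\lambda^{GL_2\times\Gm}$ is generated by the lifts $[\wt{Z}_{\lambda'}]^{GL_2\times\Gm}$ for $\lambda'$ a merging of $\lambda$, together with $G(H)$ (the class of the origin, which lies in $\wt{I}_\lambda^{GL_2\times\Gm}$ since $0\in\wt{Z}_\lambda$).

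The main obstacle is showing that $G(H)$ is already contained in the ideal generated by the $[\wt{Z}_{\lambda'}]^{GL_2\times\Gm}$, so that it may be discarded from the generating set. This will follow from a direct computation: the coarsest merging $\lambda'=\{n\}$ is among the generators for every $\lambda$, and combining \Cref{FNRformula} with the shift formula underlying \Cref{reconstruction} gives $[\wt{Z}_{\{n\}}]^{GL_2\times\Gm}=n\prod_{k=1}^{n-1}(H+ku+(n-k)v)$, whence $G(H)=\frac{1}{n}(H+nu)(H+nv)\cdot[\wt{Z}_{\{n\}}]^{GL_2\times\Gm}$. Quotienting by $H$ then produces the claimed generation of $\wt{I}_\lambda\otimes\mb{Q}$ on $\mb{A}^{n+1}$, and the two-generator refinement for $\lambda=\{a,1^{n-a}\}$ follows from the same passage through $GL_2\times\Gm$ applied to the projective two-generator result.
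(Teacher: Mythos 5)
Your proposal is correct and follows essentially the same route as the paper. For the projective $GL_2$ statement, your observation that $\Lambda^{GL_2}_{\mc{P}}$ is obtained from $\Lambda^{PGL_2}_{\mc{P}}$ by adjoining $u+v$ is exactly the content of \Cref{PGLtoGL}, which the paper combines with \Cref{pushforwardlemma}, \Cref{2generatorslemma}, and \Cref{twocrit}. For the affine statement, your passage through $A^\bullet_{GL_2\times\Gm}(\mb{A}^{n+1})\cong\mb{Q}[u,v]^{S_2}[H]$ is the same mechanism as the paper's \Cref{divideleray} (which uses the identical diagram of Chow rings, phrased via constant terms in $H$ as in \Cref{reconstruction} rather than via lifts), and your identity $G(H)=\tfrac{1}{n}(H+nu)(H+nv)\cdot[\wt{Z}_{\{n\}}]^{GL_2\times\Gm}$ matches the paper's use of $[\{n\}]_0=n\prod_{i=1}^{n-1}(iu+(n-i)v)$ dividing the class of the origin.

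One small point needs attention in the two-generator affine refinement: there the mechanism ``the coarsest merging $\{n\}$ is among the generators'' does not literally apply, since neither $\lambda$ nor $\lambda'$ equals $\{n\}$. You need the extra step that, by the projective two-generator result, $[\wt{Z}_{\{n\}}]^{GL_2\times\Gm}$ lies in the ideal generated by the two lifts modulo $(G(H))$, and then a homogeneity/degree argument (the correction term $(H+nu)(H+nv)\cdot f\cdot G(H)$ has degree strictly larger than $n+1=\deg G(H)$, so comparing graded pieces shows $G(H)$ itself lies in the ideal of the two lifts). This is precisely what the paper's hypothesis ``generators of degree at most $n$'' in \Cref{divideleray} is arranging; with that observation added, your argument is complete.
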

\begin{rmk}
\label{Feherrmk}
In the affine case, when $n$ is odd and $a=\lceil \frac{n}{2}\rceil$ this recovers \cite[Theorem 4.3]{FNR06}, and when $n$ is even and $a=\frac{n}{2}$ this recovers the rational Chow ring of the stable locus in \cite[Theorem 4.10]{FNR06}.
\end{rmk}

\begin{lem}
\label{PGLtoGL}
We have
\begin{align*}
\mb{Q}[u,v]^{S_2}\left(A^\bullet_{PGL_2}((\mb{P}^1)^d)\otimes \mb{Q}\right)^{S_{e_1}\times \ldots \times S_{e_k}}&=\left(A^\bullet_{GL_2}((\mb{P}^1)^d)\otimes \mb{Q}\right)^{S_{e_1}\times \ldots \times S_{e_k}}\text{ and}\\
\mb{Q}[u,v]^{S_2}\left(A^\bullet_{PGL_2}(\mb{P}^n)\otimes \mb{Q}\right)&=A^\bullet_{GL_2}(\mb{P}^n)\otimes \mb{Q}.
\end{align*}
In particular, if a set of partitions $\mc{P}$ satisfies the hypotheses of \Cref{twocrit} for $G=PGL_2$, then they also satisfy the hypotheses of \Cref{twocrit} for $G=GL_2$.
\end{lem}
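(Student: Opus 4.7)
The plan is to first explicitly identify the image of $A^\bullet_{PGL_2}\otimes\mb{Q}$ inside $A^\bullet_{GL_2}\otimes\mb{Q}$ in each case. By \Cref{PGL2injective}, the image of $A^\bullet_{PGL_2}((\mb{P}^1)^d)\otimes\mb{Q}$ is the $\mb{Q}$-subalgebra generated by $H_i':=H_i+\tfrac{1}{2}(u+v)$ for $i\in [d]$ (note $(u-v)^2=4(H_i')^2$ using $F(H_i)=0$, so $(u-v)^2$ is automatically present). By \Cref{PGLunordered}, the image of $A^\bullet_{PGL_2}(\mb{P}^n)\otimes\mb{Q}$ is the $\mb{Q}$-subalgebra generated by $H':=H+\tfrac{n}{2}(u+v)$ and $(u-v)^2$. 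In both cases the PGL$_2$-subring is stable under the full $S_d$-action (respectively is trivially invariant) and $\mb{Q}[u,v]^{S_2}$ is likewise $\prod_i S_{e_i}$-invariant, so the $\supseteq$ direction of both equalities is immediate.

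For the $\subseteq$ direction of the first equality, I would observe that $A^\bullet_{GL_2}((\mb{P}^1)^d)\otimes\mb{Q}=\mb{Q}[u,v]^{S_2}[H_1,\ldots,H_d]/(F(H_i))$ is free over $\mb{Q}[u,v]^{S_2}$ with basis $\{\prod_{i\in S}H_i'\mid S\subset [d]\}$, since the change of basis from $\{\prod_{i\in S}H_i\}$ to $\{\prod_{i\in S}H_i'\}$ is $\mb{Q}[u,v]^{S_2}$-triangular with ones on the diagonal. Any $\prod_i S_{e_i}$-invariant element then expands uniquely as a $\mb{Q}[u,v]^{S_2}$-linear combination of orbit sums $\sum_{S\in\mathcal{O}}\prod_{i\in S}H_i'$ for $\mathcal{O}$ a $\prod_i S_{e_i}$-orbit on $2^{[d]}$. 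Each such orbit sum is a $\mb{Z}$-polynomial in the $H_i'$, hence lies in the PGL$_2$-subring, and is visibly $\prod_i S_{e_i}$-invariant, giving the required decomposition. The second equality reduces to the analogous but simpler argument using the $\mb{Q}[u,v]^{S_2}$-basis $\{1,H',(H')^2,\ldots,(H')^n\}$ of $A^\bullet_{GL_2}(\mb{P}^n)\otimes\mb{Q}$, every member of which lies in the PGL$_2$-subring.

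The ``in particular'' clause then follows formally. Pulling back the second equality along $\hat{\iota}_\lambda\circ\Phi_\lambda$ yields
$\Phi_\lambda^*\hat{\iota}_\lambda^*(A^\bullet_{GL_2}(\mb{P}^n)\otimes\mb{Q})=\mb{Q}[u,v]^{S_2}\cdot\Phi_\lambda^*\hat{\iota}_\lambda^*(A^\bullet_{PGL_2}(\mb{P}^n)\otimes\mb{Q}),$
so capping with each $\Delta_P$ and summing over $P\in\mc{P}$ gives $\Lambda^{GL_2}_{\mc{P}}=\mb{Q}[u,v]^{S_2}\cdot\Lambda^{PGL_2}_{\mc{P}}$. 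Hence if the PGL$_2$-hypothesis $(A^\bullet_{PGL_2}((\mb{P}^1)^d)\otimes\mb{Q})^{\prod S_{e_i}}\subset\Lambda^{PGL_2}_{\mc{P}}$ of \Cref{twocrit} holds, multiplying both sides by $\mb{Q}[u,v]^{S_2}$ and invoking the first equality produces the corresponding $GL_2$-hypothesis. The only step with any real content is recognizing the correct $\mb{Q}[u,v]^{S_2}$-free-module basis so that orbit sums of products of $H_i'$'s manifestly sit in the PGL$_2$-subring; the rest is bookkeeping.
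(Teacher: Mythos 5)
Your proof is correct and follows essentially the same route as the paper: identify the rational $PGL_2$-subrings as generated by $H_i'=H_i+\tfrac12(u+v)$ (resp.\ $H'=H+\tfrac n2(u+v)$ and $(u-v)^2$) and observe that adjoining $\mb{Q}[u,v]^{S_2}$ recovers the full $GL_2$-ring, compatibly with taking $\prod_i S_{e_i}$-invariants. Your free-module/orbit-sum argument for why every invariant element of $A^\bullet_{GL_2}((\mb{P}^1)^d)\otimes\mb{Q}$ is a $\mb{Q}[u,v]^{S_2}$-combination of invariant polynomials in the $H_i'$ just makes explicit a step the paper asserts without detail, and your derivation of the ``in particular'' clause via $\Lambda^{GL_2}_{\mc{P}}=\mb{Q}[u,v]^{S_2}\cdot\Lambda^{PGL_2}_{\mc{P}}$ is exactly the intended deduction.
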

\begin{proof}
We identify $A^\bullet_{PGL_2}((\mb{P}^1)^d) \otimes \mb{Q}$ as the subring of $A^\bullet_{GL_2}((\mb{P}^1)^d) \otimes \mb{Q}$ via \Cref{PGLunordered} generated by $H':=H+\frac{n}{2}(u+v)$ and $(u-v)^2$. Since $A^\bullet_{GL_2}((\mb{P}^1)^d) \otimes \mb{Q}$ is generated by $H'$ over $\mb{Q}[u,v]^{S_2}$, and $(u-v)^2$ and $u+v$ generate $\mb{Q}[u,v]^{S_2}$, $\mb{Q}[u,v]^{S_2}\left(A^\bullet_{PGL_2}(\mb{P}^n)\otimes \mb{Q}\right)=A^\bullet_{GL_2}(\mb{P}^n)\otimes \mb{Q}$. 

For the other equality, we use \Cref{PGL2injective} to identify $A^\bullet_{PGL_2}(\mb{P}^n)\otimes \mb{Q}$ as the subring of $A^\bullet_{GL_2}(\mb{P}^n)\otimes \mb{Q}$ generated by $H_i':=H_i+\frac{u+v}{2}$. Then, $\left(A^\bullet_{PGL_2}((\mb{P}^1)^d)\otimes \mb{Q}\right)^{S_{e_1}\times \ldots \times S_{e_k}}$ is generated $\mb{Z}$-linearly by all $p(H_1',\ldots,H_n')$, where $p$ is a polynomial invariant under the action of $S_{e_1}\times\cdots\times S_{e_k}$. Similarly, $\left(A^\bullet_{GL_2}((\mb{P}^1)^d)\otimes \mb{Q}\right)^{S_{e_1}\times \ldots \times S_{e_k}}$ is generated by all such $p(H_1',\ldots,H_n')$, together with $u+v$ and $uv$. Therefore, 
$$\mb{Q}[u,v]^{S_2}\left(A^\bullet_{PGL_2}((\mb{P}^1)^d)\otimes \mb{Q}\right)^{S_{e_1}\times \ldots \times S_{e_k}}=\left(A^\bullet_{GL_2}((\mb{P}^1)^d)\otimes \mb{Q}\right)^{S_{e_1}\times \ldots \times S_{e_k}}.$$
%$A^\bullet_{PGL_2}(\mb{P}^1)^d \otimes \mb{Q}$ is generated by products of elementary symmetric polynomials in the groupings of $H'$ times $(u-v)^{2k}$. $\left(A^\bullet_{GL_2}((\mb{P}^1)^d)\otimes \mb{Q}\right)^{S_{e_1}\times \ldots \times S_{e_k}}$ is identically described but has the additional generator $u+v$ and $uv$. The result now follows.
\end{proof}

As we will now see, the cones over generators of $I^{GL_2}_\lambda \otimes \mb{Q}$ also generate $\wt{I_\lambda} \otimes \mb{Q}$. We will use a certain property about the classes of unordered strata to prove this, which as we will see is that $Z_\lambda$ contains a cycle whose class divides the class of the origin in $A^\bullet_{GL_2}(\mb{A}^{n+1})\otimes \mb{Q}$.

\begin{lem}\label{divideleray}
Given a partition $\lambda$ of $n$ and a set of generators $S$ of $I^{GL_2}_{\lambda}\otimes \mb{Q}$ of degree at most $n$, %including $[\{n\}]$, 
$\wt{I}_{\lambda}\otimes \mb{Q}$ is generated by
\begin{align*}
\{\alpha_0\mid \alpha\in S\},
\end{align*}
where $\alpha_0$ is the constant term of $\alpha\in A_{GL_2}^{\bullet}(\mb{P}^{n})\otimes \mb{Q}$, after writing $\alpha$ as a polynomial in $H,u,v$ that is degree at most $n$ in $H$ using the relation $G(H)=0$ (see \Cref{ringdescriptions}). 
\end{lem}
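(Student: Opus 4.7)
The plan is to realize the constant term map $\alpha\mapsto \alpha_0$ geometrically via the principal $\Gm$-bundle $\pi\colon \mb{A}^{n+1}\setminus\{0\}\to \mb{P}^n$. By \Cref{quotientlemma}, $A^\bullet_{GL_2}(\mb{P}^n)\cong A^\bullet_{GL_2\times \Gm}(\mb{A}^{n+1}\setminus\{0\})$, and excising $\{0\}\subset \mb{A}^{n+1}$ realizes this as the quotient of $A^\bullet_{GL_2\times \Gm}(\mb{A}^{n+1})=\mb{Q}[u,v]^{S_2}[H]$ by the ideal $(G(H))$. Writing $\alpha$ as a polynomial of degree at most $n$ in $H$ is exactly the choice of a canonical lift to $A^\bullet_{GL_2\times \Gm}(\mb{A}^{n+1})\otimes\mb{Q}$, and applying the forgetful map $A^\bullet_{GL_2\times \Gm}(\mb{A}^{n+1})\to A^\bullet_{GL_2}(\mb{A}^{n+1})=\mb{Q}[u,v]^{S_2}$ (which sends the $\Gm$-parameter $H$ to $0$) returns $\alpha_0$.

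Applying the same construction to $\wt Z_\lambda\setminus\{0\}\to Z_\lambda$, together with compatibility of pushforward with restriction-to-open and the forgetful map, gives a commutative diagram through which the constant term map is compatible with $\iota_*$ and $\wt\iota_*$. A short diagram chase identifies the image of $I_\lambda\otimes \mb{Q}$ under the constant term map with $\wt I_\lambda\otimes \mb{Q}$: given $\wt\iota_*\wt\beta\in \wt I_\lambda$, lift $\wt\beta\in A^{GL_2}_\bullet(\wt Z_\lambda)$ to a $GL_2\times \Gm$-equivariant class, restrict to $\wt Z_\lambda\setminus\{0\}$ (which by \Cref{quotientlemma} yields an element of $A^{GL_2}_\bullet(Z_\lambda)$), and push forward into $I_\lambda$; the resulting class has constant term precisely $\wt\iota_*\wt\beta$.

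To conclude that $\{\alpha_0\mid \alpha\in S\}$ generates $\wt I_\lambda\otimes \mb{Q}$ as an ideal of $\mb{Q}[u,v]^{S_2}$, observe that the constant term map is $\mb{Q}[u,v]^{S_2}$-linear, so every element of $I_\lambda\otimes \mb{Q}$ is a $\mb{Q}[u,v]^{S_2}$-combination of classes $H^k\alpha_i$ with $\alpha_i\in S$ and $0\le k\le n$. Since each $\alpha_i$ has cohomological degree $\le n$, the class $H^k\alpha_i$ requires reduction modulo $G(H)$ only when $k+\deg \alpha_i>n$, and in that case $(H^k\alpha_i)_0$ involves only the coefficients of $G(H)$, ultimately controlled by $G(0)=[\{0\}]_{\mb{A}^{n+1}}\in \wt I_\lambda$ (since $\{0\}\subset \wt Z_\lambda$).

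The main obstacle is this last step: showing rigorously that the reduced constant terms $(H^k\alpha_i)_0$ for $k>n-\deg\alpha_i$ do not enlarge the $\mb{Q}[u,v]^{S_2}$-ideal generated by $\{\alpha_{i,0}\}$. The cleanest route combines the reconstruction formula \Cref{reconstruction}, which recovers $\alpha_i$ from $\alpha_{i,0}$, with an explicit expansion of $H^k\pmod{G(H)}$ in terms of the elementary symmetric functions of the characters $iu+(n-i)v$; each reduced term can then be expressed as a $\mb{Q}[u,v]^{S_2}$-multiple of $\alpha_{i,0}$ plus a multiple of $G(0)$, which itself lies in $(\alpha_{j,0})$ via the self-intersection of $\{0\}$ inside any cone over an invariant subvariety of $Z_\lambda$.
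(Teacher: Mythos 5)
Your overall strategy is the same as the paper's: realize the constant-term map via the $\Gm$-torsor $\mb{A}^{n+1}\setminus\{0\}\to\mb{P}^n$, observe that the image of $I^{GL_2}_\lambda\otimes\mb{Q}$ in $A^\bullet_{GL_2}(\mb{A}^{n+1}\setminus\{0\})\otimes\mb{Q}\cong\mb{Q}[u,v]^{S_2}/(G(0))$ is the reduction of $\wt{I}_\lambda\otimes\mb{Q}$, and thereby reduce everything to showing that $G(0)=\prod_{i=0}^n(iu+(n-i)v)$, the class of the origin, already lies in the ideal $\wt{I}'_\lambda$ generated by $\{\alpha_0\mid\alpha\in S\}$. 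Your computation that each reduced constant term $(H^k\alpha_i)_0$ is a multiple of $\alpha_{i,0}$ plus a multiple of $G(0)$ is correct, as is the identification $\wt{I}_\lambda\otimes\mb{Q}=\wt{I}'_\lambda+(G(0))$ (modulo the small slip that the constant term of the class produced in your diagram chase agrees with $\wt\iota_*\wt\beta$ only modulo $G(0)$, which is harmless here).

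The gap is in the final step. The ``self-intersection of $\{0\}$ inside any cone over an invariant subvariety of $Z_\lambda$'' at best shows $G(0)\in\wt{I}_\lambda\otimes\mb{Q}$ --- which is immediate from $0\in\wt{Z}_\lambda$ and is exactly the term you are trying to absorb --- not the needed containment $G(0)\in\wt{I}'_\lambda$. Moreover, $S$ is an arbitrary set of generators of degree at most $n$, not necessarily fundamental classes of subvarieties, so there is no cone attached to an $\alpha_j$ on which to perform such an intersection. The paper closes this step as follows: since $Z_{\{n\}}\subset Z_\lambda$, the class $[\{n\}]$ lies in $I_\lambda\otimes\mb{Q}$ and hence is an $A^\bullet_{GL_2}(\mb{P}^n)\otimes\mb{Q}$-linear combination of elements of $S$; because the constant-term map is a ring homomorphism into $\mb{Q}[u,v]^{S_2}/(G(0))$ and $[\{n\}]_0=n\prod_{i=1}^{n-1}(iu+(n-i)v)$ has degree $n-1<n+1=\deg G(0)$, a homogeneity argument gives $[\{n\}]_0\in\wt{I}'_\lambda$ on the nose; finally $G(0)=nuv\cdot[\{n\}]_0$, so $G(0)\in\wt{I}'_\lambda$. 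Some such use of a specific stratum contained in $Z_\lambda$ whose affine class divides $G(0)$ is what your sketch is missing and needs to be supplied.
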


\begin{proof}
Let $\wt{I}'_{\lambda}\subset A^\bullet_{GL_2}(\mb{A}^{n+1})\otimes \mb{Q}$ be the ideal generated by $\{\alpha_0 \mid \alpha \in S\}$, so we want to show $\wt{I}'_\lambda=\wt{I}_\lambda\otimes \mb{Q}$. Consider the diagram of rational Chow rings (we omit $\otimes \mb{Q}$ for brevity)
\begin{center}
\begin{tikzcd}
%A_T^{\bullet}(Z_{\lambda}) \ar[d] & A_{T\times \Gm}^{\bullet}(\wt{Z_{\lambda}}\backslash\{0\}) \ar[d] \ar[swap, l, "\sim"] \ar[r, two heads] & A_{T}^{\bullet}(\wt{Z_{\lambda}}\backslash\{0\}) \ar[d] & A^{\bullet}_T(\wt{Z_{\lambda}}) \ar[l,two heads] \ar[d]\\
%& A_{T\times \Gm}^{\bullet}(\mb{A}^{n+1}) \ar[ld, two heads] \ar[d, two heads] \ar[dr, two heads] \ar[drr, two heads] & &\\
A_{GL_2}^{\bullet}(\mb{P}^n) \ar[d, two heads, "\pi_1"] & A_{GL_2\times \Gm}^{\bullet}(\mb{A}^{n+1}\backslash\{0\}) \ar[swap, l, "\sim"] \ar[r, two heads] \ar[d, two heads, "\pi_2"] & A_{GL_2}^{\bullet}(\mb{A}^{n+1}\backslash\{0\}) \ar[d, two heads, "\pi_3"] & A^{\bullet}_{GL_2}(\mb{A}^{n+1}) \ar[l,two heads]  \ar[d,two heads, "\pi_4"]\\
A_{GL_2}^{\bullet}(\mb{P}^n\backslash Z_{\lambda}) & A_{GL_2\times \Gm}^{\bullet}(\mb{A}^{n+1}\backslash\wt{Z_{\lambda}}) \ar[swap, l, "\sim"] \ar[r, two heads] & A_{GL_2}^{\bullet}(\mb{A}^{n+1}\backslash \wt{Z_{\lambda}}) & A^{\bullet}_{GL_2}(\mb{A}^{n+1}\backslash  \wt{Z_{\lambda}}) \ar[l,swap,"\sim"]
\end{tikzcd}
\end{center}
where $\Gm$ acts by scaling on $\mb{A}^{n+1}$. %Since the elements $\alpha\in S$ have degree at most $n$, they lift uniquely to $A^{\bullet}_{T\times \Gm}(\mb{A}^{n+1})$. Let $J_{\lambda}\subset A^{\bullet}_{T\times \Gm}(\mb{A}^{n+1})$ be the ideal generated by these lifts. 

%The image of $J_{\lambda}$ under $A^{\bullet}_{T\times \Gm}(\mb{A}^{n+1})\to A_T^{\bullet}(\mb{P}^n)$ is $I_{\lambda}$ and the image of $J_{\lambda}$ under $A^{\bullet}_{T\times \Gm}(\mb{A}^{n+1})\to A_T^{\bullet}(\mb{A}^{n+1})$ is $\wt{I}'_{\lambda}$. 
We know $I_{\lambda}\otimes \mb{Q}$ is the kernel of $\pi_1$, so it maps surjectively to the kernel of $\pi_3$ in $A_{GL_2}^{\bullet}(\mb{A}^{n+1}\backslash\{0\})$. Each generator $\alpha \in S$ maps to the image of $\alpha_0$ in $A^{\bullet}_{GL_2}(\mb{A}^{n+1}\backslash\{0\})\otimes \mb{Q}$. 
%By commutativity, the image of $\wt{I}'_{\lambda}$ in $A_{T}^{\bullet}(\mb{A}^{n+1}\backslash\{0\})$ is the kernel of $\pi_3$.
Since the kernel of $A_{GL_2}^{\bullet}(\mb{A}^{n+1})\otimes \mb{Q}\to A_{GL_2}^{\bullet}(\mb{A}^{n+1}\backslash\{0\})\otimes \mb{Q}$ is generated by $\prod_{i=0}^{n}{(iu+(n-i)v)}$, we have $\wt{I}'_{\lambda}+\langle \prod_{i=0}^{n}{(iu+(n-i)v)}\rangle = \wt{I}_{\lambda} \otimes \mb{Q}$. To finish, it suffices to see $\prod_{i=0}^{n}{(iu+(n-i)v)}\in \wt{I}'_{\lambda}$.

\begin{comment}
We see that the image of $I_{\lambda}$ in 
$$A_{T}^{\bullet}(\mb{A}^{n+1}\backslash\{0\})\otimes\mb{Q}\cong \mb{Q}[u,v]/(\prod_{i=0}^{n}{(iu+(n-i)v)})$$
is the image of $\wt{I}'_{\lambda}$ under the restriction 
$$A_{T}^{\bullet}(\mb{A}^{n+1})\otimes\mb{Q}\to  A_{T}^{\bullet}(\mb{A}^{n+1}\backslash\{0\})\otimes\mb{Q}.$$
To finish, it suffices to see that $\prod_{i=0}^{n}{(iu+(n-i)v)}\in \wt{I}_{\lambda}$. 
\end{comment}
As $Z_{{\{n\}}}$ is a cycle in $Z_\lambda$, $[\{n\}]$ can be expressed as an $A^{\bullet}_{GL_2}(\mb{P}^n)\otimes \mb{Q}$-linear combination of the elements of $S$, and taking the constant terms yields
$$[\{n\}]_0=n\prod_{i=1}^{n-1}{(iu+(n-i)v)}\in \wt{I}_{\lambda}'$$ 
by \Cref{diagonal} and \Cref{pushforward}, which divides $\prod_{i=0}^{n}{(iu+(n-i)v)}$.
\end{proof}
\begin{proof}[Proof of \Cref{GL2andaffine}]
Apply \Cref{PGLtoGL} to \Cref{pushforwardlemma,2generatorslemma} to get the statements on $I_{\lambda}^{GL_2}\otimes\mb{Q}$. Then, apply \Cref{divideleray} to get the statements on $\wt{I}_{\lambda}$. 
\end{proof}

\appendix
\section{Multiplicative relations between symmetrized strata}
\label{multiplicative}
%Continuing the study in Feh\'er and Rim\'anyi, we study the Thom polynomials of symmetrized orbits. Given a $K^*$-invariant subvariety $X$ of a vector space $V$, which is further invariant under an algebraic group $G$, the equivariant Chow classes $[X] \in A^\bullet_G(V) \equiv A^\bullet_G(\pt)$ and $[\wt{X}] \in A^\bullet_G(\mb{P}(V))\equiv A^\bullet_G(\pt)[H]$ determine each other as follows. To obtain $[X]$ from $[\wt{X}]$ we take the constant term in $A^\bullet_G(\pt)$. Conversely, if $u_1,\ldots,u_k$ are the Chern roots of $G$, then we obtain $[\wt{X}]$ from $[X]$ by replacing $u_i$ with $u_i+\frac{1}{k}H$ for all $i$. The class $[X]$ is called the \emph{Thom polynomial}.

In this section, we investigate certain multiplicative relations between the classes $[\wt{Z_\lambda}]\in A^\bullet_{GL_2}(\text{\rm Sym}^n K^2)$. These are equivalent to certain relations between the degree 0 terms of the expressions for $[\lambda]\in A^\bullet_{GL_2}(\mb{P}^n)$ by \Cref{reconstructionsection}. For this, it suffices to restrict ourselves to the $\mb{Q}$-basis given by the $[a,b,1^c]$-classes from \Cref{basis}.

\begin{defn}
Denote by $[a,b,1^c]_0\in \mb{Z}[u,v]^{S_2}$ be the term of $[a,b,1^c]\in H_{GL_2}^{\bullet}(\mb{P}^n)$ that is degree zero in $H$.
\end{defn}

We show how to write $(u+v)[a,b,1^c]_0$ and $uv[a,b,1^c]_0$ as a $\mb{Q}$-linear combination of strata. A few of these multiplicative relations have been explicitly written down \cite[Remark 3.9]{FNR06} and shown to exist abstractly \cite[Theorems 4.3 and 4.10]{FNR06} using the degeneration of a spectral sequence of a filtered CW-complex. We give a combinatorial method to do this in general in \Cref{uplusv,utimesv}. 

\begin{thm}
\label{uplusv}
For $c\ge 1$ and $a+b+c=n$,
\begin{align*}
n(u+v)[a,b,1^c]_0=(c+a-b)&[a+1,b,1^{c-1}]_0\\
+(b+c-a)&[a,b+1,1^{c-1}]_0\\
+(a+b-c)&[a+b,1,1^{c-1}]_0.
\end{align*}
\end{thm}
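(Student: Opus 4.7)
The plan is to reduce the theorem to an explicit polynomial identity in $\mb{Z}[u,v]$ and then verify it directly. First, by \Cref{FNRformula} I would write
\[
[a,b,1^c]_0 = \frac{1}{(u-v)^{c+2}}\sum_{k=0}^n c_k\phi_k,
\]
where $c_k$ is the coefficient of $z^k$ in $p(z) := (z^a-1)(z^b-1)(z-1)^c$ and $\phi_k := \prod_{j\in[n]\setminus\{k\}}(jv+(n-j)u)\in\mb{Z}[u,v]$. Each class $[a',b',1^{c-1}]_0$ on the right-hand side has an analogous expression with denominator $(u-v)^{c+1}$. Multiplying both sides of the desired identity through by $(u-v)^{c+2}$ therefore reduces the theorem to the polynomial identity
\[
n(u+v)\sum_k c_k\phi_k \;=\; (u-v)\sum_k d_k\phi_k \quad\text{in }\mb{Z}[u,v],
\]
where $d_k$ is the coefficient of $z^k$ in
\[
q(z) := (c{+}a{-}b)(z^{a+1}{-}1)(z^b{-}1)(z{-}1)^{c-1}+(b{+}c{-}a)(z^a{-}1)(z^{b+1}{-}1)(z{-}1)^{c-1}+(a{+}b{-}c)(z^{a+b}{-}1)(z{-}1)^c.
\]

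Next I would analyze the $\mb{Z}[u,v]$-linear relations among $\{u\phi_k,v\phi_k\}_{k=0}^n$. Since the $\phi_k$ form a Lagrange-interpolation basis for degree-$n$ binary forms (hence are linearly independent), the $2(n+1)$ elements $u\phi_k,v\phi_k$ span the $(n+2)$-dimensional space of degree-$(n+1)$ binary forms, leaving an $n$-dimensional relation space. The $n$ obvious relations $R_k : \xi_{k-1}\phi_{k-1} = \xi_k\phi_k$ for $k=1,\ldots,n$, where $\xi_k := (n-k)u+kv$, all hold because $\xi_k\phi_k = \prod_{j=0}^n(jv+(n-j)u)$ is independent of $k$, and a direct check shows they are linearly independent and hence span the full relation space. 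Writing a general combination $\sum_k \mu_k R_k$ and reindexing, every element of the relation space has the form $\sum_k\nu_k\xi_k\phi_k$ with $\sum_k\nu_k=0$.

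Applying this structural result to $\sum_k[(nc_k-d_k)u+(nc_k+d_k)v]\phi_k = 0$ forces $nc_k-d_k=\nu_k(n-k)$ and $nc_k+d_k=\nu_k k$, yielding $\nu_k=2c_k$ and $d_k=(2k-n)c_k$; the side condition $\sum\nu_k=2p(1)=0$ is automatic since $(z-1)$ divides $p(z)$. The theorem thus reduces to the polynomial identity $q(z) = (2z\tfrac{d}{dz}-n)p(z)$, which I would verify by direct expansion: differentiating $p$ via the Leibniz rule, multiplying by $2z$ using $z(z-1)^{c-1}=(z-1)^c+(z-1)^{c-1}$, and collecting terms, both sides reduce to $(z-1)^{c-1}\bigl[n(z^{a+b+1}+1)-(a+b-c)(z^{a+b}+z)-(c+a-b)(z^{a+1}+z^b)-(b+c-a)(z^{b+1}+z^a)\bigr]$. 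The main obstacle is the structural analysis of relations among the $u\phi_k,v\phi_k$, especially the edge cases $k\in\{0,n\}$ where one of $n-k$ or $k$ vanishes and the parameterization $\alpha_k=\nu_k(n-k), \beta_k=\nu_k k$ requires careful interpretation; once that is in place, the remaining polynomial verification is mechanical.
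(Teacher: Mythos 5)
Your proof is correct, but it takes a genuinely different route from the paper's. The paper argues geometrically: by \Cref{reconstruction} the statement is equivalent to an identity for $(2H+nu+nv)[a,b,1^c]$ in $A^\bullet_{GL_2}(\mb{P}^n)$, and $\Phi^*(2H+nu+nv)$ is rewritten as the cyclic sum $\Delta_{1,2}+\Delta_{2,3}+\cdots+\Delta_{n,1}$; each product $\Delta_{i,i+1}\Delta_{\{A,B\}}$ is then converted into strata via \Cref{trivialfacts} and a single square relation, and pushed forward term by term. Your argument instead stays entirely inside $\mb{Z}[u,v]$: starting from \Cref{FNRformula} you reduce everything to the single polynomial identity $q(z)=(2z\tfrac{d}{dz}-n)p(z)$ for $p(z)=(z^a-1)(z^b-1)(z-1)^c$, which I checked does hold (both sides equal $(z-1)^{c-1}$ times the bracketed expression you display). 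Two remarks. First, your structural analysis of the relation module among $\{u\phi_k,v\phi_k\}$ is more than you need: for the proof itself only the sufficiency direction is required, namely that once $d_k=(2k-n)c_k$ is verified one has $\sum_k[(nc_k-d_k)u+(nc_k+d_k)v]\phi_k=\sum_k 2c_k\,\xi_k\phi_k=\bigl(2\sum_kc_k\bigr)\prod_{j=0}^n(jv+(n-j)u)=0$ since $p(1)=0$; so the edge cases $k\in\{0,n\}$ that you flag as delicate never actually arise. The full description of the relation space is only needed to \emph{derive} the coefficients rather than to verify them. Second, the trade-off between the two approaches: the paper's proof exhibits the identity as a pushforward of square (WDVV-type) relations and extends verbatim to an algorithm expressing $n(u+v)[\lambda]$ for any $\lambda$ with at least three parts, whereas your computation packages multiplication by $n(u+v)$ as the operator $2z\tfrac{d}{dz}-n$ acting on generating polynomials --- a clean and reusable observation, but one that requires knowing the target expression in advance.
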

\begin{proof}
We will prove \Cref{uplusv} by pulling back to $(\mb{P}^1)^n$. By \Cref{reconstruction}, we want to show
\begin{align*}
(2H+nu+nv)[a,b,1^c]=(c+a-b)&[a+1,b,1^{c-1}]\\
+(b+c-a)&[a,b+1,1^{c-1}]\\
+(a+b-c)&[a+b,1,1^{c-1}].
\end{align*}
Let $A=\{1,\ldots,a\}$, $B=\{b+1,\ldots,a+b\}$. By the projection formula, the right hand side is
$$\Phi_*(\Delta_{\{A,B\}}\cap \Phi^*(2H+nu+nv)).$$ 
The pullback of $2H+nu+nv$ along $\Phi$ is
\begin{align*}
(H_1+H_2+u+v)+(H_2+H_3+u+v)+\ldots+(H_n+H_1+u+v)\\
=\Delta_{1,2}+\Delta_{2,3}+\ldots+\Delta_{n,1}
\end{align*}
by \Cref{psidelta}.
In this way, we now only have to intersect strata using \Cref{trivialfacts} and the square relation as in \Cref{P14}. 

There are 6 cases: $1 \le i \le a-1$, $i=a$, $a+1 \le i \le a+b-1$, $i=a+b$, $a+b+1 \le i \le n-1$, and $i=n$. We will deal with each of these cases in the same way outlined above. 

To calculate $\Phi_*(\Delta_{i,i+1}\Delta_{\{A,B\}})$ for $1 \le i \le a-1$, we use the square relation to replace $\Delta_{i,i+1}$ with $\Delta_{i,n}-\Delta_{n,a+1}+\Delta_{a+1,i+1}$. Using \Cref{trivialfacts}, each of the products is itself a strata, and the pushforward is
$$[a+1,b,1^{c-1}]-[a,b+1,1^{c-1}]+[a+b,1,1^{c-1}].$$

\begin{center}
\begin{tikzpicture}[scale=0.25]
\draw[fill=black] (0+0.4,0-0.4) circle (4pt);
\draw[fill=black] (1+0.1,1-0.1) circle (4pt);
\draw[fill=black] (2,2) circle (4pt);
\draw[fill=black] (3+0.1,3-0.1) circle (4pt);
\draw[fill=black] (4+0.4,4-0.4) circle (4pt);
%\draw [red] plot [smooth cycle] coordinates {(0,0) (1,1) (3,1) (1,0) (2,-1)};
\draw [black,dotted] plot [smooth cycle] coordinates {(-1+1,-1) (-1+1,2) (5,4.2) (2+1,1)};

\draw[fill=black] (8-0.4,4-0.4) circle (4pt);
\draw[fill=black] (9-0.1,3-0.1) circle (4pt);
\draw[fill=black] (10,2) circle (4pt);
\draw[fill=black] (11-0.1,1-0.1) circle (4pt);
\draw[fill=black] (12-0.4,0-0.4) circle (4pt);
\draw [black,dotted] plot [smooth cycle] coordinates {({12-(-1+1)},-1) ({12-(-1+1)},2) ({12-(5)},4.2) ({12-(2+1)},1)};

\draw[fill=black] (4,-3+0.4) circle (4pt);
\draw[fill=black] (5,-3+0.1) circle (4pt);
\draw[fill=black] (6,-3) circle (4pt);
\draw[fill=black] (7,-3+0.1) circle (4pt);
\draw[fill=black] (8,-3+0.4) circle (4pt);
%\draw [black,dotted] plot [smooth cycle] coordinates {(3,-3.5) (3,-2) (9,-2) (9,-3.5)};

\draw (-1.2,2) node {A};
\draw (13,2) node {B};

\draw (2,2) node[anchor=south east] {i};
\draw (3+0.1,3-0.1) node[anchor=south] {i+1};
\draw (8-0.4,4-0.4) node[anchor=south] {a+1};
\draw (8,-3+0.4) node[anchor=south west] {n};

\draw (2,2) -- (3+0.1,3-0.1) -- (8-0.4,4-0.4) -- (8,-3+0.4) -- cycle;
\end{tikzpicture}
\end{center}

For $i=a$, \Cref{trivialfacts} implies $\Delta_{a,a+1}\Delta_{A,B}=\Delta_{\{A \sqcup B\}}$, which pushes forward to $$[a+b,1,1^{c-1}].$$

Similarly to before, for $a+1 \le i \le a+b-1$, the pushforward is
$$[a,b+1,1^{c-1}]-[a+1,b,1^{c-1}]+[a+b,1,1^{c-1}].$$

For $i=a+b$, \Cref{trivialfacts} implies $\Delta_{a+b,a+b+1}\Delta_{\{A,B\}}=\Delta_{\{A,B\sqcup\{a+b+1\}\}}$, which pushes forward to $$[a,b+1,1^{c-1}].$$

For $a+b+1 \le i \le n-1$, replace $\Delta_{i,i+1}$ with $\Delta_{i,a}-\Delta_{a,a+1}+\Delta_{a+1,i+1}$, and similarly to before we get the pushforward is $$[a+1,b,1^{c-1}]-[a+b,1,1^{c-1}]+[a,b+1,1^{c-1}].$$

Finally, for $i=n$, using \Cref{trivialfacts}, $\Delta_{n,1}\Delta_{\{A,B\}}=\Delta_{\{A\sqcup\{n\},B\}}$, so this will pushforward to $$[a+1,b,1^{c-1}].$$

Combining these yields the desired result.
\end{proof}

\begin{rmk}
Given a partition $\lambda$ of $n$ with at least three nontrivial parts, the argument of \Cref{uplusv} is a combinatorial algorithm that can non-canonically express $n(u+v)[\lambda]$ in terms of other classes $[\lambda']$ with one fewer part. The number of square relations can be drastically reduced in practice by an appropriate choice of the partition pushing forward to $[a_1,\ldots,a_d]$.%The answer one gets at the end is not in terms of the basis of $[a,b;c]$ given in \Cref{basis}, so it is not canonical. 

%When computing examples, it is often helpful to replace use the relation
%$$[\{\{1,2\}\}]+[\{\{2,3\}\}]+\ldots+[\{\{n,1\}\}]=[\{\{\sigma(1),\sigma(2)\}\}]+\ldots+[\{\{\sigma(n),\sigma(1)\}\}]$$
%for any permutation $\sigma\in S_n$ to reduce the number of times one needs to apply the square relation. 
\end{rmk}

\begin{thm}
\label{utimesv}
For $c \ge 2$, and $a+b+c=n$
\begin{align*}
n^2uv[a,b,1^c]_0=(2ab+ac+bc+c(c-1))&[a+1,b+1,1^{c-2}]_0\\
+(-ab-bc)&[a+2,b,1^{c-2}]_0\\
+(-ab-ac)&[a,b+2,1^{c-2}]_0\\
+(-ac-bc-c(c-1))&[a+b+1,1,1^{c-2}]_0\\
+(ac+bc)&[a+b,2,1^{c-2}]_0.
\end{align*}
\end{thm}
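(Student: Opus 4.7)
The strategy is to compute $n^2(u+v)^2[a,b,1^c]_0$ and $n^2(u-v)^2[a,b,1^c]_0$ separately, then combine via $4uv = (u+v)^2 - (u-v)^2$.

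First, two iterations of \Cref{uplusv} (first applied to $[a,b,1^c]_0$, then to each of the three resulting terms) give $n^2(u+v)^2[a,b,1^c]_0 = \sum_\mu \alpha_\mu [\mu]_0$, where $\mu$ ranges over the five partitions $\{a+2,b\}$, $\{a+1,b+1\}$, $\{a,b+2\}$, $\{a+b+1,1\}$, $\{a+b,2\}$ (each adjoined with $1^{c-2}$); e.g.\ $\alpha_{\{a+2,b\}} = (c+a-b)^2$ and $\alpha_{\{a+1,b+1\}} = 2[c^2-(a-b)^2]-4c$. This step is direct symbolic substitution.

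Second, we compute $(u-v)^2[a,b,1^c]_0 = \sum_\mu \beta_\mu [\mu]_0$ in the same five-element basis. By \Cref{FNRformula}, $[\lambda]_0 = \frac{1}{(u-v)^d}\sum_{k=0}^n ([z^k]P_\lambda(z))\, Q_k$ where $P_\lambda(z)=\prod_i(z^{a_i}-1)$, $d$ is the number of parts of $\lambda$, and $Q_k = \prod_{j\in\{0,\ldots,n\}\setminus\{k\}}(jv+(n-j)u)$. The polynomials $Q_0,\ldots,Q_n$ are $\mb{Q}$-linearly independent in $\mb{Q}[u,v]$, since $Q_k$ evaluated at $(u,v)=(-m,n-m)$ vanishes for $m\ne k$ but equals the nonzero constant $n^n(-1)^k k!(n-k)!$ for $m=k$. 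Consequently, the equality $(u-v)^2[a,b,1^c]_0 = \sum \beta_{\{a',b'\}}[a',b',1^{c-2}]_0$ is equivalent to the polynomial identity
\[
(z^a-1)(z^b-1)(z-1)^2 \;=\; \sum_{\substack{a'+b'=a+b+2\\a',b'\geq 1}}\beta_{\{a',b'\}}(z^{a'}-1)(z^{b'}-1).
\]
Both sides are supported on the monomials $z^0,z,z^2,z^a,z^{a+1},z^{a+2},z^b,z^{b+1},z^{b+2},z^{a+b},z^{a+b+1},z^{a+b+2}$, and comparing coefficients at $z^{a+1}$, $z^a$, $z^b$, $z^{a+b+1}$, $z^{a+b}$ uniquely determines $\beta_{\{a+1,b+1\}}=-2$, $\beta_{\{a,b+2\}}=\beta_{\{a+2,b\}}=1$, $\beta_{\{a+b+1,1\}}=2$, $\beta_{\{a+b,2\}}=-1$, with all other $\beta = 0$.

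Finally, combining via $4n^2uv = n^2(u+v)^2-n^2(u-v)^2$, the coefficient of each $[\mu]_0$ in $n^2 uv[a,b,1^c]_0$ equals $(\alpha_\mu-n^2\beta_\mu)/4$. A short algebraic simplification using $n=a+b+c$ verifies each such quantity matches the claimed coefficient in \Cref{utimesv}; for instance, $(2[c^2-(a-b)^2]-4c+2n^2)/4$ simplifies to $2ab+ac+bc+c(c-1)$ for the $[a+1,b+1,1^{c-2}]_0$ term, and $((c+a-b)^2-n^2)/4=-b(a+c)=-ab-bc$ for the $[a+2,b,1^{c-2}]_0$ term. The main obstacle is the careful bookkeeping in the iterated application of \Cref{uplusv}; the polynomial coefficient comparison in step two and the final algebraic verification are routine.
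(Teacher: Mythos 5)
Your proof is correct, and it takes a genuinely different route from the paper's. The paper proves \Cref{utimesv} the same way it proves \Cref{uplusv}: it works upstairs in $A^\bullet_T((\mb{P}^1)^n)$, writes $(\sum H_i+nu)(\sum H_i+nv)=-\sum_{i<j}(\Delta_{i,k_{i,j}}-\Delta_{j,k_{i,j}})^2$ using $(H_i+u)(H_i+v)=0$, and then resolves each intersection with $\Delta_{\{A,B\}}$ by a case analysis with square relations before pushing forward. You instead bootstrap: the $(u+v)^2$ part is obtained by iterating \Cref{uplusv} (legitimate since $c\ge 2$ guarantees the hypothesis $c-1\ge 1$ for the second application), and the $(u-v)^2$ part is reduced, via \Cref{FNRformula} and the linear independence of the torus-fixed-point classes $Q_k$, to the five-term polynomial identity $(z^a-1)(z^b-1)(z-1)^2=\sum\beta_{\{a',b'\}}(z^{a'}-1)(z^{b'}-1)$, which one checks by direct expansion; combining with $4uv=(u+v)^2-(u-v)^2$ and simplifying with $n=a+b+c$ reproduces every coefficient (I verified all five). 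What each approach buys: the paper's argument stays entirely at the level of strata intersections and square relations, which is what makes the subsequent remark about a general combinatorial branching rule for arbitrary $\lambda$ immediate; yours is lighter on combinatorial bookkeeping and makes the verification essentially mechanical, at the cost of leaning on the closed formula of \Cref{FNRformula}. Two small points worth a sentence in a write-up: the "unique determination" of the $\beta$'s by comparing coefficients is slightly glib when exponents collide (e.g.\ $a=b$ or $b\le 2$), but the polynomial identity itself holds verbatim in all cases and the colliding terms merge consistently on both sides, so nothing breaks; and only the forward implication (polynomial identity $\Rightarrow$ Chow identity) is actually needed, so the linear-independence argument for the $Q_k$ is not strictly necessary for the proof, only for uniqueness.
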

\begin{comment}
n^2uv[a,b,1^c]_0=(2ab+ac+bc+c(c-1))&[a+1,b+1,1^{c-2}]_0\\
+(-ab-ac)&[a+2,b,1^{c-2}]_0\\
+(-ab-bc)&[a,b+2,1^{c-2}]_0\\
+(-ac-bc)&[a+b+1,1,1^{c-2}]_0\\
+(ac+bc-c(c-1))&[a+b,2,1^{c-2}]_0.
\end{comment}
\begin{proof}
As in the previous theorem letting $A=\{1,\ldots,a\}$, $B=\{a+1,\ldots,a+b\}$ the statement is equivalent to
\begin{align*}
\Phi_*((\sum H_i + nu)(\sum H_i+nv)\Delta_{\{A,B\}})\\
=(2ab+ac+bc+c(c-1))&[a+1,b+1,1^{c-2}]\\
+(-ab-bc)&[a+2,b,1^{c-2}]\\
+(-ab-ac)&[a,b+2,1^{c-2}]\\
+(-ac-bc-c(c-1))&[a+b+1,1,1^{c-2}]\\
+(ac+bc)&[a+b,2,1^{c-2}].
\end{align*}
We have $(H_i+u)(H_i+v)=0$, so
\begin{align*}
(\sum H_i + nu)(\sum H_i+nv)=&\sum_{1 \le i<j \le n} (H_i+u)(H_j+v)+(H_j+u)(H_i+v)\\
=&\sum_{1 \le i<j \le n} -(H_i-H_j)^2\\
=&\sum_{1 \le i<j \le n} -(\Delta_{i,k_{i,j}}-\Delta_{j,k_{i,j}})^2
\end{align*}
where $k_{i,j} \in [n]\setminus \{i,j\}$ is arbitrary. There are $6$ cases depending on which of $A,B,[n]\setminus \{A,B\}$ each of $i,j$ lie in, and for each of these cases an appropriate choice of $k_{i,j}$ can be made so that the strata combine via \Cref{trivialfacts} as in the proof of \Cref{uplusv} and push forward to $[a',b',1^{c-2}]$-classes.
\end{proof}

\begin{rmk}
Similarly to \Cref{uplusv}, the argument of \Cref{utimesv} is a combinatorial algorithm that can express $n^2uv[\lambda]$ in terms of other classes $[\lambda']$ with two fewer parts for any partition $\lambda$ of $n$ with at least four parts. 
\end{rmk}

\begin{comment}
For each of $[\{i,i+1\}]$ with $1 \le i \le a-1$, we can use the square relation to replace it with $[\{i,c\}]-[\{c,a+1\}]+[\{a+1,i+1\}]$, and for each $[\{i,i+1\}]$ with $b \le i \le b+1$, we can use the square relation to replace it with $[\{i,c\}]-[\{c,a\}]+[\{a,i+1\}]$. Therefore the intersection before applying $\Phi_*$ is
\begin{align*}
(a-1)([\{1,\ldots,a,c\},\{a+1,\ldots,a+b\}]-[\{1,\ldots,a\},\{a+1,\ldots,a+b,c\}]+[\{1,\ldots,a+b\}])\\
+[\{1,\ldots,a+b\}]\\
+(b-1)(-[\{1,\ldots,a,c\},\{a+1,\ldots,a+b\}]+[\{1,\ldots,a\},\{a+1,\ldots,a+b,c\}]+[\{1,\ldots,a+b\}])\\
+[\{1,\ldots,a\},\{a+1,\ldots,a+b+1\}]\\
+\sum_{i=a+b+1}^{n-1} [\{1,\ldots,a\},\{a+1,\ldots,b\},\{i,i+1\}]\\
+[\{1,\ldots,a,c\},\{a+1,\ldots,a+b\}]
\end{align*}
\end{comment}

\begin{comment}
For the first one, we note that $$2\sum H_i+u+v = \sum [\sigma(i)\sigma(j)]$$ for any permutation $\sigma$, where we let $\sigma(n+1)=\sigma(1)$.

For the second one, we note that $$(H_i+u)(H_j+v)+(H_j+u)(H_i+v)=-([ik]-[jk])^2$$ for any choice of $k$.
\end{comment}

\bibliographystyle{plain}
\bibliography{references.bib}
\end{document}